\documentclass[letterpaper]{book}

\usepackage[utf8]{inputenc}
\usepackage[backend=biber, style=alphabetic, sorting=nyt, doi=false, eprint=false, isbn=false]{biblatex}
  \usepackage{amsmath,amssymb, amsthm}
  \usepackage{mathrsfs}
  \usepackage[mathscr]{euscript}
  \usepackage{stmaryrd}
  \SetSymbolFont{stmry}{bold}{U}{stmry}{m}{n}
  \usepackage[top=30truemm, bottom=40truemm, left=35truemm, right=35truemm]{geometry}
  \usepackage{xcolor}
  \usepackage[all]{xy}
  \usepackage{tikz-cd}
  \usepackage{color}
  \usepackage{mathtools}
  \usepackage{graphicx}
  \usepackage{scalerel}
  \usepackage{enumitem}  
  \usepackage[colorlinks,final,hyperindex]{hyperref}
  \usepackage[noabbrev,capitalize]{cleveref}
  \usepackage{etoolbox}
  \usepackage{stackengine,scalerel}
  \usepackage{calc}
  \usepackage{adjustbox}
  \usepackage{todonotes}
  \usepackage{aliascnt}

\newtheorem{theorem}{Theorem}[section]
\newtheorem*{theorem*}{Theorem}

\newtheorem{manualtheoreminner}{Theorem}
\newenvironment{manualtheorem}[1]{%
  \renewcommand\themanualtheoreminner{#1}%
  \manualtheoreminner
}{\endmanualtheoreminner}

\newaliascnt{proposition}{theorem}
\newtheorem{proposition}[proposition]{Proposition}
\aliascntresetthe{proposition}

\newaliascnt{corollary}{theorem}
\newtheorem{corollary}[corollary]{Corollary}
\aliascntresetthe{corollary}

\newaliascnt{lemma}{theorem}
\newtheorem{lemma}[lemma]{Lemma}
\aliascntresetthe{lemma}

\newaliascnt{hypothesis}{theorem}
\newtheorem{hypothesis}[hypothesis]{Hypothesis}
\aliascntresetthe{hypothesis}

\newtheorem*{proposition*}{Proposition}

\theoremstyle{definition}

\newaliascnt{definition}{theorem}
\newtheorem{definition}[definition]{Definition}
\aliascntresetthe{definition}

\newaliascnt{variant}{theorem}

\aliascntresetthe{variant}

\newaliascnt{predefinition}{theorem}
\newtheorem{predefinition}[predefinition]{Pre-definition}
\aliascntresetthe{predefinition}

\newaliascnt{example}{theorem}
\newtheorem{example}[example]{Example}
\aliascntresetthe{example}

\newaliascnt{notation}{theorem}
\newtheorem{notation}[notation]{Notation}
\aliascntresetthe{notation}

\newaliascnt{question}{theorem}

\aliascntresetthe{question}

\newaliascnt{note}{theorem}

\aliascntresetthe{note}

\newaliascnt{remark}{theorem}
\newtheorem{remark}[remark]{Remark}
\aliascntresetthe{remark}
\newtheorem*{remark*}{Remark}

\newaliascnt{warning}{theorem}
\newtheorem{warning}[warning]{Warning}
\aliascntresetthe{warning}

\newaliascnt{construction}{theorem}

\aliascntresetthe{construction}

\crefname{theorem}{theorem}{theorems}
\Crefname{theorem}{Theorem}{Theorems}
\crefname{proposition}{proposition}{propositions}
\Crefname{proposition}{Proposition}{Propositions}
\crefname{corollary}{corollary}{corollaries}
\Crefname{corollary}{Corollary}{Corollaries}
\crefname{lemma}{lemma}{lemmas}
\Crefname{lemma}{Lemma}{Lemmas}
\crefname{hypothesis}{hypothesis}{hypotheses}
\Crefname{hypothesis}{Hypothesis}{Hypotheses}
\crefname{definition}{definition}{definitions}
\Crefname{definition}{Definition}{Definitions}
\crefname{variant}{variant}{variants}
\Crefname{variant}{Variant}{Variants}
\crefname{predefinition}{pre-definition}{pre-definitions}
\Crefname{predefinition}{Pre-definition}{Pre-definitions}
\crefname{example}{example}{examples}
\Crefname{example}{Example}{Examples}
\crefname{notation}{notation}{notations}
\Crefname{notation}{Notation}{Notations}
\crefname{conjecture}{conjecture}{conjectures}
\Crefname{conjecture}{Conjecture}{Conjectures}
\crefname{Assumption}{assumption}{assumptions}
\Crefname{Assumption}{Assumption}{Assumptions}
\crefname{question}{question}{questions}
\Crefname{question}{Question}{Questions}
\crefname{note}{note}{notes}
\Crefname{note}{Note}{Notes}
\crefname{remark}{remark}{remarks}
\Crefname{remark}{Remark}{Remarks}
\crefname{warning}{warning}{warnings}
\Crefname{warning}{Warning}{Warnings}
\crefname{construction}{construction}{constructions}
\Crefname{construction}{Construction}{Constructions}

\mathchardef\mhy="2D
\newcommand{\blank}{-} %
\newcommand{\eps}{\varepsilon}
\DeclareFontFamily{U}{min}{}
\DeclareFontShape{U}{min}{m}{n}{<-> udmj30}{}
\newcommand{\yo}{{\!\text{\usefont{U}{min}{m}{n}\symbol{'210}}\!}}
\newcommand{\vS}{\vec{S}}
\def\slashedrightarrow{\relbar\joinrel\mapstochar\joinrel\rightarrow}

\renewcommand*{\do}[1]{%
\expandafter\newcommand \csname c#1\endcsname{\mathcal{#1}}}
\docsvlist{A,B,C,D,E,F,G,H,I,J,K,L,M,N,O,P,Q,R,S,T,U,V,W,X,Y,Z}
\renewcommand*{\do}[1]{%
\expandafter\newcommand \csname e#1\endcsname{\mathscr{#1}}}
\docsvlist{A,B,C,D,E,F,G,H,I,J,K,L,M,N,O,P,Q,R,S,T,U,V,W,X,Y,Z}
\renewcommand*{\do}[1]{%
\expandafter\newcommand \csname s#1\endcsname{\mathsf{#1}}}
\docsvlist{A,B,C,D,E,F,G,H,I,J,K,L,M,N,O,P,Q,R,S,T,U,V,W,X,Y,Z}
\renewcommand*{\do}[1]{%
\expandafter\newcommand \csname r#1\endcsname{\mathrm{#1}}}
\docsvlist{A,B,C,D,E,F,G,H,I,J,K,L,M,N,O,P,Q,R,S,T,U,V,W,X,Y,Z,f,g,h,s,u,v}
\renewcommand*{\do}[1]{%
\expandafter\newcommand \csname #1#1\endcsname{\mathbb{#1}}}
\docsvlist{B,C,D,E,F,G,H,I,J,K,L,M,N,O,P,Q,R,T,U,V,W,X,Y,Z}
\renewcommand{\AA}{\mathbb{A}}
\renewcommand{\SS}{\mathbb{S}}
\renewcommand*{\do}[1]{%
\expandafter\newcommand \csname b#1\endcsname{\mathbf{#1}}}
\docsvlist{A,B,C,D,E,F,G,H,I,J,K,L,M,N,O,P,Q,R,S,T,U,V,W,X,Y,Z}
\renewcommand*{\do}[1]{%
\expandafter\newcommand \csname f#1\endcsname{\mathfrak{#1}}}
\docsvlist{A,B,C,D,E,F,G,H,I,J,K,L,M,N,O,P,Q,R,S,T,U,V,W,X,Y,Z, p, q}

\newcommand{\abs}[1]{{\lvert#1 \rvert}}

\renewcommand*{\do}[1]{%
\expandafter\newcommand \csname #1\endcsname{\mathsf{#1}}}
\docsvlist{
Set, FinSet, Poset, sSet, msSet, PreComp, cSet, mcSet, Comic, Cptd,
Fin, Cat, CatSp, CatPSp, PrCatSp, CAT, LinCat, Gph,
PSh, Shv, Exc, Prof,
Mon, CMon, Ab, Grp, Sp, Alg, Algbrd, Algd, CAlg, Rig, CRig, Ring, CRing, Mod, LMod, RMod, grMod, Ch, adCh, dCh, Tw, coCart, Cart, ADC, Span, Gaunt, Adj, Bord, FinVect, Vect, Ani, Hor, Ver, Lat, GrayCat, Mfld, Exit, Strat}
\newcommand{\BMod}[2][]{{}_{#1}\mathsf{BMod}_{#2}}
\renewcommand{\Pr}{\mathsf{Pr}}
\newcommand{\PrL}{{\mathsf{Pr}^\mathsf{L}}}
\newcommand{\PrR}{{\mathsf{Pr}^\mathsf{R}}}
\newcommand{\Spx}{{\boldsymbol{\Delta}}}

\DeclareSymbolFont{bbold}{U}{bbold}{m}{n}
\DeclareMathSymbol{\Ori}{\mathord}{bbold}{"01}

\newcommand{\strCat}{\Cat^\mathrm{str}}

\newcommand{\SMC}{\mathsf{SMCat}}

\renewcommand*{\do}[1]{%
\expandafter\newcommand \csname #1\endcsname{\mathrm{#1}}}
\docsvlist{op, co, coop, lax, oplax, cn, nc, gp, st, fin, lfin, fpqc, fppf, uns, An, IAn, LAn, RAn, Cof, Fib, rev, strict, inj, cg, Assoc, Triv, can, Ste, adj, ladj, radj, dual, fr, sfr, un, Fam, comp, conj, const, Top, PL, sm}

\renewcommand*{\do}[1]{%
\expandafter\DeclareMathOperator \csname #1\endcsname{#1}}
\docsvlist{id, pr, ev, coev, ob, Hom, Nat, Map, el, core, Sym, Sing, Aut, Spec, Lan, Tor, Ext, THH, Tot, St, Un, cell, ho, fib, cof, cyl, cofib, Free, Nm, Pic, Ind, Env, Sq, res, dom, cod, Th, End, LEnd, REnd, Fun, LFun, RFun}

\DeclareMathOperator*{\colim}{colim}
\DeclareMathOperator{\im}{Im}

\newcommand{\cube}{\Box}

\newcommand{\Cube}{{\scalerel*{\mbox{$\square$\hspace{-0.76em}\scalebox{0.85}[1]{$\square$}}\hspace{0.1em}}{\Spx}}}

\newlength\shlength
\newcommand\vvec[2][0]{\ThisStyle{\setlength\shlength{#1\LMpt}
  \stackengine{-5.6\LMpt}{$\SavedStyle#2$}{\smash{$\kern\shlength
    \stackengine{\dimexpr 1.3pt+6.25\LMpt}{$\SavedStyle\mathchar"017E$}
      {\rule{\widthof{$\SavedStyle#2$}}{\dimexpr.1pt+.5\LMpt}\kern.4\LMpt}{O}{r}{F}{F}{L}\kern-\shlength$}}
      {O}{c}{F}{T}{S}}}

\DeclareMathOperator{\rcof}{\overrightarrow{\mathrm{cof}}}
\DeclareMathOperator{\rfib}{\overrightarrow{\mathrm{fib}}}
\DeclareMathOperator{\rcyl}{\overrightarrow{\mathrm{cyl}}}
\DeclareMathOperator{\rcone}{\overrightarrow{\mathrm{cone}}}
\DeclareMathOperator{\rpath}{\overrightarrow{\mathrm{path}}}
\DeclareMathOperator{\lcof}{\overleftarrow{\mathrm{cof}}}
\DeclareMathOperator{\lfib}{\overleftarrow{\mathrm{fib}}}
\DeclareMathOperator{\lcyl}{\overleftarrow{\mathrm{cyl}}}
\DeclareMathOperator{\lcone}{\overleftarrow{\mathrm{cone}}}\DeclareMathOperator{\lpath}{\overleftarrow{\mathrm{path}}}

\newcommand{\ramalg}{\mathbin{\overrightarrow{\amalg}}}
\newcommand{\lamalg}{\mathbin{\overleftarrow{\amalg}}}

\makeatletter
\newcommand{\xRrightarrow}[2][]{\ext@arrow 0359\Rrightarrowfill@{#1}{#2}}
\newcommand{\Rrightarrowfill@}{\arrowfill@\equiv\equiv\Rrightarrow}
\newcommand{\xLleftarrow}[2][]{\ext@arrow 3095\Lleftarrowfill@{#1}{#2}}
\newcommand{\Lleftarrowfill@}{\arrowfill@\Lleftarrow\equiv\equiv}
\makeatother

\newlength\squareheight
  \tikzset{Rightarrow/.style={double equal sign distance,>={Implies},->},
  Rightarrow wide/.style={double, double distance=2.5pt, >={Implies},->},
  triple/.style={-, preaction={draw, Rightarrow wide}},
  quadruple/.style={
    preaction={draw, double, double distance=4pt, -, >={Implies}, ->, shorten >=22.5pt},                 %
    -, double, double distance=1pt, shorten >=25pt, shorten <=25pt
  }
  }

\title{The Algebra of Categorical Spectra}
\author{Naruki Masuda}
\date{July 26, 2024 (revised in April 2026)}
\begin{document}
\maketitle
\tableofcontents

\chapter{Introduction}
This thesis focuses on developing the fundamental theory of \emph{categorical spectra}, a higher-categorical generalization of spectra. 
Let $\sS$ denote the $(\infty, 1)$-category of $\infty$-groupoids (a.k.a.\ anima, weak homotopy types, or spaces) and $\sS_{\ast}$ be the category of pointed $\infty$-groupoids. 
Recall that the category of \emph{spectra} is defined as the limit along the loop functor on $\sS_\ast$: 
\[\Sp\coloneqq \lim(\cdots \xrightarrow{\Omega} \sS_\ast\xrightarrow{\Omega} \sS_\ast).\]
In other words, a spectrum $X$ is a sequence of pointed $\infty$-groupoids $(X_n)_{n\in \NN}$ equipped with equivalences $X_n\xrightarrow{\sim}\Omega X_{n+1}$. 
For $0\leq n \leq \infty$, let $n\Cat$ denote the $(\infty, 1)$-category of (small) $(\infty, n)$-categories. That is, we set $0\Cat = \sS$ and inductively define $(n+1)\Cat = (n\Cat)\mhy\Cat$ as the $(\infty, 1)$-category of $n\Cat$-enriched categories. We also set $\infty\Cat=\lim_n n\Cat$, i.e., an $(\infty, \infty)$-category $X$ is a compatible collection of the underlying $(\infty, n)$-categories $X^{\leq n}$ obtained by discarding higher noninvertible cells. An essential feature of $\infty\Cat$ is that it is a fixed point of enrichment: $(\infty\Cat)\mhy\Cat \simeq \infty\Cat$. 
The $(\infty, 1)$-category $\CatSp$ of categorical spectra is defined by replacing $\sS$ in the definition of $\Sp$ with $\infty\Cat$:
\[\CatSp\coloneqq \lim(\cdots \xrightarrow{\Omega}\infty\Cat_\ast\xrightarrow{\Omega} \infty\Cat_\ast),\]
so a categorical spectrum $X$ is a sequence $(X_n)_{n\in\NN}$ of pointed $(\infty, \infty)$-categories equipped with equivalences $X_n\xrightarrow{\sim} \Omega X_{n+1}$. 
Here, the loop of a pointed $(\infty, \infty)$-category means the $(\infty, \infty)$-category of \emph{endomorphisms} of the base object: $\Omega(X, x) = (\Hom_X(x, x), \id_x)$. 
A spectrum is thus a special case of a categorical spectrum where every component is an $\infty$-groupoid: $\Sp\subset \CatSp$. 

Notice that $X_0\xrightarrow{\sim} \Omega X_1\xrightarrow{\sim} \Omega^2 X_2\xrightarrow{\sim} \cdots$; just as for $\infty$-groupoids, the $n$-fold loop object is canonically an $\EE_n$-monoid object.
In the limit, each component $X_n$ of a categorical spectrum acquires the structure of a symmetric monoidal $(\infty, \infty)$-category.
Conversely, given a symmetric monoidal $(\infty, \infty)$-category $X_0$, there is a minimal choice of $X_n$ with $X_0\xrightarrow{\sim} \Omega^n X_n$, namely the \emph{(connective) delooping} $X_n = \rB^n X_0$ of $X_0$. A categorical spectrum is \emph{connective} if it is determined by $X_0$ in this way. These form another class of examples, equivalent to symmetric monoidal $(\infty, \infty)$-categories: $\rB^\infty: \CMon(\infty\Cat)\xrightarrow{\simeq}\CatSp^\cn\subset \CatSp$. Note that a non-grouplike $\EE_\infty$-space is a special case of this example. 

Of course, this is a naive categorification that one might try (in fact, it was considered independently by several groups: \cref{catsp_literature}). Categorification is more an art than a recipe and a naive approach is not always the correct one. It is therefore reasonable to ask how fruitful this notion is. 
As is typical for an abstract mathematical structure, there are two layers of affirmative answers to this question. On the surface level, one seeks sufficient supply of examples, old and new, to demonstrate that the notion provides a useful organizational language. This is largely achieved in the thesis of Stefanich \cite{stefanichHigherQuasicoherentSheaves2021}. 
The notion of categorical spectra provides a convenient framework to bundle iterated categorifications. The $(\infty, 1)$-category $\CatSp$ contains $\Sp$ and $\CMon(\infty\Cat)$ as full subcategories, as well as more intricate examples such as the categorical spectra of iterated spans and iterated modules. The functoriality of the $n$-quasi-coherent sheaves and the compatibility across different $n$ can be expressed as a morphism from the categorical spectrum of iterated spans of the (pre)stacks to the categorical spectum iterated modules.

In the deeper layer, however, one asks whether these are legitimate mathematical \emph{objects} rather than merely a language.
The utility of spectra partially comes from the fact that they admit various interrelated interpretations (the following list is roughly taken from \cite[\S 0.2.3]{lurie2018spectral}):
\begin{itemize}
    \item Spectra are infinite loop spaces; this is the definition we have chosen. 
    \item Spectra are generalized abelian groups; 
    there is an equivalence between infinite loop spaces $\Omega^\infty X$ for a spectrum $X$ and grouplike $\EE_\infty$-spaces. In particular, we have $\pi_0: \Sp\to \Ab$ and an inclusion $\Ab\to \Sp$ as the heart of the Postnikov $t$-structure. Moreover, there is a symmetric monoidal structure $\otimes = \otimes_{\SS}$ deriving the tensor product of abelian groups. This viewpoint allows us to develop the whole ``brave new'' versions of classical algebra and algebraic geometry, known as \emph{higher algebra} and \emph{spectral algebraic geometry}. 
    \item Spectra are stable homotopy types; for instance, this is apparent in the definition of finite spectra using the Spanier--Whitehead category. Namely, for two finite pointed CW complexes $X, Y$, the space of maps between their suspension spectra is computed as 
    \[\Map_{\Sp}(\Sigma^\infty X, \Sigma^\infty Y)\simeq \colim(\Map_{\sS_\ast}(X, Y)\to \Map_{\sS_\ast}(\Sigma X, \Sigma Y)\to\cdots ).\]
    A finite spectrum is a shift $\Sigma^{\infty-n} X$ of a suspension spectrum of a finite CW complex, and general spectra are filtered colimits of finite spectra: $\Sp = \Ind(\Sp^\fin)$. Moreover, the Freudenthal suspension theorem implies that the above colimit sequence stabilizes after finitely many suspensions. 
    \item Spectra are generalized (co)homology theories; 
    more precisely, given a spectrum $E$, one defines the associated cohomology theory $E^n(X) = \pi_0\Map_{\sS}(X, \Omega^{\infty-n} E)$ and homology theory $E_n(X)= \pi_n(\Sigma^{\infty} X\otimes E)$. The Brown representability theorem states this gives a bijection between equivalence classes of cohomology theories and those of spectra. As for homology, it leads to another useful description of spectra as reduced excisive copresheaves on finite pointed $\infty$-groupoids (or CW complexes): $\Sp\simeq \Exc(\sS^\fin_\ast, \sS)$. This perspective is essential in Goodwillie's calculus of functors; spectra play the role of the linearization of $\infty$-groupoids. 
    \item Spectra form the universal stable $(\infty, 1)$-category: more precisely, $\Sp$ is the free stable presentable $(\infty, 1)$-category generated by a single object (the sphere spectrum) $\SS\in \Sp$, i.e., $\ev_{\SS}: \LFun(\Sp, \eC)\to \eC$ is an equivalence for any stable presentable $(\infty, 1)$-category $\eC$. It implies that $\Sp$ is the tensor unit of the symmetric monoidal $(\infty, 1)$-category $\Pr^\sL_\st$ of presentable stable $(\infty, 1)$-categories for the Lurie tensor product of presentable categories, providing an elegant way to define the tensor product of spectra. This machinery is also essential for categorified sheaf theory, including recent developments in six functor formalism \cite{liuEnhancedSixOperations2017}\cite{mannAdic6FunctorFormalism2022}.
\end{itemize}
Heuristically, we would like categorical spectra to admit similar interpretations, but with everything directed or lax:
\begin{itemize}
    \item Categorical spectra are infinite loop higher categories; again, this is essentially our definition. 
    \item Categorical spectra are generalized commutative monoids and symmetric monoidal categories; we have already seen that connective spectra are equivalent to symmetric monoidal $(\infty, \infty)$-categories, and a general categorical spectrum is a compatible sequence $(X_n)$ of possibly nonconnective deloopings of $X_0$. In particular, a robust theory of categorical spectra leads to a robust derived algebra of commutative monoids. 
    We wish categorical spectra to play a similarly fundamental role in categorified algebra as spectra do in higher algebra. In particular, we need a tensor product of categorical spectra.
    In many examples, $X_0$ is a classical object (of categorical level $0$ or $1$), and $X_n$ is obtained by iterated categorifications. In other words, the new interesting information grows in the \emph{cohomological} direction. For this reason, Johnson-Freyd \cite{pirsa_PIRSA:23090104} proposes to call the resulting categorified algebra \emph{deeper algebra}, in contrast to \emph{higher algebra}, whose derived information grows in the homological direction. 
    \item Categorical spectra are stable directed homotopy types; this is based on the interpretation that $(\infty, \infty)$-categories are directed homotopy types. That is, $(\infty, \infty)$-categories are like cell complexes, but with directions. Just as spectra are like CW complexes with negative dimensional cells, categorical spectra are like $(\infty, \infty)$-categories with negative dimensional cells. 
    Additionally, a Spanier--Whitehead style definition and a form of the Freudenthal suspension theorem are desirable.
    \item Categorical spectra are ``(co)homology theory'' of higher categories; in other words, we would like an axiomatization of the functors on $(\infty, \infty)$-categories that categorical spectra represent. 
    In particular, we must understand the correct analogue of excision properties. This is more or less equivalent to understanding which higher categorical colimits in $\CatSp$ are also limits, i.e., we would like to classify the \emph{absolute colimits} in $\CatSp$. 
    \item Categorical spectra are universal stable presentable Gray-bimodules; as we will see below, the natural replacement for the Cartesian product in the higher categorical world is the (lax) Gray tensor product. Accordingly, the ambient categorical setting will in general only have compositions of higher cells up to some noninvertible higher cells. 
    Once we overcome the relevant difficulties around the Gray tensor product, it is more or less clear that $\CatSp$ is the universal presentable Gray-bimodule with invertible loop-suspension. The tensor product of categorical spectra then follows from this. 
    However, stability for $(\infty, 1)$-categories can be defined in many other ways; understanding the appropriate notion of stability boils down to understanding the excision properties, or equivalently, the absolute colimits. 
\end{itemize}
In essence, the theme of this project is to explore this list. In particular, we will pave the way toward the tensor product of categorical spectra based on the last viewpoint, and begin the study of absolute colimits in $\CatSp$ ---since the notion of weighted colimits uses tensoring by the enriching category, we already need the tensor product to study the notion of stability. We now expand on the contents of the paper, following the table of analogies:
\begin{center}
    \begin{tabular}{|c|c|c|}
        \hline
        Classical Mathematics & Homotopy Theory & Higher Category Theory \\
        \hline\hline
        equality & homotopy & morphism\\
        \hline
        sets $\Set = (0, 0)\Cat$ & spaces/$\infty$-groupoids $\sS = 0\Cat$ & $(\infty, \infty)$-categories $\infty\Cat$\\
        \hline
        --- & homotopy $n$-type & $(\infty, n)$-category \\\hline
        Cartesian product $\times$ & Cartesian product $\times$ & lax Gray tensor product $\otimes$ \\\hline
        $(1, 1)$-categories $(1, 1)\Cat$ & $(\infty, 1)$-categories $1\Cat$ & Gray-categories $(\infty\Cat^\otimes)\mhy\Cat$ \\\hline
        abelian groups $\Ab$ & spectra $\Sp$ & categorical spectra $\CatSp$\\\hline
         & grouplike $\EE_\infty$-spaces $\simeq \Sp^\cn$ & $\CMon(\infty\Cat)\simeq \CatSp^\cn$ \\\hline
        --- & loop $\Omega (X, x) = \Aut_X(x)$ & $\Omega (X, x) = \End_X(x)$ \\\hline
        --- & suspension $\Sigma = \rB\ZZ\wedge (\blank)$ &  $\Sigma = \rB\Free_{\EE_1} = \rB\NN\owedge (\blank)$\\\hline
        free functor $\Set\to \Ab$ & suspension spectra ${\Sigma}_+^\infty: \sS\to \Sp$ & $\Sigma_+^\infty: \infty\Cat\to \CatSp$ \\\hline
        integers $\ZZ$ & sphere spectrum $\SS$ & finite set categorical spectrum $\FF$\\\hline
        tensor product $\otimes_\ZZ$ & tensor (smash) product $\otimes_\SS$ & tensor product $\otimes_\FF$ \\\hline
        abelian categories & (pre)stable categories & stable Gray-bimodules \\\hline
    \end{tabular}
    \end{center}
\subsection*{Higher category theory}
The first two chapters of the body of the thesis, as well as the appendix, are devoted to preparation. 
In \cref{chapter_prelim_category_theory}, we give a model-independent exposition of the theory of $(\infty, \infty)$-categories.
Our main focus will be on comparing enriched-categorically defined notions and those that are native and specific to $\infty$-category theory. 
In doing so, we will pass between weak and strict notions of higher categories; we perform computations on strict objects and then left Kan extend to the weak setting. 
The Steiner theory of the \cref{appendix_Steiner_theory} provides an algebraic language for computations in strict $\infty$-categories.

The central object on the $(\infty, \infty)$-categorical side of the story is the \emph{(lax) Gray tensor product}. It is a biclosed monoidal structure on $\infty\Cat$ that acts \emph{additively} on categorical levels (compare with the fact that $n\Cat\subset\infty\Cat$ is closed under the Cartesian product (in fact, under all limits and colimits)). More precisely, it is characterized by $\cube^m\otimes \cube^n\simeq \cube^{m+n}$, where $\cube^n$ denotes the $n$-category called the (fully lax) $n$-cube (we refer the reader to \cref{pictures_of_the_cubes} for illustrations).
The main technical complication arises from the \emph{non-commutativity}, essentially coming from the choice of direction of the $2$-cell in the cube category $\cube^2 = \cube^1\otimes \cube^1$. 
For our purposes, the most important result is \cref{pushout_formula_unreduced_suspension} that compares the enriched-categorical suspension with the suspension as a quotient of the Gray cylinder, i.e., the Gray tensor product with the interval. 

Most of the material in this section is well-known to experts and appear in the literature, such as \cite{campionGrayTensorProduct2023}\cite{loubatonTheoryModels$infty2023} (see also \cite{araJointTranchesPour2020} for the strict case). 
However, we pay special attention to avoiding mistakes with duality involutions (this corresponds to sign issues in the presence of negatives, e.g. in $\Sp$), as this will be crucial in later chapters.

\subsection*{Categorical spectra}
\cref{chapter_categorical_spectra} is another preparatory chapter. We begin with the study of loop-suspension adjunctions and the delooping hypothesis. In particular, in \cref{suspension_is_tensor_S1}, we observe that the loop-suspension adjunction is the tensor-hom adjunction for the \emph{lax smash product} with the directed circle $\vS^1 = \rB\NN$, so $\Sigma X\simeq \vS^1\owedge X$ (where $\owedge$ is to $\otimes$ as $\wedge$ is to $\times$). 
We then define categorical spectra precisely and include examples that arise universally by imposing levelwise properties of categorical spectra. We conclude the section with a formal study of the finiteness properties of categorical spectra. More than half of this chapter is a summary of \cite[Chapter 13]{stefanichHigherQuasicoherentSheaves2021}, simplified by specializing to the situation of interest, namely from enriched $(\infty,1)$-categories to $(\infty,\infty)$-categories. 

\subsection*{Tensor product of categorical spectra}
\cref{chapter_tensor_product_of_catsp} is devoted to proving our first main theorem: the construction and universal properties of the tensor product of categorical spectra. The strategy was outlined above, but we now describe the method and the main obstacle in more detail.

Recall that the tensor product (always assumed to be biclosed) of abelian groups is characterized by the fact that the free abelian group functor $\Free: \Set\to \Ab$ promotes to a symmetric monoidal functor. 
The tensor product of spectra is similarly characterized by the fact that $\Sigma_+^\infty: \sS\to \Sp$ promotes to a symmetric monoidal functor. However, the homotopical nature of the object makes it impossible to formulate a correct universal property without a solid foundation in $(\infty,1)$-category theory. 
Lurie first constructed a symmetric monoidal structure $\otimes$ on $\PrL$ that promotes the presheaf functor $\PSh: \Cat\to \PrL$ to a symmetric monoidal functor\footnote{This is an example of the microcosm principle: to talk about an object with a certain structure (e.g.\ a commutative monoid), we must first equip the ambient category with the corresponding structure (e.g.\ a symmetric monoidal structure).}. A (commutative) algebra object in $\PrL$ is precisely a presentably (symmetric) monoidal category.
\begin{theorem*}[\cite{LurieHA}]
    $\Sigma^\infty_+: \sS\to \Sp$ is an idempotent $\EE_0$-algebra in $\PrL$, i.e., $\Sigma^\infty_+\otimes \id: \Sp\simeq \sS\otimes \Sp\to \Sp\otimes\Sp$ is an equivalence. Since the forgetful functor $\CAlg^{\mathrm{idem}}(\PrL)\to \Alg_{\EE_0}^{\mathrm{idem}}(\PrL)$ is an equivalence, $\Sp$ uniquely promotes to an object of $\CAlg(\PrL)$ whose unit is the sphere spectrum $\SS$.
\end{theorem*}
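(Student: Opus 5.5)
The plan is to identify $\sS\otimes\eC$ for presentable $\eC$, and hence $\Sp\otimes\Sp$, with a category of functors, and then reduce the comparison map to the fact that $\Omega$ is already invertible on $\Sp$. Recall that for presentable $\eC,\eD$ the Lurie tensor product can be described as $\eC\otimes\eD\simeq \RFun(\eC^\op,\eD)$, the category of limit-preserving functors. In particular $\sS\otimes\eC\simeq\eC$, since $\RFun(\sS^\op,\eC)\simeq\eC$ by evaluation at the point. Likewise $\Sp\otimes\eC\simeq\RFun(\Sp^\op,\eC)$.

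\textbf{Step 1: identify $\Sp\otimes\eC$ with the stabilization of $\eC$.} Since $\Sp=\lim(\cdots\xrightarrow{\Omega}\sS_\ast\xrightarrow{\Omega}\sS_\ast)$ in $\PrR$, we have $\Sp^\op\simeq\colim(\sS_\ast^\op\xrightarrow{\Omega^\op}\cdots)$. A limit-preserving functor out of this colimit is a compatible system of limit-preserving functors out of each term. Therefore
\[\Sp\otimes\eC\simeq\lim(\cdots\xrightarrow{\Omega}\RFun(\sS_\ast^\op,\eC)\xrightarrow{\Omega}\RFun(\sS_\ast^\op,\eC)).\]
Moreover $\RFun(\sS_\ast^\op,\eC)\simeq\eC_\ast$. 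Putting these together gives $\Sp\otimes\eC\simeq\Sp(\eC)$, and under this identification the map induced by $\Sigma^\infty_+$ is the stabilization functor $\eC\to\Sp(\eC)$.

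\textbf{Step 2: show that stabilization is an equivalence on stable categories.} If $\eC$ is stable, then $\eC_\ast\simeq\eC$ and $\Omega$ is an equivalence. The tower defining $\Sp(\eC)$ is then constant up to equivalence, so $\eC\to\Sp(\eC)$ is an equivalence. Taking $\eC=\Sp$ gives $\Sp\otimes\Sp\simeq\Sp$. The remaining check is that this composite really agrees with $\Sigma^\infty_+\otimes\id$ and not merely with some abstract equivalence. For this, trace the functoriality of the equivalence $\Sp\otimes\eC\simeq\Sp(\eC)$ in the $\Sp$-variable: precomposition with $\Sigma^\infty_+$ corresponds to the map $\eC\simeq\sS\otimes\eC$. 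I expect this compatibility to be the \textbf{main obstacle}. The cleanest way around it is to use a universal property. The functor $\Sigma^\infty_+\otimes\eC$ exhibits $\Sp\otimes\eC$ as the initial stable presentable category under $\eC$; equivalently, $\Sp\otimes\blank$ is a localization of $\PrL$ onto $\Pr^\sL_\st$. For a localization, the unit at an already-local object is an equivalence.

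\textbf{Step 3: deduce the commutative algebra structure.} Once $(\Sp,\Sigma^\infty_+)$ is an idempotent $\EE_0$-algebra, I invoke the general fact (Lurie, HA \S4.8.2) that idempotent objects correspond to smashing localizations. An idempotent $\EE_0$-algebra $1\to E$ extends essentially uniquely to a commutative algebra: the space of $\EE_\infty$-structures compatible with the unit is contractible, because maps $E^{\otimes n}\to E$ under the unit are forced. So $\CAlg^{\mathrm{idem}}(\PrL)\to\Alg_{\EE_0}^{\mathrm{idem}}(\PrL)$ is an equivalence. The unit of the resulting commutative algebra is the image of the unit $\ast\in\sS$, namely $\Sigma^\infty_+\ast=\SS$. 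The essential inputs are thus the presentation $\eC\otimes\eD\simeq\RFun(\eC^\op,\eD)$ and stability of $\Sp$; the idempotent-to-commutative step is formal.
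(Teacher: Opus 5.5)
Your proposal is correct and follows essentially the route behind the citation. The paper gives no proof and defers to Lurie, who identifies $\Sp\otimes\eC\simeq\Sp(\eC)$ so that $\Sigma^\infty_+\otimes\eC$ becomes $\Sigma^\infty_{\eC}\circ(\blank)_+$, observes that this is an equivalence when $\eC$ is stable, and then applies \cite[Proposition 4.8.2.9]{LurieHA}. Two small repairs:

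The cleaner colimit presentation is $\Sp\simeq\colim_{\PrL}(\sS_\ast\xrightarrow{\Sigma}\sS_\ast\xrightarrow{\Sigma}\cdots)$, combined with cocontinuity of $\otimes$ in each variable. If you keep your formulation, the transition maps you label ``$\Omega^\op$'' should be $\Sigma^\op$.

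In that form, the compatibility you flag as the main obstacle is just naturality of the identifications, so the detour through the localization $\PrL\to\Pr^\sL_\st$ is unnecessary. That detour also risks circularity: the localization statement is normally deduced from the very idempotence being proved. If you want to use it, derive it independently from stability of $\Sp\otimes\eC$ together with $\Sp$ being the free stable presentable category on $\SS$.
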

This method is robust and elegant, so we aim to apply the same strategy to categorical spectra. 
However, it turns out to be more subtle than it may first appear. First, one cannot expect $\Sigma_+^\infty: \sS\hookrightarrow\infty\Cat\to\CatSp$ to be an idempotent $\EE_0$-algebra in $\PrL$, since the category $\infty\Cat$ is already not idempotent\footnote{In fact, the equivalences $\CAlg^\mathrm{idem}(\PrL)\xrightarrow{\sim}\Alg^\mathrm{idem}_{\EE_1}(\PrL)\xrightarrow{\sim}\Alg^\mathrm{idem}_{\EE_0}(\PrL)$ and the fact that $(\infty\Cat, \times)\in \CAlg(\PrL)$, $(\infty\Cat, \otimes^{\mathrm{(op)lax}})\in \Alg(\PrL)$ have the same underlying $\EE_0$-algebra (i.e., the unit is terminal) disprove idempotence.} over $\sS$.
One may instead ask whether $\Sigma_+^\infty: \infty\Cat\to \CatSp$ (or equivalently, $\Sigma^\infty: \infty\Cat_\ast\to \CatSp$) is idempotent, but to make sense of this, we must first choose a monoidal structure on $\infty\Cat$. 
The obvious choice would be the Cartesian product, but the suspension is far from being a module map over it:
if $X, Y$ are $m$- and $n$-categories respectively, then $X\wedge \Sigma Y$ is a $\max\{m, n+1\}$-category, while $\Sigma(X\wedge Y)$ is a $\max\{m, n\}+1$-category. Thus $X\wedge \Sigma Y$ and $\Sigma(X\wedge Y)$ do not even have the same categorical level in general. This is the same problem as $\Sigma X\not\simeq \vS^1\wedge X$.

This observation suggests that we should use a monoidal structure that adds categorical levels; fortunately, we have already proven that the Gray tensor product satisfies $\Sigma X\simeq \vS^1\owedge X$. In particular, $\Sigma^\infty: \infty\Cat_\ast\to \CatSp$ is a morphism of right $\infty\Cat_\ast$-modules. 
However, the noncommutativity now becomes a serious obstacle: it makes no sense to ask for idempotence of a right module over a noncommutative algebra, since there is no relative tensor product. 
It turns out that one can resolve this issue by lifting $\Sigma$ to a \emph{bimodule} morphism $\infty\Cat_\ast\to \infty\Cat_\ast$. In fact, we will prove that there is a unique such lift if we twist the bimodule structure on one copy of $\infty\Cat_\ast$ by the \emph{total dual} involution. 
In terms of the coefficient object $\vS^1$, this means that we may commute objects with $\vS^1$ in a completely canonical manner, provided we twist the object by the total dual involution: $\vS^1\owedge X\simeq X^\circ\owedge \vS^1$.
We call this the \emph{half-central structure} of $\vS^1$. This is of fundamental importance beyond this application: it is the higher-categorical incarnation of the \emph{Koszul sign rule}. Since we have no negatives to express the sign, we must instead twist one side of the equivalence by an appropriate duality.
Once the half-central structure of $\vS^1$ is understood, the remaining work to prove the following theorem is largely formal:
\begin{manualtheorem}{A}[=\cref{thm_tensor_product_of_catsp}]
        The functor $\Sigma_+^\infty: \infty\Cat\to \CatSp$ is an idempotent $\EE_0$-algebra in $\BMod{\infty\Cat^\otimes}(\PrL)$. 
        In particular, $\CatSp$ admits a unique biclosed $\EE_1$-monoidal structure $\otimes$ 
        underlying an $\infty\Cat^\otimes$-algebra structure (in $\PrL$) that promotes $\Sigma_+^\infty$ to an $\infty\Cat^\otimes$-algebra morphism. 
        Moreover, the monoidal category $\CatSp^\otimes$ can be obtained by universally inverting $\vS^1\in \infty\Cat_\ast^{\owedge}$. 
\end{manualtheorem}
The reader who finds the noncommutativity of the tensor product unsettling is invited to consult \cref{remark_remedy_of_noncommutativity} for several ideas that may help address this issue.
However, the Gray tensor product appears to be unavoidable in higher category theory, at least from the viewpoint that $(\infty,\infty)$-categories are directed homotopy types. 
Furthermore, we expect that any useful commutative variant of the tensor product receives a lax monoidal functor from our tensor product. In other words, the results established using our tensor product are universal and can be transferred to a preferred setting if necessary. 
We hope that the following chapters justify the richness and naturality of the theory developed from this tensor product of categorical spectra.

\subsection*{Duality, absolute colimits and stability}
Recall that one of the standard motivations for introducing spectra is \emph{Spanier--Whitehead duality}. That is, unlike the symmetric monoidal categories $\sS_\ast$ or $\Sp^\cn$ (where the only dualizable objects are the free ones on finite sets), $\Sp$ has many interesting dualizable objects: any finite (i.e., compact) spectrum is dualizable, which provides an upper bound on dualizable objects (as in any closed monoidal category with a compact unit object; cf.\cref{dualizable_implies_compact}).
This reflects the fact that the category of spectra has better exactness properties: since $\Sp\simeq \LFun(\Sp, \Sp)\simeq \RFun(\Sp, \Sp)^\op$, an object $X\in \Sp$ is dualizable if and only if $X\otimes (\blank)\in \RFun(\Sp, \Sp)$, i.e., if it preserves limits. By stability, limits commute with finite colimits, so if $X$ is built from finite colimits (and shifts) of $\FF$, then $X$ is dualizable.
More generally, weighted colimits in $\Sp$ with finite weights (i.e., diagrams $J\to \Sp^\fin$ for finite categories $J$) are limit-preserving, or \emph{absolute}: they are preserved by all $\Sp$-enriched functors. This captures the essence of stability in a way that does not depend on an ad hoc choice of diagram shapes.

In the same spirit, we expect a good supply of dualizable objects and absolute colimits in $\CatSp$. Although we are not yet able to classify all of them, we show that many fundamental finite categorical spectra and ``finite weights'' are dualizable (or absolute).
For the following, let $X\leftarrow Y \rightarrow Z$ be a span of categorical spectra, and let $X\ramalg_Y Z$ denote the directed pushout $X\amalg_{\{0\}\otimes Y} (\cube^1\otimes Y)\amalg_{\{1\}\otimes Y} Z$. 
\begin{manualtheorem}{B}[=\cref{theorem_directed_pushouts_are_absolute}, \cref{retracts_of_cubes_are_dualizable}]
    The functor $\ramalg : \Fun((\bullet\leftarrow \bullet\rightarrow\bullet), \CatSp)\to \CatSp$ admits a right adjoint given by 
    \[X\mapsto (\Sigma^{\infty-1}I^\op\otimes X\leftarrow \Sigma^{-1}X\rightarrow \Sigma^{\infty-1}I \otimes X),\] where $I$ is the interval category $0\to 1$ with basepoint at $0$. 
    It follows that suspension spectra of the cubes, orientals, and objects of Joyal's $\Theta$ category are all dualizable. 
\end{manualtheorem}
We expect that proving the absoluteness of these basic examples, together with an analysis of closure properties of absolute weights, will lead to a complete classification of absolute weights. 
Note, however, that ordinary pushouts are not absolute; in fact, it force invertibility of $(\infty,1)$-categorical loop-suspension, i.e., $(\infty,1)$-categorical stability. Therefore, to classify absolute colimits, we must define ``finite weights'' in a way that excludes such overly invertible examples. 
A particularly important special case is the following:
\begin{manualtheorem}{C}[=\cref{extension_of_catsp}]\label{theorem_C}
    Let $f: X\to Y$ be a morphism of categorical spectra. There is a canonical equivalence between the lax cofiber $\rcof(f) = 0\ramalg_X Y$ and the lax fiber of the suspension $\rfib(\Sigma f)= 0\overrightarrow{\times}_{\Sigma Y}\Sigma X$. Moreover, the canonical triangle $Y\to (\rcof(f)\simeq \rfib(\Sigma f)) \to \Sigma X$ is a (non-lax) bifiber sequence. 
\end{manualtheorem}
In this situation, we say that $Z = \rcof(f)$ is an \emph{extension} of $\Sigma X$ by $Y$ (and similarly for a \emph{coextension} $\lcof(f)$). 
This theorem lifts the Barratt--Puppe sequence in the category of spectra. This is obvious in stable $(\infty,1)$-categories, but the result here is surprising, since naive extensions of standard facts in stable $(\infty,1)$-categories tend to fail; for example, the pasting law for directed pushouts fails, and lax fiber sequences do not coincide with lax cofiber sequences. 
This suggests that the classical notion of (co)fiber sequences in a stable $(\infty,1)$-category splits into three different classes of sequences—namely, lax cofiber sequences, bifiber sequences, and lax fiber sequences—and that they appear in a three-periodic pattern.

\subsection*{Categorical spectra with adjoints and applications to TQFT}
The study of absolute colimits in categorical spectra is not only theoretically important but also a useful computational tool. One can often draw strong conclusions from coincidences between limits and colimits\footnote{For instance, ambidexterity is a technique of this flavor, which has recently opened new directions in chromatic homotopy theory.}. In particular, even if one is only interested in symmetric monoidal $(\infty,n)$-categories, studying categorical spectra leads to a better understanding of their (pre)stability. We apply this principle to the study of TQFTs. 
We say that a categorical spectrum $(X_n)$ is \emph{$d$-adjointful} if, for $k<d+n$, every $k$-cell of $X_n$ admits both left and right adjoints. For instance, an $(\infty,d)$-category $\eC$ \emph{has duals} in the sense of \cite{lurieClassificationTopologicalField2009} (i.e., its objects are fully dualizable) if the corresponding connective categorical spectrum $\rB^\infty\eC$ is $d$-adjointful (note that objects of $X_n$ are dualizable if $1$-morphisms of $X_{n+1}$ admit adjoints). 
We denote by $\CatSp^{d\mhy\adj}\subset \CatSp$ the full sub $(\infty,1)$-category of $d$-adjointful categorical spectra. We also let $d\CatSp\subset \CatSp$ denote the full subcategory of $d$-categorical (i.e., $d$-truncated) spectra, and set $d\CatSp^{\adj}=d\CatSp\cap \CatSp^{d\mhy\adj}$.  
\begin{manualtheorem}{D}[=\cref{theorem_tensor_product_localizes_to_adj}, \cref{n_adjointable_catsp_are_closed_under_extensions}]
    The subcategories $\CatSp^{d\mhy\adj}\subset \CatSp$ are closed under extensions. Moreover, the tensor product of categorical spectra localizes to categorical spectra with adjoints. More precisely, there is a unique tensor product functor making the following diagram commute (where the vertical arrows are the localizations):
    \[\begin{tikzcd}
        k\CatSp \otimes l\CatSp\ar[r, "\otimes"]\ar[d, "L^{k\mhy\adj}\otimes L^{l\mhy\adj}"] & (k+l)\CatSp \ar[d, "L^{(k+l)\mhy\adj}"]\\
        k\CatSp^{\adj} \otimes l\CatSp^{\adj}\ar[r, dashed, "\otimes^\adj"] & (k+l)\CatSp^{\adj}
    \end{tikzcd}\]
    In particular, there are unique tensor products on $0\CatSp^{\adj}$ and $\infty\CatSp^{\adj}$ that promote the localizations $\CatSp\twoheadrightarrow 0\CatSp^{\adj}, \infty\CatSp^{\adj}$ to monoidal functors. 
\end{manualtheorem}
The proof reduces to a certain pushout formula for $\cube^1\otimes \Adj$, where $\Adj$ denotes the walking adjunction $2$-category. This proof occupies the bulk of \cref{section_categorical_spectra_with_adjoints}. 
Using this result, we can formally assemble the framed cobordism hypotheses in different dimensions into a multiplicatively structured object\footnote{It is convenient to normalize by shifting $\rB^\infty\Bord_n\in\CatSp^{n\mhy\adj}$ to $\rB^{\infty-n}\Bord_n\in \CatSp^{0\mhy\adj}$ so that we may avoid using a graded monoidal structure. This also aligns with the philosophy of deeper algebra, using codimensional indexing and placing the \emph{partition function} in degree zero.}:
\begin{manualtheorem}{E}[=\cref{multiplicative_cobordism_hypothesis},\cref{stable_cobordism_hypothesis}]
    Assume the framed cobordism hypothesis. Then:
    \begin{itemize}
        \item $\bigoplus_{n\geq 0}\rB^{\infty-n}\Bord^\fr_n$ is a tensor algebra in $\CatSp^{0\mhy\adj}$ freely generated by a $(-1)$-cell. 
        \item The stably framed bordism $(\infty,\infty)$-category $\rB^\infty\Bord^\sfr$ is the tensor unit of $\CatSp^{\infty\mhy\adj}$.
    \end{itemize}
\end{manualtheorem}
In both cases (at least heuristically), multiplication in the algebra structure is given by the Cartesian product of manifolds.
Note that it is not possible to formulate the above structure without the lax tensor product, since the Cartesian product adds the dimension of manifolds\footnote{This theorem was roughly mentioned in \cite{MOpostRigCategoryStructureBord}; however, the discussion is not made rigorous due to the lack of correct background setup.}.
Note, however, that everything up to this point could have been carried out within the setting of connective categorical spectra, i.e., symmetric monoidal $(\infty,\infty)$-categories, after suitable reindexing. 
The true strength of the theory lies in stability results such as Theorem~\ref{theorem_C}, which we now apply. 
A \emph{cobordism category with singularities} is a cell-complex-like object built from the ordinary cobordism categories (with tangential structures), i.e., objects $B^k$ fitting into the following iterated (co)extensions of categorical spectra (in $\CatSp^{0\mhy\adj}$): 
\[\begin{tikzcd}[column sep=small]
    0\ar[r, tail] & B^d\ar[r, tail]\ar[d, two heads] & B^{d-1}\ar[r, tail]\ar[d, two heads] & \cdots \ar[r, tail] & B^1\ar[r, tail]\ar[d, two heads] & B^0\ar[d, two heads] \\
    & \rB^{\infty-d} \Bord_d^{\tilde{X}^d} & \rB^{\infty-(d-1)}\Bord_{d-1}^{\tilde{X}^{d-1}} & &\rB^{\infty-1} \Bord_1^{\tilde{X}^1} & \rB^{\infty}\Bord_0^{\tilde{X}^0}
\end{tikzcd}\]
where $\tilde{X}^k$ are $\rO(k)$-spaces classifying tangential structures. Each extension $B^k$ is classified by an $\rO(k)$-equivariant morphism $E^k: \tilde{X}^k\to (\Omega^{\infty-k-1} B^{k+1})^{\leq 0}$.

Recall that an extension of categorical spectra can be described either as a lax fiber or as a lax cofiber. 
The former describes maps \emph{into} $B^k$, allowing us to interpret $B^k$ geometrically, while the latter describes maps \emph{out of} $B^k$, i.e., TQFTs. This dual description is precisely the content of the cobordism hypothesis with singularities. 
\begin{manualtheorem}{F}[=\cref{cobordism_hypothesis_with_singularities} {\cite[Theorem 4.3.11]{lurieClassificationTopologicalField2009}}]
    The categorical spectrum $B^k$ admits a description as the cobordism category of $\vec{X}$-manifolds as in \cite[Definition Sketch 4.3.2]{lurieClassificationTopologicalField2009}, with singularity datum $\vec{X} = (\tilde{X}^d, \tilde{X}^{d-1}, E^{d-1}, \cdots, \tilde{X}^k, E^k)$. 
    Moreover, for any $0$-adjointful categorical spectrum $A$, there is a Cartesian square 
    \[\begin{tikzcd}
        \Map(B^k, A)\ar[r]\ar[d] & \Map(B^{k+1}, A)\ar[d]\ar[r, phantom, "\ni"] & Z_0\ar[d, mapsto] \\
        \Map_{\rO(k)}(\tilde{X}^k, (\Alg_{\EE_0}(A_{k+1}))^{\leq 0}) \ar[r] & \Map_{\rO(k)}(\tilde{X}^k, (A_{k+1})^{\leq 0})\ar[r, phantom, "\ni"] & \Omega^{\infty-k-1}Z_0\circ E^k. 
    \end{tikzcd}\]
\end{manualtheorem}
Note that this formulation is in a sense dimension-independent: $B^d$ can be $0$ if $\tilde{X}^d =\emptyset$, so the value of $d$ does not matter, as long as the sequence is bounded below. This is somewhat curious since in the study of TQFTs dimension is usually a fixed parameter. 

\subsection*{Notations and Terminologies}
\begin{itemize}
    \item We mainly follow the standard notations of \cite{LurieHTT}\cite{LurieHA}, with a few exceptions below. In particular, for a type of categorical object, we use $\widehat{(\blank)}$ to denote the (very large) category of the large variant of that object.
    \item From now on (except in \cref{appendix_Steiner_theory}), we will assume the objects are homotopical by default; in particular, we leave ``$\infty$-'' in $(\infty, n)$-categories or $\infty$-category/groupoid implicit and call them $n$-categories or category/groupoid. The ($(\infty, 1)$-)category of $n$-categories will be denoted by $n\Cat$. Similarly, $\infty$-operads in the sense of \cite{LurieHA} will be called operads. 
    We still denote the category of ($\infty$-)groupoids by $\sS\coloneqq 0\Cat$, even though we will avoid calling them \emph{spaces} unless they are supposed to have additional structures of topological spaces or CW complexes. 
    Presheaves takes values in groupoids by default: $\PSh_{\eC}(\eD)\coloneqq \Fun(\eD^\op, \eC)$ and $\PSh(\eD) = \PSh_{\sS}(\eD) =\Fun(\eD^\op, \sS)$.
    \item We let $\PrL\subset \widehat{\Cat}\supset \PrR$ denote the (non-full) subcategories whose objects are presentable categories and the morphisms are left and right adjoint functors, respectively.
    We let $\LFun(\eC, \eD)\subset \Fun(\eC, \eD)\supset \RFun(\eC, \eD)$ denote the full subcategories spanned by the left and right adjoints. 
    We see $\PrL$ as a symmetric monoidal category by Lurie tensor product $\otimes$ (the internal hom is $\LFun$).
    Also, $\Pr^\sL_{\omega}\subset \PrL$ will denote the (non-full) symmetric monoidal subcategory of compactly generated categories and compact object preserving left adjoints. 
    The morphisms of its category, $\Pr^\sR_{\omega}\subset \PrR$ are filtered colimit preserving right adjoints. 
    \item The $(1, 1)$-category of strict $n$-categories will be denoted by $n\strCat$ (the notation $(n, n)\Cat$ will mean the $(n+1, 1)$-category of weak $(n, n)$-categories).
    \item We write $\Map$ for the hom groupoid of a (possibly underlying) $(\infty, 1)$-category and $\Hom$ for a generic hom object of an algebroid or an enriched category, in particular, the Cartesian internal hom for $\infty\Algbrd$.
    The notation $[\blank, \blank]$ (resp. $\llbracket \blank, \blank\rrbracket$) will mean the \emph{left} (resp. \emph{right}) internal hom for the Gray-type tensor product, i.e., $\Map(X\otimes Y, Z)\simeq \Map(Y, [X, Z])\simeq \Map(X, \llbracket Y, Z\rrbracket)$.
    As above, $\Fun$ will mean the functor ($(\infty, 1)$-)category between category-type objects. 
    \item We let $\ast$ denote generically a terminal object (most often the contractible category), while $1$ denote the unit of the monoidal structure under consideration. We use $\eC_\ast$ and $\eC_{\ast\ast}$ to denote the category of pointed and bipointed objects, i.e., the category of objects under $\ast$ and $\ast\amalg \ast$. 
    \item We tend to use $\sqcup$ to denote a disjoint union (i.e., a coproduct that is disjoint) whereas $\amalg$ will mean a generic coproduct. The wedge sum $\vee$ either means the coproduct in $\eC_\ast$ or the \emph{bipointed wedge sum} (typically \emph{sink-source}), i.e., $(X, x_0, x_1)\vee(Y, y_0, y_1) = (X\amalg Y/(x_1=y_0), x_0, y_1)$. We will not be very strict about the distinction of the notation. 
    \item $\times$, $\wedge$ will mean the Cartesian product and the corresponding smash product. $\otimes$ denotes the lax Gray tensor product of unpointed $\infty$-categories or the tensor product of categorical spectra, whereas $\owedge$ denotes the Gray smash product of pointed $\infty$-categories. 
    \item We write $\sigma$ for the unreduced suspensions (of $\infty$-algebroids/categories and augmented directed complexes), reserving $\Sigma$ for the (categorical) reduced suspension. Note that this does not agree with the usual suspension for $\infty$-groupoids. 
    We use $\rB$ to denote (the univalent completion of) the delooping of a monoidal category. 
    For categorical spectra, we may both use $\Sigma$ and $\rB$ for the shift functor (also denoted by $[1]$ as usual); we will be flexible with the notation here. 
    \item $\GG$, $\Ori$, $\Cube$, $\Theta$ denote the $(1, 1)$-category of globes, orientals, (fully lax) cubes, and Joyal's theta. 
    $C_n$, $\Ori^n$, $\cube^n$ denote the $n$-cell, the $n$-oriental (i.e., the fully lax simplex) and the $n$-cube. 
    Note the somewhat nonstandard notation for the orientals. We use $\Theta_1$ to mean the usual simplex category, whose objects ($1$-categorical simplexes) are denoted by $[n]$. 
    \item For $\eC\in \widehat{\Cat}$, we denote the full subcategory of (homotopically) $n$-truncated objects by $\eC_{\leq n}\subset \eC$. In contrast, for $X\in \infty\Algbrd$, we denote the $n$-categorical truncation by $X^{\leq n}\subset X$, i.e., the right adjoint to the inclusion $n\Algbrd\hookrightarrow\infty\Algbrd$. 
    There is also a left adjoint to the inclusion, which we denote by $X\mapsto {}^{\leq n}X$, but this notation is ambiguous and it can mean the further localization to $n\Cat$, $n\strCat$ or $n\Gaunt$. See \cref{section_n_cats_and_algebroids} for details. 
    \item There seems to be no consensus about whether our Gray tensor product should be called the lax or oplax Gray tensor product. We will follow \cite{araJointTranchesPour2020}. That is, the one making the linearization functor $\infty\strCat\to \adCh$ strong monoidal with respect to the usual Koszul sign rule is the \emph{oplax} Gray tensor product, and we take its opposite, the \emph{lax} tensor product as default. This choice is more convenient for us, ultimately because of the choice we made about the way a monoidal functor is identified with a bimodule in \cref{chapter_tensor_product_of_catsp}. 
\end{itemize}

\subsection*{Acknowledgement}
I would like to thank my advisor, David Gepner, for his continuous encouragement and for fostering an exceptionally friendly learning environment. His optimism regarding my thesis project was a crucial counterforce to my pessimism.

Special thanks go to Tim Campion, who taught me through discussions how to work with $(\infty, n)$-categories. His insights and numerous MathOverflow questions deeply influenced this thesis.

I am thankful to Mayuko Yamashita for drawing my attention to physicists' use of categorical spectra and for organizing valuable learning seminars around QFTs.

I am grateful to Bruno Vallette and Tasuki Kinjo for collaborations during my first and second research experiences. They taught me how to produce a paper.

I thank all the faculty members and staff of Johns Hopkins University, especially Emily Riehl, Nitu Kitchloo, Katia Consani, Yiannis Sakellaridis, David Savitt, and Steve Wilson, for their mathematical and non-mathematical support during my graduate study. I also thank Lars Hesselholt and Jack Morava for their encouragement and discussions.

For conversations regarding the project, I thank Anish Chedalavada, David Ayala, German Stefanich, John Francis, Kantaro Ohmori, Ko Aoki, Niranjan Ramachandran, Shai Keidar, Takumi Maegawa, and Thomas Blom.

I thank the organizers and participants of various learning seminars from which I learned a lot, especially Kenta Kobayashi and Takumi Maegawa, who were frequent and excellent speakers. Milton Lin and Rok Gregoric were active organizers of the seminars where I learned some topics efficiently.

Regarding my life as a graduate student, I especially thank Milton Lin for introducing me to addictive exercise habits and mountain trips. Without him, I would have suffered from back pain during the writing of this thesis. I thank Rahul Dalal for frequent company in bouldering gym sessions and Jonathan Lin for advice on my projects. I thank Akira Tominaga for sharing drinks and connecting me to a community of Japanese graduate students. I also thank every graduate student and postdoc who shared offices, beer, parties, and climbing sessions with me.

Lastly, I thank my family, especially my parents, for their love and support. Without them, it would have been impossible to find something I am passionate about.

\chapter{Preliminaries on category theory}\label{chapter_prelim_category_theory}
The goal of this chapter is to equip the reader with background knowledge on $n$-category theory (i.e., $(\infty, n)$-category theory), including the case $n=\infty$. 
It is largely meant to be expository, but we also provide some proofs of folklore results that the author could not find in the literature.
Steiner's theory of strict $\infty$-categories is at the core of the techniques, but it is separated into the appendix so that the reader can avoid getting too distracted by the combinatorics of strict $\infty$-categories.

We start with a general introduction in \cref{section_n_cats_and_algebroids}. 
Our focus is to provide (without proofs) various natively $(\infty, 1)$-categorical treatments of $n$-categories. 
They are roughly divided into two flavors: one is as \emph{enriched categories}: $n$-categories are categories enriched in $(n-1)$-categories. This is inductive by nature. 
Another is by \emph{presentation}: the category $n\Cat$ of $n$-categories is a localization of presheaves $\PSh(\eC)$, where $\eC\subset n\Cat$ is some full sub $(1, 1)$-category of combinatorial shapes, and the localization is prescribed by gluings that exist in $\eC$. Heuristically, $\eC$ is a lax version of a test category, i.e., we probe the structure of $n$-categories by mapping combinatorial shapes into it.
This approach separates combinatorial complexity from homotopical complexity and makes combinatorial calculations possible. Another advantage is its flexibility, offering different choices of $\eC$ depending on our purpose. 

\cref{section_suspension} and \cref{section_duality} introduce two fundamental operations: the \emph{(unreduced) suspension} and \emph{duality involutions}. Roughly speaking, the suspension takes $X\in n\Cat$ to $\sigma X\in (n+1)\Cat$, which is generated by two objects $\bot, \top$ with $\Hom_{\sigma X}(\bot, \top) = X$. This is treated most naturally using enriched category theory. 
Duality involutions generalize the operation $(\blank)^\op: \Cat \to \Cat$ of taking the opposite category.
There are $(\ZZ/2)^n$-worth of duality involutions on $n\Cat$, obtained by flipping morphisms of specified dimensions. In fact, these exhaust all automorphisms of $n\Cat$. 

\cref{section_cubes_and_Gray_tensor} introduces another fundamental operation: the \emph{Gray tensor product} of $\infty$-categories. It is a noncommutative monoidal structure that will replace the role of Cartesian products in many situations.
Heuristically, the Gray tensor product of two $\infty$-categories looks like the Cartesian product, except that every product cell is filled with a non-invertible arrow of a coherently chosen direction. 
The tensor product is naturally characterized on the dense full subcategory $\Cube\subset \infty\Cat$ of the \emph{cubes}, and as such it is rooted in the presentation over the cube category.
Using the Gray tensor product, we can define two suspension-like operations by tensoring with the interval either on the left or on the right and collapsing the top and bottom faces of the cylinder.
We will show in \cref{section_Gray_suspension} that these agree with the suspension (up to a duality involution in one case). This will be a crucial ingredient in the later chapters. 

\section{\texorpdfstring{$n$}{n}-categories and \texorpdfstring{$n$}{n}-algebroids}\label{section_n_cats_and_algebroids}
There are a few different approaches when working with $(\infty, n)$-categories. In this thesis, we take a \emph{model-independent} approach; we assume and work natively in $(\infty, 1)$-category theory (as developed in \cite{LurieHTT}) and let $n\Cat$ be \emph{the} (large) category (recall the ``implicit $\infty$-'' convention) of (small) $(\infty, n)$-categories without choosing a point-set presentation. 
Thus, our treatment is similar in spirit to \cite{barwickUnicityTheoryHigher2021} (see also \cite{campion$inftyn$categoricalPastingTheorem2023}\cite{campionGrayTensorProduct2023}): we use certain kinds of \emph{strict} $\infty$-categories as the combinatorial blueprint of weak $\infty$-categories.
We will also use the larger category $n\Algbrd = n\Cat^\rf \supset n\Cat$ of $n$-algebroids (a.k.a.\ flagged $(\infty, n)$-categories) to encompass both strict and weak categories.
We begin with some definitions on the strict side. 

\begin{definition}\label{def_strict_omega_categories}
    A \emph{strict $0$-category} is a set: $0\strCat\coloneqq \Set$.
    Let $n\geq 0$ and suppose inductively that the $(1, 1)$-category $n\strCat$ of strict $n$-categories is already defined. Then a \emph{strict $(n+1)$-category} is a strictly $n\strCat$-enriched category: $(n+1)\strCat\coloneqq (n\strCat)\mhy\strCat$. These categories are presentable, and the inclusion $n\strCat\hookrightarrow (n+1)\strCat$ admits both left and right adjoints, denoted by ${}^{\leq n}(\blank)$ (or ${}^{\leq n, \mathrm{str}}(\blank)$ if there is a risk of confusion) and $(\blank)^{\leq n}$, respectively. Let $\infty\strCat$ be the colimit 
    \[\colim(0\strCat\hookrightarrow 1\strCat\hookrightarrow \cdots \hookrightarrow n\strCat\hookrightarrow\cdots) \in \PrL,\]
    or equivalently, the limit in $\PrR$ or $\widehat{\Cat}$ along the \emph{truncations} $(\blank)^{\leq n}$. 
\end{definition}

\begin{definition}\label{definition_suspension_strict}
    Let $X$ be a strict $n$-category. The \emph{suspension} $\sigma X$ is a strict $(n+1)$-category with two objects $\{\bot, \top\}$ and the hom categories 
    \[\Hom_{\sigma X}(\bot, \top) = X,\quad \Hom_{\sigma X}(\bot, \bot)=\ast =\Hom_{\sigma X}(\top, \top),\quad \Hom_{\sigma X}(\top, \bot) = \emptyset\] 
    equipped with uniquely determined compositions. 
    A suspension has a canonical \emph{source-sink} bipointing $\ast\sqcup\ast = \sigma\emptyset \to \sigma X$. The functor $\sigma: n\strCat\to (n+1)\strCat_{\ast\ast}$ is colimit-preserving, and the right adjoint is $(X, x_0, x_1)\mapsto \Hom_X(x_0, x_1)$. 
\end{definition}

\begin{definition}
    Let $(X, x_0, x_1)$, $(Y, y_0, y_1)$ be bipointed strict $\infty$-categories. The \emph{wedge sum} $X\vee Y$ is the quotient $(X\sqcup Y)/(x_1=y_0)$ equipped with the induced bipointing $(x_0, y_1)$. 
\end{definition}
\begin{definition}\label{def_cells_and_Theta}
    \begin{itemize}
        \item The \emph{$n$-cell} $C_n$ is the strict $n$-category $\sigma^n(\ast)$. We define the \emph{(reflexive) globe category} $\GG_n$ as the full subcategory $\{C_k\mid 0\leq k\leq n\}\subset n\strCat$ and $\GG\coloneqq \GG_{\infty}$.
        \item The \emph{canonically bipointed theta category} $\Theta^\can_{\ast\ast}\subset\infty\strCat_{\ast\ast}$ is the smallest full subcategory containing the terminal object $C_0$ and closed under suspension and wedge sum operations.
        The (Joyal's) \emph{theta category} $\Theta\subset \infty\strCat$ is the image of $\Theta^\can_{\ast\ast}$ under the forgetful functor. Let $\Theta_n = \Theta\cap n\strCat$. The bipointing of $\theta\in \Theta$ will always be the canonical one, i.e., given by the source and the sink vertices. 
    \end{itemize}    
\end{definition} 

\begin{example}
    Let us give a generic example of an object of $\Theta$. The following diagram depicts the generating cells of $(\sigma^2 C_0)\vee \sigma(\sigma C_0\vee\sigma^2 C_0)\vee \sigma C_0 = C_2\sqcup_{C_0} (C_2\sqcup_{C_1}C_3)\sqcup_{C_0} C_1$:
\[\begin{tikzcd}
    \bullet \ar[r, bend left=60, "{}" name=P]\ar[r, bend right=60, "{}" name=Q]\ar[Rightarrow, from=P, to=Q, shorten <=1ex] & \bullet \ar[r, bend left=80, "{}" name=A]\ar[r, "{}" name=B]\ar[r, bend right=80, "{}" name=C]\ar[Rightarrow, from=A, to=B, shorten <=.7ex]\ar[Rightarrow, from=B, to=C, shift left=2ex, "{}" name=D, shorten <=.5ex]\ar[Rightarrow, from=B, to=C, shift right=2ex, "{}" name=E, shorten <=.5ex]\ar[triple, from=E, to=D, shorten >=.5ex] & \bullet \ar[r] & \bullet
\end{tikzcd}.\]
\end{example}

\begin{remark}\label{remark_strcat_as_set_valued_theta_presheaves}
    The full subcategory $\Theta_n\subset n\strCat$ is (already $(1, 1)$-categorically) dense, i.e., the restricted Yoneda embedding $n\strCat\to \PSh_{\Set}(\Theta_n)$ is fully faithful. The image is characterized by the so-called Segal condition, a certain locality that can be stated as follows: any object $\theta\in \Theta$ admits a canonical colimit representation as the maximal cells (under inclusion) glued along their shared boundary: $\colim_{i} C_{k_i}\xrightarrow{\sim}\theta$. Now a presheaf $P: \Theta^\op_n\to \eC$ valued in an arbitrary category $\eC$ satisfies the \emph{Segal condition} if for any $\theta\in \Theta_n$, the induced map $P(\theta)\to \lim_i P(C_{k_i})$ is an equivalence.
    In contrast, the full subcategory $\GG_n\subset n\strCat$ is a set of colimit generators but not dense, i.e., the further restricted Yoneda embedding $n\strCat\to \PSh_{\Set}(\GG_n)$ is conservative (in fact, monadic) but not fully faithful; it fails to remember compositions (cf.\ \cref{globular definition of strict omega categories}). 
\end{remark}

Now we describe the category $n\Algbrd = n\Cat^\rf$ of $n$-algebroids, a.k.a.\ flagged $n$-categories in a few different ways. They play the role of a minimal common generalization of strict $n$-categories and $(\infty, n)$-categories: 
\begin{itemize}
    \item (iterated enrichment, \cite{lurie$infty2$CategoriesGoodwillieCalculus2009}\cite{gepnerEnriched$infty$categoriesNonsymmetric2015}\cite{hinichYonedaLemmaEnriched2020}\cite{stefanichHigherQuasicoherentSheaves2021}) For a given groupoid $X\in \sS$ of objects, one can functorially assign the ``$X$-worth-of-objects'' version of the associative (nonsymmetric) operad $\Assoc_X$, which agrees with $\Assoc$ when $X=\ast$. For example, when $X$ is a set, $\Assoc_X$ is equivalent to the multicategory with objects $(x, y)$ for $x, y \in X$ and multimorphisms 
    \[\Map((x_0, y_1), (x_1, y_2), \ldots, (x_{k-1}, y_k); (y_0, x_k))\coloneqq \prod_{i=0}^k\delta_{x_i, y_i}, \quad k\geq 0,\] where $\delta_{x, y} = \ast$ if $x=y$ and $\emptyset$ if $x\neq y$. 
    For a monoidal category $\sV$, 
    we define the category $\Algbrd_X(\sV)\coloneqq \Alg_{\Assoc_X}(\sV)$ of \emph{$\sV$-algebroid\footnote{These are called \emph{categorical algebras} in \cite{gepnerEnriched$infty$categoriesNonsymmetric2015}. In \cite{stefanichHigherQuasicoherentSheaves2021}, where the author took the terminology, a more general setting with \emph{categories} of objects is considered. It is also worth mentioning that since the relevant symmetric monoidal structure on $\sV$ is Cartesian and $\sV$ contains $\sS$, $\sV$-algebroids can also be defined as simplicial objects $X: \Spx^\op\to \sV$ satisfying $X_0\in \sS$ and the Segal condition $X_n\xrightarrow{\sim} X_1\times_{X_0}\cdots\times_{X_0} X_1$.} with groupoid of objects $X$}.
    Roughly speaking, $\eA\in \Algbrd_X(\sV)$ assigns $\eA(x_0, x_1)\in \sV$ for a pair of points $(x_0, x_1)\in X^2$ and a composition morphism $\eA(x_0, x_1)\otimes\cdots \otimes \eA(x_{k-1}, x_k)\to \eA(x_0, x_k)$ for $(x_0, \ldots, x_k)\in X^k$, $k\geq 0$, in a coherently unital and associative way.  
    We define the category of $\sV$-algebroids as the domain of the Cartesian fibration $\ob: \Algbrd(\sV)\to \sS$ classifying the contravariant functor $X\mapsto \Algbrd_X(\sV)\in \widehat{\Cat}$.
    When $X$ is a set and $\sV$ is a $(1, 1)$-category, we recover the notion of strictly $\sV$-enriched categories with the set of objects $X$, so we have $\Algbrd(\sV)|_{\Set} = \sV\mhy\strCat$. 
    
    Let $n\Algbrd\coloneqq \Algbrd(\ldots(\Algbrd(\sS))\ldots)$ be the $n$-fold iteration (with the Cartesian monoidal structure). 
    The inclusion $\ast\xrightarrow{\emptyset}\sS$ in $\PrL$ induces the inclusion $\operatorname{disc}: \sS\simeq \Algbrd(\ast)\hookrightarrow \Algbrd(\sS)$ (given by the initial section of the object fibration) and inductively $(n-1)\Algbrd(\sS)\hookrightarrow n\Algbrd(\sS)$ in $\PrL$. We let $\infty\Algbrd$ be the colimit as $n\to \infty$. 
    \item ($\Theta$-presheaves, \cite{rezkCartesianPresentationWeak2010})  
    Iteratively applying the inclusion $\Algbrd(\sV)|_{\Set}\hookrightarrow \Algbrd(\sV)$ starting with $\sV= \ast$, we have the inclusion $n\strCat\hookrightarrow n\Algbrd$. Its restriction $\Theta_n\hookrightarrow n\Algbrd$ is dense, i.e., the restricted Yoneda embedding $n\Algbrd\to \PSh(\Theta_n)$ is fully faithful. The essential image is characterized by the same Segal condition as in \cref{remark_strcat_as_set_valued_theta_presheaves}.
    \item (presheaves on a suitable site) More generally, if $\cS\subset n\strCat\hookrightarrow n\Algbrd$ is dense, then one can study $n\Algbrd$ by describing the localization $\PSh(\cS)\to n\Algbrd$ and the combinatorics of $\cS$. The localization admits an explicit description by cell attachments of torsion-free complexes when $\cS$ is \emph{suitable} in the sense of \cite[Theorem B]{campionGrayTensorProduct2023}. 
    \item If we take the notion of $n$-categories (see below) as the primary one, the notion of $n$-algebroids is equivalent to the notion of \emph{flagged} $n$-categories of \cite{ayalaFlaggedHigherCategories2018}: $n\Algbrd \simeq n\Cat^\rf$. Flagging is an extra structure of an $n$-category. Roughly speaking, it keeps track of the choices of the groupoids of objects ($X$ above) at each stage of enrichment, which are not invariant under categorical equivalence.
\end{itemize}
Our official definition is the first one; various presheaf presentations will replace the traditional use of models. 
Notice that $n$-algebroids are ``evil'' as a notion of categories, taken up to isomorphisms instead of equivalences. This is why it contains the $(1, 1)$-category of strict $n$-categories, constructed from strict enrichment. 
We define a localization $n\Cat\subset n\Algbrd$ by fixing this:
\begin{itemize}
    \item The category of $\sV$-categories $\sV\mhy\Cat\subset\Algbrd(\sV)$ is the localization by \emph{categorical equivalences}, i.e., fully faithful and essentially surjective maps of algebroids. It suffices to invert $E\to \ast$, where  $E$ is (the base change to $\sV$ of) the contractible groupoid with two objects. Let $\sS \coloneqq 0\Cat$, $(n+1)\Cat \coloneqq (n\Cat)\mhy\Cat$ and let $\infty\Cat\coloneqq \colim (\cdots\hookrightarrow n\Cat\hookrightarrow (n+1)\Cat\hookrightarrow\cdots)$ in $\PrL$. 
    The localization $n\Cat\subset n\Algbrd$ are generated by the \emph{Rezk maps} $\sigma^k (E\to \ast)$, $0\leq k < n$. 
    The local objects are also called \emph{univalent} or \emph{Rezk-complete}, meaning that the prescribed groupoid of objects of the algebroid is in fact the maximal subgroupoid, so it can be recovered from the notion of equivalences \emph{internal} to the algebroid.
    \item The Rezk maps, considered as maps in $\PSh(\Theta)$, yields the localization $\PSh(\Theta_n)\to n\Algbrd\to n\Cat$. This is the original context of Rezk.
    \item In terms of flagged $n$-categories, the univalent complete objects are those with the \emph{maximal} flags.
\end{itemize}
Note that a strict $n$-category is usually not univalent. For instance, the delooping $\rB'\ZZ$ of $\ZZ$ as an algebroid with a single object $\ast$ is a strict $1$-category, but its univalent completion $\rB\ZZ$ is the circle $S^1$; the automorphisms of $\ast$ must be already in the groupoid of objects to be univalent. In fact, a strict $n$-category is univalent if and only if it is \emph{gaunt}, i.e., if no cell has a nonidentity automorphism \cite[\S 3]{barwickUnicityTheoryHigher2021}. 
Summarizing the discussion, we obtain the following diagram: 
\begin{equation}\label{diagram_sorts_of_nCat}
    \begin{tikzcd}
        n\Gaunt\ar[r, hook]\ar[d, hook] & n\strCat \ar[d, hook] \ar[r, hook] & \PSh_{\Set}(\Theta_n)\ar[d, hook]\\
        n\Cat\ar[r, hook] & n\Algbrd\ar[r, hook] & \PSh(\Theta_n)
    \end{tikzcd}
\end{equation}
Each inclusion in the diagram is right adjoint to an $\omega$-accessible localization. The top row consists of $(1, 1)$-categories and is the $0$-truncated part of the bottom row. The middle (resp. left) column is the part of the right (resp. middle) satisfying the Segal conditions (resp. univalence). In particular, these are compactly generated with a set of compact generators $\GG_n$.
The distinction between $n\Cat$ and $n\Cat^\rf = n\Algbrd$ is often irrelevant, but one is more appropriate in some cases. It is usually clear whether the discussion works in both settings or just one, but we will clarify when necessary. 

We write ${}^{\leq n}(\blank)$, $(\blank)^{\leq n}$ for the left and right adjoints of the inclusion $n\Algbrd\hookrightarrow \infty\Algbrd$.
The right adjoint $(\blank)^{\leq n}$ preserves univalence and homotopically $k$-truncated objects (in particular strictness or gauntness). We call $X^{\leq n}$ the \emph{underlying $n$-algebroid (or category)} or \emph{$n$-(categorical) truncation} (there is potential confusion with homotopical truncation, but the relevant notion is usually clear from the context).  On the other hand, the left adjoint ${}^{\leq n}(\blank)$ (the \emph{$n$-categorical localization}) does not preserve univalence or strictness in general. For example, $\rB\NN$ is a gaunt $1$-category freely generated by an object and an endomorphism, but its $0$-categorical localization depends on the ambient setting; in $0\Algbrd= 0\Cat= \sS$ we have ${}^{\leq 0}(\rB\NN)\simeq \rB\ZZ \simeq S^1$, whose image in $0\strCat$ and $0\Gaunt$ is $\pi_0 S^1 = \ast$. Moreover, $\rB^2\NN$ is a gaunt $2$-category but ${}^{\leq 1, \mathrm{algbrd}}(\rB^2\NN) = \rB' S^1$ is an algebroid generated by a point and a $S^1$-worth of automorphisms, whereas ${}^{\leq 1, \mathrm{cat}}(\rB^2\NN) = \rB S^1 \simeq \CC P^\infty$. 
The localization $\infty\Cat\twoheadrightarrow n\Cat$ preserves finite products \cite[Proposition 3.6.13]{stefanichHigherQuasicoherentSheaves2021}. 

Note that $\Algbrd_{X}(\blank)$ preserves limits of operads and thus of symmetric monoidal categories. Integrating the equivalences $\Algbrd_X(\infty\Algbrd)\simeq \lim_n\Algbrd_X(n\Algbrd)$ and $\Algbrd_X(\infty\Cat)\simeq \lim_n\Algbrd_X(n\Cat)$ over $X$ and with some univalence considerations, one sees that $\infty\Algbrd$ and $\infty\Cat$ are fixed points of the corresponding constructions \cite[Remark 3.6.12]{stefanichHigherQuasicoherentSheaves2021}:
\[\infty\Cat\simeq (\infty\Cat)\mhy\Cat,\quad \infty\Algbrd \simeq \Algbrd(\infty\Algbrd). \]
Moreover, \cite{goldthorpeHomotopyTheoriesInfty2023} shows that these are universal among such fixed points of enrichment in $\PrL$. 
\begin{remark}
    There are also a few combinatorial presentations of the categories $n\Cat$ and $n\Algbrd$ using marked simplicial and cubical sets \cite{verityWeakComplicialSets2008}\cite{campionCubicalModelInfty2020}. 
    These are convenient for many purposes; not only are they \emph{set-valued} presheaves on combinatorial shapes, but they also handle relative categories by design. 
    However, the representable presheaves are not fibrant. To compute the correct mapping groupoid one must perform a fibrant replacement, so the calculation is not as straightforward as in the $n=1$ case of (naturally marked) quasicategories.
    We will not rely on these combinatorial approaches because passing between them and ours is tricky. See \cite{loubatonComplicialSetsModel2022} for proof that the model category for $n$-complicial sets indeed model $n\Cat\in \widehat{\Cat}$.
\end{remark}

\section{Suspension}\label{section_suspension}
The suspension functor on $n\strCat$ extends to $n\Algbrd$. 
There are many equivalent definitions; we first give one based on enriched category theory. 

Let $\Assoc_{\{\bot, \top\}}$ be the nonsymmetric operad for two-object algebroids. 
There is a morphism of nonsymmetric operads $\Triv = \Spx_{\mathrm{inert}} \to \Assoc_{\{\bot, \top\}}$ characterized by $[1]\mapsto (\bot, \top)$, which by definition corepresents $\Hom_{(\blank)}(\bot, \top): \Algbrd_{\{\bot, \top\}}(\sV)\to \sV$. 
When $\sV$ has an initial object that is compatible with the monoidal structure, there is a left adjoint given by the operadic left Kan extension (for nonsymmetric operads, see \cite[\S A.4]{gepnerEnriched$infty$categoriesNonsymmetric2015}). 
Also notice that $\Algbrd_{\{\bot, \top\}}\subset \Algbrd(\sV)_{\ast\ast}$ is the fiber of the Cartesian fibration $\ob: \Algbrd(\sV)_{\ast\ast}\to \sS_{\ast\ast}$ over the initial object, so the inclusion admits a right adjoint that sends $X$ to its full subalgebroid spanned by the base objects. 

\begin{definition}(\cite[Example 3.3.6]{stefanichHigherQuasicoherentSheaves2021})
        The (unreduced) suspension functor is the composition of the left adjoints
        \[\sigma: \sV\to \Algbrd_{\{\bot, \top\}}(\sV)\hookrightarrow \Algbrd(\sV)_{\ast\ast},\]
        whose right adjoint is $(X, x_0, x_1)\mapsto \Hom_X(x_0, x_1)$. In particular, we have $\sigma: n\Algbrd\to (n+1)\Algbrd_{\ast\ast}$ and, in the limit, $\sigma: \infty\Algbrd\to \Algbrd(\infty\Algbrd)_{\ast\ast}\simeq \infty\Algbrd_{\ast\ast}$. 
\end{definition}
\begin{remark}
    \begin{enumerate}
        \item If $\sV$ is a $1$-category, $\sigma$ factors through $\sV\to \Algbrd(\sV)|_{\Set, \ast\ast} = \sV\mhy\strCat_{\ast\ast}$. This agrees with the suspension of strictly $\sV$-enriched categories. More generally, the suspension functor is fully faithful and has the same description of hom objects as in \cref{definition_suspension_strict} (use the description of the operadic left Kan extension). In particular, $\sigma$ on $\infty\Algbrd$ restricts to the $\sigma$ on $\infty\strCat$ previously defined. 
        \item The suspension $\sigma$ lands in $\sV\mhy\Cat_{\ast\ast}$ unless $\sV=\ast$. To see this, assume there is a nontrivial map $E\to \sigma X$, so $f: 1_{\sV}\to X \simeq \Hom_{\sigma X}(\bot, \top)$ and $g: 1_{\sV}\to \emptyset\simeq \Hom_{\sigma X}(\top, \bot)$ are inverses of each other. Then $\Hom_{\sigma X}(\top, \top) \xrightarrow{\id \otimes g\otimes f} \Hom_{\sigma X}(\top, \top)\otimes \Hom_{\sigma X}(\top, \bot)\otimes \Hom_{\sigma X}(\bot, \top)\to \Hom_{\sigma X}(\top, \top)$ is an equivalence, so $1_{\sV}\simeq \Hom_{\sigma X}(\top, \top)$ is a retract of $\emptyset$. It follows that $1_{\sV}\simeq \emptyset$, inducing a natural equivalence $\id_{\sV}\simeq \mathrm{const}_{\emptyset}$, i.e., $\sV$ must be trivial. 
        \item In particular, when $X$ is an $\infty$-category (i.e., univalent), $\sigma X$ is also univalent, so we will use the same notation for the restricted functors. 
        \item There is also a definition based on $\Theta$-presheaves \cite[Theorem 2.25]{campion$inftyn$categoricalPastingTheorem2023}. Notice that by our definition of $\Theta$, the suspension of \cref{definition_suspension_strict} restricts to a functor $\sigma: \Theta_n\to (\Theta^\can_{\ast\ast})\cap (n+1){\Cat}_{\ast\ast}$. 
        Let $\tilde{\sigma}: \PSh(\Theta_n)\to \PSh(\Theta_{n+1})_{\ast\ast}$ be the unique colimit-preserving extension. This restricts to a colimit-preserving functor $\sigma: n\Algbrd\to (n+1)\Algbrd_{\ast\ast}$. 
        The two definitions of $\sigma$ given above are equivalent, as both are colimit-preserving and agree on $\Theta_n$.  
    \end{enumerate}
\end{remark}
Later, we will give another description using the Gray tensor product in \cref{pushout_formula_unreduced_suspension}.

\section{Duality}\label{section_duality}
All categories in the diagram \ref{diagram_sorts_of_nCat} have the same group of automorphisms: 
\begin{proposition}
    Let $\sC$ denote one of $\Gaunt$, $\strCat$, $\Cat$, $\Algbrd$ and $0\leq n\leq \infty$. 
    Then any automorphism of $n\sC$ preserves the subcategories $\GG_n$, $\Theta_n$, and the restriction $\Aut(n\sC)\to \Aut(\GG_n)\simeq (\ZZ/2)^n$ is an equivalence. 
\end{proposition}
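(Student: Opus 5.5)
Any automorphism $F$ of $n\sC$ is an equivalence of categories, so it preserves every notion defined purely in terms of the category structure. The plan is to characterize the cells $C_k$, and then the objects of $\Theta_n$, intrinsically, deduce that $F$ permutes them, and then use density of $\Theta_n$ (\cref{remark_strcat_as_set_valued_theta_presheaves}, and its analogue for $n\Algbrd$) to show that $F$ is determined by its restriction to $\GG_n$.

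\textbf{Step 1: $F$ preserves $\GG_n$.} In each of the four settings, $C_0=\ast$ is the terminal object, so $F(C_0)\simeq C_0$. Every object of $\GG_n$ is compact and connected, and I would characterize the cells as the atoms of a suitable lattice. Call $X$ \emph{cellular} if it is nonempty, connected and compact, and is not a nontrivial pushout or retract of strictly smaller such objects. I would then show that the cellular objects are exactly the $C_k$, and that $C_k$ is distinguished among them by the length of the maximal chain of proper monomorphisms (subobjects) $C_0\hookrightarrow\cdots\hookrightarrow C_k$.

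A cleaner alternative uses the colimit generation recalled after \eqref{diagram_sorts_of_nCat}: $\{C_k\}$ is a minimal set of compact generators. Any equivalence sends a minimal generating set of compact connected ``indecomposable'' objects to another such set. One then checks that such a set is unique up to isomorphism, by induction on $k$ using the filtration by $k\sC$. Since $F$ preserves $k\sC$, and $k\sC\subset n\sC$ is the smallest colimit-closed subcategory containing $C_0,\dots,C_k$, $F$ must send $C_k$ to $C_k$.

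\textbf{Step 2: the restriction map $\Aut(n\sC)\to\Aut(\GG_n)$.} Since the $C_k$ are gaunt, $\GG_n$ is a $(1,1)$-category. Its morphisms are the sources and targets $s,t\colon C_{k-1}\to C_k$ together with degeneracies, and composites of these, subject to the globular relations. An automorphism of $\GG_n$ fixes each object (dimensions are preserved). It is therefore determined by whether it swaps $s$ and $t$ at each level $k=1,\dots,n$, which gives $\Aut(\GG_n)\simeq(\ZZ/2)^n$. As a space, $\Aut(\GG_n)$ is discrete because $\GG_n$ is rigid.

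For surjectivity, I would construct the duality involutions $D_S$ for $S\subseteq\{1,\dots,n\}$ by induction through enrichment. The involution $(\blank)^\op$ of $\Algbrd(\sV)$ reverses $1$-morphisms. An automorphism of $\sV$ that is monoidal for the cartesian product (any automorphism is) induces an automorphism of $\Algbrd(\sV)$ and of $\sV\mhy\Cat$. Composing these realizes every flip pattern, and both constructions preserve strictness, gauntness and univalence.

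\textbf{Step 3: injectivity, and the space of automorphisms.} Let $F$ restrict to the identity on $\GG_n$. Since each $\theta\in\Theta_n$ is built from cells by suspensions and wedges, and these are iterated pushouts of cells along boundary inclusions, $F$ fixes $\Theta_n$. The same argument shows $F$ restricts to an automorphism of $\Theta_n$ that is the identity on $\GG_n$. Because $\Theta_n$ is the Segal-local closure of $\GG_n$ (its objects and maps are generated by the colimit decompositions $\colim_i C_{k_i}\simeq\theta$), this restriction is isomorphic to the identity. By density, $F\simeq\Lan(F|_{\Theta_n})\simeq\id$. Moreover, $\Aut(\id_{n\sC})$ is the automorphism space of the identity, which equals $\Aut(\id_{\Theta_n})$ by density; this is trivial since $\Theta_n$ is gaunt and rigid. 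Hence $\Aut(n\sC)$ is discrete and the restriction map is an equivalence. For $n=\infty$, I would pass to limits over $n$: automorphisms commute with the truncations, which are characterized intrinsically as coreflections onto the colimit closure of $C_{\le k}$.

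The main obstacle is Step 1, the intrinsic characterization of the $C_k$ uniformly across all four settings. In $\Cat$ the notions of monomorphism and subobject behave differently from the strict settings, so I would try to use only compactness, connectedness and generation. Concretely, I would take $C_k$ to be, up to isomorphism, the unique object $X$ such that $X\notin (k-1)\sC$, $X\in k\sC$, $X$ is a compact generator, and $\Map(X,Y)\simeq(Y^{\le k})$'s space of $k$-cells is conservative.
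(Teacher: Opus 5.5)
Your Steps 2 and 3 are reasonable in outline. The genuine gap is Step 1, which is the real content of the proposition: none of your proposed characterizations of the cells is both intrinsic and correct.

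\emph{Your candidate characterizations do not work.}
\begin{itemize}
\item ``Cellular'' relies on the undefined notion ``strictly smaller''.
\item The generator-based versions fail to single out the cells. Every $C_k$ with $k\le n$ is a retract of $C_n$, so the singleton $\{C_n\}$ already generates $n\sC$ under colimits. So do $\{C_n\vee C_1\}$ and $\{C_n\sqcup C_0\}$. Hence ``a minimal set of compact (connected, indecomposable) generators'' does not pick out $\{C_0,\dots,C_n\}$.
\item Your final characterization is not unique either. For $k\ge 1$, the object $C_k\vee C_1$ lies in $k\sC\setminus(k-1)\sC$, is compact, and corepresents a conservative functor on $k\sC$, since $C_k$ is a retract of it.
\item The step ``$F$ preserves $k\sC$, hence fixes $C_k$'' is circular. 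You only describe $k\sC\subset n\sC$ as the colimit closure of $C_0,\dots,C_k$, which is what you are trying to show $F$ preserves.
\end{itemize}

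\emph{The missing idea} (due to Barwick--Schommer-Pries, and invoked by the paper for finite $n$) is the retract preorder. Among all generators of $n\sC$, meaning objects $G$ with $\Map(G,\blank)$ conservative, $C_n$ is the smallest: it is a retract of every generator. Its retracts are exactly $C_0,\dots,C_n$, and these are totally ordered by the retract relation. Hence any automorphism fixes each $C_k$.

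\emph{A smaller point in Step 3.} The maps of $\Theta_n$ are not generated by the Segal cocones. To deduce $F|_{\Theta_n}\simeq\id$ from $F|_{\GG_n}\simeq\id$, you can use that objects of $\Theta_n$ are $n$-posets, so a cell of $\theta$ is determined by its boundary.

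\emph{The case $n=\infty$.} Your sketch is circular in the same way. Describing the truncations as coreflections onto the colimit closure of $C_{\le k}$ only helps once you already know $F$ preserves $C_0,\dots,C_k$. The finite-$n$ argument cannot be run inside $\infty\sC$, because there is no smallest generator there: the infinite cell is a proper retract of itself.

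The paper resolves this by reconstructing $n\sC\subset\infty\sC$ intrinsically, by induction:
\begin{itemize}
\item $0\sC$ is the colimit closure of the terminal object.
\item $n\sC$ is the colimit closure of the objects $X$ such that, for every $0$-truncated $Y\in\infty\sC$, the counit-induced map $\Map(Y,X)\to\Map(Y^{\le n-1},X)$ is a monomorphism. Here $(\blank)^{\le n-1}$ is the right adjoint of the already identified inclusion $(n-1)\sC\subset\infty\sC$.
\end{itemize}
Only after this does one know that every automorphism of $\infty\sC$ preserves each $n\sC$, hence each $\GG_n$, so that the finite case applies.
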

When $n$ is finite, the proposition is \cite[Theorem 4.13, Lemma 10.2]{barwickUnicityTheoryHigher2021} for $n\Gaunt$, $n\Cat$, and the same argument works for $n\strCat$, $n\Algbrd$. 
The key idea is to characterize the $n$-cell $C_n$ as an object of the abstract category $n\Gaunt$, $n\Cat$, etc., as the smallest generator (i.e., an object corepresenting a conservative functor) with respect to the retract relation. This shows that any automorphism restricts to an automorphism of $\GG_n$ (which is the identity on objects).
Each copy of $\ZZ/2$ in $\Aut(\GG_n)\simeq (\ZZ/2)^n$ corresponds to flipping the cosource and cotarget maps $s, t: C_{k-1}\to C_{k}$ for $0<k\leq n$. These automorphisms uniquely extend to $\PSh(\Theta_n)$ and fix all relevant subcategories. 

The same idea does not apply directly when $n=\infty$ because the infinite cell $C_\infty$ is a proper retract of itself.
However, the following lemma inductively reconstructs the subcategories $n\sC\subset \infty\sC$ from the abstract category $\infty\sC$\footnote{This idea is based on the post \cite{MOAuthoCat} characterizing the posets inside $\ho\Cat_{(1, 1)}$.}. Consequently, any automorphism of $\infty\sC$ preserves the subcategory $n\sC$ and therefore restricts to $\GG_n$ for any $n\geq 0$, and the proposition follows.
\begin{lemma}
    \begin{enumerate}
        \item $0\sC\subset \infty\sC$ is the colimit-closure of the terminal object. 
        \item Assume $n\geq 1$ and suppose we have already identified the full subcategory $(n-1)\sC\subset \infty\sC$ and, in particular, the right adjoint $(\blank)^{\leq n-1}$ to the inclusion.
        Let $n\sC'\subset \infty\sC$ be the full subcategory spanned by the objects $X$ satisfying the following condition:
        \begin{quote}
            for any (homotopically) $0$-truncated object $Y\in (\infty\sC)_{\leq 0}$, the counit $Y^{\leq n-1}\to Y$ induces a monomorphism $\Map(Y, X)\to \Map(Y^{\leq n-1}, X)$ in $\sS$. 
        \end{quote} 
        Then $n\sC$ is the colimit-closure of $n\sC'\subset \infty\sC$. 
    \end{enumerate}
\end{lemma}

\begin{proof}
    The first point is clear. For the second point, it suffices to prove $\GG_n\subset n\sC'\subset n\sC$.
    First, we show $\GG_n\subset n\sC'$. 

    Consider the $(-1)$-truncation functor $\Set\to \{\emptyset, \ast\}$ in $\PrL$. As the localization is product-preserving, it induces the \emph{underlying $n$-preordered set} functor $\Set(\mhy\strCat)^n\to \{\emptyset, \ast\}(\mhy\strCat)^n$. Note that the latter contains $\GG_n$. 
    For any $X\in \{\emptyset, \ast\}(\mhy\strCat)^n$, the map $(s_{n-1}, t_{n-1}): X_{n}\rightarrow X_{n-1}\times X_{n-1}$ is mono and $X_{\geq n}$ is constant. It follows that for any strict $\infty$-category $Y$, 
    the map $\Map_{\PSh_{\Set}(\GG)}(Y_\bullet, X_\bullet) \to \Map_{\PSh_{\Set}(\GG)}(Y^{\leq n-1}_\bullet, X_\bullet)$ is a monomorphism. Since $\infty\strCat\to \PSh_{\Set}(\GG)$ is faithful, $\Map_{\infty\strCat}(Y, X)\to \Map_{\infty\Cat}(Y^{\leq n-1}, X)$ is also a monomorphism. 
    Next assume $X\in n\sC'$. We must show that any $k$-cell of $X$ is degenerate for $k>n$, i.e., the map $\Map(C_k, X)\to \Map(C_{k+1}, X)$ induced by the projection $C_{k+1}\twoheadrightarrow C_k$ is an isomorphism.
    We claim that the following conditions are equivalent when $k\geq n$:
    \begin{enumerate}
        \item The map $\Map(C_k, X)\to \Map(\partial C_k, X)$ induced by $\partial C_k\hookrightarrow C_k$ is mono. 
        \item The map $\Map(C_k, X)\to \Map(\partial C_{k+1}, X)$ induced by $\partial C_{k+1}\twoheadrightarrow C_k$ is an isomorphism. 
        \item The maps $\Map(C_k, X)\to \Map(C_{k+1}, X)\to \Map(\partial C_{k+1}, X)$ induced by $\partial C_{k+1}\hookrightarrow C_{k+1}\twoheadrightarrow C_k$ are isomorphisms. 
    \end{enumerate}
    (1) and (2) are equivalent because the second map is the diagonal of the first. (2) and (3) are equivalent because the maps in (3) are mono by assumption: they are maps between subobjects of $\Map(\partial C_n, X)$ because $(\partial C_{k+1})^{\leq n-1} = (C_{k+1})^{\leq n-1} = (C_k)^{\leq n-1} = \partial C_n$ when $k\geq n$. 
    Now, again by assumption, (1) is true for $k=n$. By induction using the equivalence, we see that (3) is true for all $k\geq n$, so we have $X\in n\sC$. 
\end{proof}
\begin{remark}
    The category $n\Gaunt'$ is the category of so-called $n$-posets; by convention, a $(-1)$-poset is the terminal object, and $n$-posets are those enriched in $(n-1)$-posets. 
\end{remark}

\begin{definition}
    For a function $\tau: \ZZ_{\geq 1}\to \ZZ/2$, we let $D_\tau$ denote the corresponding involution of the categories of $n$-categories.
    That is, if we let $s_k, t_k: C_k\to C_{k-1}$ be the $k$-th co-source and co-target maps, $D_\tau$ is characterized by 
        \[D_{\tau}(s_k) = \begin{cases}
            s_k & \text{if $\tau(k)=0$},\\
            t_k & \text{if $\tau(k)=1$}.
        \end{cases}\]
    One can think of $\tau$ as the indicator function of the dimensions of the cells that get flipped. 
\end{definition}

The following copy of $\ZZ/2\times\ZZ/2\subset \Aut(\infty\Algbrd)$ is of dimension-independent importance (one reason is \cref{total_dual}; the proof does not work for a general $\tau$):

\begin{definition}
    The \emph{odd dual} (resp.\ \emph{even dual}) flips $s, t: C_{k-1}\to C_k$ for $k$ odd (resp.\ even), i.e., they are the duality involution $D_\tau$ when $\tau$ is the indicator function of the odd (resp.\ even) numbers. 
    We denote the odd and even duals by $(\blank)^\op$ and $(\blank)^\co$, respectively. 
    The \emph{total dual} flips cells of all dimensions, i.e., $(\blank)^\coop$, which we denote by $(\blank)^\circ$ or $D$. 
\end{definition}

\section{The cubes and the Gray tensor product}\label{section_cubes_and_Gray_tensor}
The (lax) Gray tensor product is a monoidal structure on $\infty\Algbrd$ and its various localizations. It differs from the Cartesian product in a few important ways: 
\begin{itemize}
    \item When $X, Y$ are $m$-, $n$-categories respectively, the Gray tensor product $X\otimes Y$ is an $(m+n)$-category while the Cartesian product $X\times Y$ is a $\max\{m, n\}$-category.
    \item The Cartesian product is symmetric, but the Gray tensor product is not. It behaves like a $*$-algebra with respect to the odd (or even) dual. 
    \item Both the Cartesian product and the Gray tensor product are closed with the terminal unit. However, the Gray internal hom classifies the functor category with \emph{(op)lax} natural transformations, while the Cartesian internal hom classifies the functor category with strong natural transformations. 
\end{itemize}
\begin{remark}
    In the classical $2$-categorical literature, sometimes the Gray tensor product refers to the \emph{pseudo}-Gray tensor product (as opposed to the \emph{lax} or oplax Gray tensor product). We will never use this language because the pseudo-Gray tensor product is just the Cartesian product in our natively homotopical setting. 
\end{remark}

We will freely use Steiner's theory (see \cref{appendix_Steiner_theory} for a review). Here, let us only recall the existence of an adjunction
\[\begin{tikzcd}
    \infty\strCat \arrow[r, shift left = 1ex, "\lambda"]
    & \adCh \arrow[l, shift left = .5ex, "\nu"]
    \arrow[l, phantom, shift right = .2ex, "\scriptscriptstyle\boldsymbol{\bot}"]
\end{tikzcd}\]
that restricts to an equivalence $\infty\Gaunt^\Ste\simeq \adCh^\Ste$ of the full subcategories of \emph{strong Steiner objects} on both sides (for the $\infty\strCat$ side they are automatically gaunt). $\adCh$ is the category of \emph{augmented directed complexes} (homologically graded augmented chain complexes with additional data of a \emph{positivity submonoid}). Strong-Steinerness is a reasonably checkable ``free and loop-free'' condition and is satisfied by many simple and combinatorially important gaunt $\infty$-categories (an important non-example is the walking adjunction category $\Adj$, however). Steiner's theory gives a neat way to define the (lax) Gray tensor product of strict $\infty$-categories; it corresponds to the tensor product of chain complexes.
We endow the category $\adCh$ with a monoidal structure by the (reversed) Koszul sign rule (i.e., $\partial (x\otimes y) = (-1)^{\deg(y)}\partial x\otimes y + x\otimes \partial y$). One can check that $\adCh^\Ste\subset \adCh$ is a monoidal subcategory. 
\begin{definition}
    \begin{enumerate}
        \item There exists a unique biclosed monoidal structure, called the \emph{(strict) Gray tensor product}, on $\infty\strCat$ such that $\adCh^\Ste\simeq \infty\Gaunt^{\Ste}\hookrightarrow \infty\strCat$ promotes to a monoidal functor. 
        \item The \emph{cube category} $\Cube\subset \infty\Gaunt^{\Ste}\subset\infty\strCat$ is the monoidal full subcategory generated by the interval $\cube^1 \coloneqq C_1$. The objects of $\Cube$ are the \emph{$n$-cubes} $\cube^n\coloneqq (\cube^1)^{\otimes n}\in n\Gaunt$. 
    \end{enumerate}
\end{definition}

\begin{remark}
    Campion \cite{campionCubesAreDense2022} shows that
    $\Theta$ is contained in the idempotent completion of $\Cube$ (see also \cref{cubes_and_orientals_are_dense} for another proof with the same idea). 
    In particular, $\Cube\subset\infty\strCat$ is dense, i.e., the left Kan extension $\PSh_{\Set}(\Cube)\to \infty\strCat$ is a localization. 
    This shows the uniqueness part of (1); moreover it is characterized as the unique biclosed monoidal structure promoting $\Cube\hookrightarrow \infty\strCat$ to a monoidal functor. 
\end{remark}

\begin{example}\label{pictures_of_the_cubes}
    The $n$-cube $\cube^n$, or a strong Steiner category in general, is an example of \emph{computads} (a.k.a.\ \emph{polygraphs}). It is the relevant notion of freeness for strict $n$-categories, defined similarly to the notion of CW-complexes \cref{definition_polygraph_strong_steiner}.
    By definition, we have $\cube^n = \lambda\bigl((\cdots\to 0 \to \underline{?}\ZZ\xrightarrow{(1, -1)}\underline{0}\ZZ\oplus\underline{1}\ZZ)^{\otimes n}\bigr)$.
    A polygraphic basis element (i.e., an atomic cell) of $\cube^n$ is a string of the letters $0, 1, ?$. The number of $?$ is the dimension of the cell. The differential $\partial(\underline{?}) = \underline{1}-\underline{0}$, together with the Koszul sign rule, describes the domain and the codomain of these cells.     
    The following depicts the atomic cells of the first three cubes $\cube^1, \cube^2, \cube^3$: 
    \[\begin{tikzcd}
        0\ar[r] & 1,
    \end{tikzcd}\quad
    \begin{tikzcd}[row sep=2em]
        00\ar[r]\ar[d] & 01 \ar[d] \\
        10\ar[r]\ar[ru, Rightarrow, shorten <=2ex, shorten >=2ex] & 11,
    \end{tikzcd}\quad
    \begin{tikzcd}[column sep=small, row sep=1em]
    && 001\arrow[rrd] &&&&& 001\arrow[rrd]\arrow[dd] &&\\
    000\arrow[dd]\arrow[rrd]\arrow[rru] &&&& 011\arrow[dd] & 000\arrow[dd]\arrow[rru] &&&& 011\arrow[dd] \\
    && 010\arrow[dd]\arrow[rru]\arrow[uu, Rightarrow, shorten=2.5ex] && {}\arrow[r, "\Rrightarrow", phantom] & {} && 101\arrow[rrd]\arrow[rru, Rightarrow, shorten=1ex] && \\
    100\arrow[rrd]\arrow[rru, Rightarrow, shorten=1ex] &&&& 111 & 100\arrow[rrd]\arrow[rru]\arrow[rruuu, Rightarrow, shorten=6.5ex] &&&& 111. \\
    && 110\arrow[rru]\arrow[rruuu, Rightarrow, shorten=6.5ex] &&&&& 110\arrow[rru]\arrow[uu, Rightarrow, shorten=2.5ex] &&
    \end{tikzcd}\]
\end{example}
\begin{remark}
    A monoidal structure $\otimes'$ on $\Cube$ such that $\cube^n\otimes' \cube^m = \cube^{n+m}$ is completely characterized by the bifunctor $\otimes': \Cube\times \Cube\to \Cube$; since $\Cube$ is a $0$-truncated object of $\Cat$ (see \cref{cube_category_is_gaunt}), any $\AA_\infty$-structure is strictly associative, and associativity is a property of the underlying $\AA_2$-structure. 
\end{remark}
Next, we explain the Gray tensor product for weak categories. 
\begin{theorem}[{\cite{campionGrayTensorProduct2023}}]
    There exists a unique closed $\EE_1$-monoidal structure, called the \emph{(lax) Gray tensor product}, on $\infty\Algbrd$ promoting the inclusion $\infty\Gaunt^{\Ste}\hookrightarrow \infty\Algbrd$ to a strong monoidal functor. 
    Moreover, the reflective subcategories $\infty\Cat$, $\infty\strCat$, and $n\Algbrd$ are all exponential ideals (see the remark below). In particular, the tensor product localizes to these categories and their intersections. 
\end{theorem}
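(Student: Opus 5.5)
We plan to construct the tensor product on $\infty\Algbrd$ by presentation: first on the dense subcategory $\Cube$, then extend by Day convolution, and finally localize. Concretely, $\Cube\subset\infty\Gaunt^{\Ste}$ is a monoidal subcategory, since $\adCh^\Ste$ is monoidal and $\cube^m\otimes\cube^n=\cube^{m+n}$. Day convolution then gives a biclosed monoidal structure on $\PSh(\Cube)$ for which the Yoneda embedding is monoidal. By Campion's density result, $\Theta$ lies in the idempotent completion of $\Cube$, so $\Cube\subset\infty\Algbrd$ is dense as well. Hence $\infty\Algbrd$ is an accessible localization $L\colon\PSh(\Cube)\to\infty\Algbrd$, presented by a set $W$ of generating local equivalences. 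Following \cite[Theorem B]{campionGrayTensorProduct2023}, we would take $W$ to be the cell-attachment maps expressing the Segal decomposition of objects of $\Cube$, together with the idempotent-splitting data.

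The key step is to show that $L$ is compatible with Day convolution. That is, for $w\in W$ and $c\in\Cube$, both $w\otimes y(c)$ and $y(c)\otimes w$ must be $L$-equivalences. Granting this, the monoidal structure descends to a closed monoidal structure on $\infty\Algbrd$ with $L$ monoidal (Lurie, HA 2.2.1.9). Because $\Cube$ generates, closedness is inherited. The restriction to $\infty\Gaunt^\Ste$ agrees with the Steiner tensor product: both sides are colimits of cubes glued along the polygraphic Steiner decomposition, and this decomposition is preserved by tensoring with a cube, because the Steiner basis of a tensor product is the product basis. Uniqueness follows from density. Any closed structure promoting $\Cube\hookrightarrow\infty\Algbrd$ to a monoidal functor preserves colimits in each variable, so it is the left Kan extension of its restriction to $\Cube\times\Cube$, and that restriction is rigid because $\Cube$ is gaunt (see the remark above).

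For the localization claims, we use that the remaining inclusions are reflective subcategories generated by explicit maps. For $\infty\Cat$ the generators are the Rezk maps $\sigma^k(E\to\ast)$, for $\infty\strCat$ the $0$-truncation maps, and for $n\Algbrd$ the maps $C_{n+1}\to C_n$ together with higher-cell degeneracies. In each case it suffices to check that $\cube^1\otimes(\text{generator})$ and $(\text{generator})\otimes\cube^1$ are again local equivalences; the general statement for $\cube^m$ then follows by induction. For $n\Algbrd$ the check should be immediate by degree counting on Steiner complexes. Strictly, tensoring raises the level, so the correct assertion is the exponential-ideal one: if $Z$ is local, then so are $[X,Z]$ and $\llbracket Y,Z\rrbracket$.

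I expect the main obstacle to be the compatibility of the Rezk maps with tensoring by $\cube^1$, i.e. the statement that $\cube^1\otimes\sigma^kE\to\cube^1\otimes C_k$ is a categorical equivalence. My first approach would be to reduce it, via the suspension/cylinder comparison, to showing that lax transformations between invertible cells are invertible. This would go by inducting on $k$ and using the explicit pushout description of $\cube^1\otimes\sigma X$ in terms of $\sigma(\cube^1\otimes X)$-type pieces.
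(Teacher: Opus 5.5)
Your architecture matches what the paper relies on through \cite{campionGrayTensorProduct2023} (cf.\ \cref{Gray_tensor_cubes_compact}): Day convolution on the dense monoidal subcategory $\Cube$, descent along $L\colon\PSh(\Cube)\to\infty\Algbrd$ by \cite[Proposition 2.2.1.9]{LurieHA}, and uniqueness from density. The gap is that the step you grant is the whole existence statement, not a routine check.

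Tensoring a cell-attachment presentation $w$ of $\cube^m$ with $\cube^n$ gives a diagram of strong Steiner pieces. Its \emph{strict} colimit is $\cube^{m+n}$, because the strict Gray product preserves strict colimits. But $L(w\otimes y(\cube^n))$ is the comparison map out of the colimit of that diagram \emph{in $\infty\Algbrd$}, and strict and weak colimits of gaunt categories differ in general. The paper's own example: the polygraph $C_2/\partial C_2$ is strictly $\rB^2\NN$ but weakly $\rB^2\Free_{\EE_2}(\ast)$.

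The missing ingredient is Campion's $(\infty,n)$-categorical pasting theorem. It says the strict polygraphic cell-attachment pushouts of a strong Steiner (more generally, torsion-free) complex are also pushouts in $\infty\Algbrd$. The example above shows that a loop-freeness hypothesis of this kind cannot be dropped. Combined with the closure of strong Steiner complexes under $\otimes$ (with basis $\{x\otimes y\}$), this theorem gives two things: compatibility of your generators $W$ with Day convolution, and agreement of the descended product with the Steiner product on $\infty\Gaunt^{\Ste}$. Your sentence ``both sides are colimits of cubes glued along the polygraphic Steiner decomposition'' assumes exactly this. So does describing $W$ by cell attachments in the first place.

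For the exponential-ideal claims, your outline is reasonable: reduce to tensoring generators with $\cube^1$, then induct on suspensions via the cylinder formula of \cref{pushout_formula_for_gray_cylinder_of_suspensions}, as in \cref{suspension_and_strongly_saturated_class}. Note, however, that the cylinder formula itself rests on the agreement of weak and strict tensor products on strong Steiner objects, so it comes logically after the step above. The Rezk maps therefore need an argument of the kind you sketch, but they are not where the main difficulty lies.

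Two smaller corrections:
\begin{itemize}
\item For $\infty\strCat$ there is nothing to check. It is the $0$-truncated part of $\infty\Algbrd$, and $\Map(Z,\Fun^{\lax}(X,Y))$ and $\Map(Z,\Fun^{\oplax}(X,Y))$ are equivalent to mapping spaces into $Y$, hence discrete when $Y$ is $0$-truncated.
\item For $n\Algbrd$, degree counting does not suffice. The map $\cube^1\otimes C_{n+1}\to\cube^1\otimes C_n$ collapses the two parallel $n$-cells of $\{0\}\otimes C_{n+1}$, so you need the same suspension induction. It starts from the case $n=0$, where it amounts to contractibility of the groupoidifications $|\cube^1\otimes C_k|$.
\end{itemize}
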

\begin{definition}
    We denote the Gray tensor product of $\infty\Algbrd$ and $\infty\Cat$ by $\otimes$, and the left and right internal homs by $\Fun^\oplax(\blank, \blank)$ and $\Fun^\lax(\blank, \blank)$\footnote{in the original version we used $[-, -]$ and $\llbracket-, -\rrbracket$.}, so we have a natural equivalence
    \[\Map_{\infty\Cat}(X, \Fun^\oplax(Y, Z))\simeq \Map_{\infty\Cat}(Y\otimes X, Z)\simeq \Map_{\infty\Cat}(Y, \Fun^\lax(X, Z))\] and similarly for $\infty\Algbrd$. 
    Beware that $X \otimes Y$ can be ambiguous up to univalent completion when $X, Y$ are $\infty$-categories; it is usually clear from the context if the tensor product is localized or not (there is no such ambiguity for the internal hom).
\end{definition}
\begin{remark}
    By definition, a full subcategory $\eC\subset \infty\Algbrd$ is an \emph{exponential ideal} if for any $Y\in \eC$ and $X\in \infty\Algbrd$, we have $\Fun^\lax(X, Y), \Fun^\oplax(X, Y)\in \eC$. When $L: \infty\Algbrd\to \eC$ is a localization, this means that $L$-equivalences are preserved by tensoring objects from both sides, or that there is a (necessarily unique) monoidal structure on $\eC$ promoting $L$ to a monoidal functor \cite[Proposition 2.2.1.9]{LurieHA}. If we denote the localized tensor product by $X \otimes^L Y\simeq L(X\otimes Y)$, the comparison map $X\otimes Y\to X\otimes^L Y$ is not an equivalence in general.
\end{remark}
\begin{remark}\label{Gray_tensor_cubes_compact}
    Because $\Cube\subset \infty\Algbrd$ is dense, promoting this inclusion to a strong monoidal functor is enough to characterize the biclosed monoidal structure on $\infty\Algbrd$; if it exists, it must be induced from the localization $\PSh(\Cube)\to \infty\Algbrd$, where $\PSh(\Cube)$ is endowed with the Day convolution monoidal structure. The same is true for further localizations. Since $\Cube$ is a set of compact generators, the Gray tensor product is compactly generated, i.e., $\infty\Cat^\otimes \in\Alg(\Pr^\sL_\omega)$.
\end{remark}
\begin{remark}\label{remark_Gray_cylinder_preserves_gauntness}
    The theorem in particular states that the weak and strict tensor products agree on the strong Steiner categories, i.e., the comparison map $X\otimes Y\to \tau_{\leq 0} (X\otimes Y)$ for $X, Y\in \infty\Algbrd$ (or $\infty\Cat$) is an equivalence when $X, Y$ are strong Steiner. 
    The monoidal category $\infty\Gaunt^{\Ste}$ in the theorem can be replaced by a larger category of \emph{torsion-free complexes}.
    The author does not know if gauntness is preserved in general by the Gray tensor product. However, Loubaton \cite[Theorem 4.3.3.26]{loubatonTheoryModels$infty2023} shows that the tensor product with the cubes preserves gauntness\footnote{Technically speaking, the theorem is proven for the monoidal structure transferred from Verity's monoidal structure on complicial sets, which was previously not known to be equivalent to Campion's monoidal structure. However, the theorem in particular proves that the transferred monoidal structure agrees with the strict one on $\Cube$, so by the characterization, the equivalence of the two monoidal structures follows.}.
\end{remark}
\begin{remark}
    The Gray tensor product is \emph{additive} on category levels, i.e., if $X$, $Y$ are $k$- and $l$-categories respectively, then $X\otimes Y$ is an $(k+l)$-category. To see this, consider the subcategory $\eC$ of pairs $(X, Y)\in \infty\Cat\otimes \infty\Cat$ (beware the use of $\otimes$ for presentable categories) such that $X\otimes Y$ is an $n$-category. $\eC$ is closed under colimits and contains $(\cube^k, \cube^l)$ for $k+l\leq n$, so it contains $\bigcup_{k+l\leq n}k\Cat\otimes l\Cat$. 
\end{remark}
\begin{remark}
    \cite{loubatonComplicialSetsModel2022}, \cite{loubatonTheoryModels$infty2023} also show the existence of the Gray tensor product by showing the equivalence between $\infty$-categories and complicial sets and allowing one to transfer the Gray tensor product of complicial sets \cite{verityGrayTensorProduct2023}. 
\end{remark}
The Gray tensor product is \emph{semiCartesian}, that is, its unit object is terminal. In this case, there is a natural transformation $X\otimes Y\to X\times Y$. The following lemma shows that this extends to a lax monoidal functor. This fact seems standard, but we include the proof because the author does not know of a proof in the literature:
\begin{lemma}
    Let $\eC^\otimes\to \Delta^\op$ be a semiCartesian monoidal category. Then there is a lax monoidal functor $\eC^\times\to \eC^\otimes$ from the Cartesian monoidal structure whose underlying functor is $\id_{\eC}$. 
\end{lemma}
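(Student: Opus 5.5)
The plan is to produce the lax monoidal structure on $\id_{\eC}\colon \eC^\times\to\eC^\otimes$ by first building an \emph{oplax} monoidal structure on $\id_{\eC}\colon \eC^\otimes\to \eC^\times$. That oplax structure comes from the universal property of Cartesian structures, applied in the opposite category. We then transfer it across the trivial adjunction $\id\dashv\id$ by doctrinal adjunction. The expected structure maps are the natural maps $X\otimes Y\to X\times Y$, with components the projections $X\otimes Y\to X\otimes 1\simeq X$ and $X\otimes Y\to 1\otimes Y\simeq Y$. The unit map is $1_\otimes\to \ast$, which is an equivalence because the unit is terminal. For an oplax functor $\eC^\otimes\to\eC^\times$ these point in the oplax direction, which is also the lax direction for a functor $\eC^\times\to\eC^\otimes$.

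\textbf{Step 1: the oplax structure via opposites.} An oplax monoidal functor $\eC^\otimes\to\eC^\times$ is the same as a lax monoidal functor $(\eC^\op)^{\otimes}\to(\eC^\op)^{\sqcup}$, where the target is the coCartesian structure on $\eC^\op$. Since the unit of $\eC^\otimes$ is terminal, the monoidal category $(\eC^\op)^\otimes$ is unital with initial unit. I then invoke Lurie's description of algebras in coCartesian structures over unital operads (the results of \cite[\S 2.4.3]{LurieHA}). It identifies lax monoidal functors $\eD^\otimes\to(\eC^\op)^\sqcup$, for $\eD^\otimes$ unital with initial unit, with ordinary functors $\eD\to\eC^\op$. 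The identity functor therefore lifts essentially uniquely. Unwinding the coCartesian structure shows that the structure map $X\sqcup^{\op} Y\to X\otimes^{\op}Y$ is induced by the two maps $X\simeq X\otimes 1\to X\otimes Y$ coming from the unit. In $\eC$ these are exactly the projections above.

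\textbf{Step 2: doctrinal adjunction.} The functor $\id\colon\eC^\otimes\to\eC^\times$ has right adjoint $\id$, with identity unit and counit. By doctrinal adjunction in the $\infty$-categorical setting, the right adjoint of an oplax monoidal left adjoint inherits a canonical lax monoidal structure. One way to see this is to pass to the corresponding maps of (co)Cartesian fibrations over $\Delta^\op$ and take relative adjoints, in the style of \cite[\S 7.3.2]{LurieHA} extended to oplax functors (Haugseng's treatment of lax/oplax monoidal adjunctions). Its structure maps are the mates of the oplax ones; with trivial unit and counit, the mates are the same maps $X\otimes Y\to X\times Y$. This yields the lax monoidal functor $\eC^\times\to\eC^\otimes$ with underlying functor $\id_{\eC}$.

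\textbf{Main obstacle.} I expect the hardest part to be Step 1. The coCartesian universal property must be applied with the unitality hypothesis stated correctly in the nonsymmetric ($\Delta^\op$-indexed) setting. This may require passing through the associative operad and the unitalization, to make sure the coCartesian statement applies to $\EE_1$-monoidal rather than symmetric monoidal categories. If that citation is not directly available, my fallback is to construct the oplax functor by hand. I would use the subcategory of $\eC^\otimes$ spanned by maps over $\Delta^\op$ that are built from inert maps and unit insertions, and compare its localization with the explicit model of $\eC^\times$ from \cite[Construction 2.4.1.4]{LurieHA}. The projections are exactly what the unit-insertion maps encode.
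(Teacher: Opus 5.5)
Your proposal is correct and follows the paper's route. You pass to opposites so that semiCartesianness of $\eC^\otimes$ becomes unitality of $(\eC^\op)^\otimes$, then apply \cite[Proposition 2.4.3.9]{LurieHA} to get the identity as a lax monoidal functor $(\eC^\op)^\otimes\to(\eC^\op)^\amalg$; the paper handles the nonsymmetric issue by passing to the symmetrization $((\eC^\op)^\otimes)^{\mathrm{sym}}$. The only difference is that you make explicit the doctrinal-adjunction step that turns this oplax structure on $\id\colon\eC^\otimes\to\eC^\times$ into a lax structure on $\id\colon\eC^\times\to\eC^\otimes$, which the paper compresses into the phrase ``is equivalent to''.
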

\begin{proof}
    One can construct the opposite monoidal category $(\eC^\op)^\otimes$ by postcomposing the involution $(\blank)^\op:\Cat \to \Cat$ with the Segal object $\Delta^\op\to \Cat$ it classifies. 
    Then $\eC^\otimes$ is semiCartesian if and only if $(\eC^\op)^\otimes$ is \emph{unital} in the sense of \cite[Definition 2.3.1.1]{LurieHA}. Let $(\blank)^\mathrm{sym}: \Mon(\Cat)\to \CMon(\Cat)$ be the left adjoint to the forgetful functor. Then constructing a lax monoidal functor $\eC^\times\to \eC^\otimes$ which is the identity on the underlying categories is equivalent to constructing a lax monoidal functor $((\eC^\op)^\otimes)^\mathrm{sym}\to (\eC^\op)^\amalg$ which is the identity on underlying categories. Now \cite[Proposition 2.4.3.9]{LurieHA} says that a lax monoidal functor from a unital operad to a coCartesian operad is determined by the underlying functor.
\end{proof}
\begin{remark}
    Because $\otimes$ is closed and semiCartesian, the comparison morphism $X\otimes Y\to X\times Y$ is an isomorphism when $X$ or $Y$ is a $0$-category. It follows by adjunction that $\Fun^\lax(X, Y)^{\leq 0}\simeq \Map_{\infty\Cat}(X, Y)\simeq \Fun^\oplax(X, Y)^{\leq 0}$. 
\end{remark}
\begin{remark}\label{remark_lax_monoidal_structure_on_id_from_cart_to_semicart}
    In particular, the identity functor promotes to a lax monoidal functor $(\infty\Algbrd, \times)\to (\infty\Algbrd, \otimes)$ and $(\infty\Cat, \times)\to (\infty\Cat, \otimes)$. This induces a fully faithful change-of-enrichment functor $\Algbrd(\infty\Algbrd)\to \Algbrd(\infty\Algbrd^\otimes)$ and $\iota: \infty\Cat\simeq (\infty\Cat)\mhy\Cat\to (\infty\Cat^\otimes)\mhy\Cat$, which moreover has a left adjoint.
\end{remark}
\begin{definition}
    A \emph{Gray category} is a category enriched in the Gray tensor product of $\infty\Cat$, i.e., an object of $\infty\Cat^\otimes\mhy\Cat$.
\end{definition}
We close the section by showing that some of the duality functors interact well with the Gray tensor product. 
\begin{proposition}\label{total_dual}
    The total dual functor promotes to a monoidal endofunctor $\infty\Algbrd^\otimes \to \infty\Algbrd^\otimes$. 
    The odd and even dual functors promote to antimonoidal endofunctors $\infty\Algbrd^{\otimes} \to \infty\Algbrd^{\otimes^\oplax}$.
    Similar statements hold for $\infty\Cat$, $\infty\strCat$, $\infty\Gaunt$. 
\end{proposition}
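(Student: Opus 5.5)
The plan is to reduce everything to the dense monoidal subcategory $\Cube$, and from there to the Steiner side, where the duals and the Gray tensor product are explicit. Since the Gray tensor product on $\infty\Algbrd$ is induced from Day convolution on $\PSh(\Cube)$ along the localization $\PSh(\Cube)\to\infty\Algbrd$ (\cref{Gray_tensor_cubes_compact}), a (anti)monoidal structure on a colimit-preserving endofunctor $F$ of $\infty\Algbrd$ is determined by one on its restriction to $\Cube$. This requires $F(\Cube)\subset\Cube$ and $F|_{\Cube}$ to be (anti)monoidal. The extension step is formal. A monoidal functor $\Cube^{\otimes}\to\Cube^{\otimes}$ (or to $\Cube^{\otimes^\oplax}$) extends uniquely to a colimit-preserving monoidal functor of Day convolutions on $\PSh(\Cube)$. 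Since each duality is an automorphism of $\infty\Algbrd$, it preserves the class of local equivalences, so this functor descends to the localization. The same argument then applies to $\infty\Cat$, $\infty\strCat$ and $\infty\Gaunt$: these are exponential ideals preserved by all dualities, so the monoidal structures localize compatibly.

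The key step is a computation on cubes. By \cref{cube_category_is_gaunt}, $\Cube$ is a $(1,1)$-category, and any monoidal structure on it is strict and is determined by the bifunctor. It therefore suffices to construct a natural isomorphism $D(X\otimes Y)\cong D(X)\otimes D(Y)$ for cubes (respectively $X^\op\otimes Y^\op\cong (Y\otimes X)^\op$ for the odd dual) and to check it is compatible with associativity and units on the nose. I would do this on Steiner complexes. The duality $D_\tau$ corresponds on $\adCh^\Ste$ to multiplying the degree-$k$ differential by $-1$ according to $\tau(k)$, or equivalently to rescaling each degree-$k$ chain group by a sign $\epsilon_k=\pm1$ so that positivity is preserved.

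For the total dual, $\partial\mapsto-\partial$ in all degrees, and I take the identity map $x\otimes y\mapsto x\otimes y$. Under the Koszul rule $\partial(x\otimes y)=(-1)^{|y|}\partial x\otimes y+x\otimes\partial y$, both sides acquire an overall factor $-1$, so this is an isomorphism of augmented directed complexes. It preserves the positivity monoids and bases, is natural, and is strictly associative and unital, so we get a monoidal functor.

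For the odd dual, I realize $\partial^{\op}$ as $\partial$ conjugated by $x\mapsto \epsilon(|x|)x$ with an appropriate sign pattern on basis elements. The candidate isomorphism is the swap $x\otimes y\mapsto (-1)^{|x||y|}y\otimes x$, corrected by signs. Since we must land in the positivity monoid, we cannot use global signs. Instead we exploit the fact that for cubes the basis $\{0,1,?\}^n$ is preserved setwise, and the map $\cube^m\otimes\cube^n\to\cube^{n+m}$ is the reversal of strings. I then verify that reversing strings intertwines the lax Koszul differential composed with $\op$ and the oplax one. The even dual follows because even $=$ total $\circ$ odd.

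I expect the main obstacle to be this sign bookkeeping for the odd dual. Concretely, one has to check on generators that string reversal sends the cube's domain/target formulas under $\op$ to those of the reversed tensor order. This amounts to computing, for a basis string with $?$'s at certain positions, how the sign $(-1)^{\#\{?\text{ after position } i\}}$ transforms under reversal together with the parity flip in odd degrees. Once this is done for $\cube^1\otimes\cube^1$ and checked to be strictly associative, the rest is formal.
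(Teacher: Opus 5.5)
Your framework is the paper's. The universal property of the Gray tensor product reduces everything to a check on Steiner complexes, and your total-dual argument (the identity on underlying graded groups, with both differentials acquiring an overall $-1$) is exactly the paper's computation. Working on $\Cube$ instead of $\infty\Gaunt^{\Ste}$ there is harmless, and so is obtaining the even dual as the composite of the total and odd duals.

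The problem is the odd-dual step, which you correctly identify as the crux but set up in a way that would fail.

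First, $D_\tau$ is not ``equivalently'' a rescaling of each $A_k$ by a sign that preserves positivity. Such a rescaling is a chain isomorphism $A\cong D_\tau A$, but it sends $A_k^+$ to $-A_k^+$ whenever the sign is $-1$. If it preserved positivity, every ADC would be self-dual.

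Second, neither of your candidate structure maps works:
\begin{itemize}
\item The signed swap $x\otimes y\mapsto(-1)^{|x||y|}y\otimes x$ does not preserve positivity.
\item Plain string reversal does not identify $(\lambda\cube^k)^\op$ with $\lambda\cube^k$. Already for $k=1$ it is the identity map, while $\partial^{\op}\underline{?}=\underline{0}-\underline{1}$. The defect sits in degree $1$ and is cured by permuting basis elements, not by sign corrections.
\end{itemize}

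What works is simpler and needs no cube-specific bookkeeping. The unsigned swap $a\otimes b\mapsto b\otimes a$ is an isomorphism of ADCs $(A\otimes B)^\op\cong B^\op\otimes A^\op$ for arbitrary $A,B$. Indeed, with $\partial^{\op}=(-1)^n\partial$ in degree $n$ and the paper's sign rule,
\begin{align*}
\partial^{(A\otimes B)^\op}(a\otimes b)&=(-1)^{|a|}\partial a\otimes b+(-1)^{|a|+|b|}a\otimes\partial b,\\
\partial^{B^\op\otimes A^\op}(b\otimes a)&=(-1)^{|a|+|b|}\partial b\otimes a+(-1)^{|a|}b\otimes\partial a,
\end{align*}
and these match under the swap. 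The same computation works for $\co$. The swap clearly respects augmentations and positivity.

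Cubes enter only to show that $\op$ preserves $\Cube$. This is why the paper restricts to $\Cube$: the odd dual does not preserve strong Steiner complexes in general. Concretely, $(\cube^1)^\op\cong\cube^1$ via $\underline{0}\leftrightarrow\underline{1}$, so $(\cube^n)^\op\cong\cube^n$ by antimonoidality. The resulting identification is string reversal composed with the letterwise swap $\underline{0}\leftrightarrow\underline{1}$; this is the correct version of your reversal map. The swap is natural on the nose, and coherence on $\Cube$ is automatic by gauntness. With this correction, the rest of your extension argument goes through.
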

\begin{proof}
    By the universal property of the Gray tensor product, it suffices to show the following:
    \begin{enumerate}
        \item The total dual $(\blank)^\circ$ restricts to $\infty\Gaunt^{\Ste}$ and promotes to a monoidal functor.
        \item The odd and even duals $(\blank)^\op$, $(\blank)^\co$ restrict to $\Cube$ and promote to antimonoidal functors\footnote{The previous version claimed that these duality functors restrict to strong Steiner complexes. However, Marcus Nicolas pointed out that this is false and provided the following counterexample: $\cube^2\cup_{\{10, 01\}}\cube^1$ is strong Steiner but its $\op$-dual $\cube^2\cup_{\{01, 10\}}\cube^1$ is not.}.
    \end{enumerate}
    These follow from the corresponding claims in the category $\adCh$ (see \cref{operations_on_ADCs}).
    To check the monoidal property in (1), note that $(A\otimes B)^\circ$ and $A^\circ\otimes B^\circ$ have the same underlying graded abelian group. The identification clearly respects the augmentation and the positive parts. For differentials, observe
    \begin{align*}
        \partial^{(A\otimes B)^\circ}(a\otimes b) = -\partial^{A\otimes B}(a\otimes b) &=-\bigl((-1)^{\deg b}(\partial^A a)\otimes b + a\otimes (\partial^B b)\bigr)\\
         &= (-1)^{\deg b}(\partial^{A^\circ} a)\otimes b + a \otimes (\partial^{B^\circ} b) = \partial^{A^\circ\otimes B^\circ}(a\otimes b).
    \end{align*}
    The proof of (2) is similar. 
\end{proof}

\section{The Gray suspension is the suspension}\label{section_Gray_suspension}
In this section, we prove the key lemma that connects the Gray tensor product and the self-enrichment $\infty\Algbrd \simeq \Algbrd(\infty\Algbrd)$. This will feed into \cref{suspension_is_tensor_S1} and ultimately become one of the main ingredients in the construction of the tensor product of categorical spectra. 
Another important byproduct is the pushout formula for $(\sigma X)\otimes \cube^1$; see \cref{pushout_formula_for_gray_cylinder_of_suspensions}.

\begin{lemma}\label{pushout_formula_unreduced_suspension}
    There are functors
    \[P, P^\circ: \infty\Algbrd\to \Fun(\Delta^1\times \Delta^1, \infty\Algbrd)\]
    whose values $P(X), P^\circ(X)$ are pushout squares with the following specifications (only the bottom arrows and the $2$-cells are not predetermined):
    \begin{equation}\label{equation_Gray_suspension}\begin{tikzcd}[column sep=huge]
        {\partial \cube^1\otimes X} \arrow[r, "\partial\cube^1\otimes (X\to \ast)"] \arrow[d, hook] & {\partial \cube^1} \arrow[d, hook] \\
        {\cube^1\otimes X} \arrow[r] & \sigma X , 
    \end{tikzcd} \quad
    \begin{tikzcd}
        {X\otimes \partial\cube^1} \arrow[r] \arrow[d, hook] & {\partial \cube^1} \arrow[d, hook] \\
        {X\otimes \cube^1} \arrow[r] & \sigma(X^\circ) .
    \end{tikzcd}\end{equation} 
\end{lemma}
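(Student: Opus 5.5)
Both sides of each square are colimit-preserving in $X$, so the plan is to check the pushouts on a dense generating subcategory and then extend by left Kan extension. Concretely, $X\mapsto \cube^1\otimes X$ and $X\mapsto \partial\cube^1\otimes X\simeq X\sqcup X$ preserve colimits because the Gray tensor product is biclosed. The functor $\sigma\colon \infty\Algbrd\to\infty\Algbrd_{\ast\ast}$ is a left adjoint. Hence the functor $X\mapsto \partial\cube^1\amalg_{\partial\cube^1\otimes X}(\cube^1\otimes X)$, viewed in $\infty\Algbrd_{\ast\ast}$ (bipointed by $\partial\cube^1$), also preserves colimits.

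I would therefore first construct a natural comparison map from this pushout to $\sigma X$ on strong Steiner generators, then show it is an equivalence there. The natural choice of dense subcategory is $\Theta$, or better the cubes $\Cube$, which are dense by Campion's result and are closed under $\cube^1\otimes(\blank)$. On such $X$ everything lives in $\infty\Gaunt^{\Ste}\simeq\adCh^\Ste$, where the Gray tensor product is computed by the tensor product of augmented directed complexes.

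The key computation takes place in $\adCh$. Writing $\lambda(\cube^1)$ as $\underline{?}\ZZ\to\underline0\ZZ\oplus\underline1\ZZ$, the complex $\lambda(\cube^1)\otimes\lambda X$ splits as $\underline0\otimes\lambda X\oplus\underline1\otimes\lambda X\oplus\underline{?}\otimes\lambda X$. Collapsing the two ends to points kills the $\underline0,\underline1$ summands except in degree $0$. What remains is $\underline{?}\otimes\lambda X$ shifted up by one, together with the two vertices, and this is exactly the Steiner complex of $\sigma X$.

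The sign needs care. The reversed Koszul rule gives $\partial(\underline{?}\otimes x)=(-1)^{\deg x}(\underline1-\underline0)\otimes x+\underline{?}\otimes\partial x$. After the quotient, the first term only survives for $\deg x=0$, so on the left factor no twist appears and we get $\sigma X$. For $X\otimes\cube^1$, the sign $(-1)^{\deg(?)}=-1$ hits the differential of every $x$. This negates all differentials, which is the total dual, giving $\sigma(X^\circ)$; \cref{total_dual} guarantees this is compatible on the Steiner side.

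Next I would check that the quotient by $\partial\cube^1\otimes X\to\partial\cube^1$ computed in $\adCh$ agrees with the pushout in $\infty\strCat$ and in $\infty\Algbrd$. I expect this step to be the main obstacle, since $\nu$ does not preserve colimits in general and homotopical pushouts of strict objects need not be strict. I would handle it by showing that the pushout is a pushout along a cofibration-like inclusion of polygraphs (attaching cells $\underline{?}\otimes x$ along boundaries). Such pushouts are preserved by $\infty\Gaunt^\Ste\hookrightarrow\infty\Algbrd$, by the torsion-free complex results of \cite{campionGrayTensorProduct2023}, extended to cubes via Loubaton (\cref{remark_Gray_cylinder_preserves_gauntness}).

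Naturality requires a bit more. I would produce the comparison maps functorially on $\Cube$: cube maps respect the decomposition $\underline{?}\otimes(\blank)$, so the identification is natural. Left Kan extension along the dense inclusion then gives functors $P,P^\circ$ landing in pushout squares. Since both the pushout and $\sigma$ (respectively $\sigma\circ(\blank)^\circ$) are colimit-preserving and agree on $\Cube$, the comparison is an equivalence for all $X\in\infty\Algbrd$. The statement for $\infty\Cat$ then follows by localizing.
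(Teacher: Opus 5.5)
Your Steiner-side computation is fine, including the sign bookkeeping, which gives a direct treatment of $P^\circ$; the paper instead deduces $P^\circ$ from $P$ via $P^\circ(X)\simeq P(X^\co)^\co$. The gap is exactly at the step you flag as the main obstacle. The principle you invoke is that pushouts along polygraph inclusions (attaching the cells $\underline{?}\otimes x$) are preserved by $\infty\Gaunt^\Ste\hookrightarrow\infty\Algbrd$. Campion's results do not say this, and it is false in this generality.

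The other leg $\partial\cube^1\otimes X\to\partial\cube^1$ is a collapse, not an inclusion. Quotients of polygraphs by subcomplexes differ strictly and weakly in general: $C_2/\partial C_2$, a cell inclusion pushed out along a collapse just like yours, is strictly $\rB^2\NN$ but weakly $\rB^2\Free_{\EE_2}(\ast)$, as the paper remarks. The pasting theorem only says that a torsion-free complex is the weak colimit of its own cell decomposition, and it covers gluings along inclusions such as wedge sums. Loubaton's result only says that $\cube^n\otimes(\blank)$ preserves gauntness. Neither shows that collapsing $\{0\}\otimes X$ and $\{1\}\otimes X$ in $\infty\Algbrd$ yields the strict quotient, even when that quotient is strong Steiner. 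The paper treats the analogous quotient $\cube^1\otimes\Ori^n/\{1\}\otimes\Ori^n\cong\Ori^{n+1}$ as needing a separate argument.

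What is missing is an actual proof that $P(\cube^n)$ is a weak pushout. The paper argues by induction on $n$:
\begin{enumerate}
\item If $P(X)$ is a pushout, so is $P(X)\otimes\cube^1$, because $(\blank)\otimes\cube^1$ preserves colimits.
\item This reduces goodness of $X\otimes\cube^1$ to the square with corners $\partial\cube^1\otimes\cube^1$, $\partial\cube^1$, $(\sigma X)\otimes\cube^1$, $\sigma(X\otimes\cube^1)$.
\item That square is factored through $\sigma X\vee\cube^1\sqcup\cube^1\vee\sigma X$. The upper piece is a weak pushout because wedge sums are both strict and weak.
\item For the lower piece, the paper builds a section square $(c)$ and proves $(c)$ is already a pushout in $\PSh(\Theta)$.
\item That check is done on $\theta$-points: every bipointed map $C_{n+1}\to(\sigma X)\otimes\cube^1$ factors through $\sigma(X\otimes\cube^1)$.
\end{enumerate}
You need some argument of this kind, comparing mapping spaces out of $\Theta$ directly, in place of the appeal to a general preservation principle.
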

\begin{remark}
    The diagrams restrict to $X\in \infty\Cat, \infty\strCat, \infty\Gaunt$. For strict categories, we could run the same proof with the strict tensor product, but by \cref{remark_Gray_cylinder_preserves_gauntness}, this coincides with the restriction of the lemma. 
    The left pushout formula is \cite[Lemma 3.8]{campionGrayTensorProduct2023} (based on the strict $\infty$-category case \cite[Cor. B.6.6]{araJointTranchesPour2020}), but we include a detailed proof for completeness. 
\end{remark}
\begin{proof}
    Notice the canonical equivalence $\sigma(X^\coop) = \sigma(X^\co)^\co$. From this, it follows that $P(X) = P(X^\co)^\co$, so it suffices to only construct the pushout square $P(X)$.

    We first analyze $P$, assuming such functors exist. First, $P(\emptyset)$ must be the identity square
    \[\begin{tikzcd}
        \emptyset\ar[r]\ar[d] & \partial\cube^1\ar[d]\\
        \emptyset\ar[r] & \partial\cube^1,
    \end{tikzcd}\]
    so $P$ must lift to functors $\infty\Algbrd\to \Fun(\Delta^1\times \Delta^1, \infty\Algbrd)_{P(\emptyset)/}$. 
    These are necessarily colimit-preserving because colimits in the codomain are computed componentwise (in each coslice category)\footnote{Recall that the colimit of $p: X\to Y_{y/}$ is (almost by definition) the colimit of the corresponding cone $\bar{p}: X^{\triangleleft}\to Y$.}. Let $\Fun^{\mathrm{po}}(\Delta^1\times \Delta^1, \infty\Algbrd)_{P(\emptyset)/}$ denote the (colimit-closed) full subcategory of the codomain consisting of pushout squares. 
    Notice that, once we have a functor $P|_{\Cube}: \Cube\to \Fun^{\mathrm{po}}(\Delta^1\times\Delta^1, \infty\Algbrd)_{P(\emptyset)/}$ with the components as specified, its unique colimit-preserving extension to $\PSh(\Cube)$ automatically descend to $\infty\Algbrd$ and meet the specifications:
    \begin{align*}
        \Fun(\Cube, \Fun(\Delta^1\times\Delta^1,\infty\Algbrd)_{P(\emptyset)/}) & \simeq \LFun(\PSh(\Cube), \Fun(\Delta^1\times\Delta^1,\infty\Algbrd)_{P(\emptyset)/}) \\
        &\hookleftarrow \LFun(\infty\Algbrd, \Fun(\Delta^1\times \Delta^1, \infty\Algbrd)_{P(\emptyset)/}),
    \end{align*}
    They descend to $\infty\Algbrd$ because each component of the square does (which is a consequence of the existence of the Gray tensor product), and the extended functor lands in pushout squares if the original functor does. 
    We construct $P|_\Cube$ in two steps: (1) construct the commutative squares valued in $\infty\Gaunt^\Ste\simeq \adCh^{\Ste}$ and (2) check that they are pushouts in $\infty\Algbrd$. Note that the (weak) tensor product of objects in $\infty\Gaunt^\Ste$, as well as the suspension and the total dual, agrees with the strict notion.
    \begin{enumerate}
        \item Since our definition of the Gray tensor product involves Steiner theory, so does the construction of the squares $P(\cube^n)$. 
        We define the functor $\tilde{P}: \adCh^\Ste\to \Fun(\Delta^1\times \Delta^1, \adCh^\Ste)$ by
        \[\tilde{P}(A) = \begin{tikzcd}
            \lambda(\partial\cube^1)\otimes A\ar[r]\ar[d] & \lambda(\partial\cube^1)\ar[d]\\
            \lambda\cube^1\otimes A \ar[r] & \sigma(A), 
        \end{tikzcd}\]
        with the bottom map (under the notation $\lambda\cube^1 = (e\ZZ\xrightarrow{\binom{-1}{1}} \bot\ZZ\oplus \top\ZZ)$)
        \begin{align*}
            A_q\otimes\bot\ZZ\oplus A_q\otimes \top\ZZ\oplus A_{q-1}\otimes e\ZZ\xrightarrow{(0, 0, 1)} A_{q-1} & \quad (q>0), \\
            A_0\otimes \bot \ZZ\oplus A_0\otimes \top\ZZ\xrightarrow{(\eps, \eps)} \bot\ZZ\oplus\bot\ZZ & \quad (q=0).
        \end{align*}
        It is straightforward to check that this defines a map of augmented directed complexes functorial in $A$, so it indeed defines a (strictly) commutative square 
        $\tilde{P}: \adCh^\Ste\to \Fun(\Delta^1\times \Delta^1, \adCh^\Ste)$. 
        Through the equivalence $\infty\Gaunt^\Ste\simeq \adCh^\Ste$ and restricting to $\Cube\cup\{\emptyset\}$, we obtain
        $P: \Cube\to \Fun(\Delta^1\times \Delta^1, \infty\Gaunt^\Ste)_{P(\emptyset)/}\subset \Fun(\Delta^1\times \Delta^1, \infty\Algbrd)_{P(\emptyset)/}$.
        \item It remains to show that $P(\cube^n)$ are pushout squares. We Kan-extend $P$ to $\infty\Algbrd$ and say that $X\in \infty\Algbrd$ is good if $P(X)$ is a pushout. The terminal category $\cube^0$ is clearly good. We show that if $X\in \infty\Gaunt^\Ste$ is good, then so is $X\otimes \cube^1$.
        By assumption, $P(X)\otimes \cube^1$ is a pushout, so we can factor $P(X\otimes \cube^1)$ as follows:
        \[\begin{tikzcd}
            {\partial \cube^1\otimes X\otimes \cube^1} \ar[r] \ar[d, hook]\ar[rd, phantom, "\ulcorner", very near end] & {\partial \cube^1\otimes \cube^1}\ar[r]\ar[d, hook] & {\partial \cube^1} \ar[d, hook] \\
            {\cube^1\otimes X\otimes \cube^1} \ar[r] & (\sigma X)\otimes \cube^1 \ar[r] & \sigma(X\otimes \cube^1) , 
        \end{tikzcd}\]
        The outer rectangle is a pushout if and only if the right square is. 
        Factor the right squares into $(a)+(b)$ in the next diagrams by factoring the left inclusions as follows ($0, 1$ (resp. $\bot, \top$) are the source and the sink of $\cube^1$ (resp. $\sigma X$); note that wedge sums are both strict and weak \cite[Theorem 2.31]{campion$inftyn$categoricalPastingTheorem2023}):
        \begin{align*}
            \partial\cube^1\otimes \cube^1 &\hookrightarrow \bigl((\sigma X\otimes \{0\})\vee (\{\top\}\otimes \cube^1)\bigr) \sqcup \bigl((\{\bot\}\otimes \cube^1)\vee (\sigma X\otimes \{1\})\bigr) \to (\sigma X)\otimes \cube^1;%
        \end{align*}
        \[\begin{tikzcd}
            &\partial\cube^1\otimes \cube^1 \ar[r]\ar[d]\ar[rd, phantom, "(a)"] & \ast\sqcup \ast \ar[d]\\
            \sigma X\sqcup \sigma X\ar[r]\ar[d]\ar[rd, phantom, "(c)"]&\sigma X\vee \cube^1 \sqcup \cube^1\vee\sigma X \ar[r]\ar[d]\ar[rd, phantom, "(b)"] & \sigma X\sqcup \sigma X \ar[d]\\
            \sigma(X\otimes \cube^1)\ar[r] & (\sigma X)\otimes  \cube^1 \ar[r] & \sigma(X\otimes \cube^1)
        \end{tikzcd}\]
        Since $(a)$ is a pushout, it suffices to construct a section $(c)$ of $(b)$ (i.e., the horizontal composite $(b)+(c)$ is the identity) and to show that $(c)$ is a pushout. Again, $(c)$ can be explicitly constructed in $\adCh^{\Ste}$; horizontal maps pick up the ``long'' hom of the wedge sums and the ``diagonal'' hom of the tensor product. 

        It suffices to show that $(c)$ is a pushout in $\PSh(\Theta)$, i.e., for $\theta\in \Theta$, the ($\Set$-valued) squares $\Map(\theta, (c))$ are pushouts in $\sS$.
        Separating the cases based on the map on vertices, it reduces to the case $\theta = \sigma\theta'$ and, moreover, to checking that $\Hom(\theta', X\otimes \cube^1)\cong \Hom_{\ast\ast}(\sigma \theta', \sigma(X\otimes \cube^1))\to \Hom_{\ast\ast}(\sigma\theta', (\sigma X)\otimes \cube^1)$ is a bijection. Writing $\theta'$ as a colimit of cells, we may assume $\theta'=C_n$. In this case, one explicitly checks (on the Steiner complexes side) that any bipointed map $C_{n+1}\to (\sigma X) \otimes \cube^1$ must factor through $\sigma(C_{n}\to X\otimes \cube^1)$.
    \end{enumerate}
\end{proof}
    By colimit-extending the pushout square $(c)$ in the same fashion as in the first part of the proof (i.e., in the category under the value for $X=\emptyset$), we get the following pushout formula: 
\begin{corollary}\label{pushout_formula_for_gray_cylinder_of_suspensions}
    There exist the following pushout squares that are functorial in $X\in \infty\Algbrd$:
     \[\begin{tikzcd}
        \sigma X\sqcup \sigma X \ar[r]\ar[d]& \sigma X\vee \cube^1\sqcup \cube^1\vee \sigma X \ar[d] \\
        \sigma(X\otimes \cube^1) \ar[r] & (\sigma X)\otimes \cube^1, 
     \end{tikzcd}\quad
     \begin{tikzcd}
        \sigma X\sqcup \sigma X \ar[r]\ar[d]& \sigma X\vee \cube^1\sqcup \cube^1\vee \sigma X \ar[d] \\
        \sigma((\cube^1)^\op\otimes X) \ar[r] & \cube^1\otimes (\sigma X).
     \end{tikzcd}\]
    The top arrows pick up the ``long'' hom between the source and the sink objects of the wedge sum. The bottom arrows pick up the ``diagonal'' hom category.
\end{corollary}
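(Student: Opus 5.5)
The plan is to mimic the first part of the proof of \cref{pushout_formula_unreduced_suspension}: build both squares on the dense subcategory $\Cube$ (together with $\emptyset$) inside $\infty\Gaunt^\Ste\simeq\adCh^\Ste$, check that they are pushouts there, and then colimit-extend. Both squares are functorial in $X$. All four corners are colimit-preserving functors of $X$ once we work in the coslice under the value at $X=\emptyset$. That value is the square with $\partial\cube^1\otimes\cube^1$-type data: $\sigma\emptyset=\ast\sqcup\ast$, the top-right corner becomes $\cube^1\sqcup\cube^1$, and the bottom corners are $\partial\cube^1$ and $\partial\cube^1\otimes\cube^1$. These corners are $\sigma X$, $\sigma X\vee\cube^1$, $\cube^1\vee\sigma X$, $\sigma(X\otimes\cube^1)$ and $(\sigma X)\otimes\cube^1$. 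They preserve colimits in the coslice because $\sigma$ does (as a functor to bipointed objects), wedge sums are computed as pushouts that are both strict and weak (\cite[Theorem 2.31]{campion$inftyn$categoricalPastingTheorem2023}), and $\otimes$ is biclosed. Since pushout squares form a colimit-closed full subcategory of $\Fun(\Delta^1\times\Delta^1,\infty\Algbrd)_{P(\emptyset)/}$, it suffices to produce the squares functorially on $\Cube$ and check they are pushouts there. Colimit extension along $\PSh(\Cube)\to\infty\Algbrd$ then gives the functorial squares, which descend because each corner does.

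For the left square, this is exactly square $(c)$ in the proof of \cref{pushout_formula_unreduced_suspension}. It was constructed there in $\adCh^\Ste$ for $X$ strong Steiner, in particular for $X=\cube^n$, with the top arrow picking out the long hom of the wedge sums and the bottom arrow the diagonal hom of $(\sigma X)\otimes\cube^1$. It was also shown there to be a pushout in $\PSh(\Theta)$, hence in $\infty\Algbrd$. The key step of that check was that every bipointed map $C_{n+1}\to(\sigma X)\otimes\cube^1$ factors through $\sigma(X\otimes\cube^1)$. Its construction is natural in maps of strong Steiner complexes, so restricting to $\Cube$ gives the required functor, and the extension above finishes this case.

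For the right square, I would reduce to the left one by applying the duality involutions. Using \cref{total_dual}, an odd or even dual is antimonoidal, so it turns $(\sigma X)\otimes\cube^1$ into $(\cube^1)^{\dagger}\otimes(\sigma X)^{\dagger}$ for the appropriate dual $\dagger$. The interval is self-dual up to swapping endpoints, and $\sigma(Y)^\op$, $\sigma(Y)^\co$ are computed from the identity $\sigma(Y^\coop)=\sigma(Y^\co)^\co$. I would choose $\dagger$ so that $\sigma X$ is sent to $\sigma(X')$ with its source and sink preserved. The co-dual does this: $(\sigma Y)^\co=\sigma(Y^\op)$ keeps $\bot,\top$ in place while flipping $1$-cells inside the hom. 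Applying $(\blank)^\co$ to the left square for $Y=X^\op$ gives
\[(\sigma X^\op\otimes\cube^1)^\co\simeq(\cube^1)^\co\otimes\sigma(X^{\op\,\co})\ldots\]
Making the indices consistent is the one bookkeeping step. I would instead substitute $Y=X^{\op}$ into the left square, apply $(\blank)^\co$, and use $(\sigma(X^\op\otimes\cube^1))^\co=\sigma((X^\op\otimes\cube^1)^\op)=\sigma((\cube^1)^\op\otimes X)$ by antimonoidality of $\op$. Since $\cube^1$ is fixed by $\co$, the bottom right becomes $(\cube^1)\otimes(\sigma X)$ and the bottom left becomes $\sigma((\cube^1)^\op\otimes X)$, exactly as stated. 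The wedge sums are preserved because $\co$ fixes $1$-cells, and the involution preserves pushouts. The main obstacle is carefully tracking which dual fixes the bipointing and which reverses the tensor order. If the $\co$ route fails on the endpoints, the fallback is to rerun the $\adCh^\Ste$ construction of $(c)$ directly with $\lambda\cube^1$ on the left, using the Koszul signs.
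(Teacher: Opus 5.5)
Your proposal is correct and follows the paper's own argument. The left square is the colimit-extension of square $(c)$ from the proof of \cref{pushout_formula_unreduced_suspension}, taken in the coslice under its value at $X=\emptyset$. The right square comes from substituting $X^\op$ and applying $(\blank)^\co$, using $(\sigma A)^\co=\sigma(A^\op)$ and the antimonoidality of the odd and even duals.

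Two minor slips do not affect the argument. At $X=\emptyset$ the top-right corner is $\ast\sqcup\cube^1\sqcup\cube^1\sqcup\ast$, not $\cube^1\sqcup\cube^1$. Your first displayed attempt, involving $\sigma(X^{\op\,\co})$, should be discarded in favour of the corrected computation that follows it.
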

Here $(\cube^1)^\op\cong \cube^1$ indicates that, with the vertex labels compatible with the other entries of the diagram, the $1$-morphism goes from $1$ to $0$. The right pushout can be obtained from the left by replacing $X$ by $X^\op$ and using $A^\op\otimes B^\op = (B\otimes A)^\op$ and $(\sigma A)^\co = \sigma(A^\op)$. 
For later use, we record a generalization of the lemma for higher suspensions and a consequence on iterated hom categories.
\begin{corollary}
    Let $k\geq 0$ be an integer. There is the following pushout square that is functorial in $X\in\infty\Algbrd$.
    \[\begin{tikzcd}
        \partial C_k\otimes X \ar[r, two heads]\ar[d]\ar[rd, phantom, "\ulcorner" very near end] & \partial C_k \ar[d]\\
        C_k\otimes X\ar[r, two heads] & \sigma^k X
    \end{tikzcd}\]
\end{corollary}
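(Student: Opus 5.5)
I would argue by induction on $k$, using \cref{pushout_formula_unreduced_suspension} as the inductive step. The case $k=0$ is trivial: $\partial C_0=\emptyset$ and $C_0=\ast$, so the square has $\emptyset\otimes X=\emptyset$ in the top left and $X\to \sigma^0X=X$ along the bottom. The case $k=1$ is exactly the left square $P(X)$ of \cref{pushout_formula_unreduced_suspension}, since $C_1=\cube^1$ and $\partial C_1=\partial\cube^1$. As in that proof, both sides are colimit-preserving in $X$ in the coslice under the value at $X=\emptyset$. So it suffices to produce the square functorially on $\Cube$ (or just on $\infty\Gaunt^\Ste$) and check the pushout property there. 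In fact I would rather build it by pasting, which gives functoriality for free.

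For the inductive step, write $C_{k+1}=\sigma C_k$. By \cref{pushout_formula_unreduced_suspension} applied to $C_k$ and to $C_k\otimes X$, together with associativity of $\otimes$, there are pushouts
\[\sigma(C_k)\otimes X \simeq \partial\cube^1 \amalg_{\partial\cube^1\otimes C_k\otimes X} (\cube^1\otimes C_k\otimes X),\]
and similarly for $\sigma(\partial C_k)\otimes X$; here $\partial C_{k+1}=\sigma(\partial C_k)$ for $k\ge1$ (and $\partial C_1=\partial\cube^1$). The target $\sigma^{k+1}X=\sigma(\sigma^kX)$ is, by the $k=1$ case, the pushout $\partial\cube^1\amalg_{\partial\cube^1\otimes\sigma^kX}(\cube^1\otimes\sigma^kX)$. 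Apply $\cube^1\otimes(\blank)$ and $\partial\cube^1\otimes(\blank)$ to the inductive square for $k$; both functors preserve colimits since $\otimes$ is biclosed. This gives a cube of pushouts. Collapsing the $\partial\cube^1$-ends then expresses $\sigma^{k+1}X$ as the pushout of $C_{k+1}\otimes X\leftarrow \partial C_{k+1}\otimes X\to \partial C_{k+1}$. The argument is a pasting of pushouts combined with commuting colimits ("pushouts of pushouts").

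An alternative that avoids the cube is to use the Steiner model directly. Define the square on $\adCh^\Ste$ by the map $\lambda C_k\otimes A\to \sigma^kA$ that sends $e_k\otimes a\mapsto a$ and kills the other basis elements apart from the augmentation, exactly as in step (1) of the lemma. One then checks pushout-ness on $\Cube$ by the same $\Theta$-probing argument. I would use this only if the pasting argument runs into trouble with the identification of $\partial C_{k+1}$.

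The main obstacle is the bookkeeping in the pasting argument. One must verify that the map $\partial C_{k+1}\otimes X\to\partial C_{k+1}$ obtained by pasting is the expected collapse and that the colimit diagrams really commute. In particular, $\partial C_{k+1}=\sigma(\partial C_k)$ only holds for $k\geq1$, so the case $k=0\to1$ must be handled separately; that case is just the lemma itself. I also need that $\sigma$ applied to the square for $k$ relates correctly to $\cube^1\otimes(\blank)$ of it. This is where the left-handed formula (tensoring $\cube^1$ on the left, with no dual twist) from \cref{pushout_formula_unreduced_suspension} is essential, because it avoids the $(\blank)^\circ$ appearing in $P^\circ$.
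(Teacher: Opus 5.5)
Your route differs from the paper's, and it works once you correct one displayed identity.

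\textbf{Comparison with the paper.} The paper peels off the \emph{innermost} suspension. It substitutes $\sigma Y\simeq(\cube^1\otimes Y)\amalg_{\partial\cube^1\otimes Y}\partial\cube^1$ into the square for $k$, giving a punctured cube. It forms the back face first, obtaining $\partial C_{k+1}$. It then factors the top map through $\partial C_{k+1}\otimes Y$ by an extra pushout along $(\partial C_k\otimes\cube^1\to\partial C_k)\otimes Y$. Finally it recognizes $C_{k+1}\simeq(C_k\otimes\cube^1)\amalg_{\partial C_k\otimes\cube^1}\partial C_k$, which is the inductive hypothesis at $X=\cube^1$.

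You peel off the \emph{outermost} suspension, $\sigma^{k+1}X=\sigma(\sigma^kX)$. You use only the left-handed formula of \cref{pushout_formula_unreduced_suspension}, applied to $\sigma^kX$, $C_k$ and $\partial C_k$; naturality of $P$ supplies the comparison maps. The whole step then becomes a single interchange of colimits in a $3\times 3$ grid, with no auxiliary factorization. That is arguably cleaner than the paper's argument, and both give functoriality in $X$ for free.

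\textbf{The error to fix.} The identity you display is false. Tensoring $\sigma(C_k)\simeq\partial\cube^1\amalg_{\partial\cube^1\otimes C_k}(\cube^1\otimes C_k)$ with $X$ gives
\[
(\partial\cube^1\otimes X)\amalg_{\partial\cube^1\otimes C_k\otimes X}(\cube^1\otimes C_k\otimes X).
\]
What you wrote is $\sigma(C_k\otimes X)$ instead. These already differ for $k=0$, where they are $\cube^1\otimes X$ and $\sigma X$ respectively. For the same reason, applying $\sigma$ to the square for $k$ gives a true but useless square with corners $\sigma(\partial C_k\otimes X)$ and $\sigma(C_k\otimes X)$. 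So the worry in your last paragraph should be dropped rather than resolved.

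\textbf{Correct bookkeeping.} Use the grid
\[\begin{tikzcd}[column sep=small]
    \partial\cube^1 & \partial\cube^1\otimes\partial C_k \ar[l]\ar[r] & \cube^1\otimes\partial C_k \\
    \partial\cube^1\otimes X \ar[u]\ar[d, equal] & \partial\cube^1\otimes\partial C_k\otimes X \ar[l]\ar[r]\ar[u]\ar[d] & \cube^1\otimes\partial C_k\otimes X \ar[u]\ar[d] \\
    \partial\cube^1\otimes X & \partial\cube^1\otimes C_k\otimes X \ar[l]\ar[r] & \cube^1\otimes C_k\otimes X
\end{tikzcd}\]
By the $k=1$ case and cocontinuity of $(\blank)\otimes X$, its rows have pushouts $\partial C_{k+1}=\sigma(\partial C_k)$, $\partial C_{k+1}\otimes X$ and $C_{k+1}\otimes X$. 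Its columns have the following pushouts:
\begin{enumerate}
  \item The first column is $\partial\cube^1\leftarrow\partial\cube^1\otimes X\xrightarrow{=}\partial\cube^1\otimes X$, with pushout $\partial\cube^1$. This is exactly your ``collapsing the ends''.
  \item The second column is $\partial\cube^1\otimes(\blank)$ applied to the inductive square, with pushout $\partial\cube^1\otimes\sigma^kX$.
  \item The third column is $\cube^1\otimes(\blank)$ applied to the inductive square, with pushout $\cube^1\otimes\sigma^kX$.
\end{enumerate}
The $k=1$ case applied to $\sigma^kX$ then identifies the total colimit with $\sigma^{k+1}X$.

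Incidentally, $\sigma(\partial C_0)=\sigma(\emptyset)=\partial\cube^1=\partial C_1$. So this step already runs from $k=0$, and the separate treatment you anticipated is unnecessary.
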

\begin{proof}
    The case $k=0$ is obvious, and $k=1$ is the first diagram of the lemma. We proceed by induction, so assume the conclusion for some $k\geq 1$. Plugging 
    $X=\sigma Y = (\cube^1\otimes Y)\cup_{\partial \cube^1\otimes Y}\partial \cube^1$ into the induction hypothesis, we see that $\sigma^{k+1} Y$ is the colimit of the following punctured cube diagram:
    \[\begin{tikzcd}[column sep=small, row sep=small]
        & \partial C_k\otimes \partial\cube^1\ar[rr]\ar[dd] && \partial C_k\ar[dd, dashed]\\
        \partial C_k\otimes \partial \cube^1\otimes\ar[rr, crossing over]\ar[ru]\ar[dd] Y && \partial C_k\otimes \cube^1\otimes Y \ar[ru] &\\
        & C_k\otimes \partial\cube^1\ar[rr, dashed] && \sigma^{k+1} Y\\
        C_k\otimes \partial\cube^1\otimes Y\ar[rr]\ar[ru] && C_k\otimes \cube^1\otimes Y\ar[ru, dashed]\ar[from=uu, crossing over]&
    \end{tikzcd}\]
    The pushout of the span in the back face is $(C_k\sqcup C_k)\cup_{(\partial C_k\sqcup \partial C_k)} \partial C_k\simeq \partial C_{k+1}$, so we have the following pushout diagram:
    \[\begin{tikzcd}
        \left((C_k\otimes \partial \cube^1)\cup_{(\partial C_k\otimes \partial\cube^1)} (\partial C_k\otimes\cube^1)\right)\otimes Y  \ar[r]\ar[d] & \partial C_{k+1} \ar[d]\\
        C_k\otimes \cube^1\otimes Y \ar[r] & \sigma^{k+1} Y
    \end{tikzcd}\]
    The top arrow factors through $\partial C_{k+1}\otimes Y$ by pushing out along $(\partial C_k\otimes \cube^1\twoheadrightarrow\partial C_k)\otimes Y$ (note that the spans in the front and the back faces are, up to the factor of $Y$, different only by the $\cube^1$ in the entry $\partial C_k\otimes \cube^1\otimes Y$). Since $(C_k\otimes \cube^1)\cup_{\partial C_k\otimes\cube^1}\partial C_k \simeq C_{k+1}$, the pushout along the same map factors the bottom map as $C_k\otimes \cube^1\otimes Y\to C_{k+1}\otimes Y\to \sigma^{k+1} Y$.
\end{proof}
As $\sigma$ preserves contractible colimits, there is an adjunction $\begin{tikzcd}
    \infty\Algbrd_{A/} \ar[r, shift left=1ex, "\sigma"] & \infty\Algbrd_{\sigma(A)/} \ar[l, shift left=.5ex, "\omega"] \ar[l, phantom, shift right = .2ex, "\scriptscriptstyle\boldsymbol{\bot}"]\end{tikzcd}$
    for any $A\in\infty\Algbrd$. By iteration, we have 
    \[\sigma^k: \begin{tikzcd}
    \infty\Algbrd \ar[r, shift left=1ex, "\sigma"] & \infty\Algbrd_{\partial C_1/} \ar[l, shift left=.5ex, "\Hom"] \ar[l, phantom, shift right = .2ex, "\scriptscriptstyle\boldsymbol{\bot}"] \ar[r, shift left=1ex, "\sigma"] & \cdots \ar[l, shift left=.5ex, "\Hom"] \ar[l, phantom, shift right = .2ex, "\scriptscriptstyle\boldsymbol{\bot}"]\ar[r, shift left=1ex, "\sigma"] & \infty\Algbrd_{\partial C_k /} \ar[l, shift left=.5ex, "\Hom"] \ar[l, phantom, shift right = .2ex, "\scriptscriptstyle\boldsymbol{\bot}"]
    \end{tikzcd}: \omega^k.\]
    For a parallel pair of $(k-1)$-morphisms $(s_{k-1}, t_{k-1}): \partial C_k\to X$, let $X(s_{k-1}, t_{k-1})$ denote the mapping category $\omega^k(X, s_{k-1}, t_{k-1})$. By adjunction, we have the following pullback square:
    \[\begin{tikzcd}
        \Hom(Y, X(s_{k-1}, t_{k-1}))\ar[r]\ar[d] & \Hom(\sigma^k Y, X)\ar[d] \\
        \ast\ar[r] & \Hom(\partial C_k, X)
    \end{tikzcd}.\] 
    Composing with the pullback square of the corollary (after applying $\Hom(\blank, X)$), we obtain the following:
\begin{corollary}\label{pullback_formula_for_hom_category}
    Let $X\in\infty\Algbrd$ and $(s_{k-1}, t_{k-1}): \partial C_{k}\to X$ be parallel $(k-1)$-morphisms. Then we have the pullback square 
    \[\begin{tikzcd}
        X(s_{k-1}, t_{k-1}) \ar[r]\ar[d]\ar[rd, phantom, "\lrcorner" very near start] & {\Fun^\oplax(C_k, X)}\ar[d]\\
        \ast\ar[r, "{(s_{k-1}, t_{k-1})}"] & {\Fun^\oplax(\partial C_k, X)}.
    \end{tikzcd}\]
\end{corollary}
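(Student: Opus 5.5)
The pullback square is obtained by a Yoneda argument that combines the adjunction pullback just displayed with the pushout square of the preceding corollary. Fix a test object $Y\in\infty\Algbrd$. We show that, after applying $\Map(Y,\blank)$, the proposed square becomes a pullback of groupoids, naturally in $Y$. Since $\Map(Y,\blank)$ for all $Y$ jointly detect limits, this proves the claim.

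First, rewrite the three nontrivial corners using the defining adjunction of $\Fun^\oplax$:
\[\Map(Y,\Fun^\oplax(C_k,X))\simeq\Map(C_k\otimes Y,X)\quad\text{and}\quad \Map(Y,\Fun^\oplax(\partial C_k,X))\simeq\Map(\partial C_k\otimes Y,X).\]
The point $(s_{k-1},t_{k-1})$ corresponds, under $\Map(Y,\Fun^\oplax(\partial C_k,X))\simeq\Map(\partial C_k\otimes Y,X)$, to the composite $\partial C_k\otimes Y\twoheadrightarrow\partial C_k\xrightarrow{(s,t)}X$. This identification uses that $\Fun^\oplax(\partial C_k,X)$ is a $0$-categorical object only up to the comparison $\partial C_k\otimes Y\to\partial C_k$ induced by $Y\to\ast$. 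Equivalently, the map $\ast\to\Fun^\oplax(\partial C_k,X)$ is adjoint to $\partial C_k\otimes\ast=\partial C_k\to X$, and precomposing with $Y\to\ast$ gives exactly that composite.

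Next, apply $\Map(\blank,X)$ to the pushout square of the previous corollary:
\[\begin{tikzcd}
    \partial C_k\otimes Y \ar[r, two heads]\ar[d] & \partial C_k \ar[d]\\
    C_k\otimes Y\ar[r, two heads] & \sigma^k Y
\end{tikzcd}\]
This produces a pullback square of groupoids, with $\Map(\sigma^kY,X)$ in the corner. Paste it with the adjunction pullback displayed before the statement, whose bottom row is $\ast\to\Map(\partial C_k,X)$ at the point $(s,t)$. The composite rectangle is again a pullback, which gives
\[\Map(Y,X(s,t))\simeq\Map(C_k\otimes Y,X)\times_{\Map(\partial C_k\otimes Y,X)}\{(s,t)\circ\pi\}.\]
By the identifications above, the right-hand side is precisely $\Map(Y,\blank)$ applied to the proposed square. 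The naturality of all these equivalences in $Y$ holds because the corollary's square is functorial in $Y$ and the hom–tensor adjunctions are natural.

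The main obstacle is bookkeeping rather than any new input. The key check is that the map $X(s,t)\to\Fun^\oplax(C_k,X)$ obtained this way is the canonical one, namely the transpose of the counit-like map $C_k\otimes X(s,t)\to\sigma^kX(s,t)\to X$. The other check is that the side conventions in $\Fun^\oplax$ match $C_k\otimes Y$, with $C_k$ on the left. Since the corollary places $C_k$ on the left factor and $\Map(Y\otimes X,Z)\simeq\Map(X,\Fun^\oplax(Y,Z))$ puts the tensored-on object on the left, the conventions line up, so no duality twist appears, in contrast with $P^\circ$.
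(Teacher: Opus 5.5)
Your proof is correct and is essentially the paper's own argument. The paper also applies $\Map(\blank,X)$ to the pushout square for $\sigma^k Y$, pastes the result with the pullback coming from $\sigma^k\dashv\omega^k$, and concludes by Yoneda via $\Map(Y,\Fun^\oplax(A,X))\simeq\Map(A\otimes Y,X)$. Your aside that $\Fun^\oplax(\partial C_k,X)$ is ``$0$-categorical'' is spurious and can be dropped; the naturality of the adjunction, as in your following sentence, is what identifies the basepoint.
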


\chapter{Categorical spectra}
\label{chapter_categorical_spectra}
In this chapter, we introduce our main object of study: \emph{categorical spectra}. 
We define the category $\CatSp$ of categorical spectra in a completely analogous way to that of spectra, i.e., as the limit along the sequence of the \emph{loop} endofunctors on $\infty\Cat_\ast$, instead of $\sS_\ast$:
\[\CatSp\coloneqq \lim(\cdots \xrightarrow{\Omega}\infty\Cat_\ast\xrightarrow{\Omega}\infty\Cat_\ast).\]
Roughly speaking, $\Omega$ takes $(X, x)$ to the endomorphism category $\End_X(x)$ of the basepoint. 
In \cref{section_loop_suspension}, we start by studying the loop-suspension adjunction $\Sigma\dashv \Omega$ for pointed $\infty$-categories.
This is the \emph{reduced} version of $\sigma\dashv \Hom$ from the previous chapter, i.e., $\Sigma X = (\sigma X)/(\sigma \ast)$. 
Unlike in classical algebraic topology, $\Sigma$ is not equivalent to $\sigma$; the latter is simpler because of the absence of morphisms in one direction. 
Collapsing the basepoints breaks this feature and, in particular, does not preserve gauntness in general. In other words, it is essentially an operation for weak $\infty$-categories. 
As before, we first define the suspension using enriched category theory and connect it to the Gray tensor product, observing that the loop-hom adjunction is equivalent to the tensor-hom adjunction for $\vS^1$ with respect to the \emph{Gray smash product}, where $\vS^1$ is the \emph{directed circle}, the category freely generated by a loop on a basepoint.
Analogously to May's recognition principle, the delooping hypothesis states that the $n$-fold loop construction provides an equivalence between $\EE_n$-monoidal $\infty$-categories and \emph{$n$-connective} $\infty$-categories, i.e., those $\infty$-categories that are trivial up through the $(n-1)$-th categorical level. This will be made precise in \cref{section_connectivity_delooping_hypothesis}. 

We then define categorical spectra in \cref{section_categorical_spectra}. By an argument similar to that for infinite loop objects, we show that it naturally fits into the following pullback square: 
\[\begin{tikzcd}
    \CMon^\gp(\sS)\ar[r, hook, "\rB^\infty"]\ar[d, hook] & \Sp \ar[d, hook] \\
    \CMon(\infty\Cat)\ar[r, hook, "\rB^\infty"] & \CatSp.
\end{tikzcd}\]
The left column can be thought of as the \emph{connective part} of the right. We will see that the horizontal arrows have both left and right adjoints: the right adjoint takes the maximal Picard subgroupoid, and the left adjoint inverts all stable cells. These offer ways to extract information from categorical spectra in a classical form. 
Iteration of categorification often provides examples of categorical spectra. We try to give a unified description in \cref{section_categorical_spectra_stable_properties}. 
We end the chapter by studying finiteness properties of categorical spectra in \cref{section_categorical_spectra_finiteness_properties}. We can make definitions analogous to various finiteness properties of spectra. They are all equivalent for spectra, but for categorical spectra the situation is subtler. At this point, we only treat the formal implications between them.

This chapter largely overlaps with \cite[Chapter 13]{stefanichHigherQuasicoherentSheaves2021}. We refer the reader there for further details and different perspectives. 

\section{Loop and suspension}\label{section_loop_suspension}
Let $\sV$ be a presentable Cartesian closed category (whose default monoidal structure is the Cartesian one). We denote the category of pointed objects $\ast\xrightarrow{x} X$ (which we write $(X, x)$, $x\in X$) by $\sV_\ast$. Notice $\sV_\ast\coloneqq \sV_{\ast/} \simeq \Alg_{\EE_0}(\sV)\simeq \sS_\ast\otimes \sV$. We will omit the basepoint from the notation when there is no risk of confusion. 
\begin{definition}
    Notice that the underlying groupoid functor $\ob: \Algbrd(\sV)_\ast\simeq \Algbrd(\sV)\times_{\sS}\sS_\ast\to \sS_\ast$ is a Cartesian fibration. 
    The \emph{algebroid delooping} functor is the inclusion of the fiber over the initial object $\ast$: 
    \[\rB'_{\sV}: \Mon(\sV)\simeq \Alg(\sV)= \Algbrd_{\ast}(\sV)\hookrightarrow \Algbrd(\sV)_\ast,\]
    and the \emph{delooping} functor is its univalent completion $\rB_{\sV}: \Mon(\sV)\xrightarrow{\rB'_{\sV}}\Algbrd(\sV)_\ast\xrightarrow{L^\mathrm{uni}} \sV\mhy\Cat_\ast$.
    The \emph{loop} functor is its right adjoint (i.e., the Cartesian transport of $\ob$ along the basepoint map $\ast\to X$), $\Omega_{\sV}: \Algbrd(\sV)_\ast\to \Alg(\sV)$, or its restriction to $\sV\mhy\Cat_\ast$. We will omit $\sV$ in the subscript when it is not confusing. 
\end{definition}
\begin{definition}
    By abuse of notation, we continue to denote by $\Omega_{\sV}$ the functor that returns the underlying pointed (resp.\ unpointed) object of the loop:
    \[(\sV\mhy\Cat_\ast\hookrightarrow)\Algbrd(\sV)_\ast\xrightarrow{\Omega_{\sV}}\Alg(\sV)\to \Alg_{\EE_0}(\sV)\simeq \sV_{\ast} (\to \sV).\]
    These functors have left adjoints, called the \emph{suspension}, given by the (partial) composite
    \[\bigl(\sV\xrightarrow{(\blank)_+}\bigr) \sV_{\ast}\xrightarrow{\Free_{\EE_1/\EE_0}}\Alg(\sV)\xrightarrow{\rB'}\Algbrd(\sV)_{\ast}\bigl(\xrightarrow{L^\mathrm{uni}} \sV\mhy\Cat_\ast\bigr);\]
    we will use the notation $\Sigma'=\rB'\circ \Free$, $\Sigma = \rB\circ\Free$ and $\Sigma'_+$, $\Sigma_+$ for their unpointed versions.
\end{definition}

\begin{remark}\label{B_is_fully_faithful}
    $\Omega_{\sV}: \sV\mhy\Cat_\ast\to \Alg(\sV)_\ast$ depends functorially on the monoidal category $\sV$ via change-of-enrichment \cite[Remark 13.1.7]{stefanichHigherQuasicoherentSheaves2021}.
    The loop of a pointed $\sV$-algebroid $(X, x)$ is the object of endomorphisms $\End_X(x)$ endowed with a monoid structure by composition (so the basepoint is $\id_x$).
    In particular, the loop functor $\Omega_{\sV}: \Algbrd(\sV)_{\ast}\to \Alg(\sV)_\ast$ inverts fully faithful morphisms, so it factors through the univalent completion $\Algbrd(\sV)_\ast\to \sV\mhy\Cat_\ast$. It follows that $\rB$ is fully faithful: the unit $\id\to \Omega\rB\simeq \Omega L^{\mathrm{uni}} \rB' \simeq \Omega \rB'$ is an equivalence because $\rB'$ is fully faithful. 
\end{remark}
The following lemma relates the reduced and unreduced suspensions:

\begin{lemma}
    The composite $\sV_\ast\xrightarrow{\sigma}\Algbrd(\sV)_{\sigma(\ast)/} \xrightarrow{\cofib} \Algbrd(\sV)_\ast\bigl(\xrightarrow{L^{\mathrm{uni}}} \sV\mhy\Cat_\ast\bigr)$, 
    where the second arrow is the cobase change along $\sigma(\ast)\to \ast$,
    is equivalent to the reduced suspension $\Sigma'$ (resp.\ $\Sigma$).
\end{lemma}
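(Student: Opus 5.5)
We compare right adjoints. Both functors are left adjoints out of $\sV_\ast$: the reduced suspension $\Sigma' = \rB'\circ \Free_{\EE_1/\EE_0}$ has right adjoint $\Omega_{\sV}$ (followed by the forgetful functor to $\sV_\ast$). The composite $\cofib\circ\sigma$ is a composite of left adjoints, so its right adjoint is the composite of the right adjoints in reverse order. The plan is to identify the two right adjoints $\Algbrd(\sV)_\ast\to \sV_\ast$ naturally. The univalent version then follows by postcomposing both sides with $L^{\mathrm{uni}}$.

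First we make the adjunctions precise. Cobase change along $\sigma(\ast)\to\ast$, as a functor $\Algbrd(\sV)_{\sigma(\ast)/}\to\Algbrd(\sV)_{\ast}$, is left adjoint to restriction along $\sigma(\ast)\to \ast$. This restriction sends $(X,x)$ to $X$ bipointed by $(x,x)$, with the identity $1$-cell $\id_x$ as the image of the generating morphism of $\sigma(\ast)$. Next, the suspension on pointed objects, $\sigma:\sV_\ast=\sV_{\ast/}\to\Algbrd(\sV)_{\sigma(\ast)/}$, is the functor induced on undercategories by $\sigma:\sV\to\Algbrd(\sV)_{\ast\ast}$. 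Since $\sigma$ preserves contractible colimits (as noted after the corollary on iterated suspensions, in the form of the adjunction $\sigma\dashv\omega$ on undercategories), it has right adjoint $(X,x_0,x_1,f)\mapsto(\Hom_X(x_0,x_1),f)$, where $f\colon \ast\to\Hom_X(x_0,x_1)$ is the given $1$-cell. Composing, the right adjoint of $\cofib\circ\sigma$ sends $(X,x)$ to $(\Hom_X(x,x),\id_x)=(\End_X(x),\id_x)$.

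By \cref{B_is_fully_faithful}, the loop functor $\Omega_{\sV}(X,x)$ is $\End_X(x)$ with its composition monoid structure, and its underlying pointed object is $(\End_X(x),\id_x)$. Both right adjoints are therefore computed as the Cartesian transport of the object fibration along $x\colon\ast\to\ob X$ followed by evaluation at the unique pair of objects. Concretely, $\Hom_X(x,x)$ is the value of the pulled-back algebroid in $\Algbrd_{\ast}(\sV)=\Alg(\sV)$ on $(\ast,\ast)$. Hence the two right adjoints are naturally equivalent as functors to $\sV_\ast$, and by uniqueness of adjoints so are the left adjoints.

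The main obstacle is making this identification natural in the $\infty$-categorical sense, and not merely objectwise. The cleanest route is the following. The restriction functor $\Algbrd_{\{\bot,\top\}}(\sV)\to\Alg(\sV)$ along the operad map $\Assoc_\ast\to\Assoc_{\{\bot,\top\}}$ collapsing both objects corresponds, on the algebroid side, to pullback along $\{\bot,\top\}\to\ast$ in the Cartesian fibration $\ob$. Both right adjoints factor as this pullback $\Algbrd(\sV)_\ast\to\Alg(\sV)$ followed by forgetting to $\sV_\ast$ through the map $\Triv\to\Assoc$; in the case of $\cofib\circ\sigma$, this is the composite $\Triv\to\Assoc_{\{\bot,\top\}}\to\Assoc_\ast$. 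Checking that these operad maps compose to the standard inclusion $\Triv\to\Assoc$, together with the compatibility of the unit point, is the only genuinely coherent step.
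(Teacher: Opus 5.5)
Your proposal is correct and follows the paper's proof, which likewise identifies the right adjoint of $\cofib\circ\sigma$ as restriction along $\sigma(\ast)\to\ast$ followed by $\Hom_{(\blank)}(\ast,\ast)$ and observes that this is $\Omega_{\sV}$; you just spell out more of the coherence. One harmless slip in your last paragraph: collapsing $\{\bot,\top\}\to\ast$ induces $\Assoc_{\{\bot,\top\}}\to\Assoc_\ast$, so restriction along it (equivalently, Cartesian transport in $\ob$) goes $\Alg(\sV)\to\Algbrd_{\{\bot,\top\}}(\sV)$ rather than the reverse, though the composite $\Triv\to\Assoc_{\{\bot,\top\}}\to\Assoc_\ast$ you then use is the correct one.
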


\begin{proof}
    Observe that the right adjoint is given by $\Algbrd(\sV)_\ast\to \Algbrd(\sV)_{\sigma(\ast)/}\xrightarrow{\Hom_{(\blank)}(\ast, \ast)} \sV_\ast$, which is equivalent to $\Omega_{\sV}$.
\end{proof}
In other words, we have a cofiber sequence $\sigma(\ast)\xrightarrow{\sigma(x)} \sigma(X)\to \Sigma(X)$. 
Now we restrict attention to the most interesting case, when $\sV=\infty\Cat$ or $\infty\Algbrd$. We let $\Sigma$ (resp.\ $\Sigma'$) denote the 
\emph{endofunctor} $\infty\Cat_\ast\xrightarrow{\Sigma_{\infty\Cat}}(\infty\Cat)\mhy\Cat_\ast\simeq \infty\Cat_\ast$ (resp.\ its algebroid version), and similarly for $\Omega: \infty\Algbrd_\ast\to \Mon(\infty\Algbrd)$.

\begin{remark}
    The loop functor on $\infty\Algbrd_{\ast}$ restricts to $\infty\Cat_\ast\to \infty\Cat_\ast$ and $\sS_\ast = 0\Cat_\ast\to \sS_\ast$. 
    The latter agrees with the classical loop, which takes the groupoid of \emph{automorphisms} of the basepoint. 
    In contrast, the suspension functor $\Sigma: \infty\Cat_\ast\to \infty\Cat_\ast$ does not restrict to the classical suspension functor $\sS_\ast\to \sS_\ast$. Instead, the classical suspension is the delooping of the \emph{group completion} of the free monoid. 
\end{remark}

Now we combine the above lemma and \cref{pushout_formula_unreduced_suspension} to deduce the formula relating the Gray smash product (which we define now) and the suspension. This will later be the first level of the half-central structure of $\vS^1$.

\begin{definition}
    Since $\sS\to \sS_\ast$ is an idempotent algebra (with the smash product) in $\PrL$, the base-change promotes to a (symmetric) monoidal localization $\sS_\ast\otimes(\blank): \PrL\to \PrL$ and, in particular, to a functor $(\blank)_\ast: \Alg(\PrL)\to \Alg(\PrL)$, sending a presentably monoidal category to the category of its pointed objects with the ``smashed'' monoidal product. 
    Applying this construction to $\infty\Algbrd$ and $\infty\Cat$ with the Gray tensor product, we obtain the \emph{Gray smash product} on the pointed objects, which we denote by $\owedge$. 
    Explicitly, we have $X\owedge Y \simeq  \cofib(X\otimes \ast \sqcup_{\ast\otimes \ast} \ast\otimes Y\to X\otimes Y)$.
\end{definition}

\begin{definition}
    Let $\vS^1 = \rB\NN = \Sigma_{+}(\ast)$ (notice $\NN = \Free_{\EE_1}(\ast)$) be the \emph{directed circle}. Also let $\vS^n \coloneqq (\vS^1)^{\owedge n}$ be the \emph{directed $n$-sphere}.
\end{definition}

\begin{remark}
    The directed circle $\vS^1$ is a gaunt $1$-category, and it is the free category on the graph $\Delta^1/\partial\Delta^1$. In contrast, $\vS^n$ is not strict for $n> 1$; the next proposition shows that
    \[
    \vS^n = \Sigma^n S^0 = \rB^n\Free_{\EE_n}(\ast),
    \]
    and $\Free_{\EE_n}(\ast) \simeq \bigsqcup_{k\geq 0} \bigl(\mathrm{Conf}_k(\RR^n)_{h\Sigma_k}\bigr)$ is not a $0$-truncated homotopy type. 
\end{remark}
\begin{proposition}\label{suspension_is_tensor_S1}
    There exist canonical identifications, functorial in $X$:
    \[
    \Sigma X \simeq \vec{S}^1\owedge X,\quad X\owedge \vec{S}^1\simeq \Sigma X^\circ.
    \]
    In particular, this provides a natural isomorphism
    \[
    \tau_X: \vec{S}^1\owedge X\simeq \Sigma X \simeq X^\circ \owedge \vec{S}^1.
    \]
\end{proposition}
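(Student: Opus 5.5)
Both identifications will come from combining the lemma identifying $\Sigma$ with the cofiber of $\sigma(\ast)\to\sigma(X)$ and the two pushout squares of \cref{pushout_formula_unreduced_suspension}. I first express $\vS^1\owedge X$ as an explicit iterated pushout, and then compare it with the cofiber description of $\Sigma X$.

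Write $\vS^1=\cube^1/\partial\cube^1$, i.e.\ the pushout $\cube^1\sqcup_{\partial\cube^1}\ast$; this holds because $\vS^1=\Sigma_+(\ast)=\sigma(\ast)/\!\!/\partial\cube^1$ and $\sigma(\ast)=\cube^1$. By definition,
\[\vS^1\owedge X\simeq\cofib\bigl(\vS^1\otimes\ast\sqcup_{\ast\otimes\ast}\ast\otimes X\to \vS^1\otimes X\bigr).\]
Since $\otimes$ preserves colimits in each variable (it is biclosed), we have
\[\vS^1\otimes X\simeq (\cube^1\otimes X)\sqcup_{\partial\cube^1\otimes X}X.\]
Now apply the left square $P(X)$ of \cref{pushout_formula_unreduced_suspension}. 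It says that $(\cube^1\otimes X)\sqcup_{\partial\cube^1\otimes X}\partial\cube^1\simeq\sigma X$. Rearranging the pushouts by pasting, $\vS^1\otimes X$ becomes $\sigma X$ with its two endpoints identified, with $X$ glued back in along the collapsed copy. Quotienting further by $\vS^1\otimes\ast=\vS^1$ and by $\ast\otimes X=X$ (placed at the basepoint $0$) then yields
\[\sigma X\sqcup_{\sigma(\ast)}\ast,\]
where $\sigma(\ast)=\cube^1\otimes\ast\to\cube^1\otimes X\to\sigma X$ is $\sigma$ of the basepoint. By the lemma relating reduced and unreduced suspension, this is $\Sigma X$. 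The key bookkeeping is that the map $\sigma(x)\colon\sigma(\ast)\to\sigma X$ is exactly the image of $\cube^1\otimes x$ under the bottom arrow of $P(X)$. This follows from the naturality of $P$ in $X$, applied to the map $x\colon\ast\to X$, together with $P(\ast)$ being the trivial square $\cube^1=\sigma(\ast)$. All steps are functorial, since $P$ is a functor and the colimit manipulations are natural. To avoid basepoint bookkeeping, I would do the whole computation in the overcategory $\infty\Cat_\ast$ as a colimit of a single finite diagram and then compute that colimit in two orders.

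The second identification is the mirror argument. Use $X\otimes\vS^1\simeq(X\otimes\cube^1)\sqcup_{X\otimes\partial\cube^1}X$ and the right square $P^\circ(X)$, whose corner is $\sigma(X^\circ)$. Naturality of $P^\circ$ along $x$ gives $\sigma(\ast^\circ)=\sigma(\ast)$, so we obtain $X\owedge\vS^1\simeq\cofib(\sigma(\ast)\to\sigma(X^\circ))\simeq\Sigma(X^\circ)$. Composing the two equivalences yields $\tau_X$, with $X$ replaced by $X^\circ$ in the second.

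The main obstacle is the pointed/univalence bookkeeping rather than the formal pasting. The pushouts of \cref{pushout_formula_unreduced_suspension} live in $\infty\Algbrd$, while $\Sigma$ on $\infty\Cat_\ast$ involves $L^{\mathrm{uni}}$ and the Gray product on $\infty\Cat$ is the localized one. The approach is to run the entire argument in $\infty\Algbrd_\ast$ with $\Sigma'$ and the unlocalized $\owedge$. Because $L^{\mathrm{uni}}$ is a monoidal left adjoint (the subcategory is an exponential ideal), applying it gives the $\infty\Cat$ statement. A secondary point needing care is that the identification of basepoints is compatible on both sides: the base object of $\Sigma X$ must correspond to the common image of $\bot$ and $\top$, which is the image of $\partial\cube^1$ collapsed to $\ast$ in $\vS^1$. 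This I expect to read off from the specified top-right corners $\partial\cube^1$ of the squares.
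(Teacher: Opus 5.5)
Your proposal is correct and is essentially the paper's argument. The paper builds a cube whose right faces are $P^\circ(\ast)$ and $P^\circ(X)$ and whose left faces exhibit $\ast\otimes\vS^1$ and $X\otimes\vS^1$ as quotients via $\vS^1=\cube^1/\partial\cube^1$. It then compares the total cofibers of the three basepoint-induced squares, which is exactly your ``compute one finite colimit in two orders.'' The only difference in emphasis is that the paper spells out $X\owedge\vS^1\simeq\Sigma X^\circ$ and leaves $\vS^1\owedge X\simeq\Sigma X$ to the other square, the reverse of what you do.

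One wording fix: $\vS^1\otimes X$ is not $\sigma X/\partial\cube^1$ with $X$ glued back in. Rather, $(\vS^1\otimes X)/(\ast\otimes X)\simeq(\cube^1\otimes X)\sqcup_{\partial\cube^1\otimes X}\ast\simeq\sigma X\sqcup_{\partial\cube^1}\ast$, and this is what your subsequent quotient actually uses.
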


\begin{proof}    
    Consider the following diagram: 
    \[\begin{tikzcd}[row sep={35,between origins}, column sep={40,between origins}]
        & {\ast \otimes \ast}\ar{dd}\ar{dl} && {\ast\otimes \partial \cube^1} \ar{ll}\ar{rr}\ar{dd}\ar{dl} & & {\partial \cube^1} \ar{dd}\ar{dl} \\
        {X\otimes \ast}\arrow[dr, phantom, "\scriptstyle{(1)}"rotate=40]\ar{dd} && {X\otimes \partial \cube^1} \ar[crossing over]{ll} \ar[crossing over]{rr} \arrow[dr, phantom, "\scriptstyle{(2)}"rotate=40] && \partial\cube^1 \arrow[dr, phantom, "\scriptstyle{(3)}"rotate=40]\\
        & {\ast\otimes \vec{S}^1}\ar{dl}  && {\ast\otimes \cube^1} \ar{ll} \ar{rr} \ar{dl} & &  {\sigma\ast} \ar{dl} \\
        {X\otimes \vec{S}^1} && {X\otimes \cube^1} \ar{ll} \ar{rr} \ar[from=uu,crossing over] && {\sigma X^\circ} \ar[from=uu,crossing over]
    \end{tikzcd}\]
    All the arrows from back to front are induced by the basepoint $\ast\to X$. 
    The front-right and the back-right faces are the pushout diagrams of \cref{pushout_formula_unreduced_suspension}, whereas the front-left and back-left faces are pushouts by $\vec{S}^1 = \cube^1/\partial\cube^1$. 
    Since all the faces in front and back are pushouts, we obtain induced equivalences on the total cofibers:
    \[
    X\owedge\vec{S}^1\simeq \cofib(1)\leftarrow\cofib(2) \rightarrow \cofib(3) \simeq \Sigma X^\circ. 
    \]
    The other equivalence $\Sigma X\simeq \vec{S}^1\owedge X$ follows from the other pushout diagram of \cref{pushout_formula_unreduced_suspension}.
\end{proof}
\begin{remark}
    The first equivalence in the proposition is equivalent to the statement that the suspension $\infty\Cat_\ast\to \infty\Cat_\ast$ is a morphism in $\RMod_{\infty\Cat_\ast^\owedge}(\PrL)$. 
    The analogous statement for the usual smash product (for Cartesian product), instead of the Gray smash product, fails fundamentally: if $X$ is an $n$-category for $n\geq 1$, then $\vS^1\wedge X$ is still an $n$-category, while $\Sigma X$ is an $(n+1)$-category.
\end{remark}

\begin{remark}\label{loop_preserves_filtered_colimits}
    The loop $\Omega: \infty\Cat_\ast\to \infty\Cat_\ast$ preserves filtered colimits, or equivalently, the suspension $\Sigma$ preserves compact objects. 
    It is a general fact that filtered colimits of enriched categories commute with taking hom objects,
    but it also follows from the pushout formula and \cref{Gray_tensor_cubes_compact}.
\end{remark}

\section{Connectivity of \texorpdfstring{$\infty$-categories}{infinity-categories} and the delooping hypothesis}\label{section_connectivity_delooping_hypothesis}

In the literature, the delooping hypothesis is often phrased as ``an $(n+k)$-category with a single $0, 1, \cdots, (k-1)$-cells is equivalent to a $\EE_k$-monoidal $n$-category''; this is literally true in the flagged/algebroid setting, but such ``$k$-connective $(n+k)$-algebroids'' are usually not univalent. The goal of this section is to give a precise treatment of the notion of connectivity in the univalent setting. 

Let $\Map(X, Y)$ denote $\Hom(X, Y)^{\leq 0}\simeq [X, Y]^{\leq 0}$ (so the Cartesian and Gray enrichments are the same after $0$-truncation).
Recall that a functor $f: X\to Y$ in $\infty\Cat$ is essentially surjective if the map $f^{\leq 0}: X^{\leq 0}\to Y^{\leq 0}$ is an effective epimorphism, i.e., it induces a surjection on $\pi_0$. 

\begin{definition}\cite{lurieClassificationTopologicalField2009}
    Let $n\geq -1$ be an integer. We define the \emph{$n$-connectivity} of a map $f: X\to Y$ inductively as follows:
    \begin{itemize}
        \item By convention, any map $f$ is $(-1)$-connective. 
        \item If $n\geq 0$, a map $f$ is $n$-connective if $f$ is essentially surjective and, for any pair of objects $x, x' \in X$, the induced map $X(x, x')\to Y(fx, fx')$ is $(n-1)$-connective. 
    \end{itemize} 
    We say $f$ is \emph{$\infty$-connective} if it is $n$-connective for all $n$. 
\end{definition}
The notion of connectivity is closely related to that of surjectivity:
\begin{definition}[cf. \cite{baezLecturesNCategoriesCohomology2010}]
    Let $n\geq 0$. A morphism $f: X\to Y$ is \emph{$n$-surjective} if the natural map $\Map(C_n, X)\to \Map(\partial C_n,  X)\times_{\Map(\partial C_n, Y)} \Map(C_n, Y)$ is an effective epimorphism of groupoids (note the convention $\partial C_0=\emptyset$). 
\end{definition}
\begin{remark}
    A map is $0$-surjective iff $0$-connective iff essentially surjective. 
    When $n\geq 1$, a map $f: X\to Y$ is $n$-surjective if and only if, for any parallel pair of $(n-1)$-morphisms $(s_{n-1}, t_{n-1}): \partial C_n\to X$, the induced map $X(s_{n-1}, t_{n-1})\to Y(fs_{n-1}, ft_{n-1})$ is essentially surjective. 
    In fact, taking the $0$-truncation of \cref{pullback_formula_for_hom_category}, the latter statement is equivalent to the statement that the induced map 
    \[
    \fib(\Map(C_n, X)\to \Map(\partial C_n, X))\to \fib(\Map(C_n, Y)\to \Map(\partial C_n, Y))
    \]
    is an effective epimorphism for any choice of basepoint of $\Map(\partial C_n, X)$. This is equivalent to $\Map(C_n, X)\to \Map(\partial C_n, X)\times_{\Map(\partial C_n, Y)}\Map(C_n, Y)$ being an effective epimorphism because this property can be checked fiberwise over $\Map(\partial C_n, X)$.
\end{remark}

\begin{example}
    A morphism $f$ is $n$-surjective if and only if the $n$-truncation $f^{\leq n}: X^{\leq n}\to Y^{\leq n}$ is $n$-surjective. In particular, the inclusion $X^{\leq n}\hookrightarrow X$ is $k$-surjective for $k\leq n$. 
\end{example}

\begin{proposition}
    The following are equivalent: 
    \begin{enumerate}
        \item $f: X\to Y$ is $n$-connective. 
        \item $f: X\to Y$ is $k$-surjective for $0\leq k\leq n$. 
        \item $f^{\leq n}: X^{\leq n}\to Y^{\leq n}$ is $n$-connective. 
    \end{enumerate}
\end{proposition}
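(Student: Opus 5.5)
The plan is to prove (1)$\Leftrightarrow$(2) by induction on $n$, and then deduce (2)$\Leftrightarrow$(3) from the example stating that $k$-surjectivity of $f$ depends only on $f^{\leq k}$.

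For $n=0$, both (1) and (2) say that $f$ is essentially surjective. This is the remark identifying $0$-surjectivity with $0$-connectivity, using $\partial C_0=\emptyset$.

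For the inductive step, assume the equivalence for $n-1$ and all maps. By definition, $f$ is $n$-connective iff $f$ is essentially surjective and every induced map $f_{x,x'}\colon X(x,x')\to Y(fx,fx')$ is $(n-1)$-connective. By the induction hypothesis, the second condition says that each $f_{x,x'}$ is $j$-surjective for $0\leq j\leq n-1$. So it suffices to prove the following claim: for $k\geq 1$, $f$ is $k$-surjective iff $f_{x,x'}$ is $(k-1)$-surjective for all objects $x,x'$.

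I would prove the claim with the reformulation in the remark following the definition of $n$-surjectivity. There, $k$-surjectivity of $f$ (for $k\geq 1$) is equivalent to essential surjectivity of $X(s_{k-1},t_{k-1})\to Y(fs_{k-1},ft_{k-1})$ for all parallel pairs $\partial C_k\to X$. A parallel pair of $(k-1)$-cells in $X$ is the same as a pair of objects $x,x'$ together with a parallel pair of $(k-2)$-cells in $X(x,x')$, by the adjunction $\sigma\dashv\Hom$ and $\partial C_k=\sigma\partial C_{k-1}$. Under this correspondence the iterated hom $X(s_{k-1},t_{k-1})$ is identified with $X(x,x')(s_{k-2},t_{k-2})$, since $\omega^k=\omega^{k-1}\circ\Hom$ by construction of the iterated adjunction. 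Applying the same remark to $f_{x,x'}$ (or using the definition of $0$-surjectivity when $k=1$) gives the claim. I should check that this identification is natural in $f$. It is, because everything is expressed through the functorial adjunction $\sigma^k\dashv\omega^k$ and the fact that the basepoints of $\partial C_k$ are the suspension points.

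For (2)$\Leftrightarrow$(3), the example gives that $f$ is $k$-surjective iff $f^{\leq k}$ is. For $k\leq n$, the map $f^{\leq k}$ is the $k$-truncation of $f^{\leq n}$, since $(X^{\leq n})^{\leq k}=X^{\leq k}$. So $f$ is $k$-surjective iff $f^{\leq n}$ is, for every $k\leq n$. Applying (1)$\Leftrightarrow$(2) to $f^{\leq n}$ then finishes the argument.

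The main obstacle is the identification of iterated homs with homs of hom categories, compatibly with $f$, in the claim. It is mostly bookkeeping with the adjunctions $\sigma\dashv\omega$ on coslices, but it needs care to ensure that the effective-epimorphism conditions correspond fiberwise over the choice of endpoints.
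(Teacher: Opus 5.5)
Your proposal is correct and follows essentially the paper's route. The paper unpacks the recursive definition into essential surjectivity of the maps $X(s_k,t_k)\to Y(fs_k,ft_k)$ on iterated homs for $0\leq k\leq n-1$ (via a downward induction on $k$), then applies the remark characterizing $k$-surjectivity, and gets (3) from the fact that $k$-surjectivity depends only on the $k$-truncation. Your induction on $n$, with the one-step claim that $f$ is $k$-surjective iff every $f_{x,x'}$ is $(k-1)$-surjective, repackages the same unpacking and makes explicit the identification $X(s_{k-1},t_{k-1})\simeq X(x,x')(s_{k-2},t_{k-2})$ that the paper uses implicitly.
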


\begin{proof}
    Unpacking the induction, the $n$-connectivity of $f$ is equivalent to essential surjectivity together with the condition that for any $0\leq k\leq n-1$ and any parallel pair of $k$-morphisms $(s_k, t_k): \partial C_{k+1}\to X$, the map $X(s_k, t_k)\to Y(fs_k, ft_k)$ is $(n-k-1)$-connective. By downward induction on $k$, the latter condition can be weakened to the requirement that $X(s_k, t_k)\to Y(fs_k, ft_k)$ is essentially surjective, so the equivalence of (1) and (2) follows from the remark. Since the $k$-surjectivity of a map only depends on the $k$-truncation, (1) is equivalent to (3). 
\end{proof}
\begin{remark}
    The class of $n$-connected maps are closed under pullbacks, filtered colimits, and disjoint unions. %
    Classically, in an $(\infty, 1)$-topos, there is an inductive characterization of $n$-connected maps \cite[Proposition 6.5.1.18]{LurieHTT} using diagonals. In the higher-categorical setting, the diagonal $X\to X\times X$ of $\infty$-groupoids typically gets replaced either by $\Fun^\lax([1], X)\to X\times X$ or by $\mathrm{Tw}(X)\to X\times X$. We do not pursue such characterizations here. 
\end{remark}

\begin{definition}
    Let $n\in \ZZ_{\geq 0}\cup \{\infty\}$. 
    A pointed $\infty$-category $X$ is said to be \emph{$n$-connective} if the structure map $\ast\to X$ is $(n-1)$-connective.     
\end{definition}
\begin{remark}
    The definition extends to unpointed $\infty$-categories as follows: an $\infty$-category $X$ is always $(-1)$-connective. It is $0$-connective (resp.\ $1$-connective) if it is nonempty (resp.\ if $X^{\leq 0}$ is connected). For $n\geq 2$, $X$ is $n$-connective iff it is $1$-connective and $n$-connective for one (and therefore any) choice of a basepoint. 
\end{remark}
\begin{proposition}
    Let $(X, x)$ be a pointed $\infty$-category and $n\geq 0$. The following are equivalent: 
    \begin{enumerate}
        \item $X$ is $n$-connective, i.e., the structure map $\ast\xrightarrow{x} X$ is $(n-1)$-connective. 
        \item $\Omega^k X$ is connected (i.e., $(\Omega^k X)^{\leq 0}$ is connected) for $0\leq k\leq n-1$.
        \item The counit map of $\rB^n \Omega^n X\to X$ is an equivalence. 
        \item $X$ belongs to the essential image of the functor $\rB^n: \Mon_{\EE_n}(\infty\Cat)\to \infty\Cat_{\ast}$. 
        \item There exists an $\infty$-algebroid $\tilde{X}$ whose underlying $(n-1)$-algebroid is trivial and whose univalent completion is $\tilde{X}\xrightarrow{\sim} X$. 
    \end{enumerate}
\end{proposition}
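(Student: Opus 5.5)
We prove the cycle of equivalences (1)$\Leftrightarrow$(2), (3)$\Rightarrow$(4)$\Rightarrow$(5)$\Rightarrow$(2), and (2)$\Rightarrow$(3). The organizing principle is induction on $n$ via the loop--delooping adjunction $\rB\dashv\Omega$ of \cref{B_is_fully_faithful}. We use that $\Omega^k$ of $\rB^n$ of an $\EE_n$-monoid is again a delooping, $\Omega^k\rB^n M\simeq \rB^{n-k}M$, by full faithfulness of $\rB$ applied iteratively (Dunn additivity identifies $\Mon_{\EE_n}$ with iterated $\Mon$).

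For (1)$\Leftrightarrow$(2), we unpack $(n-1)$-connectivity of $x\colon\ast\to X$ through the inductive definition. The map is essentially surjective iff $X^{\leq 0}$ is connected. The induced map on homs is $\ast\to X(x,x)=\Omega X$, and the inductive clause asks that it be $(n-2)$-connective. Since $\Omega X$ is pointed by $\id_x$, induction on $n$ gives: $(n-1)$-connectivity of $x$ iff $X$ is connected and $\Omega X$ is $(n-1)$-connective iff $\Omega^kX$ is connected for $0\le k\le n-1$. This step is routine; the only care is with the pointing of the hom categories.

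For the remaining equivalences:
\begin{itemize}
\item (3)$\Rightarrow$(4) is immediate, since $\Omega^nX$ carries an $\EE_n$-structure.
\item (4)$\Rightarrow$(5): take $\tilde X$ to be the iterated algebroid delooping $\rB'^{\,n}M$, i.e.\ $\rB'$ applied $n$ times at successive enrichment levels. By construction it has a single object, a single $1$-cell, and so on, so its underlying $(n-1)$-algebroid is trivial. Its univalent completion is $\rB^nM$, because univalent completion is computed levelwise in the enrichment (the $L^{\mathrm{uni}}$ of \cref{B_is_fully_faithful}, together with its compatibility with change of enrichment).
\item (5)$\Rightarrow$(2): the maps $\ast\to\tilde X^{\leq n-1}$ are identities, hence $k$-surjective for $k\le n-1$. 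Univalent completion $\tilde X\to X$ is fully faithful and essentially surjective at each level, so it is $\infty$-connective. Hence the composite $\ast\to X$ is $k$-surjective for $k\le n-1$, i.e.\ $(n-1)$-connective by the previous proposition; this is (1), which is equivalent to (2).
\end{itemize}

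The main obstacle is (2)$\Rightarrow$(3), which we prove by induction on $n$. By full faithfulness of $\rB$, the counit $\rB\Omega X\to X$ is fully faithful on the base object. It is also essentially surjective exactly when $X^{\leq0}$ is connected, because $\rB\Omega X$ is the univalent completion of the full subalgebroid on $x$. This settles $n=1$.

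For general $n$, we apply the inductive hypothesis to the pointed $\infty$-category $\Omega X$, which satisfies (2) for $n-1$. This gives $\rB^{n-1}\Omega^{n-1}(\Omega X)\simeq\Omega X$. We then need to show that this equivalence is one of $\EE_1$-monoids, so that we may apply $\rB$ and compose with $\rB\Omega X\simeq X$. We would handle this by working with $\EE_1$-monoid objects throughout the induction. That is, we run the induction in $\Mon(\infty\Cat_\ast)$, using that $\rB$ and $\Omega$ are compatible with the product-preserving structure and that $\Omega$ preserves limits. The coherence bookkeeping here, identifying the counit of the composite adjunction $\rB^n\dashv\Omega^n$ with the iterated counits, is where we expect the real work to lie.
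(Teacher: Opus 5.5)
Your proposal is correct, and its core matches the paper's. The case $n=1$ comes from full faithfulness of $\rB$. The inductive step writes the counit $\rB^n\Omega^nX\to X$ as $\rB$ of a monoid-level counit, followed by $\rB\Omega X\to X$; the paper asserts exactly this when it says the counit at $\rB Y$ is $\rB(\rB^{n-1}\Omega^{n-1}Y\to Y)$.

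The bookkeeping you defer is lighter than you expect, and you do not need to rerun the induction inside $\Mon(\infty\Cat_\ast)$. The functors $\rB^{n-1}$ and $\Omega^{n-1}$ preserve finite products, so their adjunction lifts to monoid objects, with unit and counit lying over the original ones. The forgetful functor $\Mon(\infty\Cat)\to\infty\Cat_\ast$ is conservative. Hence the monoid-level counit at $\Omega X$ is an equivalence because its underlying map is, and that is your inductive hypothesis.

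The real difference is in how the conditions are organized. The paper reduces each condition for $X\simeq\rB Y$ at level $n$ to the same condition for $Y$ at level $n-1$. It gets (3)$\Leftrightarrow$(4) at once, because $\rB^n$ is a fully faithful left adjoint to $\Omega^n$. It handles (5) inside the induction via $\tilde X=\rB'\tilde Y$ and $\tilde Y=\Omega\tilde X$.

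You instead close the cycle through (4)$\Rightarrow$(5)$\Rightarrow$(2). This bypasses full faithfulness of $\rB^n$, but it needs inputs the paper never uses: $k$-surjectivity for maps of algebroids, its closure under composition, and $k$-surjectivity of $\tilde X\to L^{\mathrm{uni}}\tilde X$ for every $k$. That last fact is true, but your justification is misphrased. The completion map is not fully faithful in the $\infty\Algbrd$-enriched sense. Rather, it is essentially surjective and is again a univalent completion on hom-algebroids. So $k$-surjectivity follows by induction on $k$, using the characterization of $k$-surjectivity as essential surjectivity on iterated hom categories.
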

\begin{proof}
    When $n=0$, all the conditions are empty, so we may assume $n\geq 1$ and proceed by induction. 
    When $n=1$, the equivalence of the first two conditions is clear from the definition. Since $\rB\Omega X\to X$ is always fully faithful, it is an equivalence if and only if $X$ is connected. This is also equivalent to (4) because $\rB$ is fully faithful by \cref{B_is_fully_faithful}. For (5), we may take $\tilde{X} = \rB'\Omega X$.
    Now assume $n\geq 1$. From the case $n=1$, we know that all the conditions imply $X = \rB Y$ for some $Y\in \Mon(\infty\Cat)$. 
    By induction, it suffices to observe that each condition is equivalent to the corresponding condition for $Y$ with $n$ replaced by $n-1$. This is clear for (1) and (2).
    For (3), observe that the counit map $\rB^n\Omega^n \rB Y\to \rB Y$ is equivalent to $\rB(\rB^{n-1}\Omega^{n-1} Y\to Y)$ and that $\rB: \infty\Cat_\ast\xrightarrow{\rB} \Mon(\infty\Cat)\xrightarrow{\text{forget}}\infty\Cat_\ast$ is conservative.
    The equivalence of (3) and (4) is easier to see directly because $\rB^n$ is fully faithful with right adjoint $\Omega^n$ (the existence of the functor $\rB^n$ follows from that $\rB$ is product preserving and Dunn additivity; see also the argument for \cref{CatSp_is_limit_of_SMC}). 
    For (5), if either of $\tilde{Y}\xrightarrow{\sim} Y$ or $\tilde{X}\xrightarrow{\sim} X$ is given, we may take $\tilde{X} = \rB'\tilde{Y}$ and $\tilde{Y} = \Omega \tilde{X}$ for the other. 
\end{proof}

\begin{proposition}
    If $(X, x)$ is a $n$-connective pointed $(n-1)$-category, then $X$ is in fact a groupoid.
\end{proposition}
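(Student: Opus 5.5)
Induction on $n$, using the equivalence of (1) and (3)/(4) in the preceding proposition to pass to loops. The goal is to show that every $k$-cell of $X$ is invertible. Since $X$ is an $(n-1)$-category, it is enough to show that $X$ has no noninvertible $k$-morphisms for $1\le k\le n-1$; equivalently, $X\simeq X^{\leq 0}$.

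\emph{Base cases.} For $n=0$, the statement says a nonempty $(-1)$-category is a groupoid. This holds because a $(-1)$-category is either empty or contractible, and we read it as trivially true under the convention $(-1)\Cat\subset\sS$. For $n=1$, a $1$-connective pointed $0$-category is in particular a groupoid, so the claim is immediate.

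\emph{Inductive step.} Let $n\ge 2$. By the preceding proposition, $X\simeq \rB\Omega X$, and $Y\coloneqq\Omega X=\End_X(x)$ is an $(n-1)$-connective pointed $(n-2)$-category; being an endomorphism category of an $(n-1)$-category, it is an $(n-2)$-category. Condition (2) passes to $Y$ with $n$ replaced by $n-1$, as noted in that proof. By the induction hypothesis, $Y$ is a groupoid. Moreover, $Y$ is an $\EE_1$-monoid in $\sS$ with $\pi_0 Y=\ast$, since $Y$ is connected: $n\ge 2$ gives $\Omega^1X$ connected. A connected monoid in $\sS$ is automatically grouplike. Hence every endomorphism of $x$ is invertible, and every higher cell of $X$ is a cell of $Y$ and therefore invertible.

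\emph{Concluding that $X$ is a groupoid.} It remains to check that $\rB Y$, computed in $\infty\Cat$, is a groupoid; this is the only subtle point. The algebroid $\rB'Y$ has a single object and hom groupoid $Y$ in which all $1$-morphisms are invertible, because $Y$ is grouplike. An $\infty$-algebroid whose hom objects are groupoids and whose $1$-morphisms are all invertible has univalent completion a groupoid; concretely, it is the classical delooping $BY\in\sS$. I would justify this by noting that $\rB'Y$ lies in the image of $\Algbrd(\sS)\hookrightarrow\Algbrd(\infty\Cat)$, and that univalent completion commutes with the inclusion $1\Cat\subset\infty\Cat$. So $L^{\mathrm{uni}}\rB'Y$ is the $1$-category obtained as the univalent completion of a Segal space with all morphisms invertible, which is the groupoid $BY$. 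I expect this identification to be the main, though mild, obstacle: one must be careful that univalent completion does not introduce noninvertible cells. Everything else reduces formally to the preceding proposition.
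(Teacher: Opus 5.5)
Your argument is correct and is essentially the paper's. The paper notes that $\Omega^{n-1}X$ is a groupoid because $X$ is an $(n-1)$-category, so $\Omega^n X$ is a grouplike $\EE_n$-groupoid, and then applies the counit equivalence $\rB^n\Omega^n X\xrightarrow{\sim} X$ from the preceding proposition; you instead peel off one loop at a time, taking grouplikeness from connectivity of $\Omega X$. Your justification that delooping a grouplike monoid in groupoids gives a groupoid (via univalent completion of a Segal groupoid) fills in a step the paper leaves implicit, and the $n=0$ base case is unnecessary since your inductive step only uses $n=1$.
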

\begin{proof}
    $\Omega^{n-1} X$ is an $\EE_{n-1}$-groupoid if $X$ is an $(n-1)$-category, so $\Omega^n X$ is a grouplike $\EE_n$-groupoid and $\rB^n\Omega^n X$ is a groupoid. If $X$ is connective, the counit map $\rB^n\Omega^n X\to X$ is an equivalence, so $X$ is also a groupoid. 
\end{proof}

\begin{corollary}
    A pointed $\infty$-category $(X, x)\in \Cat_\ast$ is $\infty$-connected if and only if $X$ is terminal. 
\end{corollary}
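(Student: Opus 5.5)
The "if" direction is immediate: the terminal category $\ast$ has $\ast\to\ast$ an equivalence, hence $n$-connective for every $n$. So the content is the converse. Here "$\infty$-connected" means $\infty$-connective in the sense above, i.e.\ the structure map $x\colon \ast\to X$ is $n$-connective for all $n$. By the earlier proposition characterizing $n$-connective maps, this means $x$ is $k$-surjective for every $k\geq 0$. I would aim to show that $\Map(C_k, X)$ is contractible for every $k$, and then conclude using that $\GG$ (the globes) generates $\infty\Cat$ under colimits, so the functors $\Map(C_k,\blank)$ are jointly conservative.

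The key step is an induction on $k$ showing that $\Map(C_k, X)\simeq\ast$ and $\Map(\partial C_{k+1}, X)\simeq \ast$. For $k=0$, $0$-surjectivity says $\ast\to\Map(C_0,X)$ is an effective epimorphism. This only gives connectedness of $X^{\leq 0}$, not contractibility, so I need more than surjectivity at each single level. The better route is via the third characterization above: $x$ is $n$-connective iff $x^{\leq n}\colon \ast\to X^{\leq n}$ is $n$-connective. Equivalently, by the connectivity proposition for pointed categories, $X^{\leq n}$ is $(n+1)$-connective for every $n$, and since $X^{\leq n}$ is an $n$-category, the preceding proposition (an $(m)$-connective pointed $(m-1)$-category is a groupoid) applies with $m=n+1$. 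Hence $X^{\leq n}$ is a groupoid which is $(n+1)$-connective.

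For an $n$-category, the inclusion $X^{\leq n}\hookrightarrow X$ is $k$-surjective for $k\leq n$, so $\Omega^k X^{\leq n}$ is connected for $k\leq n$. Since $X^{\leq n}$ is a groupoid, this says its homotopy groups $\pi_k$ vanish for $k\leq n$, and an $n$-truncated groupoid, i.e.\ a homotopy $n$-type, with these vanishing is contractible. The main subtlety is that "$n$-category" here truncates categorical level, and for a groupoid this means homotopy $n$-truncated. I expect that checking this identification, namely that $X^{\leq n}$ being an $n$-category and a groupoid forces it to be $n$-truncated as a space, is the only point requiring care. It follows since a groupoid that is an $n$-category has invertible, hence degenerate, cells above dimension $n$.

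Thus $X^{\leq n}\simeq\ast$ for all $n$. Finally, $\infty\Cat=\lim_n n\Cat$ along the truncations $(\blank)^{\leq n}$ (equivalently, $\Map(C_k,X)=\Map(C_k,X^{\leq k})$), so $X\simeq \lim_n X^{\leq n}\simeq\ast$, and $X$ is terminal.
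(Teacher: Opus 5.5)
Your outline matches the paper's: pass to the truncations, use that $\ast\to X$ is $n$-connective iff $\ast\to X^{\leq n}$ is, apply the proposition that an $(n+1)$-connective pointed $n$-category is a groupoid, then reassemble $X$ from its truncations.

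\textbf{The gap.} It lies in the step from ``groupoid'' to ``contractible''. You claim that a groupoid which is an $n$-category must be $n$-truncated as a space, since its cells above dimension $n$ are ``invertible, hence degenerate''. This is false here. In this paper $n\Cat$ means $(\infty,n)$-categories, and $\sS=0\Cat\subset n\Cat$ for every $n$. So every groupoid, e.g.\ $K(\ZZ,5)$, is already a $1$-category. ``Degenerate above dimension $n$'' only means that $\Map(C_k,X)\to\Map(C_{k+1},X)$ is an equivalence for $k\geq n$. The groupoid $\Map(C_n,X)$ itself can have arbitrary homotopy type. So $(n+1)$-connectivity of $X^{\leq n}$ gives only $\pi_k X^{\leq n}=0$ for $k\leq n$, and the conclusion $X^{\leq n}\simeq\ast$ does not follow from what you wrote.

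\textbf{The repair.} Use the hypothesis at all levels at once rather than one $n$ at a time. Since $X^{\leq n}$ is a groupoid, $X^{\leq n}\simeq (X^{\leq n})^{\leq 0}\simeq X^{\leq 0}$ for every $n$. Hence the single pointed groupoid $X^{\leq 0}$ is $(n+1)$-connective for every $n$. Because the loop functor restricts to the classical one on $\sS_\ast$, this says $X^{\leq 0}$ is connected with $\pi_k(X^{\leq 0})=\pi_0\Omega^k X^{\leq 0}$ trivial for all $k$. Whitehead's theorem in $\sS$ then gives $X^{\leq 0}\simeq\ast$, so every $X^{\leq n}\simeq\ast$.

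Your last step then goes through, but phrase it as $X\simeq\colim_n X^{\leq n}$ (\cref{objects_of_colim_in_PrL_omega_is_colim}), or via $\Map(C_k,X)\simeq\Map(C_k,X^{\leq k})\simeq\ast$ and joint conservativity of the cells. Do not write it as a limit of the $X^{\leq n}$, since the natural maps go $X^{\leq n}\to X^{\leq n+1}$. The paper's one-line proof also leaves the groupoid-to-terminal step implicit; the argument above is what fills it.
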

\begin{proof}
    The structure map $\ast\to X$ is $(n-1)$-connective if and only if $\ast\to X^{\leq n-1}$ is, so $X^{\leq n-1}$ is terminal by the proposition. Applying \cref{objects_of_colim_in_PrL_omega_is_colim}, we have $X=\colim_{n\in\NN} X^{\leq n}$, so $X\simeq \ast$. 
\end{proof}

\section{Categorical spectra}\label{section_categorical_spectra}
Our goal here is to give the definitions and some examples of categorical spectra, roughly summarizing \cite[Chapter 13]{stefanichHigherQuasicoherentSheaves2021}.
\begin{definition}
    The category of \emph{categorical spectra} is the limit of right adjoints 
    \[\CatSp\coloneqq \lim(\cdots\xrightarrow{\Omega}\infty\Cat_\ast\xrightarrow{\Omega}\infty\Cat_\ast)\]
    in $\Pr^\sR_{\omega}$ (or $\widehat{\Cat}$).
    Its objects, categorical spectra $X\in \CatSp$, are sequences $(X_n, x_n)_{n\in \NN}$ of pointed $\infty$-categories equipped with identifications $f_n: (X_n, x_n)\xrightarrow{\sim} (\End_{X_{n+1}}(x_{n+1}), \id_{x_{n+1}})$. We will often suppress $x_n$ and $f_n$ in the notation. 
    We write $\Omega^{\infty-n}$ for the projection to the $n$-th component, so $\Omega^{\infty-n} X = X_n$ for $X=(X_n)$. We let $\Sigma^{\infty-n}$ denote the left adjoint of $\Omega^{\infty-n}$.
    One can also define the obvious variant of flagged categorical spectra: $\CatSp^\rf\coloneqq \lim(\cdots\xrightarrow{\Omega} \infty\Algbrd_\ast\xrightarrow{\Omega} \infty\Algbrd_\ast)$. 
\end{definition}
\begin{example}
    Recall that the following diagram commutes:
    \[\begin{tikzcd}
        \sS_\ast\ar[r, "\Omega"]\ar[d, hook] & \sS_\ast \ar[d, hook] \\
        \infty\Cat_\ast \ar[r, "\Omega"] & \infty\Cat_\ast
    \end{tikzcd}\]
    Therefore the vertical maps induce a fully faithful functor $\Sp\hookrightarrow \CatSp$. We will see in \cref{categorical_truncations_of_catsp} that this inclusion admits both left and right adjoints, and in the next chapter we will see that the sphere spectrum $\SS$ is an idempotent algebra and $\Sp$ is precisely the category of $\SS$-modules.
\end{example}
\begin{remark}
    The endomorphism $\Omega$ on $\infty\Cat_\ast$ commutes with the limit diagram, so it induces an endomorphism $\Omega: \CatSp\to \CatSp$. This is actually an automorphism by the coinitiality of $(\NN_{\geq 1})^\op\hookrightarrow \NN^\op$, and its adjoint inverse $\Sigma$ is induced by taking the colimit of $\Sigma: \infty\Cat_\ast\to \infty\Cat_\ast$ in $\PrL$. 
    This is an example of the general fact that one can universally invert an endomorphism on a category by passing to the sequential limit along the endomorphism. 
    More precisely, the left adjoint to the inclusion $\Fun(\rB\ZZ, \widehat{\Cat})\hookrightarrow\Fun(\rB\NN, \widehat{\Cat})$ (resp.\ $\Fun(\rB\ZZ, \PrL)\hookrightarrow \Fun(\rB\NN, \PrL)$) sends the pair $(\infty\Cat_{\ast}, \Omega)$ (resp.\ $(\infty\Cat_\ast, \Sigma)$) of a category and an endomorphism to $(\CatSp, \Omega)$ (resp. $(\CatSp, \Sigma)$). 
    Moreover, it sends the morphism $(\sS_\ast, \Omega)\to (\infty\Cat_\ast, \Omega)$ of pairs to $(\Sp, \Omega)\to (\CatSp, \Omega)$. 
\end{remark}
\begin{warning}
    We will occasionally use the standard notation $X[n]\coloneqq \Sigma^n X$ for $n\in \ZZ$; we warn that this is the \emph{left} action by $\vS^n\in \infty\Cat_\ast^\otimes$. So, for instance, in the later chapters, $X\otimes (Y[n])$ and $(X\otimes Y)[n]$ are not necessarily equivalent. 
\end{warning}
\begin{remark}
    The reader may wonder what happens if we choose to work with a fixed finite category level and make the same definition. 
    It turns out that we gain nothing new: we end up with the category $\Sp$. We may regard $\Omega: n\Cat_\ast\to (n-1)\Cat_\ast$ as an endomorphism of $n\Cat_\ast$ by composing with the inclusion. Now note that $\Omega^n$ factors through $\sS_\ast$:
    \[\begin{tikzcd}
        \cdots \ar[r, "\Omega^n"] & \sS_\ast \ar[r, "\Omega^n"]\ar[d, hook] & \sS_\ast\ar[d, hook] \\
        \cdots \ar[ru, dashed, "\Omega^n"]\ar[r, "\Omega^n"] & n\Cat_\ast \ar[r, "\Omega^n"]\ar[ru, dashed, "\Omega^n"] & n\Cat_\ast
    \end{tikzcd}\]
    Taking the limit horizontally, we observe that the dashed arrows induce the inverse to the inclusion by the coinitiality of $(\NN_{\geq n})^\op\hookrightarrow \NN^\op$, so if we universally invert $\Omega: n\Cat_\ast\to n\Cat_\ast$, we recover the category $\Sp$.  
    If we allow the category levels to vary over the limiting diagram, there is a meaningful notion of categorical levels of categorical spectra (see \cref{definition_k_categorical_spectra}). 
\end{remark}
\begin{remark}\label{catsp_literature}
    The definition is not new and was made or indicated independently by many authors, including Horiuchi \cite{horiuchiObservationsSphereSpectrum2018}, the author \cite{MasudaOralExam} (informally, inspired by the works of Connes and Consani around $\FF_1$ \cite{connesOverlineOperatornameSpec2020}), Stefanich \cite{stefanichHigherQuasicoherentSheaves2021} (who attributes the notion to Constantin Teleman), and Johnson-Freyd \cite{pirsa_PIRSA:23090104} (who attributes the notion to Claudia Scheimbauer).
\end{remark}
Recall that the category of spectra is also the stabilization of the category of connective spectra: 
\[\Sp\coloneqq \lim(\cdots \xrightarrow{\Omega} \Sp^\cn\xrightarrow{\Omega} \Sp^\cn).\]
Here $\Sp^\cn\simeq \CMon^\gp(\sS)$ is the category of infinite loop groupoids. We now give an analogous description of $\CatSp$ as the ``stabilization'' of $\infty\SMC$, following \cite[\S 13.4]{stefanichHigherQuasicoherentSheaves2021}.
Note that $\Omega: \infty\Cat_\ast\to \Mon(\infty\Cat)$ preserves the Cartesian product. In particular, it induces
$\Omega: \Mon_{\EE_n}(\infty\Cat)\to \Mon_{\EE_n}(\Mon(\infty\Cat))\simeq \Mon_{\EE_{n+1}}(\infty\Cat)$ for $0\leq n\leq \infty$, and these commute with the forgetful functors between different $n$. Now consider the following diagram in $\Pr^\sR_{\omega}$: 
\[\begin{tikzcd}
    \vdots\ar[d, equal] &{\cdots} & \vdots\ar[d] & \vdots\ar[d] & \vdots\ar[d] \\    
    \CatSp\ar[d, equal] \ar[r]&{\cdots}\ar[r] & \Mon(\infty\Cat)\ar[r, "\Omega"]\ar[d] & \Mon_{\EE_2}(\infty\Cat) \ar[r, "\Omega"]\ar[d] & \Mon_{\EE_3}(\infty\Cat)\ar[d] \\
    \CatSp\ar[r]\ar[d, equal]&{\cdots}\ar[r] & \infty\Cat_\ast\ar[r, "\Omega"]\ar[d, equal] & \Mon(\infty\Cat) \ar[r, "\Omega"]\ar[d] & \Mon_{\EE_2}(\infty\Cat)\ar[d] \\
    \CatSp\ar[r]\ar[d, equal]&{\cdots}\ar[r] & \infty\Cat_\ast\ar[d, equal]\ar[r, "\Omega"] & \infty\Cat_\ast\ar[r, "\Omega"]\ar[d, equal] & \Mon(\infty\Cat)\ar[d] \\
    \CatSp\ar[r] &{\cdots}\ar[r] & \infty\Cat_\ast\ar[r, "\Omega"] & \infty\Cat_\ast \ar[r, "\Omega"]& \infty\Cat_\ast.
\end{tikzcd}\]
Taking the vertical limit (along the forgetful functors), one obtains the following:
\begin{proposition}\label{CatSp_is_limit_of_SMC}
    The diagram defining $\CatSp$ lifts to the following diagram in $\Pr_\omega^\sR$: 
    \[\CatSp\xrightarrow{\sim} \lim(\cdots\xrightarrow{\Omega} \CMon(\infty\Cat)\xrightarrow{\Omega} \CMon(\infty\Cat)).\]
    In terms of left adjoints, we have
    \[\begin{tikzcd}
    \infty\Cat_\ast\ar[r, "\Sigma"]\ar[d, "{\Free_{\EE_\infty/\EE_0}}"] & \infty\Cat_\ast\ar[r, "\Sigma"]\ar[d, "{\Free_{\EE_\infty/\EE_0}}"] & \cdots \ar[r] & \CatSp\ar[d, equal] \\
    \CMon(\infty\Cat)\ar[r, hook, "\rB"] & \CMon(\infty\Cat) \ar[r, hook, "\rB"] & \cdots\ar[r, hook] & \CatSp.
    \end{tikzcd}\]
    The bottom row lies in $\Mod_{\CMon}(\PrL)$, so $\CatSp$ is semiadditive, i.e., it has a zero object $0$ and biproducts $\oplus$.
    Also, we have an equivalence $\Sigma^\infty\simeq \rB^\infty\circ \Free_{\EE_\infty/\EE_0}$. 
\end{proposition}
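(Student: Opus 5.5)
The plan is to take the vertical limit of the displayed grid and then exchange the order of limits. Index the grid by $(m,n)$: column $n$ counts the $\Omega$-direction and row $m$ counts the monoid level. The entry in row $m$ is $\Mon_{\EE_{m}}(\infty\Cat)$ in column $n\ge m$, where we read $\Mon_{\EE_0}=\infty\Cat_\ast$; to the left of the diagonal the entries are constant at $\infty\Cat_\ast$. The horizontal maps are $\Omega$, which raises the $\EE$-degree by one, using $\Mon_{\EE_n}(\Mon(\infty\Cat))\simeq\Mon_{\EE_{n+1}}(\infty\Cat)$ from Dunn additivity. The vertical maps are forgetful functors. All of these are right adjoints preserving filtered colimits: $\Omega$ does by \cref{loop_preserves_filtered_colimits}, and the forgetful functors are computed levelwise. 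Hence the grid is a diagram $\NN^\op\times\NN^\op\to\Pr^\sR_\omega$. Commutativity holds because $\Omega$ preserves products and so commutes with the forgetful maps. Making the grid a genuinely coherent diagram is the one step needing care. I would define it by applying $\Mon_{\EE_m}(\blank)$ to the single product-preserving functor $\Omega:\infty\Cat_\ast\to\Mon(\infty\Cat)$, naturally in $m$, and then use Dunn additivity naturally in $m$.

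Limits in $\Pr^\sR_\omega$ are computed in $\widehat{\Cat}$, so I compute $\lim_{m,n}$ in two orders. Taking the horizontal limit first, each row $m\ge1$ has the same limit as row $0$. The reason is that the $(m{+}1)$-st term factors the $m$-th, as in the finite-level remark above. More precisely, $\Omega^m:\infty\Cat_\ast\to\Mon_{\EE_m}(\infty\Cat)$ supplies inverse maps between the row-$m$ limit and the row-$0$ limit, by cofinality of $\NN_{\ge m}\subset\NN$. Therefore every row has limit $\CatSp$, and the vertical maps between these limits are equivalences. This gives $\lim_{m,n}\simeq\CatSp$.

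Taking the vertical limit first, each column $n$ stabilizes to $\lim_m\Mon_{\EE_m}(\infty\Cat)\simeq\CMon(\infty\Cat)$, since $\EE_\infty=\colim_m\EE_m$ as operads. The induced horizontal map is $\Omega$ on $\CMon$. Equating the two orders gives $\CatSp\simeq\lim(\cdots\xrightarrow{\Omega}\CMon(\infty\Cat))$.

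Passing to left adjoints turns the limit in $\Pr^\sR$ into the colimit in $\PrL$ of the adjoints. The left adjoint of $\Omega:\CMon\to\CMon$ is $\rB$, and the left adjoint of the forgetful functor to $\infty\Cat_\ast$ is $\Free_{\EE_\infty/\EE_0}$. The comparison between the rows is the square of left adjoints, which commutes because the right adjoints commute. Composing with the colimit inclusions gives $\Sigma^\infty\simeq\rB^\infty\circ\Free_{\EE_\infty/\EE_0}$.

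For semiadditivity, I use that $\CMon(\infty\Cat)\simeq\CMon\otimes\infty\Cat$ lies in $\Mod_{\CMon}(\PrL)$, and that $\rB$ is a left adjoint between $\CMon$-modules. Being a module over the idempotent algebra $\CMon$ is a property, so $\rB$ is automatically $\CMon$-linear. Since $\Mod_{\CMon}(\PrL)\subset\PrL$ is closed under colimits, the colimit $\CatSp$ is a $\CMon$-module, hence semiadditive.
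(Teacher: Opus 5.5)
Your proposal is correct and follows the paper's argument: build the grid of $\Mon_{\EE_k}(\infty\Cat)$'s, take vertical limits (which stabilize to $\CMon(\infty\Cat)$ since $\EE_\infty=\colim_k\EE_k$), identify each row's limit with $\CatSp$ by cofinality, then pass to left adjoints and use that $\Mod_{\CMon}(\PrL)\subset\PrL$ is closed under colimits. One slip: the entry in row $m$, column $n$ is $\Mon_{\EE_{\max(m-n,0)}}(\infty\Cat)$, not ``$\Mon_{\EE_m}$ for $n\ge m$'', which contradicts your own description of $\Omega$ as raising the degree; with the correct indexing, row $m$ literally agrees with the defining tower of $\CatSp$ in columns $n\ge m$, so cofinality of $\NN_{\ge m}\subset\NN$ alone identifies its limit and no interleaving via $\Omega^m$ is needed.
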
 
    We continue to denote the projection to the $n$-th component by $\Omega^{\infty-n}$. The left adjoint of $\Omega^{\infty-n}$ will be denoted by $\rB^{\infty-n}: \CMon(\infty\Cat)\to \CatSp$. By cofinality, we may $1$-periodically extend the above $\NN^{(\op)}$-indexed diagrams to $\ZZ$-indexed diagrams without changing the (co)limits; thus we use the notations $\rB^{\infty-n}$, etc., for $n\in \ZZ$.
\begin{definition}
    We let $\CatSp^\cn\subset \CatSp$ denote the essential image of $\rB^\infty$ (so $\CatSp^\cn\simeq \CMon(\infty\Cat)$) and call its objects \emph{connective} categorical spectra.
\end{definition}
\begin{warning}
    Even though we use the term ``connective,'' we do not yet know an appropriate definition of an analog of a $t$-structure on a stable $(\infty, 1)$-category.
\end{warning}
\begin{example}
    For $\eC\in \CMon(\infty\Cat)$, the categorical spectra $\rB^{\infty-n} \eC$ is the sequence of $\infty$-categories
    \[(\rB^{\infty-n}\eC)_k = \begin{cases}
        \Omega^{n-k}\eC & (k\leq n),\\
        \eC & (k=n), \\
        \rB^{k-n} \eC & (k\geq n).
    \end{cases}\]
\end{example}
    For the next definition, recall that the free $\EE_\infty$-algebra on $X\in \infty\Cat$ (resp.\ $\infty\Cat_\ast$) is given by the symmetric algebra $\bigsqcup_{n\geq 0} X^{\times n}/\Sigma_n$ (resp.\ $\bigvee_{n\geq 0} X^{\wedge n}/\Sigma_n$). 
\begin{definition}
    A categorical spectrum of the form $\Sigma^\infty X$ for $X\in \omega\Cat_\ast$ (in particular, $\Sigma_+^\infty X = \rB^\infty\Free_{\EE_\infty} X$ for $X\in \infty\Cat$) is called a \emph{suspension spectrum}. We let 
    \[\FF \coloneqq \Sigma^\infty_+ (\ast) = \Sigma^\infty S^0 = \rB^\infty\Fin^{\simeq}\] be the suspension spectrum on a point. We will call it the \emph{finite set spectrum}, the \emph{unit}, or the \emph{directed sphere spectrum}. 
\end{definition}
\begin{remark}
    Since $\Omega$ preserves filtered colimits (\cref{loop_preserves_filtered_colimits}), $\Omega^\infty: \CatSp\to \infty\Cat_\ast$ is in $\Pr^\sR_\omega$. It follows that $\FF$ is a compact object of $\CatSp$. 
\end{remark}

We close this section with some remarks on the duality involutions.
\begin{remark}(\cite[Definition 13.2.12]{stefanichHigherQuasicoherentSheaves2021})\label{remark_cells_of_categorical_spectra}
    Just as a spectrum can be seen as a ``CW complex with possibly negative-dimensional cells,'' one can regard a categorical spectrum as an ``$\infty$-category with negative-dimensional cells.'' To make this precise, let $(X_n)$ be a categorical spectrum and consider the map
    \[\Map(C_m, X_n)\xrightarrow{\sim} \Map(C_m, \Omega X_{n+1})\xrightarrow{\sim} \Map(\Sigma C_m, X_{n+1})\to \Map(C_{m+1}, X_{n+1})\] 
    induced by $C_{m+1} = \sigma C_m\twoheadrightarrow \Sigma C_m$.
    We define the pro-representable globular presheaf $\cell_\bullet(\blank): \CatSp\to \PSh(\GG)$ as the colimit of the following diagram of monomorphisms:
    \[\Map(C_\bullet, X_0)\rightarrowtail \Map(C_{\bullet+1}, X_1)\rightarrowtail \Map(C_{\bullet+2}, X_2)\rightarrowtail \cdots. \]
    Moreover, this can be extended to a presheaf over $\GG_{(-\infty, \infty)} = \colim(\GG\xrightarrow{\sigma}\GG\xrightarrow{\sigma}\cdots)$ and given a structure of compositions. 
\end{remark}
\begin{remark}\label{remark_involutions_of_catsp}
    Let $\tau: \ZZ_{\geq 1}\to \ZZ/2$ be a function and let $[n]$ (for $n\leq 0$) be the shift operator, i.e., $\tau[n]$ is the function $\tau[n](k) = \tau(k-n)$. Then the following diagram commutes (to construct the commuting homotopy, observe that the two compositions are equal on $\Theta$):
    \[\begin{tikzcd}
        \infty\Cat\ar[r, "\sigma"]\ar[d, "D_{\tau[-1]}"] & \infty\Cat\ar[d, "D_{\tau}"] \\
        \infty\Cat \ar[r, "\sigma"] & \infty\Cat.
    \end{tikzcd}\]
    Because $\Sigma X = \cof(\sigma(\ast)\to \sigma X)$, it follows that the following diagrams also commute (the right one is obtained by taking right adjoints; the truncated information of $\tau(1) = \tau[-1](0)$ acts by reversal of the monoidal structure, which is invisible on the underlying category):
    \[\begin{tikzcd}
        \infty\Cat_\ast\ar[r, "\Sigma"]\ar[d, "D_{\tau[-1]}"] & \infty\Cat_\ast\ar[d, "D_{\tau}"] \\
        \infty\Cat_\ast \ar[r, "\Sigma"] & \infty\Cat_\ast, 
    \end{tikzcd}
    \quad\begin{tikzcd}
        \infty\Cat_\ast \ar[r, "\Omega"]\ar[d, "D_\tau"] & \infty\Cat_\ast\ar[d, "D_{\tau[-1]}"] \\
        \infty\Cat_\ast \ar[r, "\Omega"] & \infty\Cat_\ast
    \end{tikzcd}.\]
    Consequently, the category $\CatSp$ admits an action of $\prod_\ZZ \ZZ/2\simeq \lim(\cdots \to \prod_{\ZZ_{\geq 1}} \ZZ/2\xrightarrow{[-1]}\prod_{\ZZ_{\geq 1}} \ZZ/2).$
    We continue to denote by $D_{\tau}$ the involution corresponding to $\tau: \ZZ\to \ZZ/2$. 
    By definition, this is the involution satisfying $D_{\tau[n]}\circ \Omega^{\infty-n} = \Omega^{\infty-n}\circ D_{\tau}$ and $D_\tau\circ\Sigma^{\infty-n}= \Sigma^{\infty-n}\circ D_{\tau[n]}$; i.e., it sends $(X_n)$ to $(D_{\tau[n]}X_n)$ in components. One may think of $\tau$ as the indicator function of the dimensions of stable cells that get flipped. 
\end{remark}
\begin{definition}\label{definition_duality_involutions_of_catsp}
    The \emph{total dual} (resp.\ \emph{odd dual}, \emph{even dual}) is the duality involution $D_\tau$ corresponding to the case $\tau$ is the indicator function of $\ZZ$ (resp.\ the odd numbers, the even numbers). We continue to denote the total dual by $D = (\blank)^\circ$ and the odd and even duals by $(\blank)^\op$, $(\blank)^\co$, respectively. 
\end{definition}
Explicitly, the total dual of $X=(X_n)$ is given by $X^\circ=(X_n^\circ)$, and the odd dual $X^\op$ is given by the ``alternating'' sequence 
$(X_0^\op, X_1^\co, X_2^\op, \cdots)$, and similarly for the even dual. 

\section{Levelwise properties of categorical spectra}\label{section_categorical_spectra_stable_properties}
Many properties and structures of categorical spectra are defined levelwise. We will list some examples and formulate typical ways to universally impose such properties. 
\begin{definition}
    Let $P = \{P(n)\}_n$ be a sequence of properties of symmetric monoidal (flagged) $\infty$-categories such that if $X_{n+1}$ satisfies $P(n+1)$, then $\Omega X_{n+1}$ satisfies $P(n)$. 
    We say a categorical spectrum $X=(X_n)_n$ satisfies $P$ (or is a $P$-categorical spectrum) if $X_n$ satisfies $P(n)$. We denote the full subcategory of $P$-categorical spectra by
    \[\CatSp^{P}\coloneqq \lim_n (\cdots \to \infty\SMC^{P(n)}\xrightarrow{\Omega}\infty\SMC^{P(n+1)}\to\cdots )\subset \CatSp.\]
    If the property only depends on the underlying pointed $\infty$-categories, one can also define
    \[\CatSp^{P}\coloneqq \lim_n (\cdots \to \infty\Cat_\ast^{P(n)}\xrightarrow{\Omega}\infty\Cat_\ast^{P(n+1)}\to\cdots )\subset \CatSp.\]
    More generally, if we have a sequence of categories $\{i_n: \eC_n\to \infty\SMC\}$ equipped with a lift of the loop, i.e., functors $\tilde{\Omega}: \eC_{n+1}\to \eC_n$ with the following commutative square $(\ast_n)$, one can define $\CatSp^\eC$ as the limit $\lim_n\eC_n$ in $\widehat{\Cat}$:
    \[\begin{tikzcd}
        \CatSp^{\eC}\ar[r]\ar[d, "i"]& \cdots \ar[r] &\eC_{n+1}\ar[r, "\tilde{\Omega}"]\ar[d, "i_{n+1}"]\ar[rd, phantom, "(\ast_n)"] & \eC_{n} \ar[d, "i_n"]\\
        \CatSp \ar[r] & \cdots \ar[r] & \infty\SMC \ar[r, "\Omega"] & \infty\SMC,
    \end{tikzcd}\]
    and similarly using $\infty\Cat_\ast$ or $\infty\Cat^\rf_\ast$ instead of $\infty\SMC$. 
\end{definition}
\begin{remark}
    An index-shift of a levelwise property is again a levelwise property. The corresponding full subcategory is some shift $\CatSp^P[n]$ of $\CatSp^P$.
\end{remark}
\begin{remark}
    If $i_n$ and $\tilde{\Omega}$ admit left adjoints $L_n\dashv i_n$ and $\tilde{\rB}\dashv \tilde{\Omega}$, then the above diagram lives in $\PrR$, so $i$ also admits a left adjoint $L$.
    If $\eC_n$ is a localization, i.e., $i_n$ is fully faithful with left adjoint $L_n$, the existence of $\tilde{\rB}$ is automatic. In fact, we take $\tilde{\rB}= L_{n+1}\circ B\circ i_n$ with the unit and counit
        \[\tilde{\rB}\tilde{\Omega}= L_{n+1} \rB i_n\tilde{\Omega}\simeq L_{n+1} \rB \Omega i_{n+1}\to L_{n+1}i_{n+1} \xrightarrow{\sim}\id_{\eC_{n+1}},\]
        \[\id_{\eC_{n}}\xleftarrow{\sim} L_n i_n \to L_n \Omega \rB i_n\xrightarrow{\sim} L_n \Omega i_{n+1} L_{n+1} \rB i_n\simeq L_n i_n \tilde{\Omega} L_{n+1} \rB i_n \xrightarrow{\sim} \tilde{\Omega} L_{n+1}\rB i_n = \tilde{\Omega}\tilde{\rB}, \]
    and moreover $\tilde{\rB}$ is fully faithful.
    In other words, for a levelwise property $P=\{P(n)\}$, 
    we may levelwise perform $P$-delooping $\rB^P$ by localizing the connective delooping $\rB$, and the \emph{$P$-envelope} $\rB^{\infty, P}$ of $\rB^\infty X$ is (as a categorical spectrum) the colimit of these deloopings: $\rB^{\infty, P} X\simeq \colim_n\rB^{\infty-n}(\rB^P)^n X$.
    An analogous claim except for the full faithfulness of $\tilde{\rB}$ is true when $\infty\SMC$ and $\rB$ are replaced by $\infty\Cat_\ast$ and $\Sigma$. 
    Note that the left adjoint is in general not given levelwise by $L_n$.
\end{remark}
We describe a condition under which the left or right adjoint to $i$ is given levelwise. Let $\Adj$ be the free adjunction $2$-category and $l: C_1\to \Adj$ be the inclusion of the universal left adjoint $1$-morphism (see \cref{section_categorical_spectra_with_adjoints} for more detail); note that this is an epimorphism.
Recall the following consequence of \cite[Proposition 5.3.17]{stefanichHigherQuasicoherentSheaves2021}:
\begin{proposition}
    Let $I$ be a category, let $\eD$ be a $2$-category (e.g., $\eD = \widehat{\Cat}$) and let $X: I\to \Fun(\Adj, \eD)$ be a diagram whose restriction $Y: I\to\Fun(\Adj, \eD)\xrightarrow{l^\ast} \Fun(C_1, \eD)$ admits a limit $Y^\triangleleft: I^\triangleleft \to \Fun(C_1, \eD)$ (which is computed pointwise). Then there is a limit diagram $X^\triangleleft: I^\triangleleft\to \Fun(\Adj, \eD)$ making the following diagram commute:
    \[\begin{tikzcd}
        I\ar[r, "X"]\ar[d, hook] & \Fun(\Adj, \eD) \ar[d, hook, "l^\ast"]\\
        I^\triangleleft \ar[r, "Y^{\triangleleft}"]\ar[ru, "X^\triangleleft"]& \Fun(C_1, \eD)
    \end{tikzcd}\]
\end{proposition}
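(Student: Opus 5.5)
The plan is to deduce the statement from \cite[Proposition 5.3.17]{stefanichHigherQuasicoherentSheaves2021}. First I would rewrite it in terms of adjointable squares. Since $l\colon C_1\to\Adj$ is an epimorphism, the restriction $l^\ast\colon\Fun(\Adj,\eD)\to\Fun(C_1,\eD)$ is a monomorphism. I expect its image to be the (non-full) subcategory with objects the $1$-morphisms of $\eD$ that admit right adjoints, and with morphisms the commuting squares whose Beck--Chevalley mate is invertible (vertically right adjointable squares). The reason is that a natural transformation of $\Adj$-diagrams must intertwine both the left and the right adjoint strongly, and the second condition is exactly invertibility of the mate.

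Under this description, the hypothesis gives a diagram $i\mapsto(f_i\colon a_i\to b_i)$ of left adjoints with right adjointable transition squares, together with a pointwise limit $f\colon a=\lim a_i\to b=\lim b_i$. The conclusion then splits into two tasks:
\begin{enumerate}
  \item show that $f$ is a left adjoint and that each projection square $(a\to a_i,\, b\to b_i)$ is right adjointable;
  \item show that the resulting cone is a limit in $\Fun(\Adj,\eD)$.
\end{enumerate}
For (1), I would build the right adjoint $g\colon b\to a$ as the limit of the maps $b\to b_i\xrightarrow{g_i}a_i$. These form a cone over $(a_i)$ precisely because the transition squares are right adjointable, so the right adjoints $g_i$ commute with the transition maps up to coherent equivalence. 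I would then construct the unit and counit as limits of the unit and counit $2$-cells $\eta_i,\epsilon_i$. This uses that the limit in the $2$-category $\eD$ is a $2$-categorical (enriched) limit, so that $2$-cells into $a$ are determined by compatible families of $2$-cells into the $a_i$. The triangle identities then hold componentwise, and the projection squares are adjointable by construction of $g$.

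For (2), take any adjunction $(c\rightleftarrows d)$ with a compatible cone of adjointable squares to the $X(i)$. Applying $l^\ast$ and the limit property of $Y^\triangleleft$ gives a unique square $(c\to a,\, d\to b)$ of left adjoints. It remains to check that this square is right adjointable. Its mate can be tested after composing with the jointly conservative family of projections $a\to a_i$. Each such composite is the pasting of two adjointable squares, hence invertible, so the mate is invertible. Full faithfulness of $l^\ast$ on the adjointable subcategory then gives the universal property in $\Fun(\Adj,\eD)$.

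The main obstacle is making (1) fully coherent rather than merely $2$-truncated. The family $(g_i)$ and the $2$-cells $(\eta_i,\epsilon_i)$ must assemble into an $I$-indexed diagram in the appropriate mapping categories, which requires the mate correspondence to be functorial in $I$. I would avoid building these coherences by hand. Instead I would cite the version of the mate calculus packaged in Stefanich's Proposition 5.3.17, namely an equivalence between $\Fun(\Adj,\eD)$ and the adjointable subcategory of $\Fun(C_1,\eD)$ natural in $\eD$. With that equivalence, the remaining argument reduces to the pointwise observations above.
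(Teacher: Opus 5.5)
Your proposal is correct and takes the same route as the paper, which gives no argument beyond recording the statement as a consequence of \cite[Proposition 5.3.17]{stefanichHigherQuasicoherentSheaves2021}. Your unpacking is consistent with that citation: the image of $l^\ast$ via the mate correspondence, adjointability of the limit cone, and the universal property tested on $2$-cells through the jointly conservative projections, all read with the limits in $\eD$ taken to be $2$-categorical. One simplification: $X$ is already given in $\Fun(\Adj,\eD)$, so the coherent right adjoints, units, counits and triangle-identity homotopies are just restrictions of $X$; your ``main obstacle'' therefore disappears, and the statement amounts to limits in $\Fun(\Adj,\eD)$ being computed pointwise.
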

\begin{remark}
    By replacing $\eD$ by $\eD^\op$ or $\eD^\co$, similar consequences with ``limit'' replaced by ``colimit'' or ``left adjoint'' replaced by ``right adjoint'' hold. 
\end{remark}
\begin{corollary}
    Suppose the square $(\ast_n)$ is vertically right (resp.\ left) adjointable for every $n$ with $i_n\dashv R_n$ (resp.\ $L_n\dashv i_n$), so the diagram defines $\NN^\op\to \Fun(\Adj, \widehat{\Cat})$. Then the morphism $i$ admits a right adjoint $R$ levelwise given by $R_n$ (resp.\ a left adjoint $L$ levelwise given by $L_n$), i.e., the natural map $\tilde{\Omega}^{\infty-n} \circ R \to R_n\circ \Omega^{\infty-n}$ is an equivalence. 
\end{corollary}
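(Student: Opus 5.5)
The plan is to deduce the corollary directly from the preceding proposition (the consequence of \cite[Proposition 5.3.17]{stefanichHigherQuasicoherentSheaves2021}), applied with $\eD = \widehat{\Cat}$ and $I = \NN^\op$, or more precisely to the version obtained by replacing $\eD$ with $\eD^\co$ as in the subsequent remark, since a right adjoint to $i_n$ corresponds to a left adjoint in the opposite direction. I treat the right adjointable case; the left adjointable case is identical after swapping the roles via the remark.

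First I would package the data. Vertical right adjointability of $(\ast_n)$ means each $i_n$ has a right adjoint $R_n$ and the mate $\tilde{\Omega}R_{n+1}\to R_n\Omega$ is an equivalence. By the universal property of $\Adj$, a functor $\Adj\to\widehat{\Cat}$ is the same as an adjunction, and a natural transformation between two such whose underlying square is adjointable is exactly a commuting square of adjunctions. So the squares $(\ast_n)$ assemble into a diagram $X\colon \NN^\op\to\Fun(\Adj,\widehat{\Cat})$, with $X(n)$ the adjunction $i_n\dashv R_n$ (encoded with the appropriate duality so that $l$ picks out $i_n$). The restriction $Y = l^\ast X\colon \NN^\op\to\Fun(C_1,\widehat{\Cat})$ is the tower of arrows $i_n\colon \eC_n\to\infty\SMC$ with transitions $(\tilde{\Omega},\Omega)$. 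Its limit exists and is computed pointwise, namely it is exactly $i\colon \CatSp^{\eC}\to\CatSp$.

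Next I would apply the proposition: it produces a limit cone $X^\triangleleft$ in $\Fun(\Adj,\widehat{\Cat})$ extending $Y^\triangleleft$. Evaluating at the cone point gives an adjunction whose left part is $i$, so $i$ has a right adjoint $R$. Moreover, the legs of the cone are morphisms in $\Fun(\Adj,\widehat{\Cat})$, meaning that the squares relating $(i,R)$ to $(i_n,R_n)$ via the projections $\tilde{\Omega}^{\infty-n}$ and $\Omega^{\infty-n}$ are adjointable. This says precisely that the mate $\tilde{\Omega}^{\infty-n}R\to R_n\Omega^{\infty-n}$ is an equivalence, i.e. $R$ is computed levelwise.

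The main obstacle is the bookkeeping in the first step: being careful that morphisms in $\Fun(\Adj,\widehat{\Cat})$ really correspond to adjointable squares (strict commutation of both adjoints, not merely lax mates), and choosing the correct variance ($\eD$ versus $\eD^\co$) so that $l^\ast$ records $i_n$ rather than $R_n$. I would resolve this by checking on a single square that a natural transformation of $\Adj$-diagrams restricts on the left-adjoint arrow to the given square and on the right-adjoint arrow to its mate, which is invertible by hypothesis.
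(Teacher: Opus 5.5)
Your argument is the intended one. The paper states this corollary without further proof, as a direct application of the preceding proposition with $I=\NN^\op$ and $\eD=\widehat{\Cat}$, and the levelwise description of the adjoint is read off from the fact that the legs of the limit cone $X^\triangleleft$ are morphisms in $\Fun(\Adj,\widehat{\Cat})$, i.e.\ adjointable squares.

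One small correction to your bookkeeping: you have the duality assigned to the wrong case. In the right-adjointable case $i_n\dashv R_n$, the functor $i_n$ is already the left adjoint, so $l^\ast$ records $i_n$ and the proposition applies verbatim with $\eD=\widehat{\Cat}$. It is the left-adjointable case $L_n\dashv i_n$ that requires passing to $\eD^\co$ via the remark, equivalently restricting along $r$.
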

\begin{remark}
    For the assumption of the corollary (for $R_n$), it suffices to check the horizontal left adjointability of $(\ast)_n$, i.e., that $\tilde{\rB}\dashv \tilde{\Omega}$ exists and commutes with $i_n$, and the existence of the right adjoints $R_n$. 
\end{remark}

\begin{example}[Univalence]
    Being a categorical spectrum is a levelwise property of a flagged categorical spectrum; if $X\in \infty\Algbrd_\ast$ is an $\infty$-category, then $\Omega X$ is also an $\infty$-category. There is a univalent completion functor $L^\mathrm{uni}: \CatSp^{\rf}\to \CatSp$ which levelwise is $L^\mathrm{uni}: \infty\Cat^\rf\to \infty\Cat$ because $(\ast_n)$ is vertically left adjointable by \cref{B_is_fully_faithful}.
    As we have seen, the infinite delooping spectrum of a symmetric monoidal $\infty$-category $\eC$ is the univalent completion of a more naive algebroid delooping: $\rB^\infty\eC = L^\mathrm{uni}{\rB'}^\infty\eC$. 
\end{example}

\begin{example}[$k$-categorical spectra, {\cite[Definition 13.2.17, Proposition 13.2.20]{stefanichHigherQuasicoherentSheaves2021}}]\label{definition_k_categorical_spectra}
    Let $-\infty\leq k\leq \infty$. We say $X$ is a \emph{$k$-categorical spectrum} if $X_n$ is a $\max\{n+k, 0\}$-category. We let $k\CatSp\subset \CatSp$ be the full subcategory of $k$-categorical spectra.
    On one extreme, we have $\infty\CatSp = \CatSp$, while on the other we have $-\infty\CatSp=\Sp$. For finite $k$, the categories $k\CatSp$ are shifts of one another. Many interesting examples of categorical spectra live in $0\CatSp$ or $1\CatSp$. This defines the categorical hierarchy that interpolates between spectra and categorical spectra: 
    \[\Sp=-\infty\CatSp\subset \cdots\subset (-1)\CatSp\subset 0\CatSp\subset 1\CatSp\subset \cdots \subset \infty\CatSp = \CatSp. \]
    Recall that the inclusion $n\Cat\hookrightarrow \infty\Cat$ admits both left and right adjoints, denoted by ${}^{\leq n}(\blank)$ and $(\blank)^{\leq n}$. In particular, $k\CatSp\subset \CatSp$ is closed under limits and colimits, so the inclusion has both left and right adjoints, again denoted by ${}^{\leq k}(\blank)$ and $(\blank)^{\leq k}$. In the following diagram, the square $(\ast_n)$ is horizontally left adjointable, so in this range, the right adjoint is given levelwise, i.e., $\Omega^{\infty-n} (X^{\leq k}) \simeq (\Omega^{\infty-n} X)^{\leq (n+k)}$ if $n\geq -k$. 
    \[\begin{tikzcd}
        k\CatSp\ar[r]\ar[d, "i", hook]& \cdots \ar[r] &(n+k+1)\Cat_\ast\ar[r, "\Omega"]\ar[d, "i_{n+1}", hook]\ar[rd, phantom, "(\ast_n)"] & (n+k)\Cat_\ast \ar[d, "i_n", hook]\\
        \CatSp \ar[r] & \cdots \ar[r] & \infty\Cat_\ast \ar[r, "\Omega"] & \infty\Cat_\ast.
    \end{tikzcd}\]
    However, the left adjoint $\Sigma: \sS_\ast\to \sS_\ast$ to $\Omega$ is not the restriction of $\Sigma: \infty\Cat_\ast\to \infty\Cat_\ast$, so we must take the monoidal structure into account, as the next remark shows. 
\end{example}
\begin{remark}\label{categorical_truncations_of_catsp}
    The condition that $X_n$ is a groupoid for $n\leq -k$ in fact forces $X_n$ to be \emph{grouplike} for $n< -k$, so $k\CatSp = \lim_n\eC_n$ for $\eC_n\subset\infty\SMC$ defined by 
    \[\eC_n = \begin{cases}
        \CMon((n+k)\Cat) & (n\geq -k), \\
        \CMon^\gp(\sS) & (n< -k). 
    \end{cases}\]
    The left and right adjoints of $n\Cat \hookrightarrow \infty\Cat$ preserve products, so they induce left and right adjoints of $\CMon(n\Cat)\hookrightarrow\CMon(\infty\Cat)$. 
    Also note that $\CMon^\gp(\sS)\subset \CMon(\sS)$ admits a left adjoint $(\blank)^\gp$ given by group completion and a right adjoint $(\blank)^\times$ that takes the maximal Picard subgroupoid, i.e. the components of invertible objects. Composing these, $i_n: \eC_n\hookrightarrow\infty\Cat$ admits left and right adjoints. The square $(\ast_n)$ is vertically right adjointable, so in particular, the underlying spectrum functor $(\blank)^{\leq -\infty}: \CatSp\to \Sp$ is levelwise given by $(X_n)\mapsto ((X_n)^{\leq 0, \times})$. Notice, however, that $(\ast_n)$ is still not vertically left adjointable \cite[Remark 13.4.21]{stefanichHigherQuasicoherentSheaves2021}.
\end{remark}

\begin{example}[Connectivity]
    Let $-\infty\leq k\leq \infty$, and consider the property $P(n)$ of being $(n+k)$-connective. We say $X$ is \emph{$k$-connective} when it satisfies this levelwise property. Denote the corresponding full subcategory by $\CatSp^{k\mhy\cn}$. When $k$ is finite, it is the essential image of the fully faithful functor $\rB^{\infty+k}:\infty\SMC\to \CatSp$ whose right adjoint is $\Omega^{\infty+k}$. The $k$-connective cover of $X$ is $X^{k\mhy\cn} = \rB^{\infty+k}\Omega^{\infty+k}X$, i.e., the terminal $k$-connective categorical spectrum with a map to $X$. 
\end{example}
\begin{example}[Adjoints and Duals]
    In \cref{chapter_categorical_spectra_with_adjoints}, we will discuss the levelwise property of being $n$-adjointful. The cobordism hypothesis gives a geometric description of the adjointful envelope for some categorical spectra. 
\end{example}

\begin{remark}
    Many categorical spectra in nature arise from the $P$-envelope construction for some property $P$. We expect that the examples include the following, but we will not pursue the details here because they require extra work and some are not yet in the literature.
    \begin{enumerate}
    \item ($n$-semiadditivity)
        For finite $n$ and an $(\infty, n)$-category $\cC$, \cite[\S 3.2]{lurieClassificationTopologicalField2009} outlines the definition of the $n$-category $\Fam_n^k(\cC)$. 
        Roughly speaking, 
        it is the $n$-category of spans of $k$-truncated $\pi$-finite groupoids coherently decorated by cells of $\cC$. 
        There is a morphism $\cC\to\Fam^k_n(\cC)$ exhibiting $\Fam^k_n(\cC)$ as the universal $k$-semiadditive $n$-category under $\cC$, as proven by \cite{harpazAmbidexterityUniversalityFinite2020} in the $n=1$ case; the general case (including the definition of a $k$-semiadditive $n$-category) has been announced by Scheimbauer--Walde \cite{scheimbauerUniversalPropertyHigher2023}. If $\cC$ itself is $k$-semiadditive, it gives the \emph{finite path integral} functor\footnote{The importance of this functor is explained in \cite{freedTopologicalQuantumField2009}. $\Fam^k_n(\cC)$ classifies classical field theories, and the composition with $\int$ gives the \emph{quantization}. An important example is the Dijkgraaf--Witten theory.} $\int: \Fam^k_n(\cC)\to \cC$.
        If $X = (X_n)$ is a categorical spectrum, then almost by definition $\Fam^k(X)\coloneqq \{\Fam^k_n(X_n)\}_{n\geq 0}$ forms a categorical spectrum. One can define $k$-semiadditivity of categorical spectra so that $\Fam^k(X)$ is the $k$-semiadditive envelope of $X$. 
        \item ($n$-stability) For a ring spectrum $R$, the spectrally enriched symmetric monoidal category $\rB R$ admits a stable envelope $\mathsf{Perf}_R$ and a stable presentable envelope $\LMod_R$. 
        In \cite{stefanichPresentableInftyCategories2020}, Stefanich defined the categorical spectrum $\underline{R} = \{n\Mod_R\}$ and the notion of $n$-presentable (stable) $n$-categories. We expect that the construction $\rB^\infty R\mapsto \underline{R}$ can be realized as the $0$-presentable stable envelope, and similarly for a finitary version of it.
        \item (separable closure: \cite{pirsa_PIRSA:23090104})
        With an appropriate finitary version of $\underline{R}$ as above, for a ring of characteristic $0$, Johnson-Freyd and Reutter defined a notion of higher categorical separable closure (either characterized by having a trivial \'etale homotopy type or by a Nullstellensatz-like condition). For complex numbers, it constructs the categorical spectrum of super-vector spaces, super-algebras, and so on. This is likely another example of an envelope construction with an appropriate property of algebraic closedness. 
    \end{enumerate}
    Combining these envelopes with the right adjoint $\GG_m \coloneqq (\blank)^{\leq -\infty}: \CatSp\to \Sp$, we can extract a spectrum containing interesting new information. For instance, $\GG_m(\underline{R})$ gives an infinite sequence of nontrivial deloopings extending the classically well-known $R^\times, \mathrm{Pic}(R), \mathrm{Br}(R)$, i.e., the units, the Picard space, and the Brauer space of a spectrum. Part of the characterizing properties of the separable closure of $\underline{\CC}$ is that $\GG_m(\underline{\CC}^\mathrm{sep}) = I_{\CC^\times}$, the Brown--Comenetz dualizing spectrum of $\CC^\times$.
    This recovers the Freed--Hopkins proposal that $I_{\CC^\times}$ is the universal target of a ``physical'' invertible TQFT. 
\end{remark}

\section{Finiteness properties of categorical spectra}\label{section_categorical_spectra_finiteness_properties}
For a spectrum $X=(X_n)$, a fundamental observation is the formula $X\simeq \colim_n \Sigma^{\infty -n}X_n$. 
It remains valid for categorical spectra by the following standard argument:
\begin{proposition}\label{objects_of_colim_in_PrL_omega_is_colim}
    Let $\eC\to \cdots \to \eC_n \to \cdots\to \eC_1\to \eC_0$ be a limit diagram of categories whose structure morphisms $R_n: \eC\to \eC_n$ admit left adjoints $L_n$.
    Then the counit maps induce a colimit diagram $L_0R_0\to L_1R_1\to \cdots\to \id_{\eC}$ in $\Fun(\eC, \eC)$.
\end{proposition}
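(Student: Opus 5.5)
The plan is to reduce the statement to a computation of mapping spaces in the limit category, using the standard description of mapping spaces in a limit of categories. Fix objects $X, Y\in \eC$. The counit maps $\epsilon_n: L_nR_n X\to X$ are compatible: $L_nR_n \simeq L_{n+1}(\ell_n R_n)$ where $\ell_n\dashv r_n$ is the adjunction between $\eC_n$ and $\eC_{n+1}$ (with $R_n\simeq r_n R_{n+1}$, hence $L_n\simeq L_{n+1}\ell_n$). The transition maps are then $L_{n+1}$ applied to the counit $\ell_n r_n R_{n+1}X\to R_{n+1}X$. This yields a diagram $\NN\to \Fun(\eC,\eC)_{/\id}$. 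Colimits in $\Fun(\eC,\eC)$ are pointwise, so it suffices to show that for each $X$ the induced map $\colim_n L_nR_nX\to X$ is an equivalence. When $\eC$ is not assumed cocomplete, I would interpret this as saying that $X$ is a colimit of the diagram, i.e., the cone is a colimit cone.

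To check that the cone is a colimit cone, I map it into an arbitrary $Y$. Then I need the following map to be an equivalence:
\[\Map_\eC(X,Y)\to \lim_n \Map_\eC(L_nR_nX, Y)\simeq \lim_n \Map_{\eC_n}(R_nX, R_nY).\]
The key input is the standard fact (e.g. \cite[Corollary 3.3.3.2]{LurieHTT} via the Cartesian fibration model, or directly since $\Map$ commutes with limits of categories) that for a limit $\eC=\lim_n\eC_n$ of categories, the mapping spaces satisfy $\Map_\eC(X,Y)\simeq \lim_n\Map_{\eC_n}(R_nX,R_nY)$. This holds because the functor $\Cat\to \sS$ sending a category to the space of functors out of $\Delta^1$ preserves limits, and one takes fibers over the endpoints.

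It remains to check that this identification agrees with the composite through the adjunction isomorphisms. That is, the map from $\Map_\eC(X,Y)$ to each term is precomposition with $\epsilon_n$ followed by the adjunction equivalence, which is just $R_n$ applied to maps. The transition maps $\Map(R_{n+1}X,R_{n+1}Y)\to \Map(R_nX,R_nY)$ are $r_n$ applied to maps, matching precomposition with the transition maps of the $L_nR_nX$. I expect this coherence bookkeeping to be the main obstacle. Specifically, the system $n\mapsto \Map(L_nR_nX,Y)$ must be identified as a diagram, not merely levelwise, with $n\mapsto \Map(R_nX,R_nY)$. I would handle it by working in $\PrR$-style terms: regard $R_\bullet$ as a natural transformation and use that the mates of the commuting squares $r_nR_{n+1}\simeq R_n$ give $L_{n+1}\ell_n\simeq L_n$ coherently, which is a functorial statement about passing to left adjoints in $\Fun(\NN^\op,\widehat{\Cat})$. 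Alternatively, I would use the Yoneda-style observation that both systems are the images of the same $\NN^\op$-diagram $\Map_{\eC_\bullet}(R_\bullet X, R_\bullet Y)$ under the equivalence $\Map(L_n-,-)\simeq \Map(-,R_n-)$, which is natural in the indexing category.
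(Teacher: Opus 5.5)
Your proposal is correct and follows the paper's proof. Both identify $\Map_\eC(X,Y)\simeq\lim_n\Map_{\eC_n}(R_nX,R_nY)\simeq\lim_n\Map_\eC(L_nR_nX,Y)$ and conclude by Yoneda. The paper skips your coherence bookkeeping by letting Yoneda \emph{define} the $\NN$-diagram $n\mapsto L_nR_nX$ from the $\NN^\op$-diagram of corepresentable functors $\Map_{\eC_n}(R_nX,R_n(-))$, which is your ``alternative'' route. That route also avoids your use of left adjoints $\ell_n$ of the transition functors $\eC_{n+1}\to\eC_n$, which are not among the hypotheses (though they do exist where the paper applies the result).
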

\begin{proof}
    For every $X, Y\in \eC$, we have an equivalence
    \[\Map_{\eC}(X, Y)\simeq \lim_n \Map_{\eC_n}(R_n X, R_n Y)\simeq \lim_n \Map_{\eC}(L_nR_n X, Y).\]
    By Yoneda, this induces the diagram in question and moreover exhibits $X$ as a sequential colimit of $L_nR_n X$. 
\end{proof}
\begin{corollary}\label{spectra_are_colimits_of_suspension_spectra}
    Let $X = (X_n)$ be a categorical spectrum. Then there are canonical equivalences $\colim_n \Sigma^{\infty-n} X_n\xrightarrow{\sim}\colim_n \rB^{\infty-n} X_n \xrightarrow{\sim} X$.
\end{corollary}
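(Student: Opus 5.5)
This is a direct application of \cref{objects_of_colim_in_PrL_omega_is_colim} to the two limit presentations of $\CatSp$ established earlier, followed by one comparison step between them.

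\textbf{First equivalence.} Start from the defining tower $\CatSp\simeq\lim(\cdots\xrightarrow{\Omega}\infty\Cat_\ast\xrightarrow{\Omega}\infty\Cat_\ast)$. Here the structure maps are $R_n=\Omega^{\infty-n}$ and their left adjoints are $L_n=\Sigma^{\infty-n}$. \cref{objects_of_colim_in_PrL_omega_is_colim} then gives that the counits assemble into a colimit diagram $\Sigma^{\infty-n}\Omega^{\infty-n}\to\Sigma^{\infty-(n+1)}\Omega^{\infty-(n+1)}\to\cdots\to\id$. Evaluating at $X$ yields $\colim_n\Sigma^{\infty-n}X_n\xrightarrow{\sim}X$.

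\textbf{Second equivalence.} Apply the same proposition to the presentation of \cref{CatSp_is_limit_of_SMC}, namely $\CatSp\simeq\lim(\cdots\xrightarrow{\Omega}\CMon(\infty\Cat)\xrightarrow{\Omega}\CMon(\infty\Cat))$. Now the projections are $\Omega^{\infty-n}$, regarded as valued in $\CMon(\infty\Cat)$, and their left adjoints are $\rB^{\infty-n}$. This gives $\colim_n\rB^{\infty-n}X_n\xrightarrow{\sim}X$, where $X_n$ now carries its canonical symmetric monoidal structure.

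\textbf{Compatibility.} It remains to check that these two colimit cones are related by the natural map $\colim_n\Sigma^{\infty-n}X_n\to\colim_n\rB^{\infty-n}X_n$. By \cref{CatSp_is_limit_of_SMC} we have $\Sigma^{\infty-n}\simeq\rB^{\infty-n}\circ\Free_{\EE_\infty/\EE_0}$. The levelwise map is therefore $\rB^{\infty-n}$ applied to the counit $\Free_{\EE_\infty/\EE_0}X_n\to X_n$ of the free/forgetful adjunction. Composing this map with the counit $\rB^{\infty-n}X_n\to X$ recovers the counit $\Sigma^{\infty-n}X_n\to X$, because counits of composite adjunctions compose.

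The resulting triangle commutes compatibly in $n$, since both towers come from the same diagram of right adjoints and the forgetful functors commute with $\Omega$. Two-out-of-three then identifies the first map as an equivalence. The main point needing care is exactly this naturality in $n$, i.e.\ that the transition maps of both colimit systems are matched. The cleanest way to see it is via the Yoneda description used in the proof of \cref{objects_of_colim_in_PrL_omega_is_colim}: both systems corepresent the same tower $\lim_n\Map(X_n,\Omega^{\infty-n}Y)$, once $\Map_{\CMon}(X_n,\Omega^{\infty-n}Y)\simeq\Map_{\infty\Cat_\ast}(X_n,\Omega^{\infty-n}Y)$ is identified in the limit.
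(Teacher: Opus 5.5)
Your proposal is correct and is the intended argument: the corollary follows by applying \cref{objects_of_colim_in_PrL_omega_is_colim} to the two limit presentations of $\CatSp$, the defining tower over $\infty\Cat_\ast$ and the tower over $\CMon(\infty\Cat)$ from \cref{CatSp_is_limit_of_SMC}. Your extra step, identifying the first map with $\rB^{\infty-n}$ applied to the free/forgetful counit and checking naturality in $n$ through the mapping-space description, is a detail the paper leaves implicit, and your justification of it is sound.
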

We will use this corollary to deduce that any categorical spectrum is a filtered colimit of finite categorical spectra. We must first define the notion of finite categorical spectra. Recall that for a spectrum $X\in\Sp$, the following conditions are equivalent:
\begin{enumerate}
    \item $X$ is \emph{finite}, i.e., $X\simeq \Sigma^{\infty-n} Y$ for some natural number $n$ and a finite pointed CW complex $Y$. 
    \item $X$ is \emph{perfect}, i.e., $X$ belongs to the smallest stable subcategory that contains $\SS$ and is closed under retracts.
    \item $X$ is \emph{compact}, i.e., $\Map_{\Sp}(X, \blank): \Sp\to \sS$ preserves filtered colimits. 
    \item $X$ is \emph{dualizable}, i.e., the functor $X\otimes (\blank): \Sp\to \Sp$ admits left or right (equivalently, both) adjoints. 
\end{enumerate}
Ideally, we wish to modify each definition for categorical spectra and prove that they are all equivalent. For now, we only work out the formal part of this.
To define perfectness of categorical spectra, we must first understand the notion of \emph{stability}. This is still a work in progress (see the introduction to \cref{chapter_ablosute_colimits_in_catsp}). What is clear is that perfect categorical spectra should not be closed under $1$-categorically finite colimits; instead, these must be replaced by some lax analogs. 
\begin{definition}
    A (pointed) $\infty$-category is \emph{finite} if it belongs to the smallest subcategory $\infty\Cat_{(\ast)}^\fin\subset\infty\Cat_{(\ast)}$ that contains the (pointed) cells $\GG = \{{C_n}_{(,+)}\}_{n\geq 0}$ and is closed under finite colimits.
    A categorical spectrum is \emph{finite} if it is of the form $\Sigma^{\infty-n} X$ for some integer $n$ and a finite pointed $\infty$-category $X$. We write $\CatSp^\fin\subset \CatSp$ for the full subcategory of finite categorical spectra. 
\end{definition}
\begin{remark}\label{remark_finite_category_Ind_generates}
    Finite $\infty$-categories are compact 
    and the inclusion $\infty\Cat^\fin\to \infty\Cat$ preserves finite colimits. By \cite[Proposition 5.3.5.11, Example 5.3.6.8]{LurieHTT}, the left Kan extension $\Ind(\infty\Cat^\fin_{(\ast)})\to \infty\Cat_{(\ast)}$ is fully faithful and colimit-preserving. Since the cells generate $\infty\Cat$ under colimits, we have $\Ind(\infty\Cat^\fin_{(\ast)})\xrightarrow{\sim}\infty\Cat_{(\ast)}$. In particular, every (pointed) $\infty$-category is canonically a filtered colimit of finite ones. 
\end{remark}
\begin{example}
    Any finite torsion-free complex is a finite $\infty$-category by \cite[Theorem B]{campion$inftyn$categoricalPastingTheorem2023}. In particular, any strong Steiner $\infty$-category corresponding to a finite-dimensional strong Steiner complex is finite; examples include the objects of $\Theta$, lax cubes, and orientals. The author does not know if any finite computad is a finite $\infty$-category. 
\end{example}

\begin{corollary}
    Any categorical spectrum is a filtered colimit of finite categorical spectra. More precisely, the inclusion $\CatSp^\fin\subset \CatSp$ induces an equivalence $\Ind(\CatSp^\fin)\xrightarrow{\sim} \CatSp$. 
\end{corollary}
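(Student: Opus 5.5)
We use the standard criterion \cite[Proposition 5.3.5.11]{LurieHTT}: the left Kan extension $\Ind(\CatSp^\fin)\to \CatSp$ of the inclusion is fully faithful and colimit-preserving provided (a) $\CatSp^\fin$ consists of compact objects of $\CatSp$, and (b) $\CatSp$ has filtered colimits. It is essentially surjective provided (c) every object of $\CatSp$ is a filtered colimit of objects of $\CatSp^\fin$. Point (b) is clear, since $\CatSp$ is presentable. One should also check that $\CatSp^\fin$ is essentially small, so that $\Ind$ makes sense. This holds because finite pointed $\infty$-categories form an essentially small subcategory generated from the cells, and we only shift by integers.

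For (a), let $X$ be a finite pointed $\infty$-category. By \cref{remark_finite_category_Ind_generates}, $X$ is compact in $\infty\Cat_\ast$. The right adjoint $\Omega^{\infty-n}$ preserves filtered colimits. This follows from \cref{loop_preserves_filtered_colimits}: $\CatSp$ is a limit in $\Pr^\sR_\omega$, so filtered colimits in $\CatSp$ are computed levelwise. Hence
\[\Map(\Sigma^{\infty-n}X, \colim_i Y_i)\simeq \Map(X, \colim_i \Omega^{\infty-n}Y_i)\simeq \colim_i \Map(\Sigma^{\infty-n}X, Y_i),\]
so $\Sigma^{\infty-n}X$ is compact. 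Negative $n$ are handled by the same argument, using the $\ZZ$-indexed extension of the diagram.

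For (c), start from \cref{spectra_are_colimits_of_suspension_spectra}, which gives $Y\simeq \colim_n \Sigma^{\infty-n}Y_n$. By \cref{remark_finite_category_Ind_generates}, each $Y_n$ is a filtered colimit of finite pointed $\infty$-categories, and $\Sigma^{\infty-n}$ preserves colimits. So $Y$ is an iterated colimit (sequential of filtered) of objects of $\CatSp^\fin$. To conclude, we use that the essential image of a fully faithful, colimit-preserving functor out of $\Ind(\CatSp^\fin)$ is closed under all small colimits. Indeed, $\CatSp^\fin$ is closed under finite colimits: after a common shift, finite colimits of objects $\Sigma^{\infty-n}X$ are computed as $\Sigma^{\infty-m}$ of finite colimits in $\infty\Cat_\ast^\fin$, using $\Sigma^{\infty-n}X\simeq\Sigma^{\infty-m}\Sigma^{m-n}X$, and $\Sigma$ preserves finiteness since it preserves colimits and sends pointed cells to finite objects. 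Therefore $\Ind(\CatSp^\fin)$ is cocomplete, and the image contains $Y$.

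The main obstacle is this closure step: verifying that $\Sigma$ of a pointed cell is finite, i.e.\ that $\Sigma(C_{k,+})$ lies in $\infty\Cat^\fin_\ast$. This follows from the cofiber sequence $\sigma(\ast)\to\sigma(C_{k,+})\to \Sigma C_{k,+}$ together with $\sigma(C_k\sqcup\ast)$ being a pushout of cells $C_{k+1}$ and $C_1$ along $\partial C_1$.
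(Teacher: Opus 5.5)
Your argument follows the paper's proof. Compactness of $\Sigma^{\infty-n}X$ comes from $\Omega^{\infty-n}$ preserving filtered colimits, full faithfulness comes from \cite[Proposition 5.3.5.11]{LurieHTT}, and essential surjectivity comes from \cref{spectra_are_colimits_of_suspension_spectra} together with \cref{remark_finite_category_Ind_generates}.

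The one place you diverge is the final step, and that detour is unnecessary. A sequential colimit is filtered. The essential image of the fully faithful, filtered-colimit-preserving functor $\Ind(\CatSp^\fin)\to\CatSp$ is closed under filtered colimits, so it contains each $\Sigma^{\infty-n}Y_n$ and hence $Y$. This is exactly how the paper concludes. It needs neither cocompleteness of $\Ind(\CatSp^\fin)$ nor finiteness of $\Sigma C_{k,+}$. Note also that Proposition 5.3.5.11 gives only full faithfulness plus filtered-colimit preservation, not preservation of all colimits as your first paragraph suggests; that mismatch is what pushed you into the detour.

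As written, your finite-colimit closure is also under-justified. A map $\Sigma^{\infty-m}A\to\Sigma^{\infty-m}B$ in $\CatSp$ need not come from an unstable map $A\to B$. It comes only from some $\Sigma^kA\to\Sigma^kB$ with $k\geq 0$. A coherent finite diagram in $\CatSp^\fin$ must therefore be lifted to a single finite stage, which needs an extra argument, e.g.\ identifying $\CatSp^\fin$ with the colimit of $\infty\Cat_\ast^\fin\xrightarrow{\Sigma}\infty\Cat_\ast^\fin\xrightarrow{\Sigma}\cdots$. Your ``main obstacle'' is best simply dropped.
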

\begin{proof}
    Since $\Omega^{\infty-n}: \CatSp\to \infty\Cat_\ast$ preserves filtered colimits, finite categorical spectra are compact.
    By \cite[Proposition 5.3.5.11]{LurieHTT}, the left Kan extension $\Ind(\CatSp^\fin)\to \CatSp$ is fully faithful.
    To show that it is an equivalence, we must check that the smallest full subcategory of $\CatSp$ containing $\CatSp^\fin$ and closed under filtered colimits is $\CatSp$ itself, which follows from \cref{spectra_are_colimits_of_suspension_spectra} and \cref{remark_finite_category_Ind_generates}.
\end{proof}
\begin{corollary}
    A categorical spectrum $X$ is compact if and only if it is a retract of a finite categorical spectrum.
\end{corollary}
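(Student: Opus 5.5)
The statement to prove is: $X$ is compact in $\CatSp$ if and only if $X$ is a retract of a finite categorical spectrum. The plan is to deduce it formally from the previous corollary, $\Ind(\CatSp^\fin)\xrightarrow{\sim}\CatSp$, by the standard description of compact objects in an Ind-category (\cite[Lemma 5.4.2.4]{LurieHTT}). Concretely, if $\eC$ is a small category, the compact objects of $\Ind(\eC)$ are exactly the retracts of objects in the image of the Yoneda embedding $\eC\hookrightarrow\Ind(\eC)$. This holds without any finite-colimit hypotheses on $\eC$, though one could note that $\CatSp^\fin$ is essentially small and closed under $\Sigma^{\pm1}$.

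For the ``if'' direction, we already know from the proof of the previous corollary that finite categorical spectra are compact. The reason is that $\Omega^{\infty-n}$ preserves filtered colimits (\cref{loop_preserves_filtered_colimits}). Hence $\Sigma^{\infty-n}$ sends compact objects to compact objects, and finite pointed $\infty$-categories are compact by \cref{remark_finite_category_Ind_generates}. Compact objects are closed under retracts, because a retract of a filtered-colimit-preserving functor $\Map(Y,\blank)$ still preserves filtered colimits, being a retract in $\Fun(\CatSp,\sS)$. This settles the ``if'' direction.

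For the ``only if'' direction, let $X$ be compact. By the previous corollary, $X\simeq\colim_{i\in I}X_i$ is a filtered colimit of finite $X_i$. The identity map $X\to X$ is a point of
\[\Map(X,\colim_i X_i)\simeq\colim_i\Map(X,X_i),\]
so it factors through some $X\to X_i$. This exhibits $X$ as a retract of $X_i$.

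The only point requiring care is essential smallness of $\CatSp^\fin$, which is needed to apply the Ind-theory. It holds because finite pointed $\infty$-categories form an essentially small category: they are generated from countably many cells under finite colimits, and the shifts are indexed by $\ZZ$. I do not expect any genuine obstacle.
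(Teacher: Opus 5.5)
Your proposal is correct and is essentially the paper's argument. The paper states this corollary without proof, as an immediate consequence of the equivalence $\Ind(\CatSp^\fin)\xrightarrow{\sim}\CatSp$ together with the standard fact that the compact objects of an Ind-category are exactly the retracts of objects of the generating subcategory, and your two directions spell out precisely that.
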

\begin{remark}
    In the classical case of spectra, we can remove the retract from the statement. The standard argument uses homology theory and the Hurewicz theorem: homology theory precisely tells us the recipe for approximating a space by cells, and stably it is conservative enough that the recipe can completely recover the spectrum in question. 
    We do not know whether any compact categorical spectrum is a finite suspension categorical spectrum (also note that the analogous statement for equivariant spectra is known to be false). In any case, it is desirable to have an algebraic invariant that extracts the data of cells required to build an $\infty$-category or a categorical spectrum. 
\end{remark}
We briefly discuss the dualizability of a categorical spectrum. Recall the following definition:
\begin{definition}
    Let $\eC$ be a monoidal category. An object $X\in\eC$ is \emph{left- (resp.\ right-)dualizable} if the functor $X\otimes (\blank): \eC\to \eC$ admits a left (resp.\ right) adjoint. 
\end{definition}
Dualizability depends on the monoidal structure constructed in the next chapter. However, note the following general fact in a closed monoidal category: a dualizable object has as much compactness as the unit object.
\begin{proposition}\label{dualizable_implies_compact}
    Suppose we are given a closed monoidal structure $\otimes$ on $\CatSp$ whose unit is $\FF$. Then any left- or right-dualizable object is compact. 
\end{proposition}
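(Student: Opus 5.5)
The plan is the standard argument for closed monoidal categories with compact unit. Suppose $X$ is right-dualizable, i.e.\ $X\otimes(\blank)\colon\CatSp\to\CatSp$ admits a right adjoint $R$; the left-dualizable case is handled the same way, and I address it at the end. Closedness means $X\otimes(\blank)$ already has a right adjoint $[X,\blank]$ (the internal hom). To show $X$ is compact, I would express $\Map(X,\blank)$ in terms of the unit: using $\FF\otimes X\simeq X$ and the closed structure,
\[
\Map_{\CatSp}(X, Y)\simeq \Map_{\CatSp}(\FF\otimes X, Y)\simeq \Map_{\CatSp}(\FF, \llbracket X, Y\rrbracket),
\]
where $\llbracket X,\blank\rrbracket$ is the right adjoint of $(\blank)\otimes X$. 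Since $\FF$ is compact (the remark after the definition of $\FF$, via \cref{loop_preserves_filtered_colimits}), it is enough to show that $\llbracket X,\blank\rrbracket$ preserves filtered colimits.

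The key step is to identify $\llbracket X,\blank\rrbracket$ with a tensoring functor, which preserves all colimits because $\otimes$ is closed. For the relevant side of dualizability, the adjunction $X\otimes(\blank)\dashv R$ is a duality datum. Concretely, I would set $X^\vee:=R(\FF)$, or take the dual produced by the adjunction. The unit $\FF\to R(X\otimes\FF)$ and the counit then give coevaluation and evaluation maps satisfying the triangle identities, and the standard argument yields $\llbracket X, Y\rrbracket\simeq Y\otimes X^\vee$ (or $X^\vee\otimes Y$, depending on the side). This functor preserves colimits, hence filtered ones, so $\Map(X,\blank)$ preserves filtered colimits. If the duality ends up on the other side, I would instead use $\Map(X,Y)\simeq\Map(\FF,[X,Y])$ for the left internal hom, with $\FF\otimes X\simeq X\otimes\FF$ given by the unit identification.

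The main obstacle is bookkeeping in the noncommutative setting. I need to check that the hypothesis "$X\otimes(\blank)$ has a left or right adjoint" actually forces the appropriate internal hom out of $X$ to commute with filtered colimits. Two cases arise. If $X\otimes(\blank)$ has a left adjoint $L$, then $\Map(X\otimes Z,Y)\simeq\Map(Z,[X,Y])$, and I would identify $[X,\blank]$ with $(\blank)\otimes X'$ for $X'=L(\FF)$-type data by the usual projection-formula argument. If it has a right adjoint, I would argue directly through the duality datum. In either case I would first reduce to showing that the relevant internal hom $\CatSp\to\CatSp$ is itself a left adjoint; that gives filtered-colimit preservation, and composing with the compact unit finishes the proof.
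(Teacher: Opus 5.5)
Your overall plan (express $\Map(X,Y)$ as maps out of the compact unit $\FF$ into an internal hom, then identify that internal hom with a tensoring functor) is the paper's plan. The step that is supposed to supply the identification fails, however.

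You read the hypothesis as "$X\otimes(\blank)$ has some right adjoint $R$", and then claim that the unit and counit of $X\otimes(\blank)\dashv R$ form a duality datum. As you yourself observe, closedness already gives $R=[X,\blank]$ for every $X$. Under this reading the hypothesis is empty, and the conclusion would make every categorical spectrum compact, which is false (e.g.\ $\bigoplus_{\NN}\FF$).

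Concretely, $R(\FF)=[X,\FF]$, and the unit $\FF\to R(X)=[X,X]$ just classifies $\id_X$. It is not a coevaluation into $[X,\FF]\otimes X$. The bare adjunction only provides the lax comparison map $[X,\FF]\otimes Y\to[X,Y]$, adjoint to the counit tensored with $Y$. Invertibility of this map is exactly what dualizability means, and it cannot be extracted from the existence of $R$. Likewise, an arbitrary left adjoint $L$ of $X\otimes(\blank)$ is only oplax compatible with right tensoring, so your ``projection-formula argument'' has nothing to start from.

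The missing ingredient is the intended hypothesis, which is what the paper's proof uses. It also matches the later notion of an adjoint in $\RMod_{\CatSp}(\PrL)$. The adjoint must itself be tensoring with an object: $X\otimes(\blank)\dashv X^R\otimes(\blank)$, resp.\ $X^L\otimes(\blank)\dashv X\otimes(\blank)$.

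This also settles the sides, which your write-up leaves hedged. In the right case you should not route through $\llbracket X,\blank\rrbracket$. In the left case it is not $[X,\blank]$ that becomes a tensoring functor.

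In the right case, uniqueness of adjoints gives $[X,Y]\simeq X^R\otimes Y$, so $\Map(X,Y)\simeq\Map(X\otimes\FF,Y)\simeq\Map(\FF,X^R\otimes Y)$.

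In the left case, the maps $\FF\to X\otimes X^L$ and $X^L\otimes X\to\FF$ satisfy the same triangle identities needed to exhibit $(\blank)\otimes X\dashv(\blank)\otimes X^L$. Hence $\llbracket X,Y\rrbracket\simeq Y\otimes X^L$ and $\Map(X,Y)\simeq\Map(\FF\otimes X,Y)\simeq\Map(\FF,Y\otimes X^L)$.

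In either case, tensoring with a fixed object preserves colimits because $\otimes$ is closed, and $\FF$ is compact. That completes the argument, and it is exactly the paper's two-line proof.
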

\begin{proof}
    Suppose $X$ has a right dual $X^R$. By assumption, the tensor product admits an internal hom: $\Map(Z, [X, Y])\simeq \Map(X\otimes Z, Y)\simeq \Map(Z, X^R\otimes Y)$, so, in general, we have $[X, Y]\simeq X^R\otimes Y$ (similarly, $\llbracket X, Y\rrbracket\simeq Y\otimes X^L$ for the left dual $X^L$ and the right internal hom $\llbracket \blank, \blank\rrbracket$). Plugging in $Z=\FF$, we see that $\Map(X, Y) \simeq \Map(\FF, X^R\otimes Y)$, and since $\FF$ is compact, this functor is colimit-preserving in $Y$. 
\end{proof}
The assumption is clearly satisfied once the monoidal structure is constructed. In particular, any dualizable categorical spectrum is compact. 
However, not all finite categorical spectra are dualizable. For this, we must replace the finiteness discussed in this section by some lax analog (for example, the non-directed circle $\Sigma^{\infty} S^1$ is not dualizable).
We will give some examples of dualizable categorical spectra in \cref{chapter_ablosute_colimits_in_catsp}. 

\chapter{Tensor product of categorical spectra}
\label{chapter_tensor_product_of_catsp}
The main goal of this chapter is to define the tensor product of categorical spectra through its universal property. 
We have already explained the strategy in detail in the introduction. 
Here we motivate our approach slightly differently, following the history of the corresponding problem for spectra. 

The first \emph{homotopy} category of spectra $\mathsf{hSp}$ with the ``smash product'' symmetric monoidal structure was defined in \cite{boardmanStableHomotopyTheory1965}, and later a more handcrafted approach in \cite{adamsStableHomotopyGeneralised1995} became popular. 
However, for higher algebra, it had serious deficiencies: the homotopy category has bad formal properties (e.g., it does not have most limits and colimits), fails to encode homotopically nuanced algebraic structures, and does not work well in families.

The first batch of successful attempts to define a symmetric monoidal structure remembering all homotopical data used model categories (e.g. \cite{EKMM}, \cite{mandellModelCategoriesDiagram2001}). The trick was a reversed microcosm principle: if we could define a symmetric tensor product, we would have the symmetry of the unit, so instead of defining spectra as merely $\NN$-indexed families of spaces, we build a model that by design takes symmetries of the spheres into account. 
The minimalistic choice is to consider the symmetric group action on the spheres encoding the Koszul sign rule, i.e., we define spectra as a $\Fin^\simeq \coloneqq \bigsqcup_{n\geq 0}\rB\Sigma_n$-indexed family of spaces. This leads to the definition of symmetric spectra. 
They were good enough for many purposes, but model categories were too rigid to behave well in families, and the choice of a model was arbitrary, contrary to the canonicity of the stable homotopy category. 

The truly universal object was in between---the $(\infty, 1)$-category $\Sp$ of spectra (Boardman's original definition was close to the modern one, except that the language was missing back then). 
After thoroughly developing $(\infty, 1)$-category theory, Lurie characterized the symmetric monoidal ($(\infty, 1)$-)category of spectra as the unit of the symmetric monoidal category $\Pr^\sL_\st$ of presentable stable categories \cite[\S 4.8]{LurieHA}.
Note that once we pass to $(\infty, 1)$-category land, sequential spectra work perfectly; being natively enriched over homotopy types and not sets, the suspension functors carry automorphisms equivalent to those of the spheres.

The lesson is that we should take the symmetries of the spheres and suspension functors into account. Coming back to our problem, we have already seen that 
the suspension $\Sigma: \infty\Cat_\ast\to \infty\Cat_\ast$ is naturally equivalent to $\vS^1\owedge(\blank)$ and $D(\blank)\owedge\vS^1$. One can think of this as the \emph{twisted} symmetry of the suspension functor, where the twist comes from the total dual $D$. Classically, the sphere commutes with other CW complexes by the \emph{Koszul sign rule}, but since we do not have negatives and instead have directions of cells, we must express the sign rule \emph{externally} by switching the directions of the cells. 
This allows us to expect the formula 
\begin{align*}
    \Sigma^{\infty-m}X\otimes \Sigma^{\infty-n} Y &= \vS^{-m}\otimes \Sigma^\infty X\otimes \vS^{-n}\otimes \Sigma^\infty Y \\
    & = \vS^{-(m+n)}\otimes \Sigma^\infty D^n X\otimes \Sigma^\infty Y \\
    & = \Sigma^{\infty-(m+n)}((D^n X)\otimes Y)
\end{align*}
for the tensor product of suspension spectra.
Based on $\infty\Cat_\ast^\owedge$, we can hope for at most an $\EE_1$-monoidal structure, or some $\ast$-algebra strucutre on $\CatSp$. However, even to prove associativity (and its higher coherence), we must be able to move suspensions around \emph{canonically}; this information is packaged conveniently in the \emph{half-central structure} on $\vS^1$, which we will establish in \cref{section_half_central_structure_of_S1}. In fact, we will show the existence of a \emph{unique} half-central structure on $\vS^1$, which deserves to be called the \emph{categorical Koszul sign rule}.

The remaining work needed to define the tensor product is in \cref{section_construction_of_tensor_product} and is rather formal, modifying previously available techniques to the $\EE_1$-setting.
This will moreover prove the monoidal universal property $\CatSp^\otimes = \infty\Cat_\ast^\owedge[\vS^{-1}]$. 
Note that although $\CatSp$ is formally obtained by inverting the endomorphism $\vS^1\otimes (\blank)$, it does not immediately imply that this procedure inverts $\vS^1$ monoidally; this problem already exists in the commutative setting (see the references in the introduction to the section).
We close the chapter with \cref{section_stable_properties_tensor_product}, establishing some basic results on the tensor product. 

\section{Half-central structure of \texorpdfstring{$\vec{S}^1$}{S1}}\label{section_half_central_structure_of_S1}
This section contains perhaps the most important technical ingredient of this thesis: the half-central structure on the directed circle $\vS^1= \rB\NN$. 
This is the higher-categorical incarnation of the Koszul sign rule in ordinary homotopy theory. We will define the half-center of $\infty\Cat_{\ast}$ and prove that the directed circle $\vec{S}^1= \rB \NN$ admits a unique half-central structure (Theorem~\ref{half-center_S1}). This will be the key technical input for the construction of the tensor product of categorical spectra in section \cref{section_construction_of_tensor_product}\footnote{When inverting a set of elements $S$ of a (noncommutative) monoid $M$, one only requires a condition on $S$ weaker than $S\subset Z(M)$, called the Ore condition, to have good control over $S^{-1}M$. 
For the definition of a monoidal structure, it might be possible to define a categorified Ore condition instead. However, it will become routine to commute the directed spheres with other objects, so it is independently useful to know that such a maneuver is completely canonical and harmless.}.

\subsection{Half-center}
We begin by recalling the notion of the center of a monoidal category. A good reference for this and the next subsection is \cite{ben-zviIntegralTransformsDrinfeld2010}.
In the following definition, the classical case is when $A$ is a monoidal $(1, 1)$-category, i.e., when $\sV = (1, 1)\Cat$ with Cartesian monoidal structure.
The example that we will specialize to later is $A=\infty\Cat_\ast^{\owedge}$ and $\sV=\Pr^{\sL, \otimes}_\omega$. 
\begin{definition}
    Let $A$ be an $\EE_1$-algebra object of $\sV\in \CAlg(\PrL)$. The \emph{center} $\fZ_\sV(A)$ of $A$ (in $\sV$) is the object $\End_{\BMod{A}(\sV)}(A)$. 
    A \emph{central structure} on $s\in A$ (i.e. $s: 1_\sV\to A$) is a lift of $s$ along the forgetful functor\footnote{
        Not to be confused with another forgetful functor $\End_{\BMod{A}}(A)\to \End_{\LMod_{A}}(A)\simeq A^\rev$.} $\fZ_\sV(A) = \End_{\BMod{A}}(A)\to \End_{\RMod_{A}}(A)\simeq A$. 
    We will omit $\sV$ from the notation when it is not confusing or relevant. 
\end{definition}
To understand the meaning of the definition, suppose $s\in A$ admits a central structure. The object $s$ is identified with a right $A$-module morphism $s\otimes(\blank): A\to A$. A bimodule homomorphism structure promoting this includes the isomorphism $s\otimes (t\otimes (\blank))\simeq t\otimes (s\otimes (\blank))$ naturally in $t$, i.e., $\tau: s\otimes (\blank)\simeq (\blank)\otimes s$.
This is all we need when $\sV$ is a $(1, 1)$-category (e.g., $\sV = \Set$), and being central is a \emph{property}, but in general this is the first piece of the \emph{structure} with infinitely many coherence data.
\begin{remark}
    As we will see below, the center is just another name for the Hochschild cohomology of $A$, seen as a bimodule over itself. 
    The notion of center defined above is sometimes called the $\EE_2$-center because it admits a canonical structure of an $\EE_2$-algebra object of $\sV$, with which the center is characterized by a universal property (see \cite[\S 5.3]{LurieHA}).
    This $\EE_2$-structure on the center can be naturally understood from Morita theory. One expects that the Morita $2$-$\sV$-category $\mathfrak{Alg}(\sV)$ of $\EE_1$-algebras and bimodules in $\sV$ can be defined in a similar manner as in (the easiest case of) \cite{johnson-freydOplaxNaturalTransformations2017}. 
    In this category, $A$ as an algebra is an object, and $A$ as a bimodule is the identity morphism of the object $A$, so the center admits a description as the double-loop object
    \[\fZ_\sV(A) \simeq \Omega(\Omega(\mathfrak{Alg}(\sV), A), \id_A),\]
    from which the $\EE_2$-structure is clear. 
\end{remark}
According to \cref{suspension_is_tensor_S1}, it seems natural to expect that $\vec{S}^2 = \vec{S}^1\owedge \vec{S}^1$ lifts to the center, which would imply that $\Sigma^\infty: \infty\Cat_\ast\to \CatSp$ lifts to an $\infty\Cat_\ast^\owedge$-bimodule homomorphism. 
However, defining a central structure can be difficult in general, a priori requiring infinitely many coherence data. 
It turns out to be easier to directly formulate the coherence data extending \cref{suspension_is_tensor_S1}, namely the ``half-central'' structure of $\vec{S}^1$; the gauntness of $\vec{S}^1$ makes it homotopy-theoretically more tractable than $\vec{S}^2$.

To define the notion of the half-center (with respect to an involution $D$), let $D: A\to A$ be a monoidal endofunctor equipped with an equivalence $D\circ D\simeq \id_A$. There is a locally fully faithful functor $\Alg(\sV)\to \mathfrak{Alg}(\sV)$ which is the identity on objects and regards algebra homomorphisms as bimodules (we do not need a precise construction of $\mathfrak{Alg}(\sV)$, however). Explicitly, an algebra homomorphism $f: A\to B$ can be seen as an $(A, B)$-bimodule ${}^f B$, whose underlying right $B$-module is $B$ itself and whose left action of $A$ is provided by $f$. By abuse of notation, we denote the $(A, A)$-bimodule ${}^DA$ also by $D$. 
\begin{definition}
    The \emph{half-center} of $A$ with respect to $D$ is $\fZ_{\sV}(A, D) \coloneqq \Hom_{\BMod{A}}(A, D)$\footnote{Optimally, this is an object of $\sV$, but for our purposes the underlying object in $\sS$ suffices.}. 
\end{definition}
\begin{remark}
    The following diagram commutes (note that $A=D=D\otimes_A D$ after forgetting to $\RMod_A$):
    \[\begin{tikzcd}
        \fZ(A, D)\simeq \Hom_{\BMod{A}}(A, D)\ar[r, "D\otimes_A(\blank)", "\sim"']\ar[d, "\text{forget}"'] & \Hom_{\BMod{A}}(D, D\otimes D)\ar[d, "\simeq"]\\
        A\simeq \End_{\RMod_A}(A, A)& \Hom_{\BMod{A}}(D, A)\ar[l, "\text{forget}"']
    \end{tikzcd}\]
    Thus lifting $x\in A$ to $\fZ(A, D)$ in fact gives a simultaneous lift to $\Hom(A, D)$ and $\Hom(D, A)$. In particular, a half-central structure on $x$ induces a central structure on $x\otimes x$ by composition $\Hom(A, D)\times \Hom(D, A)\to \Hom(A, A) = \fZ(A)$.
\end{remark}

\subsection{Cyclic bar construction and Hochschild cohomology}
\label{subsection_cyclic_bar}
Here we review the standard resolution of a bimodule into free ones, called the \emph{cyclic bar construction}, and the resulting description of the half-center $\fZ_{\sV}(A, D)$ as the Hochschild cohomology of the $(A, A)$-bimodule $D$.

Let $A, B$ be $\EE_1$-algebras in $\sV$ and let $M, N$ be $(A, B)$-bimodules. Our goal is to find a convenient description of $\Hom_{\BMod[A]{B}(\sV)}(M, N)$. 
Using the equivalence $\BMod[A]{B}(\sV)\simeq \LMod_A(\RMod_B(\sV))$, we have an adjunction
\[\begin{tikzcd}
    \LMod_A(\RMod_B(\sV)) \arrow[r, shift right = .5ex]
    & \RMod_B(\sV) \arrow[l, shift right = 1ex, "A\otimes(\blank)"']
    \arrow[l, phantom, shift right = .2ex, "\scriptscriptstyle\boldsymbol{\bot}"]
\end{tikzcd}\]
with the comonad $T = A\otimes (\blank)$. The associated comonad resolution $M\simeq \colim_{[n]\in \Spx^\op}(T^{n+1} M)$ gives  
\[\Hom_{\BMod[A]{B}}(M, N)\simeq \lim_{[n]\in\Spx}\Hom_{\BMod[A]{B}}(A^{\otimes (n+1)}\otimes M, N)\simeq \lim_{[n]\in\Spx}\Hom_{\RMod_B}(A^{\otimes n}\otimes M, N).\]
Now we apply this to the case $A=B$, $M=A$, $N=D = {}^D A$, where $D: A\to A$ is an involution as before. Since the right $A$-module structure on $D$ is identical to that on $A$, we see 
\[\fZ_{\sV}(A, D)\coloneqq \Hom_{\BMod{A}}(A, D)\simeq \lim_{[n]\in \Spx} \Hom_{\RMod_A}(A^{\otimes n}\otimes A, A)\simeq \lim_{[n]\in \Spx}\Hom_\sV(A^{\otimes n}, A).\]
\begin{remark}\label{remark_cosimplicial_structure_of_Hochschild_cohomology}
    On the right-hand side, the data of the involution $D$ is encoded in the cosimplicial structure. Explicitly, the coface map $d^i: \Hom_\sV(A^{\otimes n}, A)\to \Hom_\sV(A^{\otimes (n+1)}, A)$ sends $f: A^{\otimes n}\to A$ to 
    \[d^i f: x_0\otimes \cdots \otimes x_n\mapsto \begin{cases}
        D(x_0)f(x_1\otimes\cdots \otimes x_n) \quad (i=0),\\
        f(x_0\otimes \cdots \otimes x_{i-1}x_i\otimes \cdots \otimes x_n) \quad (1\leq i\leq n), \\
        f(x_0\otimes \cdots\otimes x_{n-1})x_n \quad (i=n+1). 
    \end{cases}\]
\end{remark}

\subsection{Digression: obstruction theory for totalizations of cosimplicial spaces}
We digress a bit and try to explicate the coherence of (half)-central structures on an object. As we saw in the last section, the half-center is the totalization of a cosimplicial object, so one can try to slice the cosimplicial diagram in skeletal layers to write down the obstructions to the existence of a half-central structure. 
In our case of interest, the obstructions turn out to live in contractible spaces, in which case this section is subsumed by a simpler argument in \cref{trivial_obstruction_theory}. Nevertheless, we include the exposition to give the intuitive description of the half-central structure in \cref{subsection_half_central_structure_on_S1}. 
The material has been standard since \cite{bousfieldHomotopySpectralSequences1989} but we provide a concise, model-independent account. See also \cite{mathewFibersPartialTotalizations2015} for a similar treatment of related material.

\begin{notation}
    For a functor $f: \eC\to \eD$, we denote the restriction $\Fun(\eD, \sS)\to \Fun(\eC, \sS)$ by $f^\ast$ and the right Kan extension by $f_\ast$, so we have an adjunction $f^\ast\dashv f_\ast$. Recall the equivalence $\lim f_\ast F\xrightarrow{\sim} \lim F$. 
\end{notation}
Let $X = X^\bullet$ be a cosimplicial object in $\sS$. Our goal is to understand when a point $x_0\in X^0$ lifts to the totalization $\Tot X\coloneqq \lim X^\bullet$. Let $\Spx_{\leq n} \subset \Spx$ 
denote the full subcategory spanned by $[k]$ for $k\leq n$, let $X_{\leq n}: \Spx_{\leq n}\to \sS$ be the restriction of $X$, and let $\Tot_n X\coloneqq \lim X_{\leq n}$ be the $n$-th partial totalization. 
Since $\Spx = \colim_n \Spx_{\leq n}$, applying \cref{objects_of_colim_in_PrL_omega_is_colim} to $\eC = \Fun(\Spx, \sS)^\op$, we have $X= \lim_n (\Spx_{\leq n}\to \Spx)_\ast X_{\leq n}$. 
The totalization yields the following limit tower of partial totalizations:  
\[\Tot X \to \cdots \to \Tot_2 X\to \Tot_1 X\to \Tot_0 X = X^0. \]
Now our task is to understand the fiber of $\Tot_{n+1} X\to \Tot_n X$ at a given point $x_n\in \Tot_n X$. 

Let $R^n X: \Spx_{\leq n+1} \to \sS$ be the right Kan extension of $X_{\leq n}$ and $M^n X\coloneqq (R^n X)^{n+1}$ (called the \emph{$n$-th matching object} of $X$). The limit of the unit natural transformation $\alpha: X_{\leq n+1}\to R^n X$ is the map in question: $\Tot_{n+1} X\to \lim R^n X \xrightarrow{\sim}\Tot_n X$. 

Let $\Spx^\inj_{\leq n+1}\subset \Spx_{\leq n+1}$ denote the subcategory with the same objects but only injective morphisms. The forgetful functor from the category of sub-simplices $u: (\Spx^\inj_{\leq n+1})_{/[n+1]}\to \Spx_{\leq n+1}$ is coinitial (see e.g., \cite[1.2.4.17]{LurieHA}), so $\lim X\simeq \lim u^\ast X$. 
Let $\eC = (\Spx^\inj_{\leq n})_{/[n+1]}$, so that $\eC^\triangleright = (\Spx^\inj_{\leq n+1})_{/[n+1]}$, and let $\eC\xrightarrow{i} \eC^\triangleright \xleftarrow{j} \{[n+1]\}$ be the inclusions.
For any functor $F: \eC^\triangleright\to \sS$, the functor $j_\ast j^\ast F$ is constant at $F([n+1])$, and the counit $F\to i_\ast i^\ast F$ replaces the value $F[n+1]$ by the point $\ast$. It follows that the following square in $\Fun(\eC^\triangleright, \sS)$ is Cartesian: 
\[\begin{tikzcd}
    F \ar[r] \ar[d, "\eta"] & j_\ast j^\ast F \ar[d, "{\eta'}"] \\
    i_\ast i^\ast F \ar[r] & i_\ast i^\ast j_\ast j^\ast F .
\end{tikzcd}\]
Recall that the limit of a constant diagram is given by cotensoring with the geometric realization (groupoidification) of the diagram shape. Also note that the geometric realizations of $\eC$ and $\eC^\triangleright$ are $S^n$ and $\ast$ because, as a simplicial set, $\eC$ is the barycentric subdivision of $\partial \Delta^{n+1}$. As a result, the limit of the above square over $\eC^\triangleright$ is the following Cartesian square in $\sS$: 
\[\begin{tikzcd}
    \lim F \ar[r] \ar[d, "\eta"] & F[n+1] \ar[d, "{\eta'}"] \\
    \lim i^\ast F \ar[r] & (F[n+1])^{S^n}. 
\end{tikzcd}\]
Plugging $\alpha: X_{\leq n+1}\to R^n X$ into $F$, we get a Cartesian cube (i.e. the cube is a limit diagram, cf.\ \cite[section 6.1.1]{LurieHA}) $\eta(\alpha)\Rightarrow \eta'(\alpha)$. Comparing the initial vertices of $\eta(\alpha)$ and $\eta'(\alpha)$ with the pullback of the rest of the squares (and since $i^\ast \alpha$ is an equivalence), we see that the following square in $\sS$ is Cartesian: 
\[\begin{tikzcd}
    \Tot_{n+1} X \ar[r]\ar[d] & X^{n+1} \ar[d] \\
    \Tot_n X \ar[r] & (X^{n+1})^{S^n}\times_{(M^n X)^{S^n}} M^n X
\end{tikzcd}\]
Tracing the construction, one sees that the map $S^n\to X^{n+1}$ corresponding to an element $x_n\in \Tot_n X$ sends the basepoint of $S^n$ (coming from $0\in \partial \Delta^{n+1}$) to $(d^0)^{n+1}(x_0)$, where $x_0$ is the element of $X^0$ underlying $x_n$.
Now we can show the following result: 
\begin{proposition}\label{obstruction theory for Tot}
    A given point $x_n\in \Tot_n X$ lifts to $x_{n+1}\in \Tot_{n+1} X$ if and only if the induced map $o(x_n): S^n\to X^{n+1}$ is trivial in $\pi_n(X^{n+1}, (d^0)^{n+1}(x_0))$. 
    When a lift exists, the space of lifts is equivalent to $\Omega^{n+1}(N^n(X), (d^0)^{n+1}(x_0))$, where $N^n(X)$ is the fiber of $X^{n+1}\to M^n(X)$ at the image of $x_n$. 
\end{proposition}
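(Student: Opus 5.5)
The proof reads everything off the Cartesian square established just before the statement,
\[\begin{tikzcd}
    \Tot_{n+1} X \ar[r]\ar[d] & X^{n+1} \ar[d] \\
    \Tot_n X \ar[r] & (X^{n+1})^{S^n}\times_{(M^n X)^{S^n}} M^n X .
\end{tikzcd}\]
Because the square is Cartesian, the fiber of $\Tot_{n+1}X\to \Tot_n X$ over $x_n$ is the fiber of the right vertical map over the image $\bar{x}_n$ of $x_n$. So we must determine when that fiber is nonempty and identify it.

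\textbf{Step 1: the right vertical map.} It sends $y\in X^{n+1}$ to the pair consisting of the constant loop at $y$ and the image of $y$ in $M^nX$. The point $\bar{x}_n$ is a pair $(o(x_n), m)$. Here $o(x_n)\colon S^n\to X^{n+1}$ is a map, $m\in M^nX$, and the composite $S^n\to X^{n+1}\to M^nX$ is identified with the constant map at $m$. A point of the fiber consists of three pieces of data. The first is $y\in X^{n+1}$ over $m$, i.e.\ $y\in N^n(X)$. The second is a homotopy from $\mathrm{const}_y$ to $o(x_n)$. The third is a compatibility between the image of this homotopy in $(M^nX)^{S^n}$ and the given nullity datum.

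\textbf{Step 2: reduce to the fiber $N^n(X)$.} Pull back along $\{m\}\to M^nX$. The right-hand corner becomes $(N^n X)^{S^n}$, and the vertical map becomes the constant-map inclusion $N^nX\to (N^nX)^{S^n}$. The map $o(x_n)$ lifts canonically to $\tilde o\colon S^n\to N^nX$ using the nullity datum. The fiber we want is then the fiber of $N\to N^{S^n}$ over $\tilde o$. Evaluating at the basepoint $\ast\in S^n$ (coming from $0\in\partial\Delta^{n+1}$) turns this into the fiber of $\Map_\ast(S^n,N)\to\ast$ over $\tilde o$, based at $y_0=\tilde o(\ast)=(d^0)^{n+1}(x_0)$. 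This fiber is nonempty exactly when $[\tilde o]=0$. When it is nonempty, it is a torsor for $\Omega\Map_\ast(S^n,N)\simeq\Omega^{n+1}(N,y_0)$, hence equivalent to $\Omega^{n+1}(N^n(X),(d^0)^{n+1}(x_0))$. This gives the second claim.

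\textbf{Step 3: the triviality criterion.} The statement phrases the criterion in $\pi_n(X^{n+1})$ rather than in $\pi_n(N^n X)$. I would bridge the two with the long exact sequence of $N^nX\to X^{n+1}\to M^nX$. The class $[\tilde o]$ lies over the class of $o(x_n)$ in $X^{n+1}$, and the two differ by the image of $\pi_{n+1}(M^nX)$. The expected route is to show that the matching map $X^{n+1}\to M^nX$ admits a section compatible with the data, coming from degeneracies, as in the classical Bousfield--Kan setting; this makes $\pi_*(N)\to\pi_*(X^{n+1})$ injective, and then vanishing in $X^{n+1}$ is equivalent to vanishing in $N$.

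\textbf{Expected obstacle.} This degeneracy splitting is the step I expect to be hardest. If it does not hold in this generality, the criterion should instead be read with $o(x_n)$ regarded as a based map into the relevant fiber, and I would state that interpretation explicitly. The remaining steps are formal manipulations of pullbacks and of cotensoring with $S^n$.
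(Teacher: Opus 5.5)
Your Steps 1 and 2 match the paper's argument. The paper likewise identifies the lifts of $x_n$ with nullhomotopies of the induced map $\tilde o\colon S^n\to N^n(X)$, and reads off $\Omega^{n+1}(N^n(X),y_0)$, where $y_0=(d^0)^{n+1}(x_0)$.

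(In Step 2, the fiber you mean is that of the constant map $\ast\to\Map_\ast(S^n,N^n(X))$ over $\tilde o$, i.e.\ paths from the constant map to $\tilde o$. Writing ``the fiber of $\Map_\ast(S^n,N)\to\ast$'' is a slip, but the conclusion is right.)

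What these steps establish is that $x_n$ lifts if and only if $[\tilde o]=0$ in $\pi_n(N^n(X),y_0)$. Since $[\tilde o]$ maps to $[o(x_n)]$, this gives the ``only if'' half of the stated criterion, but not the ``if'' half.

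The gap is Step 3. For the ``if'' direction you need the boundary map $\partial\colon\pi_{n+1}(M^nX,m)\to\pi_n(N^n(X),y_0)$ to vanish. Equivalently, $\pi_{n+1}(X^{n+1},y_0)\to\pi_{n+1}(M^nX,m)$ must be surjective.

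The mechanism you propose, a section of the matching map, does not exist in general. Take the discrete cosimplicial set $[k]\mapsto\Hom_{\Spx}([0],[k])$. Its matching map in degree $2$ is $\{0,1,2\}\to\{0,1\}^2$, $i\mapsto(s^0i,s^1i)$, which misses $(1,0)$. (Also, splittings of codegeneracies come from cofaces, not degeneracies.) So the surjectivity has to be argued on homotopy groups. One way to do this:
\begin{itemize}
\item The points $y_k=(d^0)^k(x_0)$ are preserved by all codegeneracies and by all cofaces except the last. These are exactly the maps of $\Spx$ fixing the top element, and $[0]$ is initial in that subcategory. Hence the $\pi_t(X^k,y_k)$ form a diagram of groups over it.
\item The algebraic matching map of this diagram in degree $n+1$ is surjective. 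Given a compatible $(a_0,\dots,a_n)$, set $z_n=d^na_n$ and $z_j=z_{j+1}+d^j(a_j-s^jz_{j+1})$. Then $z_0$ is a preimage, and the construction uses only $d^0,\dots,d^n$.
\item An induction over partial matching objects, as in Bousfield--Kan's identification of the homotopy of the normalized fibers, identifies $\pi_{n+1}(M^nX,m)$ with this algebraic matching object.
\item For $n=0$ no induction is needed, since $d^0$ is already a section of $s^0$.
\end{itemize}

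The paper's own proof passes over exactly this point. It fills the two triangles separately and never addresses the coherence between their composite and the homotopy of the given square; that coherence is where the $\pi_{n+1}(M^nX)$-indeterminacy lives. So your diagnosis is sharper than the paper's. As it stands, though, your proposal proves the criterion only with $N^n(X)$ in place of $X^{n+1}$.
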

\begin{proof}
    The image of $x_n$ in $(X^{n+1})^{S^n}\times_{(M^n X)^{S^n}} M^n X$ is equivalent to the data of the following commutative square:
    \[\begin{tikzcd}
        S^n\ar[r, "o(x_n)"]\ar[d] & X^{n+1}\ar[d] \\
        \ast\ar[r]\ar[ru, "x_{n+1}" description, dashed] & M^n X
    \end{tikzcd}\]
    and the data of the lift $x_{n+1}$ is equivalent to the dashed arrow with two homotopies filling the triangles. The lower-right triangle can always be filled by composing some $\ast\to S^n$ with the square, so the triviality of $[o(x_n)]\in \pi_n(X^{n+1})$ suffices.
    Now assume the triviality of $[o(x_n)]$. The space of fillers is equivalent to the space of nullhomotopies of $S^n\to N^n X$. This is $\Omega^{n+1}(N^n X)$. 
\end{proof}

\begin{remark}
    The argument in this subsection is valid for cosimplicial objects in any category with finite limits and totalizations.
    By a similar argument, one can show that, for any $X^\bullet$ satisfying $X^m=\ast$ for $m\neq n$, the totalization is either empty or $\Omega^n X^n$ (cf.\ \cite[Corollary 1.2.4.18]{LurieHA} for the stable variant).
\end{remark}

\subsection{Half-central structure on \texorpdfstring{$\vec{S}^1$}{S1}}\label{subsection_half_central_structure_on_S1}
Now we specialize to our case of interest:
\begin{notation}
    Let $\sV=\Pr_\omega^\sL$, and let $A = \infty\Cat_\ast^\owedge$ and $D = (\blank)^\circ: A\to A$ be the total dual (monoidal) functor, which flips cells of all dimensions (\cref{total_dual}). We continue to denote the associated bimodule ${}^DA$ by $D$. 
\end{notation}
The goal of this section is to prove the following: 
\begin{theorem}\label{half-center_S1}
    $\vec{S}^1\in A$ and $\Sigma\simeq \vec{S}^1\owedge (\blank) \in \End_{\RMod_{A}}(A)$ uniquely lifts along the forgetful functor $\fZ_{\sV}(A, D)\to \End_{\RMod_A}(A) \simeq A$.
\end{theorem}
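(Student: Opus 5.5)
We will compute the fiber of the forgetful map $\fZ_{\sV}(A, D)\to A$ over $\vS^1$ using the Hochschild description from \cref{subsection_cyclic_bar}. There, $\fZ_\sV(A,D)\simeq \Tot\bigl([n]\mapsto \Hom_{\sV}(A^{\otimes n}, A)\bigr)$, with the cosimplicial structure recorded in \cref{remark_cosimplicial_structure_of_Hochschild_cohomology}. Since $A=\infty\Cat_\ast$ is compactly generated by the pointed cubes and $\owedge$ is compatible with $\Pr^\sL_\omega$ (\cref{Gray_tensor_cubes_compact}), we have
\[
\Hom_{\Pr^\sL_\omega}(A^{\otimes n}, A)\simeq \Fun\bigl((\Cube_+)^{\times n}, A^\omega\bigr)
\]
for suitable cube-based dense generators. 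The fiber of the forgetful map over $\vS^1$ is then the totalization of the cosimplicial groupoid $X^\bullet$ whose $n$-th term consists of natural transformations between the functors $(x_1,\dots,x_n)\mapsto \vS^1\owedge x_1\owedge\cdots\owedge x_n$ and $x_1^\circ\owedge\cdots\owedge x_n^\circ\owedge \vS^1$. More precisely, I will fiber the whole cosimplicial diagram over the constant diagram at $\vS^1$, so that degree $n$ records the $n$-fold commutation data. Uniqueness and existence together amount to showing that this totalization is contractible.

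\textbf{Step 1 (vanishing of higher homotopy).} Both functors send cubes to gaunt objects. Indeed, $\vS^1\owedge\cube^{k}_+$ is a quotient of $\cube^1\otimes\cube^k$, and by \cref{pushout_formula_unreduced_suspension} it equals $\Sigma(\cube^k_+)$. The point is that these objects are computed from strong Steiner objects. So I will argue that each space of natural transformations restricted to cube inputs is $0$-truncated; in fact it is discrete and \emph{at most a point}. The argument is that maps out of $\Sigma(\cube^{k}_+)$ into $(\cube^{k}_+)^{\circ}\owedge\vS^1$ over the fixed bipointing and boundary are rigid. Through the Steiner equivalence $\infty\Gaunt^\Ste\simeq\adCh^\Ste$, a map that is the identity on the top-dimensional basis element is forced to be unique, by positivity and the basis condition. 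This is the step I expect to be the main obstacle. The codomains are quotients (collapsing basepoints), not literally strong Steiner, so I will instead work with the unreduced pushout squares of \cref{pushout_formula_unreduced_suspension} and \cref{pushout_formula_for_gray_cylinder_of_suspensions} and reduce to maps between Steiner objects under $\partial\cube^1$. Since $\vS^1$ is gaunt and the relevant diagrams are diagrams of $0$-truncated objects, all mapping spaces, and hence all $X^n$, are $(-1)$-truncated.

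\textbf{Step 2 (existence in low degrees).} With all $X^n$ subterminal, the obstruction theory of \cref{obstruction theory for Tot} (or simply the limit of a diagram of propositions) reduces the statement to checking that $X^0$ and $X^1$ are nonempty, compatibly. $X^0$ is the point $\vS^1$ itself. $X^1$ is nonempty by the natural equivalence $\tau_X:\vS^1\owedge X\simeq X^\circ\owedge\vS^1$ of \cref{suspension_is_tensor_S1}. Since all higher terms are subterminal and the cosimplicial identities hold automatically among propositions, the remaining task is to show that every $X^n$ is inhabited. For this, the iterated composite of $\tau$ gives a point: the commutation past $x_1\owedge\cdots\owedge x_n$ is built by repeatedly using $\tau$ together with the monoidality of $D$ from \cref{total_dual}. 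Since every $X^n$ is then a nonempty subterminal space, it is contractible, and the totalization is contractible. This yields both the existence and the uniqueness of the lift of $\vS^1$ and of $\Sigma$.
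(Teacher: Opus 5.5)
Your global structure agrees with the paper's. There, \cref{trivial_obstruction_theory} shows that if the component of $(d^0)^n(x)$ in $X^n=\Map_{\sV}(A^{\otimes n},A)$ is contractible for all $n$, and $d^0x\simeq d^1x$ in $X^1$, then the fiber of $\Tot X^\bullet\to X^0$ over $x$ is contractible. The path is the $\tau$ of \cref{suspension_is_tensor_S1}, and your iterated $\tau$ plays the same role.

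\textbf{The gap is Step~1, which carries all the content.} As stated it is false. The space of natural transformations $\vS^1\owedge(\blank)\Rightarrow(\blank)^\circ\owedge\vS^1$ is not at most a point. Let $m\colon\vS^1\to\vS^1$ be induced by $\NN\xrightarrow{\cdot m}\NN$. Precomposing $\tau$ with $m\owedge(\blank)$ gives pairwise distinct transformations: evaluate at $S^0$, where $\Map_\ast(\vS^1,\vS^1)\simeq\Hom_{\Mon}(\NN,\NN)\simeq\NN$. Fixing basepoints does not rule these out. Moreover, gauntness of the values only makes these mapping spaces $0$-truncated, not $(-1)$-truncated.

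What is true, and what is needed, concerns invertible transformations only: the automorphism group of $\Sigma\circ\mu_n$ in $\LFun_\omega(A^{\otimes n},A)$ is trivial, i.e.\ its component in $X^n$ is contractible. This is \cref{trivial_automorphisms}. Relatedly, the fiber of $\Tot$ is not a totalization of spaces of transformations. Its degree-$n$ data are $n$-simplices of $X^n$ with prescribed vertices, which is why contractibility of components is exactly the right hypothesis. Also, $\Hom_{\Pr^\sL_\omega}(A^{\otimes n},A)$ only embeds fully faithfully into functors on cubes rather than being equivalent to them; this is harmless here.

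\textbf{Your route to rigidity also does not work as sketched.} The object $\Sigma(\cube^k_+)\simeq\rB\Free_{\EE_1}(\cube^k)$ has a loop at the basepoint, so it is not strong Steiner. You flag this, but the promised reduction to Steiner objects under $\partial\cube^1$ is never supplied. The claim that ``identity on the top basis element forces uniqueness'' presupposes what must be shown.

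The missing idea is to strip off the suspension using full faithfulness:
\begin{itemize}
\item Restrict along the fully faithful embedding $\LFun_\omega(A^{\otimes n},A)\hookrightarrow\Fun(\Cube^{\times n},\infty\Cat_\ast)$.
\item Observe that $\Sigma\circ\mu_n\circ j\simeq\rB\circ\Free_{\EE_1}\circ\mu_n$, where $j$ adds a disjoint basepoint, and that $\rB$ is fully faithful.
\item Hence the component in question is that of $\Free_{\EE_1}\circ\mu_n$ in $\Fun(\Cube^{\times n},\Mon(\Gaunt))$, which is a $(1,1)$-category.
\item Its automorphism group injects into $\prod_x\Aut_{\Mon(\Gaunt)}(\Free_{\EE_1}(\mu_n x))$.
\item Restricting to indecomposables gives $\Aut_{\Mon(\Gaunt)}(\Free_{\EE_1}(\cube^m))\cong\Aut(\cube^m)$.
\item This is trivial because the Steiner basis of $\cube^m$ is linearly ordered (\cref{aut_of_cubes_are_trivial}).
\end{itemize}
Steiner theory thus enters on the honest cubes, not on their suspensions.
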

\begin{corollary}
    The category $\CatSp$ and the functor $\Sigma^\infty: \infty\Cat_\ast\to \CatSp$ lift to $\BMod{\infty\Cat_\ast}(\Pr_\omega^\sL)$.
\end{corollary}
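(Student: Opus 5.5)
The plan is to deduce the Corollary formally from \cref{half-center_S1}. The key observation is that $\CatSp$ is a sequential colimit in $\Pr^\sL_\omega$ along $\Sigma$. Once $\Sigma$ is promoted to a coherent sequence of bimodule maps, the colimit inherits the bimodule structure, and so do the coprojections.

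\emph{Step 1: a bimodule-valued diagram.} By \cref{half-center_S1}, $\Sigma\simeq \vS^1\owedge(\blank)$ lifts uniquely to a point of $\fZ_\sV(A,D)=\Hom_{\BMod{A}}(A,D)$, where $\sV=\Pr^\sL_\omega$ and $A=\infty\Cat_\ast^\owedge$. Write $\tilde\Sigma: A\to D$ for this bimodule map. Applying the equivalence $D\otimes_A(\blank)$ from the remark after the definition of the half-center, together with $D\otimes_A D\simeq A$ (which comes from $D\circ D\simeq\id$), we obtain a second bimodule map $\tilde\Sigma': D\to A$. We then form the $\NN$-indexed diagram
\[A\xrightarrow{\tilde\Sigma} D\xrightarrow{\tilde\Sigma'} A\xrightarrow{\tilde\Sigma} D\to\cdots\]
in $\BMod{A}(\Pr^\sL_\omega)$.

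We will check that its image under the forgetful functor to $\RMod_A$, and further to $\Pr^\sL_\omega$, is the sequence $A\xrightarrow{\Sigma}A\xrightarrow{\Sigma}\cdots$. For the even-to-odd maps this holds by construction. For $\tilde\Sigma'$ it holds because the commutative square in that remark says precisely that $D\otimes_A(\blank)$ followed by forgetting recovers the original right-module map $\Sigma$.

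An alternative, which I would mention as a cross-check, is to use the composite $\Hom(A,D)\times\Hom(D,A)\to\fZ(A)$. This gives a central structure on $\Sigma^2\simeq \vS^2\owedge(\blank)$, and hence an untwisted diagram $A\xrightarrow{\Sigma^2}A\xrightarrow{\Sigma^2}\cdots$. It is cofinal in the previous one, so it yields the same colimit.

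\emph{Step 2: colimits.} The forgetful functors $\BMod{A}(\sV)\to\RMod_A(\sV)\to\sV$ preserve colimits. This holds because $\sV=\Pr^\sL_\omega$ is presentably symmetric monoidal, so the tensor product commutes with colimits in each variable, and bimodules are algebras over a monad that preserves sifted colimits (in particular sequential ones). Therefore the colimit of the diagram in Step 1 exists in $\BMod{A}(\Pr^\sL_\omega)$, and its underlying object is $\colim(A\xrightarrow{\Sigma}A\xrightarrow{\Sigma}\cdots)$ computed in $\Pr^\sL_\omega$. By the definition of $\CatSp$ (the limit along $\Omega$ in $\Pr^\sR_\omega$, equivalently the colimit along $\Sigma$ in $\Pr^\sL_\omega$), this underlying object is $\CatSp$.

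This produces the lift of $\CatSp$. The lift of $\Sigma^\infty$ is the coprojection of the first term $A$ into the colimit. This coprojection is a bimodule map, and its underlying functor is $\Sigma^\infty$ by construction.

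\emph{Main obstacle.} The substantive content is entirely in \cref{half-center_S1}. Within the Corollary itself, the delicate point is Step 1. There one must check that $D\otimes_A\tilde\Sigma$ really forgets to $\Sigma$, rather than to $\Sigma$ twisted by $D$. One must also check that the alternating $A,D$ pattern does not change the underlying sequence, which requires keeping track of the identification $D\otimes_A D\simeq A$ coherently.

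I would handle this through the commutative square relating $\fZ(A,D)$, $\Hom(D,A)$ and $\End_{\RMod_A}(A)$. If the bookkeeping becomes awkward, I would instead fall back on the cofinal $\Sigma^2$-sequence, which uses only the central structure on $\vS^2$ and the identification $\Sigma^2\simeq\vS^2\owedge(\blank)$.
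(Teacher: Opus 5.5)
Your proposal is correct and follows the same route as the paper. The paper likewise uses \cref{half-center_S1}, via the remark that a half-central structure yields lifts to both $\Hom(A,D)$ and $\Hom(D,A)$, to lift the defining $\Sigma$-telescope to an alternating telescope $A\to D\to A\to D\to\cdots$ of $\infty\Cat_\ast^{\owedge}$-bimodules in $\Pr^\sL_\omega$. It then takes the colimit, which the forgetful functor preserves, and obtains $\CatSp$ with $\Sigma^\infty$ as the coprojection from the first term.
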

Unpacking the obstruction theory of the totalization of cosimplicial objects, we see that the data of a half-central structure on $\vec{S}^1$ amounts to the following:
\begin{itemize} 
    \item An object $\vec{S}^1\in \omega\Cat_\ast$, 
    \item A natural isomorphism $\tau_X: \vec{S}^1\owedge X\xrightarrow{\sim} X^\circ \owedge \vec{S}^1$, 
    \item A natural homotopy $\theta_{X, Y}$ filling the triangle 
    \[\begin{tikzcd}
        &X^\circ \owedge \vec{S}^1 \owedge Y \ar[rd, "X^\circ\owedge (\tau_Y)"]&\\
        \vec{S}^1\owedge X\owedge Y \ar[ru, "(\tau_X)\owedge Y"]\ar[rr, "\tau_{X\owedge Y}"]&& X^\circ \owedge Y^\circ \owedge \vec{S}^1
    \end{tikzcd}\]
    \item A natural homotopy filling the $3$-simplex
    \[\begin{tikzcd}
        \vec{S}^1\owedge X\owedge Y\owedge Z \ar[r]\ar[d]\ar[rd] & X^\circ\owedge Y^\circ\owedge Z^\circ\owedge \vec{S}^1 \\
        X^\circ\owedge \vec{S}^1\owedge Y\owedge Z\ar[ru, crossing over]\ar[r] & X^\circ\owedge Y^\circ \owedge \vec{S}^1\owedge Z \ar[u]
    \end{tikzcd}\]
    whose boundary is filled by the homotopies $\tau$ and $\theta$.
    \item and so on. 
\end{itemize}
It turns out that all the vertices live in contractible components, so there is no room for any nontrivial choice of coherence data at each step after providing the natural equivalence $\tau$. With this in mind, we have the following more direct argument:
\begin{lemma}\label{trivial_obstruction_theory}
    Let $X= X^\bullet$ be a cosimplicial groupoid and $x\in X^0$ be a point. Suppose that 
    \begin{enumerate}
        \item the connected component of $(d^0)^n(x)\in X^n$ is contractible for all $n\geq 0$, and
        \item there is a path $d^0 x \simeq d^1 x\in X^1$.
    \end{enumerate} 
    Then the fiber of $\Tot(X^\bullet)\to X^0$ over $x$ is contractible. 
\end{lemma}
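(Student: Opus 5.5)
The plan is to compare $X^\bullet$ with the cosimplicial subobject $Y^\bullet\subset X^\bullet$ where $Y^n\subset X^n$ is the connected component containing $y_n\coloneqq (d^0)^n(x)$. By hypothesis (1) each $Y^n$ is contractible. The point is to show that the components $Y^n$ are closed under all cofaces and codegeneracies, so that $Y^\bullet$ is a genuine cosimplicial subgroupoid. Once that holds, $Y^\bullet$ is equivalent to the constant cosimplicial point, and we get $\Tot Y^\bullet\simeq \ast$.

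The first key step is to show, by induction on $n$, that every coface $d^i\colon X^n\to X^{n+1}$ sends $y_n$ into the component of $y_{n+1}$. Equivalently, for every iterated coface (an injection $[0]\to[n]$) the image of $x$ lies in the component of $y_n$. Any injection $[0]\to[n]$ factors through $[0]\rightrightarrows[1]$ followed by an injection $[1]\to[n]$. By hypothesis (2), $d^0x$ and $d^1x$ lie in the same component, so both choices give points in the same component. By induction, all vertices $v_k\colon[0]\to[n]$ therefore send $x$ into one component, namely that of $y_n=(d^0)^n x$, which is the image of $x$ under some vertex map. Since any coface of $y_n$ is again a vertex image of $x$, the claim follows. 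For codegeneracies $s^j$, the identity $s^j d^j=\id$ gives $s^j(y_{n+1})\in s^j(d^j(\text{component of }y_n))$, which lies in the component of $y_n$.

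The second step uses the fact that $\pi_0$ of the inclusion of a union of components is a monomorphism. Consequently $Y^\bullet\to X^\bullet$ is levelwise a monomorphism (an inclusion of components), and so $\Tot Y\to\Tot X$ is a monomorphism. Its image consists exactly of those points of $\Tot X$ whose $0$-th coordinate lies in $Y^0$: for a point of $\Tot X$, each coordinate in $X^n$ is determined up to path by its vertex images, via a choice of coface composite from $[0]$. Hence the square with $\Tot Y\to Y^0$ over $\Tot X\to X^0$ is Cartesian. Taking fibers over $x$ then identifies the fiber of $\Tot X\to X^0$ with the fiber of $\Tot Y\to Y^0$, which is contractible because both the source and the target are contractible.

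The main obstacle I expect is the Cartesianness claim in the second step, namely making rigorous that membership in $\Tot Y$ is checked on the $0$-th coordinate. I would handle it by noting that $\Tot$ commutes with limits. The levelwise pullback $X^\bullet\times_{\pi_0 X^\bullet}\pi_0Y^\bullet$, with $\pi_0 Y^\bullet$ the constant point inside the cosimplicial set $\pi_0X^\bullet$, equals $Y^\bullet$, so $\Tot Y\simeq \Tot X\times_{\Tot\pi_0X}\ast$. Moreover, $\Tot$ of the cosimplicial set $\pi_0 X$ embeds into $\pi_0X^0$, since an element is determined by its $0$-th vertex. This reduces the matter to the set-level equalizer computation, which is formal. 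As an alternative fallback, Proposition~\ref{obstruction theory for Tot} applied to $Y^\bullet$ shows that every obstruction group and every space of lifts is contractible, because $N^n(Y)$ is a fiber of a map between contractible spaces.
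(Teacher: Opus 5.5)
Your proof is correct. It rests on the same idea as the paper's: the components of $X^n$ hit by $x$ form a cosimplicial subobject $Y^\bullet$, which by (1) and (2) is levelwise a single contractible component, and every lift of $x$ to $\Tot X^\bullet$ factors through $Y^\bullet\simeq\ast$.

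The only difference is how that last factorization is justified. The paper uses that $x$ is corepresented by the cosimplicial set $I=\Hom([0],\blank)$ and that image factorizations of $I\to X^\bullet$ (effective epimorphism followed by monomorphism) are unique. You instead get the Cartesian square $\Tot Y^\bullet\simeq \Tot X^\bullet\times_{X^0}Y^0$ from $\Tot$ preserving pullbacks together with the injection $\Tot\pi_0X^\bullet\hookrightarrow\pi_0X^0$. You also make explicit how (2) puts all vertex images of $x$ in one component, which the paper leaves implicit.
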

\begin{proof}
    By left Kan extension along $\{[0]\}\hookrightarrow \Spx$, the data of $\ast\xrightarrow{x} X^0$ is equivalent to a natural transformation $x: I\to X$, where $I$ is the cosimplicial set $[n]\mapsto \Hom([0], [n]) = [0]^{\sqcup (n+1)}$. In this way, the map $\Tot(X^\bullet)\to X^0$ is corepresented by the map $I\to \ast$ to the terminal cosimplicial groupoid, i.e., the groupoid of lifts of $x$ is equivalent to that of factorizations $I\to \ast \to X$ of $x$. Consider the image factorization $I\twoheadrightarrow Y^\bullet\hookrightarrow X^\bullet$, so $Y^n\subset X^n$ is the union of connected components of the images of $x$ under the structure maps $X^0\to X^n$. The conditions (1) and (2) ensure that $Y^\bullet\xrightarrow{\sim} \ast$, so the factorization $I\to \ast\simeq Y^\bullet\to X^\bullet$ gives the canonical point in the fiber of $\Tot(X^\bullet)\to X^0$ over $x$. Conversely, any factorization $I\twoheadrightarrow \ast\to X$ uniquely factors through $Y$ and is an image factorization, so the groupoid of such factorizations is contractible. 
\end{proof}

In the following, let $\mu_n$ generically denote the $n$-ary multiplication map of algebra objects (in particular, monoidal categories).
\begin{lemma}\label{trivial_automorphisms}
    For $n\geq 0$, the connected component of $\Sigma\circ\mu_n$ in the underlying groupoid $\Map_{\sV}(A^{\otimes n}, A)$ of $\Hom_{\sV}(A^{\otimes n}, A) = \LFun_\omega(\infty\Cat_\ast^{\otimes n}, \infty\Cat_\ast)$ is contractible. 
\end{lemma}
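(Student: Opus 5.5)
The key point is that the component of $\Sigma\circ\mu_n$ is the classifying space of its automorphism group. So I need to show that the self-equivalences of the functor $\Sigma\circ\mu_n: \infty\Cat_\ast^{\otimes n}\to\infty\Cat_\ast$ (natural equivalences in $\LFun_\omega$) are contractible. The hom-groupoid of $\Map_{\sV}(A^{\otimes n},A)$ at this point is a union of components of $\End(\Sigma\circ\mu_n)$ in the functor category, namely the invertible ones.

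First I reduce to compact generators. Since $\infty\Cat_\ast^{\otimes n}$ is compactly generated by the pointed cubes $\cube^{k_1}_+\boxtimes\cdots\boxtimes\cube^{k_n}_+$, a colimit-preserving functor is determined by its restriction to $\Cube^{\times n}$ (\cref{Gray_tensor_cubes_compact}). On this restriction $\Sigma\circ\mu_n$ sends $(\cube^{k_1},\dots,\cube^{k_n})$ to $\Sigma(\cube^{k_1+\cdots+k_n}_+)$. Therefore
\[\End(\Sigma\circ\mu_n)\simeq \Nat\bigl(F,F\bigr),\qquad F\colon \Cube^{\times n}\to\infty\Cat_\ast,\quad F(\vec k)=\Sigma(\cube^{\sum k_i}_+).\]
Using $\Sigma\simeq \rB\circ\Free$ and the adjunction $\Sigma\dashv\Omega$, together with full faithfulness of $\rB$ (\cref{B_is_fully_faithful}), I rewrite the endomorphisms. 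They become natural transformations $\cube^{m}_+\to \Omega\Sigma(\cube^m_+)=\Free_{\EE_1}(\cube^m_+)=\bigsqcup_{j\ge0}(\cube^m)^{\otimes j}$, natural in $m$. Here I use that the free $\EE_1$-monoid on $\cube^m_+$ for the Cartesian structure is $\bigsqcup_j (\cube^m)^{\times j}$, since $\Omega$ uses the Cartesian loop monoid. These target categories are all gaunt (\cref{remark_Gray_cylinder_preserves_gauntness} and finite products of gaunt categories are gaunt). So the mapping spaces $\Map(\cube^m,\bigsqcup_j(\cube^m)^{\times j})$ are sets, and the whole natural-transformation space is $0$-truncated: it is computed as an end over $\Cube^{\times n}$ of sets, hence a set.

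It therefore suffices to show that the only natural automorphism is the identity. This is the main obstacle. A natural endomorphism is determined by a family $\cube^m\to(\cube^m)^{\times j}$ for a chosen $j=j(m)$, compatible with all cube maps: faces, degeneracies, connections and the Gray-multiplication structure maps. Compatibility with the vertex inclusions $\cube^0\to\cube^m$ forces $j$ to be constant, equal to $j(0)$. Invertibility forces $j=1$, since the unit/counit picture of $\Free$ sees the length grading and an automorphism must preserve it. The map $\cube^m\to\cube^m$ must then be natural in all cube maps.

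I would argue that any endomorphism of $\cube^m$ commuting with all vertex inclusions is the identity on objects. Commuting with all face inclusions then forces it to be the identity on all proper faces. The top cell is forced as well, because $\cube^m$ is generated by a unique top atomic cell with prescribed boundary in a gaunt Steiner category. Here I would use the Steiner description: the top basis element must go to itself by positivity and the degree count. Hence the automorphism group is trivial, and the component is contractible. The delicate point is correctly identifying the automorphisms of $\Free_{\EE_1}$ and the grading argument. If it proves awkward, I would instead argue directly in $\Map(\Sigma\cube^m_+,\Sigma\cube^m_+)$: a basepoint-preserving functor $\rB'\Free(\cube^m_+)\to \rB\Free(\cube^m_+)$ between gaunt categories is determined on generators.
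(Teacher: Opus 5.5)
Your argument is correct. Its skeleton matches the paper's: restrict along the dense image of $\Cube^{\times n}$, rewrite $\Sigma\circ(\blank)_+$ as $\rB\circ\Free_{\EE_1}$, use full faithfulness of $\rB$ and gauntness to make the relevant automorphisms a discrete group, and show that group is trivial. The two routes differ only in this last step.

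\textbf{The paper's last step.} It argues pointwise. It identifies $\Aut_{\Mon(\Gaunt)}(\Free_{\EE_1}\cube^m)$ with $\Aut(\cube^m)$ by restricting to the indecomposable part. It then invokes \cref{aut_of_cubes_are_trivial}: the preorder on the Steiner basis of $\cube^m$ is linear, so $\cube^m$ has no nontrivial automorphisms. Hence every factor $\Aut(Fx)$ in the equalizer is trivial, and naturality plays no role.

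\textbf{Your last step.} You transpose to natural maps $G\Rightarrow U\Free_{\EE_1}G$, where $G(\cube^{k_1},\ldots,\cube^{k_n})=\cube^{k_1+\cdots+k_n}$. You use the word-length grading to reduce to a natural endomorphism of $G$. Then naturality does the work: vertex inclusions fix objects; tuples of face inclusions $G(\id,\ldots,d,\ldots,\id)$, with induction on total dimension, fix every non-top atomic cell; and $\partial(c\,\underline{?\cdots?})=\partial(\underline{?\cdots?})$ forces $c=1$ on the top cell. This bypasses the linear-order lemma and needs only the polygraph structure of the cubes in top degree. In exchange, it proves rigidity of the natural transformation rather than of each individual object.

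\textbf{The grading step needs a precise statement.} $\End(F)$ genuinely contains non-invertible natural elements, namely the $j=0$ map and the diagonal $j=2$ map. The monoid map transposed from a map into the $j$-th summand sends word length $\ell$ to word length $j\ell$. So composites multiply the $j$'s, and an invertible endomorphism must have $j=1$; your phrase about the unit/counit picture should be replaced by this.

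\textbf{Two small corrections.}
\begin{itemize}
\item Full faithfulness of restriction to $\Cube^{\times n}$ comes from density, which \cref{Gray_tensor_cubes_compact} provides. Compact generation alone does not give it; the globes generate under colimits but are not dense.
\item The free monoid here is taken for the Cartesian product, so it is $\bigsqcup_j(\cube^m)^{\times j}$, not $\bigsqcup_j(\cube^m)^{\otimes j}$. Gauntness and the rest of your argument hold either way.
\end{itemize}
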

We will prove the lemma later; let us first assume it and finish proving the theorem.
\begin{lemma}\label{half_central_S1_Tot1}
    There exists a unique equivalence of endofunctors $\tau: (\blank)\owedge \vec{S}^1\xrightarrow{\sim} \vec{S}^1\owedge (\blank)^\circ$. 
\end{lemma}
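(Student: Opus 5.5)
Existence comes from \cref{suspension_is_tensor_S1}, and uniqueness from \cref{trivial_automorphisms} with $n=1$. The statement concerns the $1$-simplex layer of the cosimplicial object computing $\fZ_\sV(A,D)$, namely equivalences in $\Map_{\sV}(A,A)$ between $(\blank)\owedge\vS^1$ and $\vS^1\owedge(\blank)^\circ$. The plan is to show that this space of equivalences is nonempty and contractible.

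\emph{Existence.} \Cref{suspension_is_tensor_S1} gives natural equivalences $\Sigma X\simeq \vS^1\owedge X$ and $X\owedge\vS^1\simeq \Sigma X^\circ$, functorial in $X\in\infty\Cat_\ast$. Composing the second with the first, applied to $X^\circ$, gives $X\owedge\vS^1\simeq \Sigma X^\circ\simeq \vS^1\owedge X^\circ$. This is an equivalence of endofunctors of $\infty\Cat_\ast$. Both functors are colimit-preserving and preserve compact objects: $\vS^1$ is compact, the Gray smash product is compactly generated by \cref{Gray_tensor_cubes_compact}, and $D$ is an automorphism. So this is an equivalence in $\LFun_\omega(\infty\Cat_\ast,\infty\Cat_\ast)=\Hom_\sV(A,A)$. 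Naturality is automatic, because the pushout squares of \cref{pushout_formula_unreduced_suspension} were constructed as functors $P,P^\circ$, and the proof of \cref{suspension_is_tensor_S1} takes total cofibers functorially in $X$.

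\emph{Uniqueness.} Both endofunctors lie in the connected component of $\Sigma\circ D\simeq\Sigma\circ\mu_1\circ D$ in $\Map_\sV(A,A)$. Precomposition with the automorphism $D$ is an equivalence of $\Map_\sV(A,A)$, so that component is equivalent to the component of $\Sigma=\Sigma\circ\mu_1$. By \cref{trivial_automorphisms} with $n=1$, this component is contractible. Hence the space of equivalences between the two functors, which is a path space in a contractible groupoid, is contractible, and $\tau$ exists and is unique.

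The hard part is not this lemma but \cref{trivial_automorphisms}, which is assumed here. Its heuristic is that $\Sigma$ restricted to cubes is controlled by the gaunt Steiner combinatorics of $\cube^n$, and that $\Map_\sV(A,A)$ is determined by restriction to the dense subcategory of pointed cubes, whose automorphisms are discrete and trivial. Within the present statement, the only point needing care is the bookkeeping of the involution $D$: one must check that the direction of the identification ($X\owedge\vS^1$ versus $\vS^1\owedge X^\circ$) matches the convention of \cref{remark_cosimplicial_structure_of_Hochschild_cohomology}, where $D$ enters through the coface $d^0$. This is harmless because $D$ is an involution.
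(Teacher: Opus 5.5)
Your proposal is correct and follows the paper's own proof: existence is \cref{suspension_is_tensor_S1}, and uniqueness is the $n=1$ case of \cref{trivial_automorphisms}. You also make one step explicit that the paper leaves implicit: both functors lie in the component of $\Sigma\circ D$, and precomposition with the automorphism $D$ carries the contractible component of $\Sigma$ onto that component.
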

\begin{proof}
    The uniqueness follows from the $n=1$ case of \cref{trivial_automorphisms}. The existence is \cref{suspension_is_tensor_S1}. 
\end{proof}

\begin{proof}[Proof of \cref{half-center_S1}]
    Apply \cref{trivial_obstruction_theory} to $X^\bullet = \Map_{\sV}(A^{\otimes \bullet}, A)$ with the cosimplicial structure described in \cref{remark_cosimplicial_structure_of_Hochschild_cohomology}. Conditions (1) and (2) are \cref{trivial_automorphisms} and \cref{half_central_S1_Tot1}, respectively.
\end{proof}

\begin{proof}[Proof of Lemma \ref{trivial_automorphisms}]
Consider the composition 
\[j: \Cube^{\times n}\to \PSh(\Cube)^{\otimes n}\to \infty\Cat^{\otimes n}\xrightarrow{(\blank)_+}\infty\Cat_\ast^{\otimes n}.\]
The first functor is the Yoneda embedding, the second is the tensor power of the localization $\PSh(\Cube)\to \omega\Cat$, and the last is the base change along $\sS\to \sS_\ast$ in $\Pr^\sL$. From the universal property of each functor, $j$ induces a fully faithful embedding
\begin{align*}
    \LFun_\omega(\infty\Cat_\ast^{\otimes n}, \infty\Cat_\ast) &\subset \LFun(\infty\Cat_\ast^{\otimes n}, \infty\Cat_\ast)\\ 
    &\simeq \LFun(\infty\Cat^{\otimes n}, \infty\Cat_{\ast}) \hookrightarrow \Fun(\Cube^{\times n}, \infty\Cat_{\ast}).
\end{align*}
In particular, the connected component of $F\coloneqq \Sigma\circ \mu_n\in \LFun(\infty\Cat_{\ast}^{\otimes n}, \infty\Cat_\ast)$ is equivalent to that of $\Sigma\circ \mu_n\circ j \in \Fun(\Cube^{\times n}, \infty\Cat_\ast)$. Now we have the following commutative diagram: 
\[\begin{tikzcd}
    \infty\Cat_\ast\otimes\cdots\otimes\infty\Cat_\ast\ar[r,"\mu_n"] & \infty\Cat_\ast\ar[r, "\Sigma"] & \infty\Cat_\ast \\
    \infty\Cat\otimes\cdots\otimes\infty\Cat \ar[r, "\mu_n"] \ar[u, "(\blank)_+"] & \infty\Cat \ar[r, "\Free_{\EE_1}"] \ar[u] & \Mon(\infty\Cat) \ar[u, hook, "\rB"] \\
    \Cube\times\cdots\times\Cube \ar[u, hook]\ar[r, "\mu_n"]\ar[uu, bend left=20mm, shift left=8mm, "j"] & \Cube \ar[u, hook]\ar[r, "\Free_{\EE_1}"] & \Mon(\Gaunt) \ar[u, hook]
\end{tikzcd}\]
The lower-left square commutes by the characterization of the Gray tensor product. The free $\EE_1$-algebra functor restricted to $\Cube$ lands in the gaunt monoidal categories by the explicit formula $\Free_{\EE_1}(\cube^n) \simeq \coprod_{k\geq 0}\cube^{kn}$. 
Therefore the connected component of $F$ in $\Fun(\Cube^{\times n}, \infty\Cat_\ast)$ is equivalent to that of $\Free_{\EE_1}\circ\mu_n\in \Fun(\Cube^{\times n}, \Mon(\Gaunt))$. The latter is a $(1, 1)$-category, so the connected component is equivalent to a delooping of a monoid in $\Set$. 
We must show that the (ordinary) group $\Aut(F)$ of invertible objects of that monoid is trivial.
We have the following equalizer diagram of sets: 
\[\begin{tikzcd}
    \Aut(F) \ar[r] 
    &\prod_{x\in \Cube^{\times n}} \Aut_{\Mon(\Gaunt)}(Fx) \ar[r, shift left=.5 mm]\ar[r, shift right = .5 mm]
    & \prod_{x\to y\in \Cube^{\times n}} \Hom_{\Mon(\Gaunt)}(Fx, Fy). 
\end{tikzcd}\]
We claim that for any $x = (\cube^{k_1}, \ldots, \cube^{k_n})$, the group $\Aut(Fx) = \Aut(\Free_{\EE_1}(\cube^{k_1+\cdots+k_n}))$ is trivial. Note that the natural map $\Aut_{\Gaunt}(\cube^m)\to \Aut_{\Mon(\Gaunt)}(\Free(\cube^m))$ is a bijection; the inverse is given by the restriction to the indecomposable part\footnote{For a monoidal $\infty$-category $\eM$, its indecomposable part can be defined as the pullback of $\operatorname{indec}(\pi_0({}^{\leq 0} \eM))\hookrightarrow \pi_0({}^{\leq 0} \eM)\leftarrow \eM$, where $\infty\Cat\xrightarrow{{}^{\leq 0}(\blank)} \sS\xrightarrow{\pi_0}\Set$ are (product-preserving) left adjoints to the inclusions. Indecomposables are only functorial in monoidal equivalences.}. 
Now the lemma is reduced to \cref{aut_of_cubes_are_trivial}.
\end{proof}

\section{The tensor product of categorical spectra}\label{section_construction_of_tensor_product}
We are now ready to prove the main theorem, namely the existence and universal property of the tensor product of categorical spectra. With the half-central structure of $\vS^1$ at hand, this is now a special case of a general result on the monoidal inversion of a central object.
The part of its proof relying on even more general facts about idempotent algebras is separated in \cref{subsection_idempotent_E1_alg}. 
The commutative case of the results of this section is well known \cite{voevodskyMathbfHomotopyTheory1998}\cite{robaloKtheoryBridgeMotives2015}\cite{nikolausGroupCompletionTheorem2017}\cite[\S 4.8.2]{LurieHA}\cite[Lecture V]{clausenCondensedMathematicsComplex}, but the proof requires modification (e.g., see the footnote of \cref{prop_inverting_central_object}), so we will spend some pages spelling out the details.

\subsection{The main theorem}
Recall that \cref{half-center_S1} allows us to lift the defining colimit diagram 
\[\infty\Cat_\ast\xrightarrow{\Sigma} \infty\Cat_\ast\xrightarrow{\Sigma}\cdots\to \CatSp \in \Pr^\sL_\omega \]
to a telescope in the category of $(\infty\Cat_\ast, \infty\Cat_\ast)$-bimodules in $\Pr_\omega^\sL$:
\[A\xrightarrow{\Sigma} D\xrightarrow{\Sigma}A\xrightarrow{\Sigma} D\xrightarrow{\Sigma}\cdots \to A_{\Sigma}= \CatSp.\]
In particular, $\Sigma_\infty: \infty\Cat_{\ast}\to \CatSp$ canonically lifts to a map of $\infty\Cat_\ast^{\owedge}$-bimodules. We denote $\Alg(\BMod{A}(\sV))$ by $\Alg_A(\sV)$. We can now state and prove the main theorem:
\begin{theorem}\label{thm_tensor_product_of_catsp}
    \begin{enumerate}
        \item $\vec{S}^1\in \infty\Cat_\ast$ acts invertibly (from the left and right) on the bimodule $\CatSp$. 
        \item The map $\Sigma^\infty: \infty\Cat_\ast\to \CatSp$ exhibits $\CatSp$ as an idempotent $\EE_0$-algebra of $\BMod{\infty\Cat_\ast}(\Pr_\omega^\sL)$. 
        \item $\CatSp$ admits a unique lift to $\Alg_A(\sV)$. The lax monoidal forgetful functor $\BMod{A}(\sV)\to \sV$ induces the underlying presentably monoidal structure on $\CatSp$.
        \item The monoidal category $\CatSp^\otimes$ is the monoidal inversion $\infty\Cat^\owedge_\ast[\vec{S}^{-1}]$. That is, 
        $\CatSp^\otimes$ is the initial $\infty\Cat^\owedge_\ast$-algebra on which $\vec{S}^{-1}$ acts invertibly. 
    \end{enumerate}
\end{theorem}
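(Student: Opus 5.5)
The plan is to deduce everything from the bimodule telescope $A\xrightarrow{\Sigma} D\xrightarrow{\Sigma} A\xrightarrow{\Sigma}\cdots\to A_\Sigma=\CatSp$ supplied by \cref{half-center_S1}, together with a general statement about idempotent $\EE_0$-algebras in $\BMod{A}(\sV)$; I will isolate that general statement in a separate subsection. For (1), the left action of $\vS^1$ on $\CatSp$ is $\Sigma$, which is invertible by construction: the colimit of a sequence along an endomorphism inverts that endomorphism, by the cofinality of the shifted telescope. For the right action I will use the half-central structure. On each stage of the telescope, $(\blank)\owedge\vS^1$ is identified through $\tau$ with $\Sigma\circ D$. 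Passing to the colimit, right multiplication by $\vS^1$ on $\CatSp$ becomes $\Sigma\circ D$, where $D$ is the total-dual automorphism of $\CatSp$ (\cref{remark_involutions_of_catsp}), and this composite is invertible. To make this rigorous, the identification has to be a map of telescopes, i.e.\ compatible with the transition maps $\Sigma$. That compatibility is exactly the naturality of $\tau$ coming from the bimodule lift.

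For (2), I need to show that $\Sigma^\infty\otimes_A\id$ and $\id\otimes_A\Sigma^\infty\colon \CatSp\simeq A\otimes_A\CatSp\to\CatSp\otimes_A\CatSp$ are equivalences. The relative tensor product commutes with colimits in each variable, so $\CatSp\otimes_A\CatSp\simeq\colim_{m,n}(\text{twisted copies of }A)\otimes_A(\text{twisted copies of }A)$. The twisted bimodules $D$ satisfy $D\otimes_A D\simeq A$ and $D\otimes_A M\simeq {}^{D}M$. The resulting double telescope has transition maps $\vS^1\owedge(\blank)$ in one index and $(\blank)\owedge\vS^1$ in the other. A diagonal cofinality argument shows that the colimit of the double telescope is the colimit of the single telescope with all maps inverted, since both actions are already invertible on $\CatSp$ by (1). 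This reduces idempotence to the general principle that if $\vS^1$ acts invertibly on $M$ on both sides, then $M\otimes_A A_\Sigma\simeq M$. I would prove that principle by writing $A_\Sigma$ as the telescope and noting that each map $M\otimes_A\Sigma$ is the invertible action.

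For (3) and (4), I will invoke an $\EE_1$ analogue of Lurie's theorem that idempotent $\EE_0$-algebras in a monoidal category (here $\BMod{A}(\sV)$ with $\otimes_A$) lift uniquely to $\EE_1$-algebras, and that the forgetful map to $\EE_0$-algebras is fully faithful with image the idempotents. The proof of this analogue is the one in \cite[\S 4.8.2]{LurieHA}, checking that no symmetry is used. Since the lax monoidal forgetful functor $\BMod{A}(\sV)\to\sV$ carries $\EE_1$-algebras to $\EE_1$-algebras, (3) follows. For (4), modules over an idempotent algebra $E$ form the full subcategory of those $M$ with $M\simeq E\otimes_A M$, and $\EE_1$-algebras under $E$ are the $A$-algebras whose underlying bimodule is $E$-local. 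By the principle from (2), these are exactly the $A$-algebras on which $\vS^1$ acts invertibly, which gives the universal property.

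The main obstacle I expect is making the right action in (1) and the double-telescope identification in (2) coherent. It is not enough to have objectwise equivalences: the twisting by $D$ alternates along the telescope and must match the bimodule structure up to all coherences. I would handle this uniformly by working in $\BMod{A}(\sV)$, where these coherences are packaged in the lifted telescope. First I would check that the right-multiplication map $\CatSp\owedge\vS^1\to\CatSp$ is the colimit of the bimodule maps $\tau$ along the telescope, and only then apply cofinality.
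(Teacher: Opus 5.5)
Your reductions of (2)--(4) to (1) are sound and essentially match the paper. The maps $A\otimes_A A_\Sigma\to A_\Sigma\otimes_A A_\Sigma$ and $A_\Sigma\otimes_A A\to A_\Sigma\otimes_A A_\Sigma$ are telescopes along the left and right actions of the inverted object, and idempotent $\EE_0$-algebras lift uniquely to $\EE_1$. The gap is in (1), which is where all the content of the theorem lies.

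\textbf{Why "by construction" fails for the left action.} The bimodule left action of $\vS^1$ on $\CatSp$ is not automatically the shift. It is the colimit of the levelwise maps $l_{\vS^1}$, and the squares commuting them with the transition maps $\Sigma=l_{\vS^1}$ come from the bimodule structure on $\Sigma$. That structure is the half-central twist $\tau_{\vS^1}\colon \vS^1\owedge\vS^1\simeq(\vS^1)^\circ\owedge\vS^1$. Cofinality of the shifted telescope only shows that the endomorphism induced with the tautological (identity) squares is invertible. To identify the two endomorphisms you must check that this twist is trivial. This is the noncommutative analogue of Voevodsky's cyclic-permutation condition, and it is exactly the point the paper flags: telescoping along $\vS^1\otimes(\blank)$ does not by itself invert $\vS^1$ monoidally.

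\textbf{You have located the difficulty in the wrong place.} The right action is the easy one, since its squares are just associators. Identifying it with $\Sigma\circ D$ through $\tau$ also requires the coherence 2-cell $\theta$ of the half-central structure. Even then, you still need the levelwise $\Sigma$, with its $\tau$-squares, to be invertible, so this route does not avoid the issue. Saying the coherences are "packaged in the lifted telescope" is not enough. The lifted telescope encodes the bimodule structure, but not the fact that this structure makes $\vS^1$ act by the shift.

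\textbf{The missing input is a triviality check on the twist.} The paper passes to $s=\vS^2$, which is central because it is obtained by composing the half-central structure with itself. It then shows $\tau_{\vS^2}\simeq\id$ because $\Aut(\vS^4)$ is contractible: we have $\vS^4=\rB^4\Free_{\EE_4}(\ast)$, and the invertible component of $\Free_{\EE_4}(\ast)$ is $\mathrm{Conf}_1(\RR^4)\simeq\ast$. With $\tau_s\simeq\id$ the squares become identities, so the shift provides an inverse to $l_s$. The right action is then identified with the left one via $\tau^v_{s_h}=(\tau^h_{s_v})^{-1}$ (\cref{prop_inverting_central_object}).

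Your direct use of the $\vS^1$-telescope can be repaired in the same way; this is the "obvious modification" behind \cref{left_action_of_S1_on_catsp}. After identifying $(\vS^1)^\circ\simeq\vS^1$, the relevant twist is an element of $\Aut(\vS^2)$. That group is also contractible, since the invertible component of $\Free_{\EE_2}(\ast)$ is $\mathrm{Conf}_1(\RR^2)\simeq\ast$. That argument has to be supplied, however, and it is the heart of the proof.
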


The theorem is a consequence of the following more general observations:

\begin{proposition}\label{prop_inverting_central_object}
    Let $\sV=\Pr^\sL_\omega$\footnote{We will only need that the monoidal structure on $A$ commutes with sequential colimits.} and let $A^\otimes \in \Alg(\Pr^\sL_\omega)$ be a monoidal category (not necessarily $\infty \Cat_\ast$). Let $s\in \fZ(A)$ be an object with a central structure (in particular $\tau: l_s=s\otimes (\blank)\xrightarrow{\sim} r_s=(\blank)\otimes s$), with which we regard $l_s: A\to A$ as a morphism in $\BMod{A}(\sV)$. Let $A_s\coloneqq \colim(A\xrightarrow{l_s}A\xrightarrow{l_s}\cdots)\in\BMod{A}(\sV)$ be its telescope. Assume moreover that $\tau_s: s\otimes s\to s\otimes s$ is equivalent to $\id_{s\otimes s}$\footnote{This is a noncommutative variant of an argument usually attributed to Voevodsky but the complication is quite different. The main point of Voevodsky's argument was that when we want to invert an object $s\in A$ by telescoping (in a symmetric monoidal category), it suffices to check that the cyclic permutation $(123)$ on $s^3$ is homotopic to the identity. In our argument, this condition is trivially satisfied (we use double suspension to begin with, and there is no nontrivial automorphism of $s^2$). We use the telescope of \emph{left} multiplications, which would intuitively invert the left action of $x$, except that we need a central structure to make sense of it. The right action is inverted essentially because it can be identified with the left action through the central structure, and the triviality of transposition is used here again.}. Then:
    \begin{enumerate}
        \item The left and right actions of $s\in A$ on $A_s$ are equivalent to each other and are $A$-bimodule isomorphisms. Explicitly, they are computed as the shift functor of components.
        \item The canonical functor $\eta: A\to A_s$ exhibits $A_s$ as an idempotent $\EE_0$-algebra object of $\BMod{A}(\sV)$. In particular, it uniquely lifts to an idempotent $\EE_1$-algebra object in $\BMod{A}(\sV)$. 
        \item For any $B\in \Alg(\sV)$, the forgetful functor $\BMod[A_s]{B}(\sV)\to \BMod[A]{B}$ (resp.\ $\BMod[B]{A_s}(\sV)\to \BMod[B]{A}(\sV)$) is fully faithful with essential image consisting of bimodules $M$ on which $s$ acts invertibly from the left (resp. right).
        \item There is an inclusion $\Alg_{A_s}(\sV)\hookrightarrow\Alg_{A}(\sV)$ whose image is the $A$-algebras on which $s\in A$ acts invertibly from both sides. $A_s$ is initial among such $A$-algebras. 
        \end{enumerate}
    \end{proposition}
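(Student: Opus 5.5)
I would prove the four parts in order, with (1) carrying the real content. The rest follows formally from the theory of idempotent algebras, deferred to \cref{subsection_idempotent_E1_alg}.

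For (1), I would compute the left action of $s$ on $A_s=\colim(A\xrightarrow{l_s}A\xrightarrow{l_s}\cdots)$ termwise. Since the monoidal structure commutes with sequential colimits, $s\otimes(\blank)$ on $A_s$ is the colimit of $l_s$ applied to each stage. Using the central structure $\tau$, this is a map of bimodule telescopes $l_s\circ l_s\simeq l_s\circ l_s$. The first point to check is that this transition datum is the obvious one: the naturality square for $l_s$ against $l_s$ involves $\tau_s\otimes(\blank)$, and the hypothesis $\tau_s\simeq\id_{s\otimes s}$ makes it the identity square. So the left action of $s$ is identified with the shift of components, which is invertible on a telescope by cofinality of $\NN_{\geq 1}\subset\NN$. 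For the right action $(\blank)\otimes s$, I would use $\tau$ to identify $r_s\simeq l_s$ as bimodule maps on each stage. Compatibility with the transition maps $l_s$ again reduces to the triviality of $\tau_s$. Hence both actions agree with the shift, are bimodule isomorphisms, and are invertible. I expect this step to be the main obstacle: the coherence is not just a square, because the identification must hold in $\BMod{A}(\sV)$ and not merely in $\sV$. I would handle it by noting that the central structure makes $l_s$ an endomorphism in $\BMod{A}(\sV)$, so everything happens inside the endomorphism monoid of the identity bimodule. There the two relevant self-maps of $l_s^2$ differ by $\tau_s$, which is trivial by hypothesis.

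For (2), I would show that $A_s\otimes_A A_s\simeq A_s$ via $\eta\otimes\id$ and $\id\otimes\eta$. The relative tensor product commutes with colimits in each variable, so $A_s\otimes_A A_s\simeq\colim_{m,n}A$ with transition maps $l_s$ on the left factor and $r_s$ on the right. By (1), the right transitions become the shift on $A_s$, which is an equivalence, so the colimit over the second variable collapses and $\eta\otimes\id$ is an equivalence. The other map is handled symmetrically. The lift to an idempotent $\EE_1$-algebra (unique, and in fact commutative) is then the standard equivalence $\Alg^{\mathrm{idem}}\simeq\Alg^{\mathrm{idem}}_{\EE_0}$ from \cite[\S4.8.2]{LurieHA}, applied in the monoidal category $\BMod{A}(\sV)$ with the relative tensor product.

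For (3) and (4), I would use the general fact that modules over an idempotent algebra $E$ form the full subcategory of objects $M$ with $M\xrightarrow{\sim}E\otimes_A M$. I would then identify these: $A_s\otimes_A M\simeq\colim(M\xrightarrow{s}M\xrightarrow{s}\cdots)$, so $M\to A_s\otimes_A M$ is an equivalence exactly when left multiplication by $s$ on $M$ is invertible. The right-module case is symmetric, using the right-hand description from (1). Part (4) follows by applying (3) on both sides: algebras over the idempotent algebra $A_s$ form a full subcategory of $\Alg_A(\sV)$ with initial object $A_s$, and the characterization of its image combines the left and right statements.
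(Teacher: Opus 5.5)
Your argument for (1)--(4) follows the paper's proof. In (1), the left action of $s$ on $A_s$ is the stagewise $l_s$ whose naturality squares come from $\tau_s$. The hypothesis $\tau_s\simeq\id$ turns it into the shift, which cofinality inverts, and $r_s$ is compared with $l_s$ stagewise through $\tau$. In (2), the telescopes computing $A_s\otimes_A A_s$ collapse by (1), and (3), (4) are formal consequences of idempotence.

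Two small precisions. First, the cylinder comparing $r_s$ with $l_s$ is filled using $\tau_s\simeq\tau_s^{-1}$ (which follows from $\tau_s\simeq\id$) together with the first coherence of the central structure, $\tau_{s\otimes x}\simeq(s\otimes\tau_x)\circ(\tau_s\otimes x)$. Second, collapsing the colimit in the second variable shows that $\id\otimes\eta$, not $\eta\otimes\id$, is an equivalence. This is harmless, since you treat the other map symmetrically.

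The step that does not go through as written is the lift to an $\EE_1$-algebra in (2). The result you cite, \cite[Proposition 4.8.2.9]{LurieHA}, is about symmetric monoidal categories and produces commutative algebras. But $\BMod{A}(\sV)$ with $\otimes_A$ is only $\EE_1$-monoidal, because $A$ is not commutative; that is the whole point of this setting, e.g.\ for $A=\infty\Cat_\ast^{\owedge}$. So the citation does not apply, and ``in fact commutative'' has no meaning there.

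What you need is the $\EE_1$-variant, which the paper proves separately as \cref{prop_idem_E0_is_E1}. Via the monoidal equivalence $\eC\simeq\End_{\RMod_{\eC}}(\eC)$, $\EE_1$-algebras in $\eC$ are the same as $\eC$-linear monads on $\eC$. Idempotent monads correspond to localizations. An $\EE_0$-algebra $\eta\colon 1\to E$ with both $\eta\otimes E$ and $E\otimes\eta$ invertible is exactly one for which $E\otimes(\blank)$ is such a localization. This is why both one-sided conditions must be checked, as you do.

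The same caveat applies to the module-theoretic fact you invoke in (3) and (4); its $\EE_1$-version is \cref{prop_module_univ_property_of_smashing_loc}. With these replacements, your argument agrees with the paper's.
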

    \begin{remark}
        The author hopes that the following variants of the last claim are also true. 
        \begin{enumerate}
            \item $A_s$ is initial among objects of $\Alg_{A/}$ on which the image of $s\in A$ acts invertibly. 
            \item For any algebra object $B\in \Alg(\Pr^\sL_\omega)$, the induced map $\Fun^\otimes(A_s, B)\to \Fun^\otimes(A, B)$ is a full subcategory inclusion. The image consists of those functors $f: A\to B$ with $f(s)\in B$ invertible. 
        \end{enumerate}
        For now, we do not attempt to prove these claims.
    \end{remark}
    \begin{proof}[{Proof of \ref{thm_tensor_product_of_catsp}}]
        Apply \cref{prop_inverting_central_object} for $A=\infty\Cat_\ast^\owedge$ and $s=\vS^2$. To check that $\tau_{\vS^2}: \vS^2\owedge \vS^2\xrightarrow{\sim} \vS^2\owedge\vS^2$ is homotopic to the identity, it suffices to observe that the monoid of endomorphisms of $\vS^4= \rB^4 \Free_{\EE_4}$ is $\Free_{\EE_4} = \bigsqcup_{n\in \NN} \EE_4(n)/\Sigma_n$, so $\Aut(\vS^4)\simeq \ast$.
    \end{proof}
    \begin{proof}
        \begin{enumerate}
            \item Unwinding the bimodule morphism structure on $l_s: A\to A$, the left and right actions of $a\in A$ on $A_s$ are given as the colimits of the telescopes (in the horizontal direction) of the following commutative squares:
            \[\begin{tikzcd}
                A \ar[d, "l_a"'] \ar[r, "l_s"] & A \ar[d, "l_a"]\ar[ld, "\alpha"', Rightarrow, shorten <=2ex, shorten >=2ex]\\
                A \ar[r, "l_s"] & A,
            \end{tikzcd} \quad \text{resp.}\quad \begin{tikzcd}
                A \ar[d, "r_a"'] \ar[r, "l_s"] & A \ar[d, "r_a"]\ar[ld, "\beta"', Rightarrow, shorten <=2ex, shorten >=2ex]\\
                A \ar[r, "l_s"] & A.
            \end{tikzcd}.\]
            Here the invertible $2$-cell $\beta$ is just the associator of the monoidal structure of $A$, whereas $\alpha$ uses the central structure on $s$:
            \[a\otimes (s\otimes (-))\simeq (a\otimes s)\otimes (-)\xrightarrow{\tau_a} (s\otimes a)\otimes (-)\simeq s\otimes (a\otimes (-)).\]
            When $a=s$, the assumption that $\tau_{s}\simeq \id : s\otimes s\to s\otimes s$ implies $\alpha\simeq \id$, so we have the following factorization of the $2$-cell:
            \[\begin{tikzcd}[column sep=tiny, row sep=tiny]
                A \arrow[rrrr, "l_{s}"] \arrow[dd, "l_{s}"'] &{}\ar[d, equal, shift right=.7ex] &&& A \arrow[dd, "l_{s}"]\\
                &{}&& {}\ar[d, equal, shift left=1.3ex, shorten >=1.5ex]& \\
                A \arrow[rrrruu, "f = \id" description]\arrow[rrrr, "l_{s}"'] &&&{}& A.
            \end{tikzcd}\]
            By the cofinality of $\NN\xrightarrow{+1} \NN$, the maps $f$ induce an inverse to $l_s: A_s\to A_s$. The colimit is taken in the category of bimodules, so $l_s$ is an invertible $A$-bimodule morphism. From this description, it is also clear that $l_s$ is a shift functor.
            
            It remains to identify $r_s: A_s\to A_s$ with $l_s$. It suffices to provide an invertible $3$-cell filling the following cylinder (the subscript $s$ of $l, r$ are omitted) because then its telescope exhibits that $\tau$ induces the equivalence between left and right action on $A_s$:
            \[\begin{tikzcd}
                A \ar[d, "l", bend left]\ar[d, "r"', bend right] \ar[r, "l"]\ar[d, phantom, "{\scriptstyle\Leftarrow}", "{\scriptstyle \tau}" near end] & A \ar[d, "l", bend left]\ar[ld, "\alpha"', Rightarrow, shorten <=2ex, shorten >=2ex, bend left, shift={(0, 1ex)}]\\
                A \ar[r, "l"'] & A,
            \end{tikzcd} \quad \simeq \quad
            \begin{tikzcd}
                A \ar[d, "r"', bend right] \ar[r, "l"] & A \ar[d, "r"', bend right]\ar[d, "l", bend left] \ar[ld, "\beta"', Rightarrow, shorten <=2ex, shorten >=2ex, bend right, shift={(0, -1.5ex)}]\ar[d, phantom, "{\scriptstyle\Leftarrow}", "{\scriptstyle \tau}" near end]\\
                A \ar[r, "l"'] & A
            \end{tikzcd}, \]
            or a $2$-cell that fills the following triangle (here $s_v$, $s_h$ denote $s$ being multiplied in the vertical and horizontal arrow respectively, and $\tau^v$ and $\tau^h$ are the central structures of $s_v$, $s_h$): 
            \[\begin{tikzcd} s_v \otimes s_h\otimes (-) \ar[r, "(\tau^h)^{-1}"]\ar[rd, "\tau^v"] & s_h\otimes s_v\otimes (-)\ar[d, "\tau^v"]\\ & s_h\otimes (-)\otimes s_v \end{tikzcd}\]
            This is filled by $\tau^v_{s_h} = (\tau^h_{s_v})^{-1}$ and the coherence of $\tau^v$.
            \item The morphisms $A\otimes_A A_s \xrightarrow{\eta\otimes A_s}A_s\otimes A_s$ and $A_s\otimes_A A\xrightarrow{A_s\otimes \eta}A_s\otimes A_s$ are the colimits of the telescopes along the endomorphisms $l_s\otimes A_s$ and $A_s\otimes l_s$, which are the left and right action of $s$ on $A_s$, so they are invertible. Any idempotent $\EE_0$-algebra lifts uniquely to an idempotent $\EE_1$-algebra by \cref{prop_idem_E0_is_E1}.
            \item The unit transformation $M\to A_s\otimes_A M$ is idempotent, so the free-forgetful adjunction is a localization. The unit is an equivalence if and only if $s$ acts invertibly on $M$ from the left. 
        \end{enumerate}
    \end{proof}
    \begin{remark}
        If we start from $\infty\Cat_\ast$ with the oplax smash product $\owedge^\rev$, the directed circle $\vS^1\in \infty\Cat_\ast$ is still half-central. 
        To see this, note that we have an equivalence $\op: \infty\Cat^{\otimes}\xrightarrow{\sim} \infty\Cat^{\otimes^\rev}$ and similarly for $\infty\Cat_\ast$ as objects of $\Alg(\PrL)$. 
        Along this equivalence, $\vS^1$ and $D$ are preserved because $(\vS^1)^\op= \vS^1$ and $D\circ \op = \co = \op\circ D$. 
        Consequently, they acquire the transferred structure of a monoidal involution and a half-central object, so we can also form a monoidal structure on $\CatSp$ by inverting $\vS^1$ in $\infty\Cat^{\owedge^\rev}_\ast$. Using the universal property, it is the image of $\infty\Cat_\ast^\owedge\to \CatSp^\otimes\in \Alg(\PrL)$ under the reversal involution on $\Alg(\PrL)$.
    \end{remark}
    \begin{remark}\label{remark_remedy_of_noncommutativity}
        The non-commutativity is somewhat unfortunate, given that many constructions in algebraic geometry rely on the commutativity of algebra. 
        The author does not know whether there is a reasonably nondestructive localization to a commutative (or at least $\EE_2$) tensor product. Conjecturally, the tensor product of categorical spectra with adjoints considered in \cref{chapter_categorical_spectra_with_adjoints} promotes to an $\EE_\infty$-structure\footnote{The author thanks Mayuko Yamashita and Thomas Blom for originally suggesting that it should be more than $\EE_1$, perhaps (framed) $\EE_2$. In joint work with David Reutter this is reduced to a conjecture related to the $\rO(n)$-action on $n$-categories with adjoints.}.
        The variant of the theorem for $\infty\Cat$ with the Cartesian product and $\CatSp^\cn$ instead of $\CatSp$ is true for a formal reason, namely that $\CMon$ is idempotent in $\PrL$ \cite{gepnerUniversalityMultiplicativeInfinite2016}). 
        
        Also, note that the noncommutativity of the Gray tensor product is not too uncontrolled; there is a duality involution that reverses the tensor: $X^\op\otimes Y^\op\simeq (Y\otimes X)^\op$. In other words, it is a \emph{$\ast$-algebra} in $\PrL$. One sees that $\CatSp$ has the induced structure of a $\ast$-algebra. Therefore, to fully exploit the multiplicative structure of categorical spectra, the theory of categorical $\ast$-algebras seems to be a good language to develop. 
    \end{remark}
    
    \subsection{Idempotent \texorpdfstring{$\EE_1$}{E1}-algebras}\label{subsection_idempotent_E1_alg}
    If $\eC$ is a symmetric monoidal category, \cite[Proposition 4.8.2.9]{LurieHA} shows that the forgetful functor $\CAlg^{\mathrm{idem}}(\eC)\to \Alg_{\EE_0}^{\mathrm{idem}}(\eC)$ is an equivalence. We spell out verifications of the $\EE_1$-variant of the related statements. Let $\eC^\otimes$ be a presentably $\EE_1$-monoidal category with unit $1$.
    \begin{definition}
        An $\EE_0$-algebra $\eta: 1\to E$ in $\eC$ is said to be \emph{idempotent} if the maps $E\simeq 1\otimes E\xrightarrow{\eta\otimes E} E\otimes E$ and $E\simeq E\otimes 1\xrightarrow{E\otimes \eta}E\otimes E$ are equivalences. An $\EE_1$-algebra $E$ is \emph{idempotent} if the multiplication map $E\otimes E\to E$ is an equivalence, or equivalently, if the underlying $\EE_0$-algebra is idempotent. 
    \end{definition}
    By definition, an $\EE_0$-algebra $\eta: 1\to E$ is idempotent if and only if the functor $L_E^l = E\otimes(\blank): \eC\to \eC$ is a localization
    (as in \cite[Prop. 5.2.7.4]{LurieHTT}). 
    Notice the monoidal equivalence $\eC\simeq \End_{\RMod_\eC(\PrL)}(\eC)$; this implies that $\Alg(\eC)$ is equivalent to the category of (right-)$\eC$-linear monads on $\eC$. 
    Since the category of idempotent monads is equivalent to the category of localizations, this is equivalent to the category of idempotent $\EE_0$-algebras in $\eC$ by $L_E^l\leftrightarrow E$. Consequently, we have:
    \begin{proposition}\label{prop_idem_E0_is_E1}
        The forgetful functor $\Alg_{\EE_1}^{\mathrm{idem}}(\eC)\to \Alg_{\EE_0}^{\mathrm{idem}}(\eC)$ is an equivalence.
    \end{proposition}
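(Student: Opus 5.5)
The plan is to make precise the identification sketched just before the statement. First, $\Alg(\eC)$ is identified with the category of right $\eC$-linear monads on $\eC$. Second, idempotence of $E$ is read off as the statement that the corresponding monad is idempotent, i.e.\ a localization. Concretely, the monoidal equivalence $\eC\simeq\End_{\RMod_\eC(\PrL)}(\eC)$, $E\mapsto L^l_E=E\otimes(\blank)$, induces equivalences
\[\Alg_{\EE_1}(\eC)\simeq \Alg(\End_{\RMod_\eC}(\eC)),\qquad \Alg_{\EE_0}(\eC)\simeq \End_{\RMod_\eC}(\eC)_{\id/}.\]
Moreover, the forgetful functor $\Alg_{\EE_1}(\eC)\to\Alg_{\EE_0}(\eC)$ corresponds to forgetting a monad to its underlying pointed endofunctor $\id\to L^l_E$. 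So it suffices to prove the following: on the full subcategory of pointed right-$\eC$-linear endofunctors $(T,\eta)$ for which $T\eta$ and $\eta T$ are equivalences, the forgetful functor from monads is an equivalence onto this subcategory.

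For the idempotence conditions, I would check directly that $E\otimes\eta$ corresponds to $L^l_E\eta$ and that $\eta\otimes E$ corresponds to $\eta L^l_E$. Both sides match $E\otimes 1\to E\otimes E$ and $1\otimes E\to E\otimes E$ after evaluating at $1$. Linearity then propagates this to all objects, since a natural transformation of right-linear functors is determined by its value at the unit. On the $\EE_1$ side, I would also note that if the multiplication $m$ is an equivalence, then unitality $m\circ(\eta\otimes E)\simeq\id$ forces $\eta\otimes E$, and likewise $E\otimes\eta$, to be equivalences. Conversely, if $\eta\otimes E$ is an equivalence then so is $m$. The two definitions of idempotent therefore agree.

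The core step is the monad statement. A pointed endofunctor $(T,\eta)$ with $T\eta,\eta T$ invertible is exactly an idempotent localization in the sense of \cite[Prop.~5.2.7.4]{LurieHTT}, so $T\simeq i\circ L$ for a reflective subcategory $\eD=T(\eC)$. I would then invoke the standard fact that the space of monad structures extending such an $\eta$ is contractible, because it is equivalent to the endomorphism monad of the adjunction $L\dashv i$, which is unique. One route is \cite[Prop.~4.8.2.9]{LurieHA} applied in the monoidal $\infty$-category $\End_{\RMod_\eC}(\eC)$ under composition. That result only needs an $\EE_1$-monoidal ambient category, since its proof uses only left and right unit maps; I will need to verify this carefully, because the cited statement is phrased for symmetric monoidal categories. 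An alternative is to use that $\eD\subset\eC$ with $\eC$-linear $L$ determines the monad, together with the equivalence between localizations and idempotent monads.

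For full faithfulness, mapping spaces between idempotent algebras are either empty or contractible: a map $E\to E'$ of algebras is a map of the corresponding localizations, and these are determined by containment of local subcategories. The same holds for idempotent $\EE_0$-algebras, via the map $E\simeq E\otimes 1\to E\otimes E'\simeq E'$, which is compatible with the units. The main obstacle is ensuring that the $\eC$-linearity, meaning right module structure, of the monad matches the linearity of its underlying endofunctor. Here I expect to use that the monad multiplication $TT\to T$ is forced to be the inverse of $T\eta$, which is automatically linear because $\eta$ is.
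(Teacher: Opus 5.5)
Your proposal follows the paper's argument. It identifies $\Alg(\eC)$ with right $\eC$-linear monads via the monoidal equivalence $\eC\simeq\End_{\RMod_\eC(\PrL)}(\eC)$, matches idempotence of $E$ with $L^l_E=E\otimes(\blank)$ being a localization, and invokes the correspondence between idempotent monads and localizations; your contractible-or-empty mapping-space argument mirrors the direct check in the remark right after the proposition. One caution: your first route, an $\EE_1$-version of \cite[Prop.~4.8.2.9]{LurieHA} applied to $\End_{\RMod_\eC}(\eC)$, is circular, since that monoidal category is $\eC$ itself and the statement is then exactly the proposition; it is the second route, through $\eC$-linear localizations, that carries the proof, as in the paper.
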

    Note that we automatically have $E\otimes \eta \simeq \eta\otimes E$ because both are inverse to the multiplication map $E\otimes E\to E$. 
    \begin{remark}
        If $\eC$ is given a symmetric monoidal structure, any idempotent $\EE_1$-algebra automatically upgrades to an $\EE_\infty$-algebra, so this subsection is only relevant if $\eC$ itself is noncommutative. This observation also implies that $\sS\hookrightarrow n\Cat$ is not idempotent in $\Pr^\sL$ for $n> 0$, because these categories have noncommutative monoidal structures (namely, the lax Gray tensor products) on these categories\footnote{A separate argument is needed for $n=1$. For instance, the existence of a semiCartesian tensor product distinct from the Cartesian product (namely, the funny tensor product) contradicts idempotence.}, and similarly for $\Sigma_+^\infty:\sS\to\CatSp$. 
    \end{remark}
    \begin{remark}
        As usual, one may also check more directly that $\Alg_{\EE_1}^\mathrm{idem}(\eC)\to \Alg_{\EE_0}(\eC)$ is an equivalence of posets, as follows. 
        Suppose there exists an $\EE_0$-algebra map $f: A\to B$ between idempotent $\EE_1$-algebras. We wish to show that $\Hom_{\EE_0}(A, B)$, $\Hom_{\EE_1}(A, B)$ are both contractible. 
        Since $\eta_B\otimes B \simeq (1\otimes B\xrightarrow{\eta_A\otimes B} A\otimes B\xrightarrow{f\otimes B} B\otimes B)$ is an equivalence, $B$ is a retract of $A\otimes B$ and therefore $L^l_A$-local, i.e., $B\simeq A\otimes B'$ for some $B'$. Moreover, $(B\xrightarrow{\eta_A\otimes B} A\otimes B) \simeq (A\otimes B'\xrightarrow{\eta_A\otimes A\otimes B'} A\otimes A\otimes B')$ is an equivalence. 
        Now consider the square $(\eta_A: 1\to A)\otimes (f: A\to B)$; the diagonal is $(\eta_A\otimes B)\circ f \simeq (A\otimes f)\circ (\eta_A\otimes A) \simeq (A\otimes f)\circ (A\otimes \eta_A) \simeq A\otimes \eta_B$, so we have a canonical contraction $f\simeq (\eta_A\otimes B)^{-1}\circ (A\otimes \eta_B)$, i.e., $\Hom_{\EE_0}(A, B)\simeq \ast$. 
        The symmetric argument implies $B\simeq B\otimes A$, so $B$ lies in $\Alg_{\EE_1}(A\eC A)\subset\Alg_{\EE_1}(\eC)$ (notice that the inclusion $(A\eC A)^\otimes \subset \eC^\otimes$ is a map of (nonsymmetric) operads and is a nonunital monoidal functor). It follows that $\Hom_{\Alg_{\EE_1}(\eC)}(A, B)$ is contractible because $A$ is initial (the tensor unit) in $\Alg_{\EE_1}(A\eC A)$.
    \end{remark}
    
    \begin{proposition}\label{prop_module_univ_property_of_smashing_loc}
        Let $E$ be an idempotent $\EE_1$-algebra. Then 
        \begin{enumerate}
            \item The forgetful functor $\LMod_E(\eC)\to \eC$ (resp.\ $\RMod_E(\eC)\to \eC$) is an equivalence onto the full subcategory $E\eC$ (resp.\ $\eC E$). 
            \item The forgetful functor $\BMod{E}(\eC)^\otimes \to \eC^\otimes$ is an equivalence to the full suboperad $(E\eC E)^\otimes \subset \eC^\otimes$.
        \end{enumerate}
    \end{proposition}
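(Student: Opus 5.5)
We argue with the standard theory of idempotent monads, adapted to the $\EE_1$ setting. As observed before \cref{prop_idem_E0_is_E1}, idempotence of $E$ means that $L^l_E = E\otimes(\blank)$ is a localization of $\eC$ onto $E\eC$, the full subcategory of objects $X$ for which $\eta\otimes X: X\to E\otimes X$ is an equivalence. Symmetrically, $L^r_E=(\blank)\otimes E$ is a localization onto $\eC E$.

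For (1), we identify $\LMod_E(\eC)$ with the category of algebras for the monad $L^l_E$. The monad structure is the one induced by $E\in\Alg(\eC)$ via $\eC\simeq\End_{\RMod_\eC}(\eC)$, so this identification is the standard one. For an idempotent monad $T$ whose unit is an equivalence on local objects, the forgetful functor from $T$-algebras is fully faithful with image the $T$-local objects; this is \cite[Proposition 4.8.2.10]{LurieHA} or the argument of \cite[Prop.\ 5.2.7.4]{LurieHTT}. The key point is that an algebra structure $E\otimes M\to M$ on $M$ must be inverse to $\eta\otimes M$, because the unit axiom makes $\eta\otimes M$ a section of it. So if $M$ lies in $E\eC$, the structure map is forced. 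Concretely, one computes the fiber of $\LMod_E(\eC)\to\eC$ over $M$ as a limit of mapping spaces of the form $\Map(E^{\otimes n}\otimes M, M)$ with cosimplicial structure. Since all the maps $E^{\otimes n}\otimes M\to M$ are restrictions along equivalences, this cosimplicial space is equivalent to the constant one on $\Map(M,M)$ restricted to the identity component, so the fiber is contractible. For $M\notin E\eC$ the fiber is empty. The right module case is symmetric.

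For (2), we use $\BMod{E}(\eC)\simeq\LMod_E(\RMod_E(\eC))$. By (1) applied to right modules, $\RMod_E(\eC)\simeq \eC E$. Then $\LMod_E$ of this subcategory is identified, again by (1) (the left action of $E$ preserves $\eC E$ by associativity), with $E\eC\cap\eC E=E\eC E$; the same matching-space argument applies because the action of $E$ on $\eC E$ is still an idempotent monad. This proves the equivalence on underlying categories.

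To promote it to operads, we use that $\BMod{E}(\eC)^\otimes$ carries the relative tensor product $\otimes_E$, and the forgetful functor is lax monoidal via $M\otimes N\to M\otimes_E N$. We must show this comparison is an equivalence for $M,N\in E\eC E$. The bar construction computes $M\otimes_E N=|M\otimes E^{\otimes\bullet}\otimes N|$, and every face map is an equivalence by idempotence, since $M\otimes E\simeq M$ and $E\otimes N\simeq N$. Hence the simplicial object is essentially constant and its realization is $M\otimes N$. Consequently the lax monoidal forgetful functor preserves multimorphism spaces: both sides compute $\Map(M_1\otimes\cdots\otimes M_k, N)$, with the right-hand side mapping into the suboperad $(E\eC E)^\otimes$. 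Together with essential surjectivity on colors, this yields an equivalence of operads.

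The main obstacle is making the operadic statement in (2) precise, rather than only the underlying equivalence. In particular, we must check that $E\eC E$ is closed under the nonunital tensor product (so that the suboperad is the full one on those colors), which holds since $(EX)\otimes(YE)=E(X\otimes Y)E$. We must also handle the unit: $E$ is the unit of $\BMod{E}$ but not of $\eC$, so the suboperad is only nonunitally monoidal in $\eC^\otimes$. We will address this by comparing multimorphism spaces of all arities, including arity $0$, where $\Map_{\BMod{E}}(E,N)\simeq\Map_\eC(1,N)$ holds by (1) since $E$ is the free bimodule localization of $1$.
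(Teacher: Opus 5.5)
\textbf{Part (1).} Your key observation is the paper's: the unit axiom exhibits $M$ as a retract of the local object $E\otimes M$, so $M\in E\eC$ and the action map is forced to be inverse to $\eta\otimes M$. The paragraph beginning ``Concretely'', however, does not work and should be dropped.

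There is no cosimplicial space with terms $\Map(E^{\otimes n}\otimes M,M)$ that computes the space of $E$-module structures on a bare object $M$. Its outer coface maps would need an action $E\otimes M\to M$ to be given already. (The cosimplicial object in the paper's Hochschild computation computes maps between modules with fixed structures, not moduli of structures.)

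Even granting such a description, the totalization of a constant cosimplicial space on the identity component of $\Map(M,M)$ is that component, which is in general not contractible. What the unit axiom actually cuts out is the fiber of $(\eta\otimes M)^\ast\colon\Map(E\otimes M,M)\to\Map(M,M)$ over the point $\id_M$. That fiber is contractible because $(\eta\otimes M)^\ast$ is an equivalence.

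The clean argument, which is the paper's, uses the free--forgetful adjunction. For every module the counit $E\otimes M\to M$ is a retraction of the equivalence $\eta\otimes M$, hence an equivalence, so the forgetful functor is fully faithful. The unit $X\to E\otimes X$ is an equivalence exactly for $X\in E\eC$, which identifies the essential image. Running this directly is also preferable to citing Higher Algebra, whose statement is for commutative algebras in a symmetric monoidal category, whereas here $\eC$ is only $\EE_1$-monoidal. The same applies to the ``matching-space argument'' you invoke for $\LMod_E(\eC E)$ in (2): replace it by this adjunction argument for the left $E$-action on the left $\eC$-module $\eC E$.

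\textbf{Part (2).} Your route genuinely differs from the paper's, which only says that the same argument works ``componentwise'' for bimodules and never discusses multimorphisms. You get the underlying equivalence from $\BMod{E}(\eC)\simeq\LMod_E(\RMod_E(\eC))$ and (1). You then check the operadic statement directly in two steps:
\begin{itemize}
\item the lax structure maps $M_1\otimes\cdots\otimes M_k\to M_1\otimes_E\cdots\otimes_E M_k$ are equivalences for $M_i\in E\eC E$, since every face map of the bar construction is an equivalence by idempotence and $\Spx^{\op}$ is weakly contractible;
\item the arity-$0$ case $\Map_{\BMod{E}(\eC)}(E,N)\simeq\Map_\eC(1,N)$ holds because $E\simeq E\otimes 1\otimes E$ is the free bimodule on $1$.
\end{itemize}
This is correct in the presentable setting, where $\otimes$ commutes with geometric realizations, so relative tensor products exist and are computed by the bar construction. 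It also gives more than the paper's sketch. It makes explicit what the operad map does on multimorphism spaces. It shows that $(E\eC E)^\otimes$ is in fact a monoidal category with unit $E$, with the inclusion into $\eC^\otimes$ preserving binary tensor products but only lax on units.
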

    \begin{proof}
        We spell out the left module case. The unit law $M\simeq 1\otimes M\xrightarrow{\eta_E\otimes M} E\otimes M\xrightarrow{a} M$ implies that $M$ is a retract of $E\otimes M$, so $M$ is $L^l_E$-local and therefore $\eta_E\otimes M = \eta_E\otimes E\otimes M'$ (for some $M'$) is an equivalence. This means that $a$, the counit map of the free-forgetful adjunction, is also an equivalence.   
        Also, on these local objects, the unit map $E\otimes X\xrightarrow{\eta\otimes E\otimes X}E\otimes E\otimes X$ of the free-forgetful adjunction is an equivalence, so the adjunction induces the stated equivalence. 
        A similar argument for bimodule, performed componentwise, proves the second claim. 
    \end{proof}

\section{Basic properties of the tensor product}\label{section_stable_properties_tensor_product}
    The goal of this short section is to discuss explicit descriptions of the tensor product and internal homs. We will also show that the tensor product behaves additively on category level and connectivity, and obtain comparison results for the tensor product with those of spectra and symmetric monoidal categories.

\subsection{Monoidal involutions and the half-central structure of \texorpdfstring{$\FF[1]$}{F[1]}}
    The proof of \cref{prop_inverting_central_object} (1) admits an obvious modification to the case where $s$ is half-central, which shows the following:
    \begin{proposition}\label{left_action_of_S1_on_catsp}
        The left action of $\vS^1\in\infty\Cat_\ast$ on $\CatSp$ (which is also the left action of $\FF[1]$) is canonically isomorphic to the shift functor. 
    \end{proposition}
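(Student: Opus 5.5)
We mimic part (1) of the proof of \cref{prop_inverting_central_object}, replacing the central structure of $s$ by the half-central structure of $\vS^1$ from \cref{half-center_S1}. By the corollary to \cref{half-center_S1}, the defining colimit of $\CatSp$ lifts to a telescope of $A$-bimodules
\[A\xrightarrow{\Sigma} D\xrightarrow{\Sigma} A\xrightarrow{\Sigma} D\to\cdots\to \CatSp,\]
where the bimodule structure alternates between $A$ and the twisted bimodule $D={}^DA$. The left action of $a\in A$ on $\CatSp$ is then the colimit of the vertical maps in the ladder of squares
\[\begin{tikzcd}
    A\ar[r, "l_{\vS^1}"]\ar[d, "l_a"'] & A\ar[d, "l_{a^\circ}"]\ar[ld, Rightarrow, shorten <=2ex, shorten >=2ex, "\alpha_a"']\\
    A\ar[r, "l_{\vS^1}"'] & A,
\end{tikzcd}\]
where the vertical maps alternate between $l_a$ and $l_{a^\circ}$, and $\alpha_a$ is built from the associator and $\tau_a\colon \vS^1\owedge a\simeq a^\circ\owedge \vS^1$ of \cref{suspension_is_tensor_S1}. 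Since $\FF=\Sigma^\infty S^0$ is the unit and $\Sigma^\infty$ is a bimodule map, the left action of $\FF[1]=\Sigma^\infty\vS^1$ agrees with the left action of $\vS^1$; this identifies the two descriptions in the statement.

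For $a=\vS^1$, the vertical maps are all $l_{\vS^1}$, because $(\vS^1)^\circ\simeq\vS^1$ as a pointed category, since $\rB\NN$ has a single generating $1$-cell. The key claim is that the $2$-cell $\alpha_{\vS^1}$, which is essentially $\tau_{\vS^1}\colon \vS^1\owedge\vS^1\to\vS^1\owedge\vS^1$, is homotopic to the identity. I would deduce this as in the proof of \cref{thm_tensor_product_of_catsp}: $\vS^2=\rB^2\Free_{\EE_2}(\ast)$ has endomorphism monoid $\Free_{\EE_2}(\ast)$, and its invertible part is contractible, so $\Aut(\vS^2)\simeq\ast$. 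The identification $(\vS^1)^\circ\simeq\vS^1$ involved in the squares is likewise unique, because $\Aut(\vS^1)=\Aut(\rB\NN)$ is trivial.

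Given that $\alpha\simeq\id$, each square factors through the diagonal map $f=\id\colon A\to A$, exactly as in the diagram in part (1) of the proof of \cref{prop_inverting_central_object}. By cofinality of $\NN\xrightarrow{+1}\NN$, these diagonals assemble into a canonical equivalence between the colimit of the vertical maps and the map induced by shifting the index by one. That shifted map is the shift functor $\Sigma$ on $\CatSp$. Naturality and canonicity follow because every choice made along the way lies in a contractible space, namely \cref{trivial_automorphisms} together with the triviality of the automorphism groups above.

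The main obstacle is bookkeeping the alternating $A$/$D$ twist. One has to check that the twisted squares actually compose into a telescope map that is compatible with shifting. One also has to check that $\tau_{\vS^1}$ as it appears in $\alpha$ is literally an automorphism of $\vS^2$, and not merely a map $\vS^1\owedge\vS^1\to(\vS^1)^\circ\owedge\vS^1$. I would handle the second point by transporting along the unique equivalence $(\vS^1)^\circ\simeq\vS^1$ before invoking $\Aut(\vS^2)\simeq\ast$.
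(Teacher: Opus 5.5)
Your proposal is correct and is essentially the paper's argument. The paper only remarks that the proof of part (1) of \cref{prop_inverting_central_object} carries over to the half-central case, and you have spelled out that modification: the alternating $l_{\vS^1}$/$l_{(\vS^1)^\circ}$ ladder, the unique identification $(\vS^1)^\circ\simeq\vS^1$, and the check $\Aut(\vS^2)\simeq\ast$ in place of the paper's $\Aut(\vS^4)\simeq\ast$ to trivialize the $2$-cells.
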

    \begin{proposition}
        The total dual functor $D = (\blank)^\circ: \CatSp\to \CatSp$ uniquely promotes to an automorphism in $\Alg(\BMod{\infty\Cat}(\PrL))$. The half-central structure on $\vS^1\in \infty\Cat^\owedge_\ast$ induces a canonical half-central structure on $\FF[1]\in \CatSp^\otimes$ with respect to this total dual. In particular, we have $X\otimes \FF[1]\simeq \Sigma(X^\circ)$.
    \end{proposition}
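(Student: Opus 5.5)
The plan is to get all three assertions from the universal property in \cref{thm_tensor_product_of_catsp}(4), via \cref{prop_inverting_central_object}(3)--(4). The input is that the total dual is already a monoidal involution of $A=\infty\Cat_\ast^\owedge$, by \cref{total_dual} applied after pointing.

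\emph{Step 1: the candidate functor.} Consider the composite $A\xrightarrow{D}A\xrightarrow{\Sigma^\infty}\CatSp$. View its target as the $A$-algebra obtained by restricting the $A$-algebra structure of $\CatSp^\otimes$ along the monoidal involution $D$; call this ${}^D\CatSp^D$. The composite is then a morphism of $A$-algebras $A\to {}^D\CatSp^D$.

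\emph{Step 2: checking that $s$ is inverted.} Apply \cref{thm_tensor_product_of_catsp} with $s=\vS^2$. I need $\vS^2$ to act invertibly on ${}^D\CatSp^D$ from both sides. Under the twisted structure, $s$ acts as $D(s)$ does on $\CatSp$. Now $(\vS^1)^\circ\simeq\vS^1$, since $\rB\NN$ is a single object with a free loop and reversing that loop gives an isomorphic category. Since $D$ is monoidal, $D(\vS^2)\simeq\vS^2$, which acts invertibly by \cref{thm_tensor_product_of_catsp}(1).

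\emph{Step 3: existence, uniqueness and invertibility.} By \cref{prop_inverting_central_object}(4), $\Alg_{A_s}(\sV)\subset\Alg_A(\sV)$ is a full subcategory with initial object $A_s=\CatSp$. Hence the space of $A$-algebra maps $\CatSp\to{}^D\CatSp^D$ is contractible, which gives a unique lift $D_{\CatSp}$. Uniqueness applied to $D_{\CatSp}\circ D_{\CatSp}$ against $\id$ shows that $D_{\CatSp}$ is an involution, hence an automorphism in $\Alg(\BMod{\infty\Cat}(\PrL))$; the unpointed version follows by base change along $\sS\to\sS_\ast$.

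\emph{Step 4: identifying the underlying functor.} It remains to check that the underlying functor is the levelwise total dual of \cref{definition_duality_involutions_of_catsp}. Both functors are colimit-preserving and both commute with $\Sigma^{\infty-n}$ up to $D$; for the levelwise dual this uses that $\tau\equiv 1$ is shift-invariant in \cref{remark_involutions_of_catsp}. They therefore agree on the generators $\Sigma^{\infty-n}X_n$, and \cref{spectra_are_colimits_of_suspension_spectra} gives agreement everywhere.

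\emph{Step 5: the half-central structure on $\FF[1]$.} \cref{half-center_S1} says that $\Sigma=l_{\vS^1}\colon A\to D$ is a bimodule map, where $D$ denotes the bimodule ${}^DA$. Passing to telescopes, as in the proof of \cref{prop_inverting_central_object}(1), gives a bimodule map $l_{\vS^1}\colon\CatSp\to{}^{D}\CatSp$. Its target is an $A_s$-bimodule by Step 3, and \cref{prop_inverting_central_object}(3) says that $A_s$-bimodules form a full subcategory of $A$-bimodules. So this map is a point of $\Hom_{\BMod{\CatSp}}(\CatSp,{}^D\CatSp)=\fZ(\CatSp,D)$ lying over $\FF[1]$.

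\emph{Step 6: the formula.} Evaluating the half-central structure gives $X\otimes\FF[1]\simeq \FF[1]\otimes X^\circ$. By \cref{left_action_of_S1_on_catsp}, the left action of $\FF[1]$ is the shift, so $\FF[1]\otimes X^\circ\simeq\Sigma(X^\circ)$.

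The main obstacle is Step 5, namely the bookkeeping of twisted bimodules. I must check that restricting the right action along $D$ commutes with forming the telescope and with the base change $\CatSp\otimes_A(\blank)\otimes_A\CatSp$. I would handle this by working throughout in $\BMod{A}(\sV)$ and using \cref{prop_module_univ_property_of_smashing_loc} to see that every object in sight is already local.
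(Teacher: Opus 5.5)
Your proposal is correct and follows essentially the same route as the paper. You obtain $\tilde D$ from the initiality of $\CatSp$ among $\infty\Cat_\ast^\owedge$-algebras on which $\vS^2$ acts invertibly, applied to the $D$-twisted algebra. You get the half-central structure on $\FF[1]$ by telescoping the bimodule map $\Sigma\colon A\to D$ and promoting $D\otimes_A\CatSp$ to a $\CatSp$-bimodule via $\tilde D$. The only cosmetic differences are two. You identify the underlying functor through the colimit presentation by $\Sigma^{\infty-n}X_n$, where the paper uses compatibility with $\Omega^{\infty-n}$. You also invoke full faithfulness of $\CatSp$-bimodules inside $A$-bimodules, where the paper instead extends the monoidal functor $A\to\End_{\RMod_{\CatSp}}(\CatSp)$ across the inversion of $\vS^1$.
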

    \begin{proof}
        By the universal property of $\CatSp = \infty\Cat_\ast^\owedge[\vS^{-1}]$, there is a unique involution $\CatSp\to \CatSp$ in $\Alg_{\infty\Cat^\otimes}(\PrL)$ making the following diagram commute:
        \[\begin{tikzcd}
            \infty\Cat_\ast^\owedge \ar[r, "D"]\ar[d, "\Sigma^\infty"] & \infty\Cat_\ast^\owedge\ar[d ,"\Sigma^\infty"] \\
            \CatSp^\otimes \ar[r, dashed, "\tilde{D}"]& \CatSp^\otimes.
        \end{tikzcd}\]
        To see that the underlying functor of $\tilde{D}$ is the total dual functor defined in \cref{definition_duality_involutions_of_catsp}, note that since $\tilde{D}$ is a $\infty\Cat^\otimes_\ast$-bimodule homomorphism, it must commute with the shift operators. Composing it with the squares of the right adjoints to the above, we have the following commutative diagram:
        \[\begin{tikzcd}
            \CatSp \ar[r, "\tilde{D}"]\ar[d, "\Sigma^n"]\ar[dd, "\Omega^{\infty-n}"', bend right=40]& \CatSp \ar[d, "\Sigma^n"'] \ar[dd, "\Omega^{\infty-n}", bend left=40]\\
            \CatSp \ar[r, "\tilde{D}"]\ar[d, "\Omega^\infty"] & \CatSp \ar[d, "\Omega^\infty"']\\
            \infty\Cat_\ast \ar[r, "D"] & \infty\Cat_\ast. 
        \end{tikzcd}\]
        This canonically factors through the same square for $n=0$, so $\tilde{D}$ is the map induced by $D$ in each degree compatibly along the loops, i.e., it agrees with the total dual functor. 
        The following diagram gives the (twisted) left action of $\FF[1] = \Sigma^{\infty}_+\vS^1$ on $B = \CatSp$ an $A = \infty\Cat_\ast^\owedge$-bimodule:
        \[\begin{tikzcd}
        A\ar[r]\ar[d, "l_{\vS^1}"] & D \ar[r]\ar[d, "l_{\vS^1}"] & A \ar[r]\ar[d, "l_{\vS^1}"] & \cdots \ar[r] & B\ar[d, "l_{\FF[1]}"]\\
        D\ar[r] & A \ar[r] & D \ar[r]& \cdots \ar[r] & D\otimes_A B.
        \end{tikzcd}\]
        $D\otimes_A B$ is a priori an $(A, B)$-bimodule. The left action of $A$ on the underlying $B$-module $D\otimes_A B$ (which is equal to $B$) defines a monoidal functor $l: A\to \End_{\RMod_B}(D\otimes_A B) = \End_{\RMod_B}(B)\simeq B$.
        This sends $a\in A$ to $\Sigma^{\infty}_+ a^\circ$. In particular, the image of $\vS^1$ is invertible, so $l$ extends to $B$, $D\otimes_A B$ promotes to a $(B, B)$-bimodule, and $l_{\FF[1]}$ is a $(B, B)$-bimodule map. 
        Moreover, this extended map is $D$ above by its universal property.
    \end{proof}
    \begin{remark}
        Similarly, one can show that the $\co$- and $\op$-duals induce $\CatSp^{\otimes^\rev}\to \CatSp^\otimes$.
    \end{remark}
    
    \subsection{Formulas for the tensor product and internal hom}    
    As an immediate consequence of \cref{spectra_are_colimits_of_suspension_spectra}, we have the following: 
    \begin{corollary}
        Let $X=(X_n)$ and $Y=(Y_n)$ be categorical spectra. Then we have 
        \[X\otimes Y\simeq \colim_{i, j}(\Sigma^{\infty-i-j}(D^j X_i)\owedge Y_j)\simeq \colim_{n}(\Sigma^{\infty-4n}(X_{2n}\owedge Y_{2n})).\] 
        Similarly, we have the analogous formula with $\rB^{\infty-i}$ in place of $\Sigma^{\infty-i}$, regarding $X_i$, $Y_j$ as symmetric monoidal categories. 
    \end{corollary}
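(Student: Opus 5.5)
We will derive both formulas by writing each factor as a colimit of suspension spectra and then commuting the tensor product past colimits. By \cref{spectra_are_colimits_of_suspension_spectra}, $X\simeq\colim_i\Sigma^{\infty-i}X_i$ and $Y\simeq\colim_j\Sigma^{\infty-j}Y_j$. The tensor product on $\CatSp$ lies in $\Alg(\Pr^\sL_\omega)$ by \cref{thm_tensor_product_of_catsp}, so it preserves colimits separately in each variable. Hence
\[X\otimes Y\simeq\colim_{i,j}\,\Sigma^{\infty-i}X_i\otimes\Sigma^{\infty-j}Y_j,\]
and the remaining task is to compute each term.

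For a single term we use the bimodule structures. We write $\Sigma^{\infty-i}X_i=\FF[-i]\otimes\Sigma^\infty X_i$, using \cref{left_action_of_S1_on_catsp}: the shift is left multiplication by $\FF[1]$. Next we move $\FF[-j]$ past $\Sigma^\infty X_i$ with the half-central structure of $\FF[1]$, that is, $Z\otimes\FF[1]\simeq\FF[1]\otimes Z^\circ$, iterated $j$ times and inverted. This gives
\[\Sigma^\infty X_i\otimes\FF[-j]\simeq\FF[-j]\otimes\Sigma^\infty D^jX_i.\]
Finally, $\Sigma^\infty$ is monoidal as a morphism of $\infty\Cat_\ast^\owedge$-algebras, so $\Sigma^\infty D^jX_i\otimes\Sigma^\infty Y_j\simeq\Sigma^\infty(D^jX_i\owedge Y_j)$. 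Putting these together,
\[\Sigma^{\infty-i}X_i\otimes\Sigma^{\infty-j}Y_j\simeq\Sigma^{\infty-i-j}(D^jX_i\owedge Y_j).\]

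These identifications must also be natural in the transition maps $\Sigma^{\infty-i-1}\Sigma X_i\to\Sigma^{\infty-i}X_i$, and making this coherent is the main obstacle. We do not build the coherence by hand. Instead we note that both sides are obtained by applying the same colimit-preserving bimodule functors to the telescope diagram $A\to D\to A\to\cdots$ from \cref{half-center_S1}. The naturality is then supplied by the uniqueness of the half-central structure, which says that all coherence spaces are contractible (\cref{trivial_automorphisms}).

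The second formula follows by cofinality. The diagonal $n\mapsto(2n,2n)$ is cofinal in $\NN\times\NN$, so we may restrict to it. For $j=2n$ we have $D^{2n}=\id$ because $D$ is an involution, so the diagonal term is $\Sigma^{\infty-4n}(X_{2n}\owedge Y_{2n})$. Using even indices avoids the twist entirely.

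For the variant with $\rB^{\infty-i}$ in place of $\Sigma^{\infty-i}$, we run the same argument with the second presentation from \cref{spectra_are_colimits_of_suspension_spectra}, namely $X\simeq\colim_i\rB^{\infty-i}X_i$. This requires computing $\rB^\infty$ of a Gray smash product of symmetric monoidal categories. We interpret this as the tensor product in $\CatSp$ of the connective objects, which is how the statement reads the formula with $X_i$ regarded as symmetric monoidal.
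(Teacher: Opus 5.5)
Your proposal is correct and follows essentially the same route as the paper. Both arguments write each factor as $\colim_i\Sigma^{\infty-i}X_i$ via \cref{spectra_are_colimits_of_suspension_spectra}, commute $\otimes$ with colimits, use the half-central structure of $\FF[1]$ to move $\FF[-j]$ past the $X$-term (which produces the twist $D^j$), apply monoidality of $\Sigma^\infty$, and then restrict to the cofinal even diagonal, where $D^{2n}=\id$. The paper leaves the naturality in the indices implicit and only says ``similarly'' for the $\rB^{\infty-i}$ variant, so your remarks on coherence and on reading $\owedge$ as the restricted tensor product on $\CMon(\infty\Cat)$ just add detail.
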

    \begin{proof}
        Recall that $X\otimes Y \simeq (\colim_i \Sigma^{\infty-i} X_i)\otimes (\colim_j \Sigma^{\infty-j} Y_j)\simeq \colim_{i, j}\Sigma^{\infty-i} X_i\otimes \Sigma^{\infty-j} Y_j\simeq \colim_{i, j} \Sigma^{-j}D^j(\Sigma^{\infty-i}X_i)\otimes \Sigma^\infty Y_j\simeq \colim_{i, j} \Sigma^{\infty-i-j} (D^j X_i)\owedge (Y_j)$. The second part results from restricting to the cofinal diagram $2\NN\xrightarrow{\mathrm{diag}} \NN\times \NN$.
    \end{proof}

    \begin{corollary}\label{formula_internal_hom_catsp}
        We have the corresponding formula for the internal hom of categorical spectra:
        $\Omega^{\infty-n}[X, Y] \simeq \lim_{k\to \infty}[D^n X_k, Y_{n+k}]$, where the internal hom on the right-hand side is taken in either $\infty\Cat_\ast^\owedge$ or $\infty\SMC^\otimes$. 
    \end{corollary}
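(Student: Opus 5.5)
We compute $\Omega^{\infty-n}[X,Y]$ by Yoneda: for a pointed $\infty$-category $W$, a map $W\to \Omega^{\infty-n}[X,Y]$ is a map $\Sigma^{\infty-n}W\to [X,Y]$ in $\CatSp$. The plan is to rewrite this mapping groupoid as a limit over levels, using the colimit formula for the tensor product in the preceding corollary and the fact that mapping groupoids out of suspension spectra are computed levelwise.

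\textbf{Step 1: identify the mapping groupoid.} Which internal hom $[\blank,\blank]$ is meant is fixed by the adjunction convention from the introduction, $\Map(X\otimes Z, Y)\simeq \Map(Z,[X,Y])$. With it,
\[\Map(\Sigma^{\infty-n}W,[X,Y])\simeq \Map(X\otimes \Sigma^{\infty-n}W, Y).\]

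\textbf{Step 2: expand $X$ and the tensor product.} Write $X\simeq\colim_k \Sigma^{\infty-k}X_k$ by \cref{spectra_are_colimits_of_suspension_spectra}. The half-central structure of $\FF[1]$ lets us move the shift past $X$, at the cost of $n$ total duals. Together with $\Sigma^\infty$ being an $\infty\Cat_\ast^\owedge$-algebra map (\cref{thm_tensor_product_of_catsp}), this gives
\[\Sigma^{\infty-k}X_k\otimes \Sigma^{\infty-n}W\simeq \Sigma^{\infty-k-n}\bigl((D^nX_k)\owedge W\bigr),\]
exactly as in the proof of the previous corollary.

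\textbf{Step 3: pass to levels and adjoin.} Combining Steps 1 and 2,
\[\Map(\Sigma^{\infty-n}W,[X,Y])\simeq \lim_k \Map_{\infty\Cat_\ast}\bigl((D^nX_k)\owedge W,\,Y_{n+k}\bigr)\simeq \lim_k \Map\bigl(W,[D^nX_k,Y_{n+k}]\bigr).\]
The last step uses the $\owedge$-internal hom of $\infty\Cat_\ast$, which exists because the Gray smash product is biclosed. Since the limit commutes with $\Map(W,\blank)$, Yoneda identifies $\Omega^{\infty-n}[X,Y]$ with $\lim_k[D^nX_k,Y_{n+k}]$.

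\textbf{Step 4: the transition maps.} We must check that the maps in the limit are the expected ones. The map $[D^nX_{k+1},Y_{n+k+1}]\to[D^nX_k,Y_{n+k}]$ should come from $\vS^1\owedge X_k\simeq\Sigma X_k\to X_{k+1}$ together with $Y_{n+k}\simeq\Omega Y_{n+k+1}$, that is, from $[\vS^1\owedge A,B]\simeq[A,\Omega B]$ (with the appropriate twist). This bookkeeping of twists by $D$ and of left versus right homs is the main obstacle. I would handle it by restricting to the cofinal even indices, as in the previous corollary, so that the $D$-twists cancel ($D^2=\id$) and the remaining identifications are forced by the uniqueness of the half-central structure (\cref{half-center_S1}).

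\textbf{The $\infty\SMC$ version.} Repeat Steps 1--4 using $\rB^{\infty-k}$ in place of $\Sigma^{\infty-k}$, and the adjunction $\rB^{\infty-n}\dashv\Omega^{\infty-n}$ on $\CMon(\infty\Cat)$, with the induced Gray tensor product on $\infty\SMC$.
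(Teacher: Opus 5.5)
Your proof is correct and follows the paper's argument: Yoneda, then $X\simeq\colim_k\Sigma^{\infty-k}X_k$, then the tensor formula $\Sigma^{\infty-k}X_k\otimes\Sigma^{\infty-n}W\simeq\Sigma^{\infty-k-n}(D^nX_k\owedge W)$ from the preceding corollary, then the levelwise adjunction $\Sigma^{\infty-k-n}\dashv\Omega^{\infty-k-n}$, and finally the $\owedge$-internal hom (and likewise with $\rB^{\infty-k}$ for $\infty\SMC$). Step 4 is unnecessary, because the statement only claims an equivalence with the limit of the diagram that Yoneda induces; the even-index trick also would not work as you describe, since the twist $D^n$ depends only on the fixed $n$ and not on $k$, so there is nothing for $D^2=\id$ to cancel.
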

    \begin{proof}
        We have the following natural equivalences for $Z\in\infty\Cat_{\ast}$: 
        \begin{align*}
            \Map_{\infty\Cat_\ast}(Z, \Omega^{\infty-n}[X, Y])
            & \simeq \Map_{\CatSp}(\Sigma^{\infty-n}Z, [X, Y])\\
            &\simeq \Map_{\CatSp}(\colim_k \Sigma^{\infty-k} X_k\otimes \Sigma^{\infty-n} Z, Y)\\
            &\simeq \lim_k \Map_{\CatSp}(\Sigma^{\infty-k-n} (D^n X_k\owedge Z), Y) \\
            &\simeq \lim_k \Map_{\CatSp}(D^n X_k\owedge Z, Y_{n+k})\\
            &\simeq \Map_{\infty\Cat_\ast}(Z, \lim_k[D^n X_k, Y_{n+k}]),
        \end{align*}
        and similarly for $Z\in \infty\SMC$.
    \end{proof}

    \subsection{Additivity on categorical levels}
    Just as with the Gray tensor product of $\infty$-categories, the tensor product of categorical spectra behaves additively on category levels. 
    \begin{proposition}\label{tensor_product_adds_category_levels}
        For $m, n\in \ZZ\cup\{\pm\infty\}$, the essential image of $m\CatSp\otimes n\CatSp$ under the tensor product is $(m+n)\CatSp$. Here the convention is $\infty+(-\infty)= -\infty$. In particular, $0\CatSp\subset \CatSp$ is a monoidal subcategory and $\Sp\subset \CatSp$ is a $\otimes$-ideal.
    \end{proposition}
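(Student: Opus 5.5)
We have to prove two inclusions: $X\otimes Y\in(m+n)\CatSp$ for $X\in m\CatSp$, $Y\in n\CatSp$, and conversely every object of $(m+n)\CatSp$ is (up to equivalence) such a tensor product. The second inclusion is the easy one. For finite $m$, the unit-type object $\FF[m]$ lies in $m\CatSp$: $\FF\in 0\CatSp$ because $\Omega^{\infty-k}\FF=\rB^k\Fin^\simeq$ is a $k$-category, and shifting moves this to level $m$. By \cref{left_action_of_S1_on_catsp}, $\FF[m]\otimes Z\simeq Z[m]$, and we know $k\CatSp[m]=(k+m)\CatSp$. So every $W\in(m+n)\CatSp$ can be written as $\FF[m]\otimes W[-m]$ with $W[-m]\in n\CatSp$. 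The infinite cases use $\FF[\pm]$ similarly with the right factor, or (for $-\infty$) $\Sp\ni\SS$, once the first inclusion is known.

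For the first inclusion, the plan is to use the colimit formula: $X\otimes Y\simeq\colim_n\Sigma^{\infty-4n}(X_{2n}\owedge Y_{2n})$. This is legitimate because $k\CatSp\subset\CatSp$ is closed under colimits (\cref{definition_k_categorical_spectra}), so it suffices to place each term in $(m+n)\CatSp$. For $2n\ge -m,-n$, $X_{2n}$ is a $(2n+m)$-category and $Y_{2n}$ a $(2n+n)$-category. By additivity of the Gray tensor product on categorical levels, $X_{2n}\owedge Y_{2n}$ is a $(4n+m+n)$-category; this holds because it is a cofiber of Gray products, and $k\Cat_\ast$ is closed under colimits. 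The remaining step is that $\Sigma^{\infty-j}W$ lies in $(k-j)\CatSp$ whenever $W$ is a pointed $k$-category, $k\ge0$. Since $\Sigma^{\infty-j}=\Sigma^{-j}\Sigma^\infty$, this reduces to $\Sigma^\infty W\in k\CatSp$. We have $\Sigma^\infty W=\colim_r\Sigma^{\infty-r}\Sigma^r W$, and $\Sigma$ raises categorical level by one (being the cofiber of $\sigma$). I would then check directly that $\Omega^{\infty-i}\Sigma^\infty W$ is a $(k+i)$-category. One route is to use \cref{CatSp_is_limit_of_SMC}: $\Sigma^\infty W=\rB^\infty\Free_{\EE_\infty/\EE_0}W$. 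The free $\EE_\infty$-monoid $\bigvee W^{\wedge r}/\Sigma_r$ on a pointed $k$-category is a $k$-category, since $k\Cat$ is closed under limits, colimits, and finite products. Its $i$-fold delooping $\rB^i$ then yields $(k+i)$-categories, because $\rB$ of a monoidal $k$-category is a $(k+1)$-category.

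The cases with $m$ or $n$ infinite need separate handling. If either is $+\infty$ and neither is $-\infty$, the claim is trivial. For the ideal statement, $\Sp\otimes Z\subset\Sp$, with $X$ in $\Sp=-\infty\CatSp$ we have $X\in k\CatSp$ for every $k$, so $X\otimes Y\in\bigcap_k(k+n)\CatSp$ for finite $n$. The truly delicate case is $Y$ arbitrary ($n=\infty$), where the convention $\infty+(-\infty)=-\infty$ is asserted. Here I would argue levelwise using the formula with terms $\Sigma^{\infty-i-j}(D^jX_i)\owedge Y_j$, where $X_i$ is a grouplike $\EE_\infty$-groupoid. The key subclaim is that $G\owedge Y$ is "stably a groupoid" for a grouplike $G$: the $\Sigma^\infty$ of such a product should be $\SS$-local. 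To get this I plan to use the idempotence of $\SS$ and that $\Sp$ is $\SS$-modules, giving $X\otimes Y\simeq X\otimes_{\SS}(\SS\otimes Y)$. The main obstacle is showing $\SS\otimes Y\in\Sp$ for arbitrary $Y$, i.e.\ that $\SS$ is a left/right $\otimes$-ideal generator. I would first reduce to $Y=\Sigma^{\infty-j}C_k$ by colimits. There, $\SS\otimes\Sigma^{\infty}C_k$ would be computed by showing that a map out of it to $Z$ factors through the maximal Picard groupoid: internally, $[ \Sigma^\infty C_k,Z]$ restricted along $\SS$ inverts all cells. This is done using \cref{formula_internal_hom_catsp} and the fact that lax transformations with invertible source components are invertible.
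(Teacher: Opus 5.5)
Your argument for finite $m,n$ is correct but takes a different route from the paper. The paper uses colimit generators $\Sigma^{\infty-i}_+\cube^j$ of $n\CatSp$ and the identities $D\cube^j\simeq\cube^j$ and $\cube^j\otimes\cube^l\simeq\cube^{j+l}$, so that tensor products of generators are again generators of the right level. You instead use three ingredients: the componentwise formula $X\otimes Y\simeq\colim_k\Sigma^{\infty-4k}(X_{2k}\owedge Y_{2k})$, additivity of the Gray product on $\infty\Cat_\ast$, and the fact that $\Sigma^\infty W\simeq\rB^\infty\Free_{\EE_\infty/\EE_0}W$ lies in $k\CatSp$ for a pointed $k$-category $W$. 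Your version never needs an explicit generating set for $n\CatSp$. Negative finite levels, where the low components are forced to be grouplike, are therefore handled uniformly by discarding finitely many terms of the colimit. The paper's version is shorter once the generators are known. Your surjectivity argument via $\FF[m]\otimes W[-m]\simeq W$ (\cref{left_action_of_S1_on_catsp}) is fine; it mirrors the paper's use of $(\blank)\otimes\FF[n]$.

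The genuine gap is the case $\infty+(-\infty)=-\infty$, i.e.\ that $\Sp$ is a two-sided $\otimes$-ideal. Your plan goes through the idempotence of $\SS$ and $\Sp\simeq\Mod_{\SS}$. In the paper these are established in the \emph{next} proposition, and its key step $L(Z\otimes Y)\simeq LZ\otimes Y$ requires $LZ\otimes Y\in\Sp$, which is exactly the ideal property you are trying to prove. So this is circular. The remaining ``direct'' computation of $\SS\otimes\Sigma^\infty C_k$ via maximal Picard groupoids is only gestured at, not carried out.

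You do not need any of this. You already reduce to $Y=\Sigma^{\infty-j}_+C_k$, and by your own lemma such $Y$ lies in $(k-j)\CatSp$. Since $X\in\Sp$ lies in $k'\CatSp$ for every finite $k'$, your finite case gives $X\otimes Y\in\bigcap_{k'}(k'+k-j)\CatSp=\Sp$, and likewise $Y\otimes X\in\Sp$. Two further facts extend this to all $Y\in\CatSp$: $\Sp\subset\CatSp$ is closed under colimits (its inclusion has a right adjoint, \cref{categorical_truncations_of_catsp}), and $\otimes$ preserves colimits separately in each variable. In other words, the intersection trick you already used for finite $n$ settles the infinite case, once you note that $\CatSp$ is generated under colimits by objects of finite level.
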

    \begin{proof}
        The category $n\CatSp$ is the colimit-closure of the set $\{\Sigma^{\infty-i}_+\cube^j \mid 0\leq j\leq n+i \text{ or } j=0\}$. Since $D\cube^j\simeq \cube^j$, we have  $\Sigma^{\infty-i}_+ \cube^j\otimes \Sigma^{\infty-k}_+ \cube^l\simeq \Sigma^{\infty-i-k}_+\cube^{j+l}\in (n+m)\CatSp$. 
        To see surjectivity, observe that $\FF[n]\in n\CatSp$ and $(\blank)\otimes \FF[n]: m\CatSp\xrightarrow{\sim} (m+n)\CatSp$.
    \end{proof}
    \begin{proposition}
        $\Sp\subset \CatSp$ is an exponential ideal. In particular, the group completion $L: \CatSp\to \Sp$ uniquely induces a monoidal structure on $\Sp$, which agrees with the usual tensor product of spectra. The local unit $\SS$ is idempotent over $\FF$, and $L$ is a smashing localization by $\SS$. 
    \end{proposition}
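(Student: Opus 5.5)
The plan is to use the additivity result \cref{tensor_product_adds_category_levels} with $m=-\infty$ to get the ideal property. Exponential ideal then follows by adjunction, and the monoidal statements follow from the general theory of smashing localizations in \cref{subsection_idempotent_E1_alg}.

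\emph{Step 1: $\Sp$ is an exponential ideal.} By \cref{tensor_product_adds_category_levels} with the convention $\infty+(-\infty)=-\infty$, we have $X\otimes Y\in\Sp$ and $Y\otimes X\in\Sp$ whenever $Y\in\Sp$. I will show that the internal homs $[X,Y]$ and $\llbracket X,Y\rrbracket$ lie in $\Sp$ for $Y\in\Sp$. The cleanest route is \cref{formula_internal_hom_catsp}: $\Omega^{\infty-n}[X,Y]\simeq\lim_k[D^nX_k,Y_{n+k}]$. Here $Y_{n+k}$ is a grouplike $\EE_\infty$-groupoid, and the internal hom in $\infty\SMC^\otimes$ into a groupoid only sees ${}^{\leq0}$ of the source, so it is again a groupoid. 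It is grouplike because the target is. Limits of grouplike groupoids are grouplike groupoids, so each level is a groupoid and $[X,Y]\in\Sp$. The right internal hom is handled symmetrically, or by applying a duality involution.

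Alternatively, Step 1 can be done purely formally. Write $L\dashv i$ for the group completion and the inclusion. An inclusion $i$ with left adjoint $L$ is an exponential ideal iff $L$-equivalences are stable under tensoring on either side. For an $L$-equivalence $f$ and $Y\in\Sp$, $\Map(f\otimes Z,Y)\simeq\Map(f,[Z,Y])$. This reduces the criterion back to the internal-hom statement, so the levelwise computation above is what I will actually rely on.

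\emph{Step 2: monoidal structure on $\Sp$ and idempotence of $\SS$.} By \cite[Proposition 2.2.1.9]{LurieHA}, the exponential ideal property gives a unique monoidal structure on $\Sp$ making $L$ monoidal, with unit $L\FF=\SS$. To see that $\SS$ is idempotent over $\FF$, note that for any $Y\in\Sp$ the map $\FF\otimes Y\to\SS\otimes Y$ is an $L$-equivalence between objects of $\Sp$, hence an equivalence. Take $Y=\SS$ to get $\SS\simeq\SS\otimes\SS$ via $\eta\otimes\SS$, and similarly on the other side. The next point is that $\SS\otimes X\in\Sp$ and $X\to\SS\otimes X$ is an $L$-equivalence. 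Indeed, for $Y\in\Sp$, $\Map(\SS\otimes X,Y)\simeq\Map(\SS,\llbracket X,Y\rrbracket)\simeq\Map(\FF,\llbracket X,Y\rrbracket)$, using Step 1 and that $\FF\to\SS$ is an $L$-equivalence. Therefore $L\simeq\SS\otimes(\blank)$ is smashing, and by \cref{prop_module_univ_property_of_smashing_loc} we get $\Sp\simeq\SS\CatSp\SS$ as monoidal categories.

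\emph{Step 3: comparison with the classical smash product.} The induced structure on $\Sp$ is presentably monoidal and biclosed, with unit $\SS$. It is preserved by $L\circ\Sigma^\infty_+\colon\sS\to\Sp$, since $L$ and $\Sigma^\infty_+$ are monoidal for $\times\to\otimes$ restricted to groupoids, where Gray and Cartesian products agree. Lurie's uniqueness result then identifies it with the usual tensor product: $\Sp$ is idempotent over $\sS$, so there is a unique presentably monoidal structure with $\Sigma^\infty_+$ monoidal. One has to check that an $\EE_1$-structure whose underlying $\EE_0$-structure is $\Sigma^\infty_+$ is forced to agree, which follows from \cref{prop_idem_E0_is_E1} applied in $\PrL$.

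I expect Step 1 to be the main obstacle, specifically getting the levelwise internal hom into a grouplike groupoid to be a grouplike groupoid. The point needing care is that $D^nX_k$ is an arbitrary symmetric monoidal $\infty$-category, and the hom lands in lax-natural-transformation categories. These are groupoids only because all cells in the target are invertible, which forces the lax naturality cells to be invertible as well.
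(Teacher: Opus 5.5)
Your proposal is correct and follows essentially the paper's argument. Step 1 is an adjoint repackaging of the paper's check that $[Y,X]$ is local for $\Sigma^{\infty-i}_+(C_{j+1}\to C_j)$ after reducing $Y$ to shifted suspension spectra. Steps 2 and 3 match the paper's argument that $L(Z\otimes Y)\simeq LZ\otimes Y$, taken with $Z=\FF$, together with its appeal to monoidality of $\Sigma^\infty_{\Sp}$. You additionally spell out the uniqueness via idempotence of $\Sp$ over $\sS$ in $\PrL$.

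The one thing to tighten is the levelwise groupoid claim in Step 1. It is exactly the fact that $\sS\subset\infty\Cat$ is an exponential ideal for the Gray tensor product \cite[\S 3.2]{campionGrayTensorProduct2023}, which the paper also relies on. You should cite it rather than justify it by the heuristic about lax naturality cells being forced to be invertible.
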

    \begin{proof}
        Let $X\in \Sp$ and $Y\in \CatSp$. To show that $[Y, X]$ is a spectrum, it suffices to check that it is local for $f_{i, j}: \Sigma^{\infty-i}_+ (C_{j+1}\to C_j)$ for any $i, j\geq 0$. As $\Sp\subset\CatSp$ is closed under limits, we may assume that $Y$ is of the form $\Sigma^{\infty-k}_+ Y'$, so $Y\otimes f_{i, j}$ is of the form $\Sigma^{\infty-i-k}_+ D^i(Y')\otimes (C_{j+1}\to C_j)$, which is an $L$-equivalence because $\sS\subset\infty\Cat$ is an exponential ideal \cite[\S 3.2]{campionGrayTensorProduct2023}. Similarly, $\llbracket Y, X\rrbracket$ is a spectrum, so $\Sp$ is an exponential ideal. It follows that there exists a unique monoidal structure on $\Sp$ promoting $L$ to a monoidal localization. Since $\Sigma^\infty_{\Sp}: \sS_\ast\hookrightarrow \infty\Cat_\ast\xrightarrow{\Sigma^\infty}\CatSp\xrightarrow{L}\Sp$ is monoidal, the induced monoidal structure on $\Sp$ is the usual tensor product. Now observe that $\Hom(L(Z\otimes Y), X)\simeq \Hom_{\CatSp}(Z, [Y, X])\simeq \Hom_{\Sp}(LZ\otimes Y, X)$ for $X\in \Sp$, so $L(Z\otimes Y)\simeq LZ\otimes Y$. Plugging in $Z = \FF$, we get $LY\simeq \SS\otimes Y$. 
    \end{proof}
    \begin{warning}
        One should not expect $n\CatSp\subset \CatSp$ to be an exponential ideal unless $n=\pm\infty$. In fact, since $[\FF[-n], X]\simeq \FF[n]\otimes X$, this internal hom increases the category level if $n$ is negative.
        However, it is still a Gray-bimodule because it is closed under cotensoring by $\infty\Cat$, whose image under $\Sigma_+^\infty$ is connective.
    \end{warning}

    \subsection{Additivity on connectivity}
    Recall that we denoted the essential image of the embedding $\rB^{\infty-n}: \CMon(\infty\Cat)\hookrightarrow \CatSp$ by $\CatSp^{n\mhy\cn}$ and said that its objects are \emph{$n$-connective}.
    The following proposition shows that the tensor product respects connectivity; in particular, it restricts to what should be called the lax tensor product of symmetric monoidal $\infty$-categories.
    \begin{proposition}
        The inclusion $\rB^\infty: \CMon(\infty\Cat)\xrightarrow{\sim}\CatSp^\cn\subset \CatSp$ exhibits $\CMon(\infty\Cat)$ as a monoidal subcategory. 
        This monoidal structure is characterized by the fact that the functor $\Free_{\EE_\infty}: \infty\Cat\to \CMon(\infty\Cat)$ promotes to a monoidal functor with respect to the Gray tensor product of the domain. 
        Moreover, the image of $\CatSp^{m\mhy\cn}\otimes \CatSp^{n\mhy\cn}\subset \CatSp\otimes \CatSp$ under the tensor product is $\CatSp^{(m+n)\mhy\cn}$. 
    \end{proposition}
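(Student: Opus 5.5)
The plan is to first show that $\CatSp^\cn$ is closed under $\otimes$ and contains the unit, and then obtain everything else from generators. The unit is $\FF=\rB^\infty\Fin^{\simeq}$, which is connective. By \cref{CatSp_is_limit_of_SMC}, $\CatSp^\cn\simeq\CMon(\infty\Cat)$ is the colimit-closure in $\CatSp$ of the suspension spectra $\Sigma^\infty_+ X = \rB^\infty\Free_{\EE_\infty}X$, $X\in\infty\Cat$. Indeed, $\rB^\infty$ is a colimit-preserving fully faithful left adjoint, and $\CMon(\infty\Cat)$ is generated under colimits by free objects on cubes, using the bar resolution. Since $\otimes$ preserves colimits separately in each variable, closure under $\otimes$ reduces to generators. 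Because $\Sigma^\infty_+:\infty\Cat\to\CatSp$ is monoidal for the Gray product (\cref{thm_tensor_product_of_catsp}), we get
\[\Sigma^\infty_+ X\otimes\Sigma^\infty_+ Y\simeq\Sigma^\infty_+(X\otimes Y),\]
which is again connective. So $\CatSp^\cn$ is a monoidal subcategory, and we transport the structure along $\rB^\infty$.

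For the characterization, the composite $\infty\Cat\xrightarrow{\Free_{\EE_\infty}}\CMon(\infty\Cat)\xrightarrow{\rB^\infty}\CatSp$ equals $\Sigma^\infty_+$, which is monoidal. Since $\rB^\infty$ is fully faithful and monoidal onto its image, $\Free_{\EE_\infty}$ inherits a monoidal structure. Uniqueness then follows as in \cref{Gray_tensor_cubes_compact}. The presentable monoidal structure on $\CMon(\infty\Cat)\simeq\CMon\otimes\infty\Cat$ is determined by its values on the image of the generators $\Free_{\EE_\infty}(\cube^n)$, because $\Free_{\EE_\infty}$ is a localization-type left adjoint in $\PrL$ and compatibility with $\infty\Cat^\otimes$ forces it. More precisely, a biclosed structure making $\Free_{\EE_\infty}$ monoidal corresponds to $\CMon(\infty\Cat)$ being the base change $\CMon\otimes\infty\Cat^\otimes$, with $\CMon$ idempotent in $\PrL$.

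For connectivity additivity, $\CatSp^{n\mhy\cn}=\CatSp^\cn[n]$ is generated under colimits by $\Sigma^{\infty-(-n)}_+X=\FF[n]\otimes\Sigma^\infty_+X$, using \cref{left_action_of_S1_on_catsp} (left action of $\FF[1]$ is the shift). Then
\[\FF[m]\otimes\Sigma^\infty_+X\otimes\FF[n]\otimes\Sigma^\infty_+Y\simeq \FF[m+n]\otimes\Sigma^\infty_+(X^{\circ\,n}\otimes Y)\]
by the half-central structure $Z\otimes\FF[1]\simeq\Sigma Z^\circ$. Here $X^{\circ\, n}$ denotes $D^n X$, and since $D$ is an automorphism it preserves suspension spectra. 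This shows the image lies in $\CatSp^{(m+n)\mhy\cn}$. Surjectivity follows from $\FF[m]\otimes(\blank)$ being an equivalence $\CatSp^{n\mhy\cn}\to\CatSp^{(m+n)\mhy\cn}$, together with $\FF[m]\in\CatSp^{m\mhy\cn}$.

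I expect the main obstacle to be the uniqueness clause of the characterization. One has to make sure that a monoidal structure on $\CMon(\infty\Cat)$ is pinned down by monoidality of $\Free_{\EE_\infty}$ alone, and not only by being restricted from $\CatSp$. I would handle this by identifying $\CMon(\infty\Cat)\simeq\CMon\otimes\infty\Cat$ as an idempotent base change in $\Alg(\PrL)$, so that $\infty\Cat^\otimes\to\CMon(\infty\Cat)$ is a monoidal localization in the sense of \cite[Proposition 2.2.1.9]{LurieHA}. For this, one checks that the $\Free_{\EE_\infty}$-equivalences are stable under Gray-tensoring, which again reduces to cubes.
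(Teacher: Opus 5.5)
Your argument for the first claim is the paper's. $\FF$ is connective, $\CatSp^\cn$ is closed under colimits as the essential image of the fully faithful left adjoint $\rB^\infty$, and closure under $\otimes$ reduces to $\Sigma^\infty_+X\otimes\Sigma^\infty_+Y\simeq\Sigma^\infty_+(X\otimes Y)$. Your proof of the connectivity statement is also correct: generators $\FF[n]\otimes\Sigma^\infty_+X$, moving $\FF[n]$ to the left via $Z\otimes\FF[1]\simeq\Sigma Z^\circ$, and surjectivity from the equivalence $\FF[m]\otimes(\blank)$. The paper does not spell that part out, and your argument is the connective analogue of \cref{tensor_product_adds_category_levels}.

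The step that would not work is the plan in your last paragraph. $\Free_{\EE_\infty}\colon\infty\Cat\to\CMon(\infty\Cat)$ is not a localization, because its right adjoint, the forgetful functor, is not fully faithful. It is in fact conservative (the arity-one summand of $\Free_{\EE_\infty}(f)$ is $f$). So the ``$\Free_{\EE_\infty}$-equivalences'' are just the equivalences, and \cite[Proposition 2.2.1.9]{LurieHA} gives nothing.

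For a similar reason, the density argument of \cref{Gray_tensor_cubes_compact} does not transfer verbatim. The objects $\Free_{\EE_\infty}(\cube^n)$ span a category with many more morphisms than $\Cube$, and monoidality of $\Free_{\EE_\infty}$ alone does not dictate how to tensor those morphisms.

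The localization that does the work is the one in your preceding sentence, namely $\CMon\otimes(\blank)$ on $\PrL$. Since $\CMon$ is an idempotent commutative algebra in $\PrL$, presentably monoidal categories with semiadditive underlying category form a reflective subcategory of $\Alg(\PrL)$, with reflector $\CMon\otimes(\blank)$.

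Now take any monoidal structure $\otimes'$ on $\CMon(\infty\Cat)$ that preserves colimits in each variable (e.g.\ a biclosed one) and for which $\Free_{\EE_\infty}$ is monoidal. It receives a monoidal left adjoint $\CMon\otimes\infty\Cat^\otimes\to(\CMon(\infty\Cat),\otimes')$ under $\infty\Cat^\otimes$. Its underlying functor restricts along $\Free_{\EE_\infty}$ to $\Free_{\EE_\infty}$. Colimit-preserving functors from $\CMon\otimes\infty\Cat\simeq\CMon(\infty\Cat)$ into a semiadditive presentable category are determined by such restrictions, so this functor is the identity. Hence $\otimes'$ agrees with the base-change structure, which is the one restricted from $\CatSp$. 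With that replacement your proof is complete, and more explicit than the paper's ``clear from this computation.''
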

    \begin{proof}
        The tensor unit $\FF$ is connective. 
        To prove the first statement, by \cite[Proposition 2.2.1.1]{LurieHA}, it suffices to check that for any $X, Y\in \CMon(\infty\Cat)$, the tensor product $(\rB^\infty X)\otimes (\rB^\infty Y)$ lies in the image of $\rB^\infty$. 
        Write $X\simeq \colim_i \Free_{\EE_\infty}(\eC_i)$ and $Y\simeq \colim_j \Free_{\EE_\infty}(\eD_j)$. Now we have 
        \begin{align*}
            (\rB^\infty X)\otimes (\rB^\infty Y) &\simeq (\colim_i \Sigma^\infty_+(\eC_i))\otimes (\colim_j \Sigma^\infty_+(\eD_j))\\
            &\simeq \colim_{i, j}(\Sigma_+^\infty(\eC_i)\otimes \Sigma_+^\infty(\eD_j))\simeq \colim_{i, j}\Sigma_+^\infty(\eC_i\otimes \eD_j).
        \end{align*}
        Since $\CatSp^\cn\subset \CatSp$ is closed under colimits, the last colimit stays inside $\CatSp^\cn$. The characterization is also clear from this computation. 
    \end{proof}
    Recall that by \cite{gepnerUniversalityMultiplicativeInfinite2016} $\CMon\in \PrL$ is an idempotent algebra and the associated symmetric monoidal localization $\CMon(\blank) = \CMon\otimes (\blank): \PrL\to \PrL$ universally turns a presentable category into a \emph{semiadditive} presentable category. In particular, it induces
    $\CMon: \CAlg(\PrL)\to \CAlg(\PrL)$ and $\CMon: \Alg(\PrL)\to \Alg(\PrL)$. We denote the symmetric monoidal structure on $\CMon(\infty\Cat, \times)$ by $\oast$ and the monoidal structure on $\CMon(\infty\Cat, \otimes)$ by $\otimes$. The lax monoidal functor $\id: (\infty\Cat, \times)\to (\infty\Cat, \otimes)$ of \cref{remark_lax_monoidal_structure_on_id_from_cart_to_semicart} induces a lax monoidal structure on $\CMon(\infty\Cat)^\oast\to \CMon(\infty\Cat)^\otimes$. 
    \begin{corollary}
        The functor $\rB^\infty$ induces a functor $\Rig_{\EE_1}(\infty\Cat)\to \Alg_{\EE_1}(\CatSp)$. 
    \end{corollary}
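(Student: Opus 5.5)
The plan is to combine the lax monoidal structure on $\CMon(\infty\Cat)^\oast\to \CMon(\infty\Cat)^\otimes$ with the monoidal embedding $\rB^\infty$ from the previous proposition. By definition, $\Rig_{\EE_1}(\infty\Cat)$ is $\Alg_{\EE_1}(\CMon(\infty\Cat)^\oast)$: an $\EE_1$-algebra for the tensor product $\oast$ induced on commutative monoids by the Cartesian product. Lax monoidal functors induce functors on $\EE_1$-algebra categories, and strong monoidal functors are in particular lax. So it suffices to exhibit the displayed functor as a composite of lax monoidal functors
\[\CMon(\infty\Cat)^\oast\xrightarrow{\id}\CMon(\infty\Cat)^\otimes\xrightarrow{\rB^\infty}\CatSp^\otimes,\]
and then apply $\Alg_{\EE_1}(\blank)$.

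First, the lax monoidal structure on the left arrow. \cref{remark_lax_monoidal_structure_on_id_from_cart_to_semicart} gives a lax monoidal functor $\id:(\infty\Cat,\times)\to(\infty\Cat,\otimes)$. We then apply the functor $\CMon(\blank)=\CMon\otimes(\blank)$ on $\PrL$. This functor is symmetric monoidal, so it sends algebras to algebras. The subtle point is that it is only clear that it sends \emph{strong} monoidal left adjoints to monoidal functors.

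To handle lax monoidal functors, I would encode a lax monoidal left adjoint $F:\eC\to\eD$ between presentably monoidal categories as an $(\eC,\eC)$-bimodule structure on $\eD$, or more cleanly as a map of operads over $\PrL$. Concretely, it is a family of multilinear natural maps $F(x_1)\otimes\cdots\otimes F(x_n)\to F(x_1\otimes\cdots\otimes x_n)$. Each such map is a natural transformation between functors $\eC^{\otimes n}\to\eD$ in $\PrL$, and it is compatible with the $\PrL$-enrichment. Applying the $2$-functor $\CMon\otimes(\blank)$ to these $2$-cells preserves them, because $\CMon\otimes(\blank)$ is a $\PrL$-linear $2$-functor. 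Since the statement only asks for the induced functor on algebras, it is enough to produce these structure transformations for each arity coherently. This is the main obstacle. I would handle it by viewing lax monoidal functors as $\Assoc^\otimes$-algebra maps in the $(\infty,2)$-category $\PrL$ and using functoriality of $\CMon\otimes(\blank)$ on lax algebra morphisms. Alternatively, since the text already asserts that this lax monoidal structure is induced, I would cite it directly.

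Second, $\rB^\infty:\CMon(\infty\Cat)^\otimes\to\CatSp^\otimes$ is strong monoidal by the preceding proposition. The monoidal structure on $\CMon(\infty\Cat)$ defined there, by restriction from $\CatSp$, must be identified with $\CMon(\infty\Cat,\otimes)$. Both are characterized by making $\Free_{\EE_\infty}:\infty\Cat^\otimes\to\CMon(\infty\Cat)$ monoidal and by colimit-preservation in each variable, so they agree by the universal property of $\CMon\otimes(\blank)$ as an idempotent algebra. Composing the two arrows and applying $\Alg_{\EE_1}$ then gives the desired functor $\Rig_{\EE_1}(\infty\Cat)\to\Alg_{\EE_1}(\CatSp)$.
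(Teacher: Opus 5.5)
Your proposal is correct and follows the paper's implicit argument: the comparison maps $X\otimes Y\to X\times Y$ make $\id\colon\CMon(\infty\Cat)^\oast\to\CMon(\infty\Cat)^\otimes$ lax monoidal, you compose this with the monoidal embedding $\rB^\infty$ of the preceding proposition, and you pass to $\Alg_{\EE_1}$. The paper only asserts the induced lax structure on $\CMon$ and the identification of the two monoidal structures on $\CMon(\infty\Cat)$, which you justify more carefully. One slip to drop: a merely lax monoidal functor $\eC\to\eD$ does not make $\eD$ an $(\eC,\eC)$-bimodule, since that needs strong monoidality, so keep only the operad-map/lax-algebra-morphism encoding.
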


\chapter{Absolute colimits in categorical spectra}
\label{chapter_ablosute_colimits_in_catsp}
    In this chapter, we begin our study of \emph{stability} in categorical spectra. 
    Understanding stability is clearly important: the category $\CatSp$ should not exist in isolation, but should be put into the larger context of \emph{stable Gray-categories}, just as the category $\Sp$ of spectra is a universal example of stable ($1$-)categories. Recall that stability for categories has many equivalent characterizations:
    \begin{proposition}[{\cite[Chapter 1]{LurieHA}}]\label{characterizations_of_stable_1cat}
        The following conditions on $\eC\in \Cat$ are equivalent:
        \begin{enumerate}
            \item $\eC$ is pointed, has cofibers, and the suspension functor is an equivalence. 
            \item $\eC$ is pointed, has fibers and cofibers, and a triangle is a fiber sequence if and only if it is a cofiber sequence.
            \item $\eC$ has finite limits and colimits, and a commutative square is a pushout if and only if it is a pullback. 
            \item $\eC$ has finite limits and $\ev_{S^0}: \Exc_\ast(\sS^\fin, \eC)\to \eC$
            is an equivalence. Here $\Exc_\ast(\eA, \eB)\subset \Fun(\eA, \eB)$ is the full subcategory of reduced excisive functors, i.e., functors that preserve terminal objects and send pushout squares to pullback squares. 
        \end{enumerate}
        If these conditions are satisfied, we say $\eC$ is a stable $1$-category. Moreover, if $\eC$ is presentable, stability is equivalent to having a (necessarily unique) module structure over $\Sp^\otimes$. 
    \end{proposition}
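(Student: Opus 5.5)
The plan is to arrange the four conditions in the cycle $(1)\Rightarrow(2)\Rightarrow(3)\Rightarrow(4)\Rightarrow(1)$. This is Lurie's argument from \cite[Chapter 1]{LurieHA}, so I will only sketch the parts where the $1$-categorical input is genuinely used.

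For $(1)\Rightarrow(2)$ I would first show that $\eC$ has fibers. Given $f\colon X\to Y$, I form $\Omega Y\to F\to X$ by desuspending the cofiber of $\Sigma^{-1}f$, and then check the universal property via the long exact sequences of mapping groupoids. Those long exact sequences come from $\Map(\Sigma^{-1}\blank,\blank)\simeq\Map(\blank,\Sigma\blank)$ and from the fact that $\Map(\blank,Z)$ sends cofiber sequences to fiber sequences of pointed groupoids. A five-lemma argument on homotopy groups then identifies fiber sequences with cofiber sequences. Because $\Sigma$ is invertible, every mapping groupoid is an infinite loop space, so these sequences are exact even at $\pi_0$. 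This is the step I expect to be the main obstacle, since it is where we need abelian group structures on $\pi_0$ rather than only pointed sets.

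For $(2)\Rightarrow(3)$ I would first note that finite limits and colimits exist. Pushouts are built as the cofiber of $Y\to X\oplus Z$, where the sum is constructed as the cofiber of the zero map $X\to \Sigma Z$, using the fact that a zero map's fiber sequence splits. Then, given a square, I compare the induced maps on horizontal cofibers and fibers. Pasting of pushouts and pullbacks shows that a square is a pushout if and only if the induced map on cofibers of rows is an equivalence, and dually for pullbacks and fibers. By (2), fibers agree with shifted cofibers, so the two conditions coincide.

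For $(3)\Rightarrow(4)$, I use that $\sS^\fin_\ast$ is generated under pushouts by $S^0$ and the point. Right Kan extension along $\{S^0\}\hookrightarrow\sS^\fin_\ast$, restricted to reduced functors, gives the inverse: it sends $X$ to $K\mapsto \lim_K X$, the cotensor with the finite space $K$. This is excisive, because the cotensor carries pushouts of finite spaces to pullbacks. Uniqueness holds because a reduced excisive functor is determined by its value on $S^0$, by induction over cells. For $(4)\Rightarrow(1)$, $\Exc_\ast(\sS^\fin,\eC)$ is pointed and admits cofibers computed pointwise followed by excisive reflection. Precomposition with $\Sigma$ on $\sS^\fin_\ast$ acts invertibly there, because $F(\Sigma K)\simeq\Omega^{-1}$-shifts: excisiveness gives $F(K)\simeq\Omega F(\Sigma K)$, with inverse $F\mapsto F\circ\Omega$-shift. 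This provides the suspension inverse.

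For the presentable statement, a stable presentable $\eC$ satisfies $\eC\simeq\Sp(\eC)\simeq\Sp\otimes\eC$, since $\Sp\otimes\eC$ is the limit of $\Omega$ on $\eC_\ast=\eC$. Conversely, any $\Sp$-module is a retract of $\Sp\otimes\eC$, which is stable, and it is therefore stable itself. The same idempotence argument as for $\Sigma^\infty_+$ in the introduction shows that the module structure is unique.
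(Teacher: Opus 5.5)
Your cycle breaks at $(3)\Rightarrow(4)$. An object of $\Exc_\ast(\sS^\fin_\ast,\eC)$ is a \emph{covariant} functor, but $K\mapsto \lim_K X=X^K$ is contravariant in $K$. It also has the wrong values. Excision applied to $S^1\simeq \ast\amalg_{S^0}\ast$ forces $F(S^0)\simeq \Omega F(S^1)$, so $F(S^1)$ must be a delooping of $X$, whereas the pointed cotensor gives $\Omega X$. Right Kan extension along $\{S^0\}\hookrightarrow \sS^\fin_\ast$ does not produce this functor either. The correct inverse is the left Kan extension $K\mapsto K\otimes X$, the pointed tensor built from finite colimits in $\eC$. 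It preserves the zero object and pushouts, and it is excisive precisely because pushouts in $\eC$ are pullbacks by (3). Your uniqueness claim also needs (3). For a general target, a reduced excisive functor is not determined by its value on $S^0$: for $\eC=\sS_\ast$, that value is $\Omega^\infty$ of a spectrum. Under (3), reduced excisive functors are exactly the finite-colimit-preserving ones. Then $\ev_{S^0}$ is an equivalence by the universal property of $\sS^\fin_\ast$ as the free pointed finite-colimit completion of $S^0$.

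The step $(4)\Rightarrow(1)$ has its own gap. Under (4) you only know that $\eC$ has finite limits, so cofibers in $\Exc_\ast(\sS^\fin_\ast,\eC)$ cannot be computed pointwise. No excisive reflection is available either, since it needs sequential colimits. What excision does give is that pointwise $\Omega$ on $\Exc_\ast(\sS^\fin_\ast,\eC)\simeq\eC$ is an equivalence with inverse $F\mapsto F\circ\Sigma$. There is no $\Omega$ on finite spaces to precompose with. So what (4) directly yields is the dual of (1): $\eC$ is pointed, has finite limits, and $\Omega$ is invertible. To get cofibers you must run the dual of your $(1)\Rightarrow(2)$ argument, setting $\cof(f)\coloneqq\fib(\Omega^{-1}f)$. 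This is how Lurie, whom the paper simply cites, organizes things: $\Sp(\eC)\simeq\lim(\cdots\xrightarrow{\Omega}\eC\xrightarrow{\Omega}\eC)$ is stable for every pointed $\eC$ with finite limits, so invertibility of $\Omega$ (or dually of $\Sigma$) forces stability.

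That puts the real weight on $(1)\Rightarrow(2)$, and the facts you cite there do not carry it. Exactness of $\Map(\blank,Z)$ on cofiber sequences is exactness in the contravariant variable; it says nothing about $\Map(W,F)\to\Map(W,X)\to\Map(W,Y)$. The missing input is functoriality of cofibers. Given $g\colon W\to X$ with a nullhomotopy of $f\circ g$, apply $\cof$ to the resulting map $(W\to 0)\to(X\xrightarrow{f}Y)$ in $\Fun(\Delta^1,\eC)$. This gives $\Sigma W\to\cof(f)\simeq\Sigma F$ lifting $\Sigma g$ up to sign, and you then desuspend. With this and its rotations, your five-lemma argument goes through.

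Finally, in $(2)\Rightarrow(3)$, for a square with rows $A\to B$ and $C\to D$, the pasting law only gives one direction: a pushout induces an equivalence $\cof(A\to B)\to\cof(C\to D)$. For the converse, compare with the actual pushout $P=B\amalg_A C$. Pasting identifies $\cof(P\to D)$ with the cofiber of that equivalence, so it vanishes. By (2) the cofiber sequence $P\to D\to 0$ is then a fiber sequence, so $P\simeq D$. Also, the cofiber of the zero map $X\to\Sigma Z$ is $\Sigma X\oplus\Sigma Z$; you want the cofiber of $0\colon\Omega X\to Z$. The presentable addendum is fine.
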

    Knowing that all of these are equivalent is extremely useful; for instance, one checks the minimalistic condition (1) and knows immediately that it has finite limits and colimits, and that they preserve both limits and colimits. 
    We wish to put $\CatSp$ into a similar context. This can be separated into two interrelated questions:
    \begin{enumerate}
        \item What is a structure on a category $\eC$ that puts it on the same stage as the category $\CatSp$ of categorical spectra, even to discuss stability?
        \item What are the appropriate analogs of the conditions listed above? Which generalize well and which do not?
    \end{enumerate}
    In the discussion of (2), it seems reasonable that pointedness is kept as is and suspension is replaced by its directed analog. 
    This indicates that for (1), the underlying $1$-category is not enough; we do not know any way to recover the suspension functor out of the bare $1$-category structure on $\infty\Cat_\ast$ or $\CatSp$. 
    We used either the fixed-point property under enrichment or the Gray-module structure, i.e., $\Sigma = \vS^1\owedge(\blank)$.
    The former is not too common compared to the latter, so we choose the latter; at least it is more convenient in the presentable situation. 
    Also, note that we forego the monoidal structure because we would like to think of stability as a kind of linearity.
    So, in the presentable context, either left modules or bimodules over $\infty\Cat^\otimes$ seem to be a good option\footnote{In the bimodule case, we might not want $\vS^1$ to act randomly from the right, compared to the left action. This indicates that we should ask for a $\ast$-bimodule-like compatibility. In terms of Gray-categories, this should imply that left-hom and right-hom enrichments (one is oplax) pass to each other by an appropriate duality. We will not discuss this point further here.}.
    For general categories, a reasonable guess for the relevant structure is that of Gray-categories/algebroids (i.e., $\infty\Cat^\otimes$-enriched categories); the two are related via the functor $\theta'_{\eA}: \LMod_{\eA}(\PrL)\to \widehat{\eA\mhy\Cat}$ of \cite[\S 4.2]{stefanichHigherQuasicoherentSheaves2021} (when $\eA=\sS$, it is the forgetful functor $\PrL\hookrightarrow\widehat{\Cat}$).
    
    Let us take this as an answer to (1) and move on to (2); from \cref{thm_tensor_product_of_catsp}, we know that $\CatSp$ is the universal module where $\vS^1$ acts invertibly, and a presentable $\infty\Cat^\otimes$-module admits a (necessarily unique) $\CatSp$-module structure if and only if the $\vS^1$-action is invertible.
    Moreover, we have the notion of tensoring for a general enriched category (which is a particular instance of \emph{weighted colimits}), so the suspension is (if it exists) a well-defined operation for any pointed Gray-category.
    Summarizing the argument, we make the following preliminary definition:
    \begin{predefinition}
        A Gray-category $\eC$ is \emph{stable} if it has a zero object and ``finite weighted colimits,'' and the suspension functor $\Sigma: \eC\to \eC$ is an equivalence.
    \end{predefinition}
    This has two problems. The first is that we do not know what finiteness is appropriate for the weight; our prototypical example $\CatSp$ has all weighted colimits, so it will not help us much. 
    The second problem is that this is not a very useful form of stability. Even for stable $1$-categories, the invertibility of loop-suspension does not obviously imply more useful characterizations, and the most standard argument fails in our Gray case (mainly because of the failure of the pasting law for directed pullbacks and pushouts).
    A fundamental problem with \cref{characterizations_of_stable_1cat} is that it depends on choices of a class of diagrams that happen to be both cone and cocone shapes, which is not robust enough to generalize.
    In fact, the naive analogs of (3)(4) (replacing (co)fibers by lax (co)fibers and pushouts/pullbacks by directed pushouts/pullbacks) are false in $\CatSp$, so the characterization of stability using diagrams that happen to be both limits and colimits should be regarded as a happy $1$-categorical accident. 

    This suggests that, instead of searching for a minimalistic definition, we should find \emph{all} exactness properties enjoyed by $\CatSp$. For stable $1$-categories, this question is asked in \cite{MOpostAllExactnessStableCategories}. 
    We take the viewpoint that stability allows us to recognize a finite colimit as a finite limit over another ($\Sp$-weighted) diagram: if $J$ is a finite category, the colimit functor $\Fun(J, \Sp)\to \Sp$ preserves both limits and colimits, so it has both left and right adjoints. This left adjoint is necessarily of the form $X\mapsto W\otimes X$ for a diagram $W: J\to \Sp$, so the colimit functor is also the limit functor weighted by $W$. In this case, we say the colimit is \emph{absolute}; a $J$-indexed colimit is preserved by any $\Sp$-enriched functor between $\Sp$-enriched categories. 
    We may treat limits and colimits more symmetrically by considering \emph{weighted colimits}. Tensoring by $X\in \Sp$ is an example, in which case absoluteness is equivalent to the dualizability of $X$. The abundance of dualizable objects, i.e., \emph{Spanier-Whitehead duality} is a key motivation for introducing spectra. 
    
    This circle of ideas is roughly summarized as follows: we may approach the stability by studying absolute colimits in $\CatSp$.
    The appropriate finiteness condition for the weight should be figured out along the way, but we will not reach that point in this thesis.
    Instead, we start by collecting basic examples of absolute colimits.
    In \cref{section_absoluteness_of_directed_pushouts}, we will show the absoluteness of directed pushouts. 
    In this case, the weight is simple and strict enough to allow explicit diagrammatic calculation. We will spend the first section spelling out basic knowledge and examples of directed pushouts. 
    We regard directed pushouts as a bootstrap for constructing other absolute colimits. We will already observe some interesting consequences, but the full strength of this result is yet to be explored.

    \section{Some weighted (co)limits in Gray-categories}
    \subsection{Bimodules and weighted colimits}
    Let $\eA\in \Alg(\PrL)$ be a presentably monoidal category. Our main examples of interest are $\eA= \infty\Cat^\otimes$ and $\CatSp^\otimes$.
    Here we give a minimalistic review of weighted colimits of $\eA$-enriched categories, with weights indexed only over unenriched categories. See e.g., \cite[Chapter 5]{stefanichHigherQuasicoherentSheaves2021} and \cite{hinichYonedaLemmaEnriched2020} for a more general discussion (the latter does not mention weighted colimits, but treats the basic setup of Morita theory over a noncommutative base). See also \cite{streetAbsoluteColimitsEnriched1983} for the original treatment of absolute colimits.
    Let $I, J\in \Cat$ be small categories. 
    We regard $\PSh_{\eA}(I) = \Fun(I^\op, \eA) \simeq \PSh(I)\otimes_{\sS} \eA$ as a $\eA$-bimodule in $\PrL$. 
    Let $\RMod_{\eA}^\mathrm{free}(\PrL)\subset \RMod_{\eA}(\PrL)$ be the full sub-$2$-category spanned by the presheaf categories. The hom category is given by 
    \[\LFun_{\eA}(\PSh_{\eA}(I), \PSh_{\eA}(J))\simeq \LFun(\PSh(I), \PSh_{\eA}(J))\simeq \Fun(I, \PSh_{\eA}(J))\simeq \Fun(I\times J^\op, \eA).\] Notice that this functor category admits a left $\eA$-module structure induced by that of $\PSh_{\eA}(J)$, i.e., by pointwise tensoring on the value. 
    \begin{definition}
        An $(I, J)$-bimodule, or a profunctor $I\slashedrightarrow J$, is a functor $W: I\times J^\op\to \eA$. Let $\Prof_{\eA}\coloneqq \RMod_{\eA}^\mathrm{free}(\PrL)$ denote the $2$-category of (unenriched) categories and profunctors. 
    \end{definition}
    We also have an equivalence 
    \[\Fun(I\times J^\op, \eA)\simeq \Fun(J^\op, \PSh_{\eA}(I^\op))\simeq \LFun_{\eA}(\PSh_{\eA}(J^\op), \PSh_{\eA}(I^\op)).\] We warn that $\PSh_{\eA}(I)\leftrightarrow \PSh_{\eA}(I^\op)$ induces an equivalence $\Prof\simeq \Prof^\op$, so to avoid confusion, we discuss adjunctions in this category only after embedding $\Prof$ into $\RMod_{\eA}(\PrL)$.

    Let $\eC$ be a left $\eA$-module. Consider the functor
    \[I\times I^\op\times J\times \Fun(J, \eC)\xrightarrow{I(\blank, \blank)\times \ev} \sS\times \eC\to \eA\times \eC\to \eC.\]
    By currying, we have a functor $I^\op\times J\to \Fun(\Fun(J, \eC), \Fun(I, \eC))$, which factors through $\LFun(\Fun(J, \eC), \Fun(I, \eC))$. 
    When $\eC$ is an $\eA$-bimodule (resp.\ $\eA$-algebra), it moreover factors through the hom category $\LFun_{\eA}(\Fun(J, \eC), \Fun(I, \eC))$ in $\RMod_{\eA}(\PrL)$ (resp.\ $\LFun_{\eC}(\Fun(J, \eC), \Fun(I, \eC))$ in $\RMod_{\eC}(\PrL)$). 
    In any of these cases, the left $\eA$-module structure on $\eC$ induces one on $\LFun(\Fun(J, \eC), \Fun(I, \eC))$ by pointwise tensoring on the value, so we may uniquely extend the functor to a functor of left $\eA$-modules\footnote{
        Notice the close analogy with matrix calculus: a profunctor corresponds to a matrix $W_{ij}: I\times J\to A$, where $A$ is a base ring and $I$, $J$ are sets. For a left $A$-module $M$, we can define multiplication $M^{\oplus J}\to M^{\oplus I}$ by a matrix $W$, by extending the action of the elementary matrix $I\times J\to \Hom(M^{\oplus I}, M^{\oplus J}); (i, j)\mapsto \iota_j\circ \pr_i$ to a left $A$-module homomorphism. 
    }:
    \[\Fun(I\times J^\op, \eA)\to \LFun_{\eA}(\Fun(J, \eC), \Fun(I, \eC)). \]
    This assignment is the above-mentioned equivalence when $\eC=\eA$. 
    The image of a bimodule $W: I\slashedrightarrow J$ is denoted by $W\otimes (\blank)$ or $\colim^W(\blank)$ and called the \emph{colimit weighted by $W$}. 
    This functor adimits a right adjoint $\Fun(I, \eC)\to \Fun(J, \eC)$, which we denote by $[W, \blank]$ or $\lim^W$ and call the \emph{limit weighted by $W$}. 
    By adjunction and the Yoneda lemma (we only need it for plain $(\infty, 1)$-categories), this is a unique left $\eA$-module functor $\Fun(I\times J^\op, \eA)\to \RFun(\Fun(I, \eC), \Fun(J, \eC))^\op$ extending $(i, j)\mapsto [J(j, \blank), \ev_{i}(\blank)]: I\times J\to \RFun(\Fun(I, \eC), \Fun(J, \eC))$. Here we give the codomain the transferred left $\eA$-module structure (by adjunction, an object $a$ of $\eA$ acts by precomposing with the pointwise cotensor $[a, \blank]$ on $\Fun(I, \eC)$).

    For colimit, we often take $I=\ast$, so $W: J^\op\to \eA$ is a presheaf. In this case, $\colim^W: \Fun(J^\op, \eA)\to \LFun(\Fun(J, \eC),\eC)$ is the unique left $\eA$-module morphism with the natural equivalence $\yo(j)\otimes F \simeq F(j)$; this is the so-called \emph{Yoneda reduction}. \cite[Example 5.5.18]{stefanichHigherQuasicoherentSheaves2021} implies that this definition of weighted colimits agrees with that of Stefanich. In this case, the weighted limit $\eC\to \Fun(J, \eC)$ is simply the pointwise cotensor $X\mapsto (j\mapsto [W(j), X])$. 
    The other extreme case $J=\ast$ switches the roles of limits and colimits: the functor $\Fun(I, \eA)\to \LFun_{\eA}(\eC, \Fun(I, \eC))$ takes $W$ to $X\mapsto (i\mapsto W(i)\otimes X)$. The weighted limit $[W, \blank]: \Fun(I, \eC)\to \eC$ is characterized by the Yoneda reduction $[I(i, \blank), F] = F(i)$ and the fact that it turns colimits into limits and tensors into cotensors ($[a\otimes W, F]\simeq [W, [a, F]]$).
    In particular, when $I = J = \ast$, the weighted limit and colimit are cotensoring and tensoring by the left action of $\eA$ on $\eC$. 

    For the study of stability, we are interested in the case where a weighted colimit $W\otimes (\blank)$ is also a weighted limit $[W', \blank]$:
    \begin{definition}
        A weight $W: I \slashedrightarrow J$ is \emph{absolute} if the weighted colimit functor $W\otimes(\blank): \Fun(J, \eA)\to \Fun(I, \eA)$ admits a left adjoint in $\RMod_{\eA}(\PrL)$. In this case, the left adjoint is given by $W^L\otimes(\blank)$ for some $W^L: J\slashedrightarrow I$, which we call the \emph{left dual} of $W$. 
    \end{definition}
    In this case, we have an equivalence $W\otimes(\blank)\simeq [W^L, \blank]$. 
    \begin{remark}
        Enriched categories and profunctors are algebras and bimodules in the $2$-category spanned by $\Fun(X, \eA)$ for $X\in \sS$, or the category of enriched quivers in \cite{hinichYonedaLemmaEnriched2020}. In this context, the absoluteness of a weight is equivalent to left dualizability as a bimodule. 
    \end{remark}
    \begin{example}
        A profunctor $\ast\slashedrightarrow \ast$ is given by an object $a\in \eA$. This weight is absolute precisely when $a$ is left dualizable. 
    \end{example}    
    
    \subsection{Directed pushouts and pullbacks}
    The goal of this section is to discuss an example of weighted (co)limits of particular interest: directed pushouts and directed pullbacks\footnote{The traditional names are cocomma and comma constructions. Other popular names are lax pushouts and lax pullbacks. The latter is somewhat more descriptive, but it risks conflicting with the general notion of (fully) lax limits and colimits. Lurie uses the term oriented fiber product for our directed pullback in \cite[\href{https://kerodon.net/tag/01KE}{Subsection 01KE}]{kerodon}.}. We will also address some subtleties around Gray-categories as we encounter them. Let us start with a minimalistic working definition of directed pushouts and pullbacks:
    \begin{definition}
        Let $\eC$ be a presentable category with an action map $\otimes: \Cat\otimes \eC\to \eC$ in $\PrL$, and let $[X, \blank]: \eC\to \eC$ denote the right adjoint to $X\otimes(\blank)$.  
        The \emph{directed pushout} of a span $B\leftarrow A\to C$, denoted by $B\ramalg_A C$, is the colimit of the following diagram in $\eC$:
        \[\begin{tikzcd}[column sep=small]
        &A\ar[rd, "0"]\ar[ld]&&A\ar[ld, "1"']\ar[rd]& \\
        B&&\cube^1\otimes A&& C
        \end{tikzcd}\]
        Dually, the \emph{directed pullback} of a cospan $B\to D\leftarrow C$, denoted by $B\overrightarrow{\times}_D C$, is the limit of the following diagram in $\eC$:
        \[\begin{tikzcd}[column sep=small]
            B\ar[rd] && {[\cube^1, D]}\ar[ld, "\ev_0"']\ar[rd, "\ev_1"]&& C\ar[ld] \\
            &D&&D&
        \end{tikzcd}\]
    \end{definition}
    A slightly more sophisticated definition is given by weighted (co)limits. Note that the directed pushout and pullback naturally fit into a cospan $B\to B\ramalg_A C\leftarrow C$ and a span $B\leftarrow B\overrightarrow{\times}_D C \to C$.
    Let $\eC\in\LMod_{\infty\Cat^\otimes}(\PrL)$. When $J$ is a $1$-category, $\Fun(J, \eC)\in \PrL$ means the functor ($1$-)category to the underlying category $\eC\in\PrL$. Let $J = \Lambda^2_0 = \{1\leftarrow 0\rightarrow 2\}$ be the walking cospan category and let $W_{\bullet\bullet}: J^\op\times J^\op\to \Cat$ be the weight given by the commutative diagram
    \[\begin{tikzcd}
        \cube^0 \ar[r, "="]\ar[d, "="] & \cube^0 \ar[d, "0"] & \emptyset \ar[d]\ar[l] \\
        \cube^0 \ar[r, "0"] & \cube^1 & \cube^0\ar[l, "1"] \\
        \emptyset \ar[u] \ar[r] & \cube^0 \ar[u, "1"] & \cube^0 \ar[l, "="] \ar[u, "="]
    \end{tikzcd}\]
    Note that $\cube^0\xrightarrow{0}\cube^1\xleftarrow{1}\cube^0$ is canonically expressed as the colimit of the following diagram of presheaves over $\Lambda^2_0$:
    \[\cube^0\otimes \yo(1)\leftarrow \cube^0\otimes \yo(0)\xrightarrow{0} \cube^1\otimes \yo(0)\xleftarrow{1}\cube^0\otimes \yo(0)\to \cube^0\otimes \yo(2).\] 
    As $W\mapsto \colim^W(\blank)$ preserves colimits, by Yoneda reduction, one sees that this weight $W_{\bullet\bullet}$ induces the following adjunction between the categories of spans and cospans:
    \[\begin{tikzcd}
        \ramalg: \Fun(J, \eC) \arrow[r, shift left = 1ex, "\colim^W"]
        & \Fun(J^\op, \eC) \arrow[l, shift left = .5ex, "\lim^W"]
        \arrow[l, phantom, shift right = .2ex, "\scriptscriptstyle\boldsymbol{\bot}"] : \overrightarrow{\times}
    \end{tikzcd}\]
    \begin{example}
        Directed pushouts and directed pullbacks specialize to many important constructions.
        \begin{enumerate}
            \item The \emph{suspension} functor $\Sigma: \eC\to \eC_{\ast\ast}$ is defined by $\Sigma X\coloneqq \ast\ramalg_{X} \ast$. When $\eC=\infty\Cat^\otimes$, it agrees with the original definition of the unreduced suspension thanks to \cref{pushout_formula_unreduced_suspension}. When $\eC= \infty\Cat_\ast^\owedge$ (where $X\in\infty\Cat$ acts by $X_+\owedge(\blank)$), we have $\eC = \eC_{\ast\ast}$ and recover the reduced suspension by \cref{suspension_is_tensor_S1} and 
            \[\vS^1\owedge X\simeq \colim(0\leftarrow S^0\xrightarrow{0}\cube^1_+\xleftarrow{1}S^0\rightarrow 0)\owedge X\simeq \colim(0\leftarrow X\xrightarrow{0}\cube^1_+\owedge X\xleftarrow{1} X\to 0).\]
            \item The \emph{hom} functor $\eC_{\ast\ast}\to \eC$ is defined by $(X; x_0, x_1)\mapsto X(x_0, x_1)\coloneqq \{x_0\}\overrightarrow{\times}_X \{x_1\}$. Again, by \cref{pullback_formula_for_hom_category} it recovers the self-enrichment of $\infty\Cat$. 
            \item The \emph{loop} functor $\Omega: \eC_{\ast}\to \eC_{\ast}$ is $(X, x)\mapsto \Omega (X, x) \coloneqq X(x, x)$. It is equivalent to the pointed internal hom $[\vS^1, \blank]$ and recovers the loop functors on $\infty\Cat_\ast$ and $\CatSp$. 
            \item The \emph{lax cofiber} (resp.\ \emph{oplax cofiber}) of $f: X\to Y$ in $\eC$ is $\rcof f \coloneqq \ast\ramalg_X Y$ (resp.\ $\lcof f \coloneqq Y\ramalg_X \ast$).
            \item Dually, the \emph{lax fiber} (resp.\ \emph{oplax fiber}) of a morphism $f: X\to Y\in \eC$ over $\ast\xrightarrow{y} Y$ is $\rfib f\coloneqq \{y\}\overrightarrow{\times}_Y X$ (resp.\ $\lfib f\coloneqq X\overrightarrow{\times}_Y \{y\}$). For $X\in\eC_\ast$, we let $\rpath(X) = \rfib(X\xrightarrow{\id} X)$ and $\lpath(X) = \lfib(X\xrightarrow{\id} X)$.
            \item Additionally, we will use the following notation: $\rcone(X)\coloneqq \rcof(X\xrightarrow{\id} X)$, $\lcone(X)\coloneqq \lcof(X\xrightarrow{\id} X)$, $\cyl(X)\coloneqq X\ramalg_X X\simeq \cube^1\otimes X$, $\rcyl(X\to Y) = X\ramalg_X Y$, and $\lcyl(X\to Y) = Y\ramalg_X X$. 
            Let $I\coloneqq \rcone(S^0)\in \infty\Cat_\ast$; it is the interval $0\to 1$ with the basepoint $0$. Notice that $I^\op = I^\circ = \lcone(S^0)$. If $\eC$ is pointed, we have $\rcone(X)\simeq I\owedge X$ and $\lcone(X)\simeq I^\op\owedge X$. 
        \end{enumerate}
    \end{example}
    For more specific examples, we give a few sample calculations of lax (co)fibers.
    \begin{example}\label{example_sample_calculations_of_lax_fib_cofib}
    \begin{enumerate}
        \item
        $\Sigma^\infty I = \rB^\infty\Free_{\EE_\infty}(I)$ is the symmetric monoidal category freely generated by an object $c$ and a morphism $1\to c$, or, in other words, a single $\EE_0$-algebra. In particular, we have an explicit description $\Free_{\EE_\infty}(I) = \operatorname{Env}(\EE_0) = \Fin^\inj$ (where $\operatorname{Env}$ is the monoidal envelope of \cite[\S 2.2.4]{LurieHA}). The cofiber map $\vS^0\to I$ induces the inclusion $\Fin^\simeq = \operatorname{Env}(\mathrm{Triv})\to\operatorname{Env}(\EE_0) = \Fin^\inj$.
        \item \Cref{lemma_cylinder_suspension_wedge_of_orientals} shows $\lcone(\Ori^n) = \Ori^{n+1}$ strictly, and \cite{gepnerOrientedSimplicialSpacesInpreparation} proves that this pushout is weak. One could define orientals inductively by the cone construction starting from $\Ori^0$. 
        More generally, one can think of $X\ramalg_{X\otimes Y} Y$ as the lax join of $X$ and $Y$. 
        \item Let $\NN^\delta$ denote the discrete commutative monoid of natural numbers, and let $\Fin^\simeq\to \NN^\delta$ be the $0$-truncation map in $\CMon(\sS)$. We (heuristically) compute the lax cofiber $0 \vec{\amalg}_{\Fin^\simeq} \NN^\delta$ in $\infty\SMC$, or equivalently in $\CatSp$. It is a symmetric monoidal category that corepresents a morphism $1\to c$, where $c$ \emph{strictly commutes} with itself with respect to the symmetric monoidal structure.
        As we saw above, without strictness, this is $\Fin^\inj$ with objects $c^{\otimes n}$ for $n\geq 0$, but we force the $\Sigma_n$-action on $c^{\otimes n}$ to be trivial,
        so the hom groupoid $\Hom(c^{\otimes m}, c^{\otimes n})$ is the quotient $\Hom^\inj(\langle m\rangle, \langle n\rangle)/\Sigma_n\simeq \rB\Sigma_{n-m}$.
        \item Next, let us compute the lax fiber $0\overrightarrow{\times}_{\rB\NN^\delta} \rB\Fin^\simeq$. This is equivalent to the pullback of $(\rB\NN^\delta)_{\ast//}\to \rB\NN^{\delta}\leftarrow \rB\Fin^\simeq$, where $\rB\NN^\delta_{\ast//}$ is the fiber of $\ev_0: [\Delta^1, \rB\NN^\delta]\to \rB\NN^\delta$, or the \emph{lax undercategory} of $\vS^1 = \rB\NN^\delta$ under the basepoint. 
        However, $\vS^1$ is just a $1$-category, so the lax undercategory is the same as the undercategory, i.e.,
        $\vS^1_{\ast//} = \vS^1_{\ast/} = \NN^{\leq}$; we get the \emph{ordered set} of natural numbers because $\ast\xrightarrow{n}\ast\in \vS^1_{\ast/}$ factors through another object $\ast\xrightarrow{m}\ast$ if and only if there is $k$ such that $\ast\xrightarrow{m}\ast\xrightarrow{k}\ast = \ast\xrightarrow{n}\ast$, i.e., $m+k=n$, or $m\leq n$. 
        The upshot is that the lax fiber in question is the fiber of $\NN^\leq \to \vS^1 = \rB\NN^\delta \leftarrow \rB\Fin^\simeq$. 
        This can be explicitly computed as a $(2, 1)$-category: the objects are the natural numbers, and the morphisms are given by $\Hom(m, n) = \rB\Sigma_{n-m}$.
        Thus we obtain $\rcof(\rB^\infty\Fin^\simeq \to \rB^\infty\NN^\simeq) \simeq \rfib(\rB^{\infty+1}\Fin^\simeq \to \rB^{\infty+1}\NN^\simeq)$ by computing both sides independently. In the next section, we will show that $\rcof(f) \simeq \rfib(\rB f)$ is generally true in categorical spectra.
    \end{enumerate}
    \end{example}
    We intuitively know that directed pushouts and pullbacks are universal lax squares extending spans and cospans. The next goal is to make this precise; since our target category $\eC$ (e.g., $\CatSp$) is only a Gray-category and not an honest $2$-category, we must be careful about what we even mean by a lax square in $\eC$. 
    \begin{remark}
        Recall the forgetful functor $\iota: \infty\Cat\simeq \infty\Cat^\times \mhy\Cat\to \infty\Cat^\otimes\mhy\Cat$ induced by the lax monoidal functor $\id: \infty\Cat^\times \to \infty\Cat^\otimes$. 
        It does not preserve colimits in general; for instance, the ``long'' hom category of $\iota(C_2\vee C_2)$ is $C_1\times C_1$, whereas that of $(\iota C_2)\vee (\iota C_2)$ is $\cube^2$.
        However, this is essentially the only failure; \cite[Theorem 4.10.7]{gepnerOrientedSimplicialSpacesInpreparation} shows that $\iota$ is a fully faithful right adjoint with essential image characterized as the local objects with respect to the maps
        \[\iota\sigma(C_m)\vee\iota\sigma(C_n)\to \iota(\sigma C_m\vee \sigma C_n), \quad m, n\geq 1.\]
        The following colimit decomposition remains true in $\infty\Cat^\otimes\mhy\Cat$:
        \[\iota \cube^2 = \iota\Delta^{\{00, 10, 11\}}\cup_{\iota\Delta^{\{00, 11\}}=\sigma(\{0\})}\iota\sigma \cube^1\cup_{\iota\sigma(\{1\}) = \Delta^{\{00, 11\}}}\iota\Delta^{\{00, 01, 11\}}. \]
    \end{remark}
    \begin{definition}
        A \emph{lax square} in $\eC$ is a functor $\iota\cube^2\to \eC$ in $\infty\Cat^\otimes\mhy\Cat$.
    \end{definition}
    Using the colimit-decomposition of $\iota\cube^2$, we see that the data of a lax square $\iota\cube^2\to \eC$, which we depict as
    \[\begin{tikzcd}
        A\ar[r]\ar[d] & B \ar[d] \\
        C\ar[r]\ar[ru, Rightarrow, shorten <=2ex, shorten >=2ex, "\alpha"'] & D
    \end{tikzcd}\]
    is functorially equivalent to either of the following commutative diagrams:
    \[\begin{tikzcd}
        & A \ar[r]\ar[d, "i_0"] & B \ar[dd]\\
        A\ar[d]\ar[r, "i_1"] & \cube^1\otimes A\ar[rd, "\alpha"] & \\
        C\ar[rr] & & D, 
    \end{tikzcd}\quad
    \begin{tikzcd}
        A\ar[dd]\ar[rr]\ar[rd, "\alpha"] & & B \ar[d]\\
        &{[\cube^1, D]}\ar[r, "\ev_0"] \ar[d, "\ev_1"] & D\\
        C\ar[r] & D &
    \end{tikzcd}\]
    \begin{remark}
        When we write a diagram with more than one cell of dimension at least $2$, e.g., 
        \[\begin{tikzcd}
            A\ar[r]\ar[d] & B \ar[d]\ar[r] & E \ar[d] \\
            C\ar[r]\ar[ru, Rightarrow, shorten <=2ex, shorten >=2ex, "\alpha"'] & D\ar[r]\ar[ru, Rightarrow, shorten <=2ex, shorten >=2ex, "\beta"']  & F,
        \end{tikzcd}\]
        in a Gray-category $\eC$, it should a priori be interpreted as a functor $\iota \cube^2\sqcup_{\iota \cube^1}\iota \cube^2\to \eC$, not $\iota(\cube^2\sqcup_{\cube^1}\cube^2)\to \eC$ (the latter implicitly asserts the existence of a well-defined $2$-dimensional composite $\beta\ast\alpha$). However, this particular case does not involve horizontal composition beyond dimension $1$, so one sees that the comparison map $\iota \cube^2\sqcup_{\iota \cube^1}\iota \cube^2\to \iota(\cube^2\sqcup_{\cube^1}\cube^2)$ is an equivalence. 
    \end{remark}

    The pasting law for directed pullbacks and pushouts is rather restrictive:
    \begin{proposition}\label{pasting_law_lax_pullbacks}
        In the following diagram, suppose that $\alpha$ is a directed pushout square. 
        \[\begin{tikzcd}
            A\ar[r]\ar[d] & B \ar[d]\ar[r] & E \ar[d] \\
            C\ar[r]\ar[d]\ar[ru, Rightarrow, shorten <=2ex, shorten >=2ex, "\alpha"'] & D\ar[r]\ar[d]\ar[ru, Rightarrow, shorten <=2ex, shorten >=2ex, "\beta"', "\simeq"]  & F\\
            G \ar[ru, Rightarrow, shorten <=2ex, shorten >=2ex, "\gamma"', "\simeq"]\ar[r] & H &         
        \end{tikzcd}\]
        Suppose moreover that $\beta, \gamma$ are invertible. Then
        \begin{enumerate}
            \item $\beta\ast \alpha$ is a directed pushout square if and only if $\beta$ is a pushout square.
            \item $\gamma\ast \alpha$ is a directed pushout square if and only if $\gamma$ is a pushout square.
        \end{enumerate}
        Similar assertions hold for directed pullback and pullback squares. 
    \end{proposition}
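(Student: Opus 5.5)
The plan is to translate all three squares into ordinary (non-lax) colimit diagrams in the underlying presentable category $\eC$, and then apply the ordinary pasting law for pushouts. The tool is the description of a lax square as a commutative diagram out of $\cube^1\otimes A$, given just before the statement.

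By that description, $\alpha$ amounts to a commutative diagram
\[
B\leftarrow A\xrightarrow{i_0}\cube^1\otimes A\xleftarrow{i_1} A\to C
\]
mapping to $D$. Saying that $\alpha$ is a directed pushout square means that the induced map from the colimit $B\ramalg_A C$ to $D$ is an equivalence. I will read $B\ramalg_A C$ as the iterated pushout
\[
\bigl(B\amalg_{A} (\cube^1\otimes A)\bigr)\amalg_A C,
\]
glued along $i_0$ and $i_1$ respectively.

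For (1), I first describe $\beta\ast\alpha$. Since $\beta$ is invertible, the pasted lax square $\beta\ast\alpha$ has top edge $A\to B\to E$ and bottom edge $C\to D\to F$. Its $2$-cell is the composite
\[
\cube^1\otimes A\xrightarrow{\alpha} D\to F,
\]
whose restriction to $i_0$ is identified with $A\to B\to E$ via $\beta$. So the cocone for $\beta\ast\alpha$ is the cocone for $\alpha$, post-composed with $D\to F$ and reglued along $B\to E$ using $\beta$. Concretely, $E\ramalg_A C$ can be computed as $E\amalg_B (B\ramalg_A C)$. This holds because the colimit defining the directed pushout is built by first gluing $\cube^1\otimes A$ to $C$ along $i_1$ and then gluing along $A\to B$, and pushing $B$ forward to $E$ is an ordinary pushout, by pasting of ordinary pushouts. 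Since $\alpha$ is a directed pushout, $B\ramalg_A C\simeq D$, and hence
\[
E\ramalg_A C\simeq E\amalg_B D.
\]
Under this identification, the comparison map $E\ramalg_A C\to F$ for $\beta\ast\alpha$ becomes the comparison map $E\amalg_B D\to F$ for $\beta$. The first is an equivalence if and only if the second is, which is (1).

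Statement (2) is the same argument with the roles of $B$ and $C$ exchanged, using
\[
B\ramalg_A G\simeq (B\ramalg_A C)\amalg_C G.
\]
The claim for directed pullbacks and pullbacks follows dually: use the second description of lax squares via $[\cube^1,D]$, the right adjoint $\overrightarrow{\times}$, and the pasting law for pullbacks.

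The main obstacle is the coherence bookkeeping. I must make sure that the pasted $2$-cell $\beta\ast\alpha$, which is a priori a functor out of $\iota\cube^2\sqcup_{\iota\cube^1}\iota\cube^2$, really corresponds to the composite cocone described above. By the preceding remark, that domain is equivalent to $\iota(\cube^2\sqcup_{\cube^1}\cube^2)$, and the invertible cell $\beta$ factors through the collapse of one $\cube^2$ to $\cube^1\times\cube^1$. I would check this functorially at the level of the colimit decomposition of $\iota\cube^2$. Once that identification is made, the rest is formal manipulation of ordinary colimits.
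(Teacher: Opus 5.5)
Your proposal is correct and follows the paper's own argument: the paper likewise computes the directed pushout as two successive ordinary pushouts, $B\amalg_A(\cube^1\otimes A)\amalg_A C$, applies the ordinary pasting law, and dualizes for directed pullbacks. Your identifications $E\ramalg_A C\simeq E\amalg_B(B\ramalg_A C)$ and $B\ramalg_A G\simeq(B\ramalg_A C)\amalg_C G$ make that one-line argument explicit.
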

    \begin{proof}
        This follows from the pasting law for pullback squares together with the fact that the colimit of $C\leftarrow A\to \cube^1\otimes A\leftarrow A\to B$ can be computed by forming two pushout squares.
    \end{proof}
    Notice that even when $\eC = \infty\Cat$, the formation of directed pullback and directed pushout squares is not $2$-functorial. However, the half-central structure of $\vS^1$ gives rise to an exceptional functoriality of suspensions. Note that when the underlying category of $\eC$ is pointed (so $\eC\simeq \sS_\ast\otimes \eC$), the action of $\infty\Cat$ canonically factors through $\infty\Cat_\ast$ by $X\otimes (\blank) = X_{+}\owedge (\blank)$. 
    \begin{proposition}\label{half-2-functoriality_of_suspension}
        Let $\eC$ be an object of $\LMod_{\infty\Cat_{\ast}}(\PrL)$.
        The suspension functor $\Sigma: \eC\to \eC$ induces $\Map(\iota\cube^2, \eC)\to \Map(\iota(\cube^2)^{2\mhy\op}, \eC)$, which is depicted by
        \[\left(\begin{tikzcd}
            A\ar[r]\ar[d] & B \ar[d] \\
            C\ar[r]\ar[ru, Rightarrow, shorten <=2ex, shorten >=2ex, "\alpha"'] & D            
        \end{tikzcd}\right) \mapsto \left(\begin{tikzcd}
            \Sigma A\ar[r]\ar[d] & \Sigma B\ar[ld, Rightarrow, shorten <=2ex, shorten >=2ex, "\Sigma\alpha"'] \ar[d] \\
            \Sigma C\ar[r] & \Sigma D
        \end{tikzcd}\right)\]
    \end{proposition}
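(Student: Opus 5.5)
The plan is to use the two equivalent descriptions of a lax square recorded just above. A lax square $\iota\cube^2\to\eC$ with $2$-cell $\alpha$ amounts to a commutative diagram in the underlying category of $\eC$: the maps $A\to B$, $A\to C$, $C\to D$, $B\to D$, and a map $\alpha\colon \cube^1\otimes A\to D$ restricting along $i_0$ and $i_1$ to the two composites. Dually, $\iota(\cube^2)^{2\mhy\op}$ is the same square with the $2$-cell reversed. Under this encoding it amounts to a map $\cube^1\otimes A\to D$ whose restrictions along $i_0,i_1$ are swapped relative to the original. Equivalently, with $(\cube^1)^\op$ in place of $\cube^1$ and the restrictions kept the same.

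So I must produce, naturally in the square, a map $\cube^1\otimes\Sigma A\to\Sigma D$ with the swapped boundary from the given $\alpha\colon\cube^1\otimes A\to D$. Here $\cube^1\otimes(\blank)$ on pointed $\eC$ means $\cube^1_+\owedge(\blank)$.

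The key step is an equivalence $\cube^1_+\owedge\Sigma X\simeq\Sigma\bigl((\cube^1_+)^\circ\owedge X\bigr)$, natural in $X\in\eC$ and compatible with the endpoint inclusions $0,1$ in the swapped manner. I get this from the half-central structure (\cref{half-center_S1}, through \cref{suspension_is_tensor_S1}):
\[\cube^1_+\owedge(\vS^1\owedge X)\simeq(\cube^1_+\owedge\vS^1)\owedge X\xrightarrow{\tau^{-1}}(\vS^1\owedge(\cube^1_+)^\circ)\owedge X .\]
Here $\tau_Y\colon\vS^1\owedge Y\simeq Y^\circ\owedge\vS^1$ is applied with $Y=(\cube^1_+)^\circ$, using $D^2\simeq\id$. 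Since $\tau$ is a natural transformation of functors of $Y$, applying it to $\partial\cube^1_+\to\cube^1_+$ gives compatibility with the endpoint maps. Moreover $(\cube^1)^\circ=(\cube^1)^\op$ is the interval with $0$ and $1$ exchanged, which is exactly the source of the swap. Because $\eC$ is a left $\infty\Cat_\ast^\owedge$-module and $\Sigma=\vS^1\owedge(\blank)$ on $\eC$, all of this transports to $\eC$ through associativity of the action.

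With this equivalence, $\Sigma\alpha$ is the composite $\cube^1\otimes\Sigma A\simeq\Sigma((\cube^1)^\op\otimes A)\xrightarrow{\Sigma\alpha}\Sigma D$, and the boundary conditions follow from the naturality just described. Functoriality is then formal: the whole construction is a functor of the diagram data, that is, of $\Map$ out of the colimit decomposition of $\iota\cube^2$ in \cref{pushout_formula_for_gray_cylinder_of_suspensions}'s style (the decomposition displayed in the remark). That yields the map $\Map(\iota\cube^2,\eC)\to\Map(\iota(\cube^2)^{2\mhy\op},\eC)$.

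I expect the main obstacle to be a bookkeeping issue with one-sidedness. The suspension acts on the left, yet we need to move $\cube^1$, which also acts on the left, past $\vS^1$. That step needs the half-central (twisted-commutativity) structure rather than mere associativity, and I have to check carefully that the twist $D$ on the interval reverses the $2$-cell, not any $1$-cells. Since $D$ fixes $\partial\cube^1_+$ pointwise up to relabelling, I believe it does.
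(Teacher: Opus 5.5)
Your proposal is correct and is essentially the paper's argument. The paper likewise rewrites $\Sigma(\cube^1_+\owedge A)\simeq(\vS^1\owedge\cube^1_+)\owedge A$ and applies $\tau$ to get $((\cube^1)^\op_+\owedge\vS^1)\owedge A$, observing that this exchanges the endpoint inclusions of the weight $\ast_+\xrightarrow{0}\cube^1_+\xleftarrow{1}\ast_+$ and so reverses only the $2$-cell; your use of $\tau^{-1}$ at $(\cube^1_+)^\circ$ instead of $\tau$ at $\cube^1_+$ is the same identification read in the opposite direction.
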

    \begin{proof}
        Using the half-central structure of $\vS^1$, we have $\Sigma(\cube^1_+\owedge A)\simeq (\vS^1\owedge \cube^1_+)\owedge A\xrightarrow{\sim} ((\cube^1)^{\op}_+\owedge \vS^1)\owedge A$. This flips the weight $\ast_+\xrightarrow{0}\cube^1_+\xleftarrow{1}\ast_+$ to $\ast_+\xrightarrow{1}\cube^1_+\xleftarrow{0}\ast_+$. 
    \end{proof}
    The proposition is already reflected in the classical definition of triangulated categories. Namely, this is the negative sign introduced when we rotate the triangle $A\xrightarrow{f} B\xrightarrow{g} C\xrightarrow{h} \Sigma A$ to $B\xrightarrow{g}C\xrightarrow{h}\Sigma A\xrightarrow{-\Sigma f} \Sigma B$; the suspension converts a lax fiber sequence to an oplax fiber sequence, so one flips the direction of the homotopy back to a lax fiber sequence by negating the maps. Since we do not have negatives in our categorical setting, we must distinguish lax and oplax fiber sequences. 

\section{Absoluteness of directed pushouts}\label{section_absoluteness_of_directed_pushouts}
The goal of this section is to prove the absoluteness of directed pushouts and begin the study of absolute colimits in $\CatSp$. Our strategy is to guess the dual weight and exhibit the adjunction of weighted colimits directly.
We will first prove the case of the cone construction, i.e., $0\ramalg_X X = I\owedge X$. In this case, absoluteness is equivalent to the dualizability of $\Sigma^\infty I$. Even though this is a special case of the main theorem, verification of the dualizability of $I$ nicely packages part of its proof, so we give a separate treatment. 
After the main theorem, we will list some immediate consequences. A corollary of particular importance is the equivalence $\rcof{f}\simeq \rfib{\Sigma f}$ for a morphism of categorical spectra $f: X\to Y$ (as observed in a particular case in \cref{example_sample_calculations_of_lax_fib_cofib}). Curious as it is, indicating both similarities and differences from stable $1$-categories, it is also useful for computations; we will apply this to the study of TQFTs in the next chapter. 

\subsection{\texorpdfstring{$\Sigma^{\infty} I$}{The suspension spectrum on I} is dualizable}
Recall the notation $I\coloneqq \rcof(S^0\to S^0)$. In this section, we will show that $\Sigma^\infty I$ is dualizable in categorical spectra. 
    To guess the dual object $(\Sigma^\infty I)^L$, recall that we hoped to show that the cone and the path category are equivalent up to a shift: $\rcone(X)\simeq \rpath(X[1])$. This is equivalent to $I\otimes X\simeq [I, \vS^1\owedge X]\simeq [\Sigma^{\infty-1} I, X]$, which suggests the following:
    \begin{proposition}
        $\Sigma^{\infty-1} I$ is the left dual of $\Sigma^\infty I$ in the monoidal category $\CatSp^\otimes$. 
    \end{proposition}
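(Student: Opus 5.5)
To show that $\Sigma^{\infty-1} I$ is the left dual of $\Sigma^\infty I$, we write down an explicit unit and counit and check the two triangle identities. The unit should be a map $\eta\colon \FF\to \Sigma^\infty I\otimes \Sigma^{\infty-1} I$ and the counit a map $\eps\colon \Sigma^{\infty-1} I\otimes \Sigma^\infty I\to \FF$.

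By the tensor formula for suspension spectra and the half-central structure of $\FF[1]$, we have
\[\Sigma^{\infty-1} I\otimes \Sigma^\infty I\simeq \Sigma^{\infty-1}(I\owedge I),\qquad \Sigma^\infty I\otimes \Sigma^{\infty-1} I\simeq \Sigma^{\infty-1}(I^\circ\owedge I).\]
Here $I\owedge I=\cube^2/(\text{basepoint edges})$ and $I^\circ=I^\op$. So $\eps$ amounts to a pointed functor $I\owedge I\to \vS^1$, and $\eta$ amounts to a pointed functor $\vS^1\to I^\op\owedge I$, i.e.\ a choice of endomorphism of the basepoint of $I^\op\owedge I$. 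Both are pointed maps between gaunt strong Steiner objects, so they can be written down on Steiner complexes.

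For the counit, $I\owedge I$ is the square $00,01,10,11$ with the edges through $00$ collapsed. The remaining data is the edges $01\to 11$ and $10\to 11$, both now starting at the basepoint, together with the $2$-cell between them. We send one edge to the generator of $\vS^1=\rB\NN$ and the other to the identity; the direction of the $2$-cell in $\cube^2$ dictates which edge gets the generator. For the unit, in $I^\op\owedge I$ the collapsed point $00$ has a composite path $00\to 01\to 11\to 10\to 00$ (using the op-edge $10\to 00$ and the lax $2$-cell), giving a loop at the basepoint. Let $\eta$ be this loop.

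The triangle identities then reduce to showing that a composite
\[\Sigma^\infty I\to \Sigma^\infty I\otimes\Sigma^{\infty-1}I\otimes \Sigma^\infty I\to \Sigma^\infty I\]
is the identity, and similarly for $\Sigma^{\infty-1}I$. After shifting, this is a statement about pointed maps $\vS^1\owedge I\to I\owedge I^\op\owedge I\to \vS^1\owedge I$ (modulo the duals inserted by the half-central twist). Evaluated at a finite level, the middle term is $\cube^3$ with collapses. Mapping spaces out of gaunt objects into these are not contractible in general, but $\Map(\Sigma I, \Sigma I)$ is computable since $\Sigma I=\rB\Free_{\EE_1}(I)$. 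So it suffices to compute the composite as a monoid map $\Free(I)\to \Free(I)$ and check that it sends the generator to itself, together with the relevant homotopy. This can be done with Steiner complexes: the zig-zag is a concrete map of augmented directed complexes, modulo the Rezk completion introduced by collapsing.

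An alternative, probably cleaner route avoids the triangle identities. Show directly that $[\Sigma^{\infty-1}I,-]\simeq \Sigma^\infty I\otimes(-)$ as endofunctors of $\CatSp$ in $\RMod_{\CatSp}$, i.e.\ $I\owedge X\simeq [I,\Sigma X]$ naturally and $\CatSp$-linearly. Using \cref{formula_internal_hom_catsp}, one reduces this to the case $X=\Sigma^{\infty-n}_+\cube^k$. By \cref{pushout_formula_unreduced_suspension}, $\rcone$ and $\rpath$ of such objects are computed by pushouts and pullbacks of cubes. The required equivalence $\rcone(X)\simeq\rpath(\Sigma X)$ at the level of $\infty\Cat_\ast$ then becomes the pushout formula for $I\otimes\sigma(-)$, combined with \cref{pullback_formula_for_hom_category} and the self-enrichment, via $\rpath(\Sigma X)=\{\ast\}\overrightarrow{\times}_{\Sigma X}\Sigma X$.

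The main obstacle is the coherence in either route. Matching the orientation of the $2$-cells under the total-dual twist, i.e.\ the Koszul sign, has to be done carefully so that the zig-zag yields the identity rather than its dual. Upgrading a levelwise natural equivalence to a $\CatSp$-linear one likely needs an argument like \cref{trivial_automorphisms}, namely that the relevant automorphism spaces of cube-indexed functors are contractible.
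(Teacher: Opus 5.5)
Your main route is the paper's route: an explicit unit and counit, moving the desuspension with the half-central structure, and reducing the triangle identities to unstable maps out of quotients of cubes. However, the counit you propose does not exist. After collapsing the basepoint edges of $\cube^2$, the two surviving edges of $I\owedge I$ are parallel $1$-cells from the basepoint to $11$, and the $2$-cell of $\cube^2$ runs between them. The target $\vS^1=\rB\NN$ is a gaunt $1$-category whose endomorphism monoid $\NN$ is discrete, so its only $2$-cells are identities. Hence any pointed functor $I\owedge I\to\vS^1$ must send both edges to the same element of $\NN$. ``One edge to the generator, the other to the identity'' is therefore not a functor, whichever way the $2$-cell points. (Sending both edges to the identity gives a null map, which cannot satisfy a triangle identity.) The paper's counit sends both edges to the generator $l$ and the $2$-cell to $\id_l$. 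Your unit is the right idea but is described incorrectly. In $I^\op\owedge I$ exactly one vertex survives the collapse, and the loop is the composite $rs$ of the two $1$-cells through that vertex; no $2$-cell is involved.

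Beyond this, you only outline the triangle identities. Note that $\vS^1$ and $\vS^1\owedge I$ are not strong Steiner: in $\lambda\vS^1$ the loop has $\partial l=0$, which violates unitality. So you cannot simply compose maps of Steiner complexes. The paper instead writes each composite as maps of cubes $\cube^2\to\cube^3\to\cube^2$ that descend to the quotients, and traces atomic cells to identify the result with $\tau_I$. Concretely, $l\mapsto rs$ and $\theta\mapsto\alpha\ast\beta$; then $s,t\mapsto l'$, $r\mapsto\id$, $\beta\mapsto\id_{l'}$ and $\alpha,\gamma\mapsto\theta'$.

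Your remark that $\Map_\ast(\Sigma I,\Sigma I)\simeq\Map_\ast(I,\Free_{\EE_1/\EE_0}(I))$ is discrete is a valid shortcut. The free monoidal category on a morphism $1\to c$ is gaunt and $\Hom(1,c^{\otimes n})$ is a point, so only the image of the generating $1$-cell matters and no further homotopy is needed. For the second identity you also need that $\eta$ can be described by pulling $\vS^{-1}$ out on the right; the paper gets this from the coherence of the half-central structure. Without it, that composite is not visibly $\Sigma^\infty$ of an unstable map.

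Your alternative route does not work as stated, for three reasons.
\begin{itemize}
\item $\rcone(X)\simeq\rpath(\Sigma X)$ fails in $\infty\Cat_\ast$: for $X=S^0$ it would give $I\simeq\vS^1_{\ast/}=\NN^{\leq}$.
\item In $\CatSp$ the paper deduces this equivalence from the present proposition, via the absoluteness of directed pushouts, so using it here is circular.
\item Reducing to the generators $\Sigma^{\infty-n}_+\cube^k$ presupposes that $[I,\Sigma(\blank)]$ preserves colimits, which is essentially the dualizability being proved.
\end{itemize}
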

    \begin{remark}
        The proposition should eventually be understood as an example of \emph{categorical Atiyah duality} and the degree shift $1$ accounts for the dimension of the interval. Observe that the proof even resembles the usual construction of Spanier-Whitehead duality by embedding a space into a sphere and contracting the neighborhood, although it is unclear how this idea generalizes. This is the subject of an ongoing extension of this project.
    \end{remark}
    \begin{remark}
        Since the localization $\CatSp\to \Sp$ is monoidal, any monoidal duality in $\CatSp$ gives rise to one in $\Sp$. The duality of the proposition localizes to the trivial duality $0\dashv 0$. Note that the opposite direction for $\Sp\hookrightarrow\CatSp$ is false, as it does not preserve the unit object.
    \end{remark}
    \begin{remark}\label{remark_duals_of_I}
        Suppose $L$ is the left dual of $R$ in $\CatSp$ (so that $L\otimes (\blank)\dashv [L, \blank]\simeq R\otimes (\blank)$). By composing the adjoint inverses $[1]$ and $[-1]$, one sees $\Sigma^{\pm 1} L^\circ\dashv \Sigma^{\mp 1} R$ and $\Sigma^{\pm 1}L\dashv \Sigma^{\mp 1}R^\circ$. In particular, we have the four-periodic cycle 
        \[\Sigma^{\infty} I\dashv \Sigma^{\infty-1} I^\op\dashv \Sigma^{\infty}I^\op\dashv \Sigma^{\infty-1} I\dashv \Sigma^{\infty} I, \]
        so $\Sigma^{\infty} I$ is also right-dualizable.
    \end{remark}
    \begin{remark}
        Another clue for the dual object comes from Steiner's theory: $\lambda I$ is the augmented chain complex $\ZZ\underline{e}\to \ZZ\underline{0}\oplus \ZZ\underline{1}$ with basepoint $\underline{0}$. The dual complex of the reduced complex is $\ZZ\underline{1}^\vee\to \ZZ\underline{e}^\vee$ in degrees $[-1, 0]$. The dexterity of the dual determines the natural positive part, giving the desuspension of the reduced augmented directed complex of $\Sigma^{\infty} I^{(\op)}$. It seems possible to have a nonconnective version of Steiner theory relating pointed augmented complexes to a reasonably strict part of categorical spectra. Part of the difficulty would be that our version of suspension is essentially not strict, let alone loop-free. In analogy with the case of spectra, it should be rectified by passing to strictly commutative monoid objects, or $\rH\NN$-modules. The strictification process also seems likely to have a Dold--Thom style interpretation. 
    \end{remark}
    \begin{proof}
        Let us first construct the unit and counit of the duality.
        \begin{enumerate}
            \item To define the counit $\eps: \Sigma^{\infty-1} I\otimes \Sigma^{\infty} I\to \FF$, it suffices to define $\vS^1\otimes \eps: \Sigma^\infty (I\owedge I)\to \FF[1]$. We define it as $\Sigma^{\infty}$ applied to the map $I\owedge I\to \vS^1\simeq \rB \NN$ depicted as follows:
                \[\begin{tikzcd}
                    00 \ar[r, equal]\ar[d, equal] & 10\ar[d]\ar[ld, Rightarrow, shorten <=2ex, shorten >=2ex] \\
                    01\ar[r] & 11
                \end{tikzcd}\mapsto 
                \begin{tikzcd}
                \ast \ar[r, equal, "0"]\ar[d, equal, "0"] & \ast\ar[d, "1"]\ar[ld, equal, shorten <=2ex, shorten >=2ex] \\
                \ast\ar[r, "1"] & \ast.
                \end{tikzcd}\]
            The diagram shows the image of the atomic cells of $\cube^2$ under the composition $\cube^2\twoheadrightarrow I\owedge I\to \rB\NN$, which descends to the quotient $I\owedge I$ because the restriction to $\{0\}\otimes \cube^1\cup \cube^1\otimes \{0\}$ is trivial. 
            \item To define the unit map 
            $\eta: \FF\to \Sigma^\infty I\otimes \Sigma^{\infty-1}I = \Sigma^{\infty}I\otimes \vS^{-1}\otimes \Sigma^{\infty}I$, we use the half-central structure on $\vS^1$ to pull out the desuspension to the left. Namely, we define $\vS^1\otimes \eta$ so that
            \[\vS^1\otimes \FF\xrightarrow{\vS^1\otimes \eta} \vS^1\otimes \Sigma^\infty I \otimes \vS^{-1}\otimes \Sigma^\infty I\xrightarrow[\sim]{\tau_I\otimes \id} \Sigma^\infty I^\op\otimes \vS^1\otimes\vS^{-1}\otimes \Sigma^\infty I\simeq \Sigma^\infty (I^\op\owedge I)\] is induced by the map $\vS^1\to I^\op\owedge I$, $l\mapsto rs$, where $l$ is the generating loop of $\vS^1$ and $r, s$ are the $1$-cells depicted as follows (we identify $I^\op$ with the interval $0\to 1$ with the vertex $1$ marked, so $I^\op\owedge I$ is the quotient of $\cube^2$ by $\cube^1\otimes \{0\}\cup \{1\}\otimes\cube^1$):
            \[I^{\op}\owedge I = \begin{tikzcd}
                00 \ar[r, equal]\ar[d, "s"] & 10\ar[d, equal]\ar[ld, Rightarrow, shorten <=2ex, shorten >=2ex] \\
                01\ar[r, "r"] & 11.
            \end{tikzcd}\]
            We could have defined $\eta$ by pulling out the desuspension to the \emph{right}. The coherence data of the half-central structure of $\vS^1$ verifies that the two possible definitions are the same. To see this, notice that the following diagram canonically commutes:
            \[\adjustbox{scale=0.8, center}{
            \begin{tikzcd}
            \vS^1\otimes \FF \ar[r, "\vS^1\otimes \eta"] \ar[d, equal, "\tau_{\FF}"]
                & \vS^1\otimes \Sigma^\infty I\otimes \vS^{-1}\otimes \Sigma^\infty I \ar[r, "\tau_I"] \ar[d, "\tau_{I\otimes \vS^{-1}\otimes I}"]
                & \Sigma^{\infty} I^\op\otimes \vS^1\otimes \vS^{-1}\otimes \Sigma^\infty I\ar[r]\ar[d, "\tau_{\vS^{-1}}"]
                & \Sigma^{\infty} (I^\op\owedge I)\ar[d, equal]\\
                \FF^\circ\otimes \vS^1 \ar[r, "\eta^\circ\otimes\vS^1"]
                & \Sigma^\infty I^\op\otimes \vS^{-1}\otimes \Sigma^{\infty}I^\op\otimes \vS^1 \ar[r, "\tau_I^{-1}"]
                & \Sigma^\infty I^\op\otimes \vS^{-1}\otimes \vS^1\otimes \Sigma^\infty I \ar[r]
                & \Sigma^\infty(I^\op\owedge I).
            \end{tikzcd}}\]
            Taking the total dual of the bottom composite, we see that $\eta$ is also characterized by the fact that 
            \[\FF\otimes \vS^1\xrightarrow{\eta\otimes\vS^1}\Sigma^\infty I\otimes \vS^{-1}\otimes \Sigma^\infty I\otimes \vS^1\xrightarrow{} \Sigma^{\infty}I\otimes \vS^{-1}\otimes \vS^1\otimes \Sigma^{\infty} I^\op\to \Sigma^{\infty}(I\owedge I^\op)\] 
            is $\Sigma^\infty$ applied to the loop $r's':\vS^1\to I\owedge I^\op$, where $r', s'$ are the $1$-cells depicted as
            \[I \owedge I^\op = \begin{tikzcd}
                00 \ar[r, "{s'}"]\ar[d, equal] & 10\ar[d, "{r'}"]\ar[ld, Rightarrow, shorten <=2ex, shorten >=2ex] \\
                01\ar[r, equal] & 11.
            \end{tikzcd}\]
        \end{enumerate}
        Now we check the triangle identities, i.e., that the following compositions are equivalent to the identities:
        \begin{enumerate}
            \item $\Sigma^\infty I \xrightarrow{\eta \otimes \Sigma^\infty I} \Sigma^\infty I\otimes \Sigma^{\infty-1} I\otimes \Sigma^\infty I\xrightarrow{\Sigma^\infty I\otimes \eps}\Sigma^{\infty} I$,
            \item $\Sigma^{\infty-1} I\xrightarrow{\Sigma^{\infty-1} I\otimes \eta} \Sigma^{\infty-1}I\otimes \Sigma^{\infty} I\otimes \Sigma^{\infty-1} I\xrightarrow{\eps\otimes \Sigma^{\infty-1} I}\Sigma^{\infty-1}I$.
        \end{enumerate}
        For (1), consider the following diagram (we omit some $\Sigma^\infty$'s to save space):
        \[\begin{tikzcd}
            \vS^1\otimes \Sigma^\infty I \ar[r, "\vS^1\otimes \eta\otimes I"]\ar[rd, equal] & \vS^1\otimes \Sigma^\infty I\otimes \Sigma^{\infty-1} I\otimes \Sigma^{\infty} I \ar[r, "\tau_I\otimes \id", "\sim"']\ar[d, "\vS^1\otimes I\otimes \eps"] & \Sigma^\infty I^\op\otimes \Sigma^\infty I\otimes \Sigma^\infty I \ar[d, "I^\op\otimes \vS^1\otimes \eps"]\\
            & \vS^1\otimes \Sigma^{\infty} I \ar[r, "\tau_I", "\sim"'] & \Sigma^\infty I^\op\otimes \vS^1
        \end{tikzcd}\]
        The right square commutes, so the commutativity of the left triangle reduces to that of the outer compositions. By definition, these are $\Sigma^\infty$ applied to unstable maps $\vS^1\owedge I\to I^\op\owedge I\owedge I \to I^\op\owedge \vS^1$, which in turn are induced from maps of cubes $\cube^2\to \cube^3\to \cube^2$ by passing to quotients. Therefore, we may compute the compositions by tracing the assignments of the relevant atomic cells of the cubes:
        \[\begin{tikzcd}
            00 \ar[r, equal]\ar[d, equal] & 01 \ar[d, "l"] \\
            10 \ar[r, equal]\ar[ru, Rightarrow, shorten <=2ex, shorten >=2ex, "\theta"] & 11
        \end{tikzcd}\quad\to\quad \begin{tikzcd}[row sep=1em]
            & 001 \arrow[dd, "s"] \arrow[rd, equal] & \\
           000 \arrow[dd, equal] \arrow[ru, equal] & & 101 \arrow[dd, equal] \arrow[ld, Rightarrow, shorten <=1ex, shorten >=1ex, "\gamma"] \\
            & 011 \arrow[rd, "r"] & \\
           010 \arrow[rd, equal] \arrow[ru, "t"] \arrow[ruuu, Rightarrow, shorten <=6ex, shorten >=6ex, "\beta"] & & 111 \\
            & 110 \arrow[ru, equal] \arrow[uu, Rightarrow, shorten <=2ex, shorten >=2ex, "\alpha"] & 
           \end{tikzcd}\quad\to\quad\begin{tikzcd}
            00 \ar[r, "l'"]\ar[d, equal] & 01 \ar[d, equal] \\
            10 \ar[r, equal]\ar[ru, Rightarrow, shorten <=2ex, shorten >=2ex, "\theta'"] & 11
        \end{tikzcd}\]
        The first map assigns $l\mapsto rs$ and $\theta\mapsto \alpha\ast\beta$ and the second map assigns $s, t\mapsto l'$, $r\mapsto \id_\ast$, $\beta\mapsto \id_{l'}$ and $\alpha, \gamma\mapsto \theta'$. The composition therefore assigns $l\mapsto l'$, $\theta\mapsto \theta'$, so it is equal to $\tau_I: \vS^1\owedge I\to I\owedge I^{\op}\owedge \vS^1$. 
        Verification of (2) is similar. Tensoring $\vS^1$ from both the left and the right, (2) is equivalent to the identity if and only if the outer compositions of the following diagram commute:
        \[\begin{tikzcd}[column sep=large]
            \Sigma^{\infty} I\otimes \vS^1 \ar[r, "{I\otimes \eta\otimes \vS^1}"]\ar[rd, equal]
            & \Sigma^{\infty}I\otimes \Sigma^\infty I\otimes \Sigma^{\infty-1}I\otimes \vS^1 \ar[r, "I\otimes I\otimes \tau^{-1}_I", "\sim"']\ar[d, "\vS^1\otimes \eps\otimes \Sigma^{\infty-1}I\otimes \vS^1"]
            & \Sigma^\infty I\otimes \Sigma^\infty I\otimes \Sigma^\infty I^\op\ar[d, "\vS^1\otimes \eps\otimes\id"] \\
            & \Sigma^{\infty}I\otimes \vS^1 \ar[r, "\tau_I^{-1}", "\sim"']
            & \vS^1\otimes \Sigma^\infty I^\op.
        \end{tikzcd}\]
        Using the second description of $\eta$, the top-right composition is  $\Sigma^\infty$ applied to the unstable map $I\owedge \vS^1\to I\owedge I\owedge I^\op\to \vS^1\owedge I^\op$ induced from $\cube^2\to \cube^3\to \cube^2$, depicted as:
        \[\begin{tikzcd}
            00 \ar[r, equal]\ar[d, equal] & 01 \ar[d, equal] \\
            10 \ar[r, "l"]\ar[ru, Rightarrow, shorten <=2ex, shorten >=2ex, "\theta"] & 11
        \end{tikzcd}\quad\to\quad \begin{tikzcd}[row sep=1em]
            & 010 \arrow[dd, "t"] \arrow[rd, equal] & \\
           000 \arrow[dd, equal] \arrow[ru, equal] & & 011 \arrow[dd, equal] \\
            & 110 \arrow[rd, "r"]\arrow[ru, Rightarrow, shorten <=1ex, shorten >=1ex, "\beta"]\arrow[dd, Rightarrow, shorten <=2ex, shorten >=2ex, "\gamma"] & \\
           100 \arrow[rd, equal] \arrow[ru, "s"] \arrow[ruuu, Rightarrow, shorten <=6ex, shorten >=6ex, "\alpha"] & & 111 \\
            & 101 \arrow[ru, equal] & 
           \end{tikzcd}\quad\to\quad\begin{tikzcd}
            00 \ar[r, equal]\ar[d, "l'"] & 01 \ar[d, equal] \\
            10 \ar[r, equal]\ar[ru, Rightarrow, shorten <=2ex, shorten >=2ex, "\theta'"] & 11
        \end{tikzcd}.\]
        The assignments of relevant cells are $l\mapsto rs$, $\theta\mapsto \alpha\ast\beta$, $t, s\mapsto l'$, $r\mapsto \id_\ast$, $\alpha\mapsto \id_{l'}$, and $\beta, \gamma\mapsto \theta'$, so they compose to $\tau_I^{-1}: l\mapsto l', \theta\mapsto \theta'$.
    \end{proof}
    
    \subsection{Directed pushouts are absolute}
    As before, we let $J = \Lambda^2_0 = (1\leftarrow 0 \rightarrow 2)$ be the walking cospan category and let $W: (\Lambda^2_0)^\op\to \infty\Cat_\ast$ be the weight $S^0 \rightarrow \cube^1_+\leftarrow S^0$ for directed pushouts. Our goal is to show the absoluteness of directed pushouts
    $\ramalg = \colim^W: \Fun(J, \CatSp)\to \CatSp$.
    Let us start by guessing the dual weight by assuming it has a left adjoint as a right $\CatSp$-modules. The following lemma is useful:
    \begin{lemma}
        Let $J$ be a $(1, 1)$-category enriched in finite sets. Then there is an adjunction \[\begin{tikzcd}[column sep=large]
            \Fun(J, \CatSp) \arrow[r, shift left = 1ex, "\ev_i"]
            &  \CatSp \arrow[l, shift left = .5ex, "\yo(i)\otimes (\blank)"]
            \arrow[l, phantom, shift right = .2ex, "\scriptscriptstyle\boldsymbol{\bot}"]
        \end{tikzcd}\] in the category $\RMod_{\CatSp}(\PrL)$, or even in $\BMod{\CatSp}(\PrL)$. 
    \end{lemma}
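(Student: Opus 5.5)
The plan is to compute the right adjoint of $\yo(i)\otimes(\blank)$ explicitly and check that it is $\ev_i$. Here $\yo(i)\otimes(\blank)$ sends $X$ to the functor $j\mapsto J(i,j)\otimes X$.

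Since $J$ is enriched in finite sets, $J(i,j)$ is a finite set. The tensoring of $\CatSp$ by a finite set $S$ is the coproduct $\coprod_{S} X$. By \cref{CatSp_is_limit_of_SMC}, $\CatSp$ is semiadditive, so this coproduct is the biproduct $\bigoplus_S X\simeq \prod_S X$, which is also the cotensor $[S,X]$. Hence
\[\yo(i)\otimes X\simeq \bigl(j\mapsto [J(i,j),X]\bigr),\]
and this identification is functorial in $j$ because biproducts are natural in maps of finite sets.

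First I will produce the adjunction $\yo(i)\otimes(\blank)\dashv \ev_i$ in plain $\PrL$. This is the standard free-evaluation adjunction, or equivalently Yoneda reduction: for $F\in\Fun(J,\CatSp)$ we have
\[\Map(\yo(i)\otimes X,F)\simeq \Map(X,F(i)).\]
I will then use semiadditivity to rewrite $\yo(i)\otimes(\blank)$ as the right Kan extension $j\mapsto [J(i,j),\blank]$. This exhibits it as the right adjoint of $\ev_i$, which is the adjunction in the direction displayed in the statement. The finiteness of $J(i,j)$ is exactly what makes this identification possible.

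Second, I will promote the adjunction to $\RMod_{\CatSp}(\PrL)$, and then to $\BMod{\CatSp}(\PrL)$. The functor $\ev_i$ is a bimodule map, since the actions on $\Fun(J,\CatSp)$ are pointwise. The functor $\yo(i)\otimes(\blank)$ is also a bimodule map, because the tensor product of $\CatSp$ preserves finite biproducts in each variable (it is a morphism in $\PrL$). For an adjunction between module categories where both functors are module maps, it suffices that the lax structure on the adjoint, obtained by the mate construction, is invertible. For the right adjoint $\yo(i)\otimes(\blank)$, the mate comparison $Y\otimes [J(i,j),X]\to [J(i,j),Y\otimes X]$ is the canonical map between finite products. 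It is an equivalence because $Y\otimes(\blank)$ preserves finite biproducts, and the same holds on the right side.

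I expect the main obstacle to be this second step: making the module-functor coherence precise rather than checking a pointwise map. I would handle it by noting that the category of $\CatSp$-bimodule morphisms is closed under passing to right or left adjoints whenever the mate transformation is invertible, which reduces the coherence to the pointwise biproduct check above.
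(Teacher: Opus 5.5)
The step that fails is identifying $\yo(i)\otimes(\blank)$ with a right Kan extension. With your definition $(\yo(i)\otimes X)(j)=J(i,j)\otimes X$, this functor is the \emph{left} Kan extension along $\{i\}\hookrightarrow J$. Your first step therefore correctly gives $\yo(i)\otimes(\blank)\dashv\ev_i$. The right Kan extension along $\{i\}\hookrightarrow J$, however, is $j\mapsto[J(j,i),X]$, indexed by maps \emph{into} $i$, so your formula $j\mapsto[J(i,j),\blank]$ is not it. Rewriting $J(i,j)\otimes X$ as $\prod_{J(i,j)}X$ via semiadditivity only re-expresses the same left adjoint. You are thus implicitly claiming that the left and right adjoints of $\ev_i$ coincide, which is false in general. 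Take $J=\{0\to 1\}$ and $i=0$. Your functor is then $X\mapsto(X\xrightarrow{\id}X)$, and $\Map\bigl((F_0\to F_1),(X\xrightarrow{\id}X)\bigr)\simeq\Map(F_1,X)$, not $\Map(F_0,X)$. For $F=(0\to\FF)$ and $X=\FF$, one side is contractible and the other is $\Map(\FF,\FF)$, which is not. The actual right adjoint of $\ev_0$ here is $X\mapsto(X\to 0)$.

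The fix is to read $\yo(i)$ as the representable presheaf $J(\blank,i)$, which is what the paper means. This matches its Yoneda reduction $\yo(j)\otimes F\simeq F(j)$, and in the later application to $\Lambda^2_0$ one has $\yo(0)\otimes\FF=(0\leftarrow\FF\to 0)$. The paper's argument then runs as follows. The right adjoint of $\ev_i$ is $X\mapsto\bigl(j\mapsto[J(j,i),X]\bigr)$. Since $J(j,i)$ is finite and the target is semiadditive, $[J(j,i),X]\simeq J(j,i)\otimes X$, made covariant in $j$ by transposing matrices. Hence this right adjoint preserves colimits and the adjunction lies in $\PrL$. The paper proves this for $\Fun(J,\CMon)\rightleftarrows\CMon$ and then tensors with the $\CMon$-algebra $\CatSp$. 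That carries the adjunction into $\RMod_{\CatSp}(\PrL)$ and $\BMod{\CatSp}(\PrL)$ with no coherence to check. Your mate-based second step would also work once applied to the correct right adjoint: the comparison $Y\otimes[J(j,i),X]\to[J(j,i),Y\otimes X]$ is an equivalence for the reason you give. It does, however, require an $\infty$-categorical doctrinal-adjunction result as input, which the tensoring argument avoids.
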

    \begin{proof}
        In general, the evaluation functor $\ev_i: \Fun(J, \CMon)\to \CMon$ admits a right adjoint given by the right Kan extension $X\mapsto (j\mapsto [\Map_J(j, i), X])$. If $\Map_J(j, i)$ are finite sets, we have $[\Map_J(j, i), X]\simeq \Map_J(j, i)\otimes X$ by semiadditivity (with the opposite functoriality in $\Map_J(j, i)$ via transposing matrices), so the right adjoint is colimit-preserving. The adjunction of the lemma is obtained by tensoring $\CatSp$ from the right (note that $\CatSp$ is semiadditive, so it is an algebra over $\CMon$).
    \end{proof}
    Therefore, if there is a morphism $L: \CatSp\to \Fun(J, \CatSp)$ of right $\CatSp$-modules that is left adjoint to a functor $\colim^W: \Fun(J, \CatSp)\to\CatSp$, by composing with the adjunction in the above lemma, we see that $\ev_i\circ L$ must be given by tensoring with the left dual of $\colim^W \yo(i)$. 
    Specializing to $J = \Lambda^2_0$ and $W = S^0\xrightarrow{0}\cube^1_+\xleftarrow{1} S^0$, we see that $\ev_0\circ L = (0\ramalg_\FF 0)^L \simeq \FF[-1]$, $\ev_1\circ L = (\FF\ramalg_{\FF} 0)^L = \Sigma^{\infty-1} I^\op$, and $\ev_2 \circ L = (0\ramalg_{\FF} \FF)^L\simeq \Sigma^{\infty-1} I$ (see \cref{remark_duals_of_I}). 
    Therefore, in order to prove the absoluteness of directed pushouts, we must prove the following:
    \begin{theorem}\label{theorem_directed_pushouts_are_absolute}
        There is an adjunction \[\begin{tikzcd}[column sep=35mm]
            \CatSp \arrow[r, shift left = 1ex, "{\Sigma^{\infty-1}(I^\op\leftarrow S^0\rightarrow I)\otimes (\blank)}"]
            &  \Fun(\Lambda^2_0, \CatSp) \arrow[l, shift left = .5ex, "{\colim^W}"]
            \arrow[l, phantom, shift right = .2ex, "\scriptscriptstyle\boldsymbol{\bot}"]
        \end{tikzcd}.\]
        in $\RMod_{\CatSp}(\PrL)$. 
        In particular, the directed pushout $(X\leftarrow Y\rightarrow Z)\mapsto X\ramalg_Y Z$ is absolute. 
    \end{theorem}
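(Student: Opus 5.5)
We construct the unit and counit of the adjunction explicitly and verify the triangle identities, reducing everything to finite computations with Gray cubes in the same way as for the dualizability of $\Sigma^\infty I$.

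Write $L \coloneqq \Sigma^{\infty-1}(I^\op\leftarrow S^0\rightarrow I)\otimes(\blank)$. Both $L$ and $\colim^W$ are right $\CatSp$-module functors. By the lemma just above, a right-module natural transformation out of or into $\Fun(\Lambda^2_0,\CatSp)$ is determined by finitely many data in $\CatSp$. It therefore suffices to produce two things, and then to check the two triangle identities after evaluating at the generator $\FF$, i.e.\ on the representables $\yo(i)\otimes\FF$:
\begin{itemize}
\item a unit $\eta\colon \FF\to \colim^W L(\FF) = \Sigma^{\infty-1}I^\op \ramalg_{\FF[-1]} \Sigma^{\infty-1}I$;
\item a counit, namely a map of spans $L(X_1\leftarrow X_0\to X_2)\to (X_1\leftarrow X_0\to X_2)$ natural in the span.
\end{itemize}

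For the unit, I would first compute the target. Shifting by $\vS^1$, which is legitimate by \cref{left_action_of_S1_on_catsp}, reduces it to $\Sigma^\infty$ of the unstable directed pushout $I^\op\ramalg_{S^0} I$ in $\infty\Cat_\ast$. This is the quotient of $\cube^1_+$ glued to $I^\op$ and $I$, i.e.\ a path of three composable $1$-cells in which the outer ones form the $\vS^1$-loop after collapsing. Concretely, $\Sigma\eta\colon \vS^1\to I^\op\ramalg_{S^0}I$ should send the generating loop to the composite $r\circ e\circ s$ through the cylinder edge $e$, imitating the unit $l\mapsto rs$ of the proof for $\Sigma^\infty I$.

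For the counit I would proceed componentwise, using the explicit shape of $\colim^W$:
\begin{itemize}
\item At $0$, the component is $\Sigma^{-1}(X_1\ramalg_{X_0}X_2)\to X_0$, the adjunct of the structure map $X_1\ramalg_{X_0}X_2\to \rcof$-type collapse onto $\Sigma X_0 = 0\ramalg_{X_0}0$.
\item At $1$ and $2$, the components are $\Sigma^{-1}I^\op\otimes(X_1\ramalg_{X_0}X_2)\to X_1$ and the analogue for $X_2$. These are built from the counit $\eps$ of $\Sigma^{\infty-1}I\dashv\Sigma^\infty I$ constructed in the proof of the previous proposition, applied to the cylinder part, together with the collapse of the opposite leg.
\end{itemize}
Naturality of these components in the span follows because everything is induced by maps of weights in $\Fun(\Lambda^2_0,\infty\Cat_\ast)$ built from $\cube^1$, $\cube^2$ and their quotients, followed by $\Sigma^\infty$. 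Moving desuspensions past $I$ and $I^\op$ uses the half-central structure (\cref{half-center_S1}), and uniqueness of that structure guarantees that the choices of where to pull $\vS^{-1}$ out agree.

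The triangle identities, after tensoring with $\vS^1$ on the appropriate side, become identities of unstable maps between quotients of low-dimensional cubes, namely $\cube^2\to\cube^3\to\cube^2$ and their wedges along $\cube^1$. These are checked by tracing atomic cells as in the previous proof. By the Yoneda reduction it suffices to check them on the three representable spans $\yo(i)$. For $i=1,2$ the checks essentially reproduce the triangle identities for $\Sigma^\infty I^{(\op)}$ from \cref{remark_duals_of_I}. For $i=0$ the check involves the full span $\FF\leftarrow\FF\to\FF$ and the $\Sigma$-flip of \cref{half-2-functoriality_of_suspension}.

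I expect the main obstacle to be the $i=0$ component. There the cylinder $\cube^1\otimes X_0$ interacts with both legs at once, and the direction of the $2$-cell must come out consistently after suspension reverses it. Getting the signs (duals $\circ$, $\op$) right is the delicate point. My first attempt would be to reduce this case to the two one-sided cases via the pushout decomposition of $\colim^W$ into two ordinary pushouts, as in \cref{pasting_law_lax_pullbacks}, and then glue the already-verified identities.
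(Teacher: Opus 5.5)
Your outline (explicit unit and counit, reduction to generators, tracing atomic cells after suspending) is the same as the paper's. However, one step is computed wrongly and the decisive one is missing.

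\textbf{The unit's target is misidentified.} To pull $\Sigma^{-1}$ out of $\colim^W L(\FF)$ you must commute the cylinder $\cube^1$ past $\vS^{-1}$. The half-central structure replaces $\cube^1$ by $(\cube^1)^\circ$ and thereby swaps the two legs (\cref{half-2-functoriality_of_suspension}). So the target is $\Sigma^{\infty-1}(I\ramalg_{S^0} I^\op)$, the free pointed category on $\ast\to 0\to 1\to\ast$, not $I^\op\ramalg_{S^0} I$. The latter is free on $0\to 1$ and $0\to\ast\to 1$. Its basepoint has no nonidentity endomorphism, so your loop $r\circ e\circ s$ does not exist there. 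Your verbal picture is the right one, but the formula exhibits exactly the dual-bookkeeping error you flagged as delicate. Similarly, the $X_1$-leg of the counit must use $\eps_{I^\op}$ (for $\Sigma^{\infty-1}I^\op\dashv\Sigma^\infty I^\op$), applied after the collapse $X_1\ramalg_{X_0}X_2\to X_1\ramalg_{X_1}0\simeq I^\op\owedge X_1$. The counit for $I$ is correct only on the $X_2$-leg.

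\textbf{The decisive check is left open, and your suggested route cannot close it.} You also conflate the two triangle identities.
\begin{itemize}
\item The identity for $L$ is evaluated at $\FF$ and is a map of spans. Its middle leg is trivially the identity, and its outer legs are the triangle identities for $I$ and $I^\op$.
\item The identity for $\colim^W$ is evaluated at the generators $\yo(i)\otimes\FF$. For $i=1,2$ it is an easy loop check. The real content is at $\yo(0)=(\FF\leftarrow\FF\to\FF)$.
\end{itemize}
Splitting $\ramalg$ into two ordinary pushouts and invoking \cref{pasting_law_lax_pullbacks} does not handle this case. The pasting law only recognises directed pushout squares. Moreover, the decomposition $X\amalg_Y(\cube^1\otimes Y)\amalg_Y Z$ retains the cylinder $\cube^1\otimes Y$, which the one-sided generators never see, and that is where the $2$-cell and the flip $\tau$ live.

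The paper handles this case by explicit computation. After suspending, the composite becomes
$\vS^1\owedge\cube^1_+\to(I\amalg_{S^0}\cube^1_+\amalg_{S^0}I^\op)\owedge\cube^1_+\to\cube^1_+\owedge\vS^1\xrightarrow{\tau}\vS^1\owedge\cube^1_+$.
Presented on a grid of gaunt cells, the $2$-cell goes $\alpha\mapsto\delta\ast\gamma\ast\beta\mapsto\alpha'$, since $\beta$ and $\delta$ go to identities, and then $\tau$ undoes the flip.

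Your instinct that $\yo(0)$ should follow from the one-sided cases can be rescued by rigidity rather than pasting. Suppose the composite is the identity at $\yo(1)$ and $\yo(2)$. Then naturality leaves as the only remaining freedom the fiber of $\Map(\cube^1,\Free_{\EE_\infty}\cube^1)\to\Map(\partial\cube^1,\Free_{\EE_\infty}\cube^1)$ over the endpoint inclusion, using $\Omega^\infty\Sigma^\infty_+\simeq\Free_{\EE_\infty}$. Since the unary summand of $\Free_{\EE_\infty}\cube^1$ is $\cube^1$ itself, this fiber is $\Map_{\cube^1}(0,1)\simeq\ast$.
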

    \begin{remark}
        One can also guess the right dual weight in a similar but easier manner. Namely, we already know that $X\mapsto [W(\blank), X]$ is the right adjoint to $\colim^W$, even though we do not know whether it lies in $\RMod_{\CatSp}(\PrL)$. Since the (co)presheaf categories are free modules on (co)representable functors, we can always cook up the best approximation of a functor by a right $\CatSp$-module morphism. In our case, it is simply $X\mapsto [W(\blank), \FF]\otimes X$, so we only need to know the right dual of each component of the weight. In our case, we will see that the right dual of $\Sigma^\infty_+\cube^1$ is $\Sigma^{\infty-1}(I\cup_{S^0} I^\op)$. 
    \end{remark}
    \begin{remark}
        It seems reasonable that the absoluteness follows formally from the above consideration (because, roughly speaking, the right adjoint of the prospective left adjoint is an absolute left Kan extension and should be computed pointwise), more generally for colimits over a finite posets with weights whose colimits of corepresentable presheaves are dualizable. We will not pursue this idea here and instead give a direct proof. 
    \end{remark}
    \begin{proof}
        We exhibit the adjunction by spelling out the unit and counit and checking the triangle identities. We will often omit $\Sigma^\infty$ in the following. Also beware of the frequent switch between $\ramalg$ and $\lamalg$ under $\Sigma$.
        \begin{enumerate}
            \item To define the unit $\eta: \id_\CatSp \to \colim^W(\Sigma^{-1}(I^\op\leftarrow S^0 \rightarrow I)\otimes(\blank))$, it suffices to define $\eta_{\FF}$; for general $X$, we must define $\eta_X = \eta_{\FF}\otimes X$. Note that the codomain is computed as
            \[\Sigma^{-1}I^\op\ramalg_{\Sigma^{-1}S^0} \Sigma^{-1}I \simeq \Sigma^{\infty-1}(I \ramalg_{S^0} I^\op),\]
            where the category $I\ramalg_{S^0} I^\op \simeq I\amalg_{S^0} \cube^1_+\amalg_{S^0} I^\op\in \Cat_\ast$ is the free category on the graph
            \[\begin{tikzcd}[column sep=2mm, row sep=3mm]
                0 \arrow[rr, bend left, "\cube^1"] && 1 \arrow[ldd, bend left, "{I^\op}"] \\ && \\
                & \ast \arrow[luu, bend left, "{I}"] & 
            \end{tikzcd}.\]
            We define $\eta_\FF: \FF\to \Sigma^{\infty-1}(I\ramalg_{S^0}I^\op)$ as the map classifying the loop $\ast\to 0\to 1\to \ast: \vS^1\to I\ramalg_{S^0}I^\op$. 
            \item Let $X\leftarrow Y\to Z\in \Fun(\Lambda^2_0, \CatSp)$. We define the counit map $\eps_{X\leftarrow Y\to Z}$ as the vertical composite of the following diagram:
            \[\begin{tikzcd}
                \Sigma^{-1} I^\op\otimes (X\ramalg_Y Z) \ar[d] & \Sigma^{-1}\FF \otimes (X\ramalg_Y Z) \ar[l]\ar[r]\ar[d] &
                \Sigma^{-1} I\otimes (X\ramalg_Y Z)\ar[d]\\
                \Sigma^{-1} I^\op\otimes (X\ramalg_X 0)\ar[d, "\simeq"] & \Sigma^{-1} (0\ramalg_Y 0) \ar[l]\ar[r] \ar[d, "\simeq"] &
                \Sigma^{-1} I\otimes (0\ramalg_Z Z)\ar[d, "\simeq"]\\
                \Sigma^{-1} (I^\op\owedge I^\op)\otimes X \ar[d ,"\eps_{I^\op}"] & Y \ar[l]\ar[r]\ar[d, equal] & \Sigma^{-1}(I\owedge I) \otimes Z \ar[d, "\eps_I"] \\
                X & Y\ar[l]\ar[r] & Z.
            \end{tikzcd}\]
            \item Now we must check the triangle identities. Since both our prospective left and right adjoints are morphisms of right $\CatSp$-modules, it suffices to check them on generators.
            \begin{enumerate}
                \item One of the triangle identities, evaluated at $\FF\in\CatSp$, says that after a suspension, the vertical compositions of the following diagram are the identities: 
                \[\adjustbox{center}{\begin{tikzcd}[column sep=6mm]
                    I^\op\ar[d]  &\FF\ar[l]\ar[r]\ar[d] &I\ar[d]\\
                    I^\op\otimes (\Sigma^{-1}I^\op\ramalg_{\Sigma^{-1}\FF}\Sigma^{-1}I)\ar[d]&(\Sigma^{-1}I^\op\ramalg_{\Sigma^{-1}\FF}\Sigma^{-1}I)\ar[l]\ar[r]\ar[d] &I\otimes (\Sigma^{-1}I^\op\ramalg_{\Sigma^{-1}\FF}\Sigma^{-1}I)\ar[d] \\
                    I^\op \otimes (\Sigma^{-1}I^\op\ramalg_{\Sigma^{-1}I^{\op}}0)\ar[d]& (0\ramalg_{\Sigma^{-1}\FF} 0)\ar[l]\ar[r]\ar[d] &I\otimes (0\ramalg_{\Sigma^{-1}I}\Sigma^{-1}I)\ar[d] \\
                    I^\op\otimes I^\op\otimes (\Sigma^{-1} I^\op)\ar[d] &\FF\ar[l]\ar[r]\ar[d] & I\otimes I\otimes(\Sigma^{-1}I)\ar[d] \\
                    I^\op&\FF\ar[l]\ar[r] &I
                \end{tikzcd}}\]
                After another suspension and canceling the desuspension using the half-central structure of $\vS^1$, the compositions can be computed in $\infty\Cat_\ast$:
                \[\begin{tikzcd}
                    \vS^1\owedge I^\op\ar[d]  &\vS^1\ar[l]\ar[r]\ar[d] &\vS^1\owedge I\ar[d]\\
                    I\owedge (I\ramalg_{S^0}I^\op)\ar[d] & (I \ramalg_{S^0}I^\op)\ar[l]\ar[r]\ar[d] &I^\op\owedge (I\ramalg_{S^0}I^\op)\ar[d] \\
                    I \owedge (0\ramalg_{I^{\op}}I^{\op})\ar[d]& (0\ramalg_{S^0} 0)\ar[l]\ar[r]\ar[d] &I^\op\owedge (I \ramalg_{I} 0)\ar[d] \\
                    I\owedge I\owedge I^\op\ar[d] &\vS^1 \ar[l]\ar[r]\ar[d] & I^\op\owedge I^\op\owedge I\ar[d] \\
                    \vS^1 \owedge I^\op&\vS^1\ar[l]\ar[r] &\vS^1\owedge I.
                \end{tikzcd}\]            
                The middle column unpacks to $\vS^1\to (I\ramalg_{S^0}I^\op)\simeq I\amalg_{S^0}\cube^1_+\amalg_{S^0} I^\op\to \ast\amalg_{S^0}\cube^1_+\amalg_{S^0}\ast\simeq \vS^1$, which is evidently the identity. The left and right columns are equivalent to the identity by the triangle identities for the duality of $I$ and $I^\op$. 
                \item The other triangle identity is that for any $X\leftarrow Y\to Z$, the following composition is the identity: 
                \[X\ramalg_Y Z\to \Sigma^{-1}(I \ramalg_{S^0} I^\op)\otimes (X\ramalg_Y Z) \to X\ramalg_Y Z, \]
                where the second map is the $W$-weighted colimit of the diagram in (2). Since $\Fun(\Lambda^2_0, \CatSp)$ is generated by $0\leftarrow 0\rightarrow \FF$, $\FF\leftarrow 0\rightarrow 0$, and $\FF\leftarrow\FF\rightarrow\FF$ as a right $\CatSp$-module, it suffices to check the identity for these three diagrams. The first case unpacks (after a suspension) to the statement that the composition
                \[\vS^1\to I\amalg_{S^0}\cube^1_+\amalg_{S^0}I^\op\to (I\owedge I)\amalg_{\ast} \ast\amalg_{\ast} \ast\simeq I\owedge I\to \vS^1,\]
                or equivalently $\vS^1\to I\amalg_{S^0}\cube^1_+\amalg_{S^0} I^\op\to I\amalg_{S^0} \ast \amalg_\ast \ast\simeq \vS^1$ is the identity, which is clear, and the second case is similar. The third case unpacks, after a suspension, to the statement that the following composition is the identity:
                \[\vS^1\owedge \cube^1_+\to (I\amalg_{S^0}\cube^1_+\amalg_{S^0}I^\op)\owedge \cube^1_+ \to\cube^1_+\owedge \vS^1\xrightarrow{\tau} \vS^1\owedge \cube^1_+. \]
                We can compute these maps explicitly by presenting them as (weak) quotients of grid-shaped (gaunt) categories (vertical and horizontal directions are the first and second tensor component, respectively):
                \[\left(\begin{tikzcd}                
                    \ast\ar[r, equal]\ar[d, "l_0"] & \ast \ar[d, "l_1"]\\
                    \ast \ar[r, equal]\ar[ru, Rightarrow, shorten <=2ex, shorten >=2ex, "\alpha"'] & \ast
                \end{tikzcd}\right)\quad \to \quad
                \left(\begin{tikzcd}
                    \ast\ar[r, equal]\ar[d, "a_0"] & \ast\ar[d, "a_1"]\\
                    00\ar[r, "d"']\ar[d, "b_0"]\ar[ru, Rightarrow, shorten <=2ex, shorten >=2ex, "\beta"'] & 01\ar[d, "b_1"]\\
                    10\ar[r, "e"']\ar[d, "c_0"]\ar[ru, Rightarrow, shorten <=2ex, shorten >=2ex, "\gamma"'] & 11\ar[d, "c_1"]\\
                    \ast\ar[r, equal]\ar[ru, Rightarrow, shorten <=2ex, shorten >=2ex, "\delta"'] & \ast
                \end{tikzcd}\right)\quad\to\quad 
                \left(\begin{tikzcd}
                    \ast \ar[r, "l_1'"]\ar[d, equal] & \ast\ar[d, equal]\\
                    \ast \ar[r, "l_0'"']\ar[ru, Rightarrow, shorten <=2ex, shorten >=2ex, "\alpha'"] & \ast
                \end{tikzcd}\right).\]
                The first map is $l_i\mapsto c_i\circ b_i\circ a_i$ ($i=0, 1$) and $\alpha\mapsto \delta\ast\gamma\ast\beta$. The second map is ($a_0, b_0, b_1, c_1\mapsto \id_\ast$), ($a_1, d\mapsto l_1'$), ($e, c_0\mapsto l_0'$), ($\beta\mapsto \id_{l_1'}$), ($\gamma\mapsto \alpha'$), ($\delta\mapsto \id_{l_0'}$), so these two maps and the half-central structure map compose to the identity. 
            \end{enumerate}
        \end{enumerate}
    \end{proof}
    Let us now spell out some special cases of the theorem:
    \begin{corollary}
        There are adjunctions
        \[\begin{tikzcd}[column sep=30mm]
            \CatSp \arrow[r, shift left = 1ex, "{\Sigma^{\infty-1}(S^0\to I)\otimes (\blank)}"]
            &  \Fun(\cube^1, \CatSp) \arrow[l, shift left = .5ex, "\rcof"]
            \arrow[l, phantom, shift right = .2ex, "\scriptscriptstyle\boldsymbol{\bot}"]
        \end{tikzcd}, and\]
        \[\begin{tikzcd}[column sep=40mm]
            \CatSp \arrow[r, shift left = 1ex, "{\Sigma^{\infty-1}(I^\op\to I\amalg_{S^0} I^\op)\otimes (\blank)}"]
            &  \Fun(\cube^1, \CatSp) \arrow[l, shift left = .5ex, "\rcyl"]
            \arrow[l, phantom, shift right = .2ex, "\scriptscriptstyle\boldsymbol{\bot}"]
        \end{tikzcd}.\]
        In particular, lax cofibers and lax cylinders are absolute colimits. 
    \end{corollary}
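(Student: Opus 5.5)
The plan is to obtain both adjunctions by restricting the adjunction of \cref{theorem_directed_pushouts_are_absolute} along the two obvious embeddings of $\Fun(\cube^1,\CatSp)$ into $\Fun(\Lambda^2_0,\CatSp)$. For the lax cofiber, let $e\colon \Fun(\cube^1,\CatSp)\to\Fun(\Lambda^2_0,\CatSp)$ send $(X\to Y)$ to $(0\leftarrow X\to Y)$. Then $\rcof = \ramalg\circ e$. The functor $e$ is left Kan extension along $\cube^1\cong\{0\to 2\}\hookrightarrow\Lambda^2_0$, extended by the zero object, since $\CatSp$ is pointed. Its right adjoint is restriction $e^\ast$, which forgets the $1$-vertex. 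Both functors are right $\CatSp$-module maps.

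Composing adjunctions gives $e^\ast\circ(\Sigma^{\infty-1}(I^\op\leftarrow S^0\rightarrow I)\otimes(\blank))\dashv \ramalg\circ e$. However, what we need is a left adjoint of $\ramalg\circ e$, and this composite is not yet one. The point to check is that $e$ must itself be a \emph{right} adjoint, so that we can compose left adjoints. A left adjoint of $e$ is the functor $(X\leftarrow Y\to Z)\mapsto (Y\to Z)$ modified by the cofiber in $X$. This is not obviously available, so I would instead construct $e$'s left adjoint directly.

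Concretely, $e$ has left adjoint $q$ sending $(X\xleftarrow{f} Y\to Z)$ to the pair $(\fib$-type object$\to Z)$. This fails to be pointwise in general. I would therefore bypass abstract composition and reuse the unit/counit of the theorem, specialized to spans with first entry $0$. The candidate left adjoint is $\Sigma^{\infty-1}(S^0\to I)\otimes(\blank)$, namely the restriction to $\{0\to2\}$ of $\Sigma^{\infty-1}(I^\op\leftarrow S^0\to I)$. I define the unit as the theorem's unit, postcomposed with the map $\Sigma^{-1}(I\ramalg_{S^0}I^\op)\to\Sigma^{-1}(I\ramalg_{S^0}0)$ collapsing $I^\op$. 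The counit is the theorem's counit with the $X$-column deleted. The triangle identities then reduce to checks on generators $(0\to\FF)$ and $(\FF\to\FF)$. These are exactly the computations (3b) in the theorem's proof with the $I^\op$-leg collapsed, together with the duality $\Sigma^{\infty-1}I\dashv\Sigma^\infty I$ from the preceding subsection.

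For $\rcyl$, the approach is the same. Use $e'\colon (X\to Y)\mapsto (X\xleftarrow{\id}X\to Y)$, so that $\rcyl = \ramalg\circ e'$. Restricting the theorem's left adjoint to the diagonal-type diagram gives the weight obtained by left Kan extending along the fold $\Lambda^2_0\to\cube^1$, $0,1\mapsto 0$, $2\mapsto1$. Its value at $0$ is the pushout $I^\op\amalg_{S^0}$ of the $S^0$- and $I^\op$-entries. Computing this pointwise should give exactly $\Sigma^{\infty-1}(I^\op\to I\amalg_{S^0}I^\op)$. Here I would use that $\Sigma^{\infty-1}$ preserves colimits, and that precomposition with the fold is right adjoint to this left Kan extension, so $\ramalg\circ e' = \ramalg\circ\mathrm{fold}^\ast$, and left adjoints compose. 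In fact the same device handles $\rcof$ more cleanly: $e = \mathrm{ext}\circ$ with pointedness, where $e$ is right adjoint to left Kan extension along $\{0\to2\}\to\Lambda^2_0$ plus quotient.

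The main obstacle is identifying these adjunctions as living in $\RMod_{\CatSp}(\PrL)$, and verifying that the restricted weight is correctly the stated one. In the $\rcyl$ case this means checking that the pushout $S^0\amalg I^\op$ assembles into $I\amalg_{S^0}I^\op$ with the right map from $I^\op$. Absoluteness then follows by definition.
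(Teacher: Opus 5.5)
Your treatment of $\rcyl$ is exactly the paper's argument. The functor $e'=\mathrm{fold}^\ast$ is what the paper calls $i_!$, the left Kan extension along $i\colon\{0\to 2\}\hookrightarrow\Lambda^2_0$. Its left adjoint $\mathrm{fold}_!=i_!^L$ is the cobase-change functor $(X\leftarrow Y\to Z)\mapsto(X\to X\amalg_Y Z)$. Composing with the theorem's adjunction gives the stated weight.

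One small slip: the value of $\mathrm{fold}_!$ at $0$ is the colimit over $\{0\to 1\}$, which has terminal object $1$, so it is just $I^\op$. The pushout $I^\op\amalg_{S^0}I$ appears at the other vertex.

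The $\rcof$ part has a genuine error: you have the adjoints of $e$ backwards. The functor $e(X\to Y)=(0\leftarrow X\to Y)$ is the right Kan extension $i_\ast$, not a left Kan extension. A left Kan extension would put $X$ at vertex $1$. For the right Kan extension, the value at $1$ is a limit over the empty category, since there are no arrows from $1$ into the image of $i$, so it is the terminal object $0$.

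You can see this directly. A map $(A\leftarrow B\to C)\to(0\leftarrow X\to Y)$ is the same as a map $(B\to C)\to(X\to Y)$, because $0$ is terminal. So restriction $i^\ast$ is left adjoint to $e$, with no cofiber correction. Conversely, restriction is not right adjoint to $e$: a map $(0\leftarrow X\to Y)\to(A\leftarrow B\to C)$ also carries a nullhomotopy of $X\to B\to A$.

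Hence the composite you wrote down and then rejected, $i^\ast\circ\bigl(\Sigma^{\infty-1}(I^\op\leftarrow S^0\to I)\otimes(\blank)\bigr)\dashv\ramalg\circ i_\ast=\rcof$, is already the desired adjunction. That is the paper's whole proof: use the quadruple $i_!^L\dashv i_!\dashv i^\ast\dashv i_\ast$, compose the theorem with $i^\ast\dashv i_\ast$ for $\rcof$, and with $i_!^L\dashv i_!$ for $\rcyl$.

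Your hand-built unit (theorem's unit followed by collapsing $I^\op$) and counit (theorem's counit at $(0\leftarrow X\to Y)$ with the first column deleted) are exactly the unit and counit of this composite. So the fallback does work, but the triangle identities then hold formally and no generator checks are needed. The ``fib-type'' left adjoint $q$ is incorrect, and your final sentence about $e$ being right adjoint to a left Kan extension does not type-check; both should be discarded.

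Finally, the $\RMod_{\CatSp}(\PrL)$ issue is not an obstacle. The functors $i^\ast$, $i_\ast$ (extension by the zero object), $i_!$ and cobase change all preserve colimits and commute with the pointwise right $\CatSp$-action, using $0\otimes A\simeq 0$. So the composite adjunctions live in $\RMod_{\CatSp}(\PrL)$, and absoluteness follows by definition.
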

    \begin{proof}
        The inclusion $i: \cube^1 \simeq \{0\to 2\}\hookrightarrow \{1\leftarrow 0\to 2\} = \Lambda^2_0$ induces the following adjunction quadruple:
        \[i_!^L \dashv i_!\dashv i^\ast\dashv i_\ast :\begin{tikzcd}
            \Fun(\Lambda^2_0, \CatSp) \arrow[r, shift left = 1.6ex] \arrow[r]
            & \Fun(\cube^1, \CatSp). \arrow[l, shift left=.8ex] \arrow[l, shift right=.8ex]
        \end{tikzcd}\]
        The leftmost adjoint $i_!^L$ is the cobase-change functor $(X\leftarrow Y\rightarrow Z)\mapsto (X\to X\amalg_Y Z)$; the other three are restriction and left and right Kan extension. The left Kan extension takes $X\to Y$ to $X\xleftarrow{=} X\to Y$, and the right Kan extension takes $X\to Y$ to $0\leftarrow X\to Y$.
        Now, the first claim follows by composing the adjunction of the theorem with $i^\ast\dashv i_\ast$, whereas the second follows by composing it with $i_!^L\dashv i_!$. 
    \end{proof}
    
    \begin{corollary}
        The $1$-cube $\Sigma^\infty_+\cube^1$ is dualizable. The left and right duals are both $\Sigma^{\infty-1}(I\cup_{S^1} I^\op)$, i.e., the desuspension of the free spectrum on the pointed graph 
        \[\begin{tikzcd}
            \ast \ar[r, bend left] & \bullet \ar[l, bend left]
        \end{tikzcd}.\]
    \end{corollary}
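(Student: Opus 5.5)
The plan is to specialize the second adjunction of the previous corollary, the one for the lax cylinder, to the arrow $\FF\xrightarrow{=}\FF$. Since $\rcyl(X\xrightarrow{=}X)=X\ramalg_X X\simeq \cube^1\otimes X$, and $\CatSp$ is tensored over $\infty\Cat$ through $\Sigma^\infty_+$, we have $\rcyl(\id_X)\simeq \Sigma^\infty_+\cube^1\otimes X$. The functor $X\mapsto \Sigma^\infty_+\cube^1\otimes X$ therefore factors as
\[\CatSp\xrightarrow{\ \Delta\ }\Fun(\cube^1,\CatSp)\xrightarrow{\ \rcyl\ }\CatSp,\]
where $\Delta$ is the constant-diagram functor $X\mapsto (X\xrightarrow{=}X)$. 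The previous corollary supplies a left adjoint $\Sigma^{\infty-1}(I^\op\to I\amalg_{S^0}I^\op)\otimes(\blank)$ to $\rcyl$ in $\RMod_{\CatSp}(\PrL)$. The functor $\Delta$ is the restriction along $\cube^1\to\ast$, so its left adjoint is $\colim_{\cube^1}=\ev_1$, which is clearly a right $\CatSp$-module map. Composing the two gives a left adjoint to $\Sigma^\infty_+\cube^1\otimes(\blank)$ in $\RMod_{\CatSp}(\PrL)$, namely
\[X\mapsto \ev_1\bigl(\Sigma^{\infty-1}(I^\op\to I\amalg_{S^0}I^\op)\otimes X\bigr)=\Sigma^{\infty-1}(I\amalg_{S^0}I^\op)\otimes X.\]
This exhibits $\Sigma^{\infty-1}(I\amalg_{S^0}I^\op)$ as the left dual of $\Sigma^\infty_+\cube^1$. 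The gluing $I\cup_{S^0}I^\op$ identifies $0$ with $0$ and $1$ with $1$, so it is the pointed graph with an arrow $\ast\to\bullet$ and an arrow $\bullet\to\ast$, and its free category is the one displayed in the statement.

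For the right dual, I will use the duality rules of \cref{remark_duals_of_I}. Applying the total dual $D$, which is a monoidal involution of $\CatSp^\otimes$ by the proposition in the section on monoidal involutions, we get $(\Sigma^\infty_+\cube^1)^\circ\simeq\Sigma^\infty_+\cube^1$. Then shifting, as in that remark, turns a left-duality $L\dashv R$ into $\Sigma^{-1}L\dashv\Sigma R^\circ$-type relations. Concretely, from $L\dashv \Sigma^\infty_+\cube^1$ with $L=\Sigma^{\infty-1}(I\cup I^\op)$, I get $\Sigma^\infty_+\cube^1=\Sigma(\Sigma^{-1}\Sigma^\infty_+\cube^1)\dashv\Sigma^{-1}L^\circ\cdot\Sigma$. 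The cleaner route is the general fact that if $L\dashv R$, then $R\dashv \Sigma^{\pm1}(\cdots)$ cycles with period four, exactly as in the remark. Tracking that cycle gives $\Sigma^\infty_+\cube^1\dashv \Sigma^{\infty-1}(I\cup I^\op)^{\circ}$ or the op version. Since $(I\cup_{S^0}I^\op)^\circ\cong I^\op\cup I\cong I\cup_{S^0}I^\op$ (the graph is symmetric under reversing both edges), the right dual is the same object.

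An alternative for the right dual, which I would use if the cycle bookkeeping is awkward, is the remark after \cref{theorem_directed_pushouts_are_absolute}. There the right dual of $\Sigma^\infty_+\cube^1$ was already predicted to be $\Sigma^{\infty-1}(I\cup_{S^0}I^\op)$, via the reversal symmetry $\op:\infty\Cat^\otimes\simeq\infty\Cat^{\otimes^\rev}$ transported to $\CatSp$. That symmetry swaps left and right duals and fixes both $\cube^1$ and the graph up to isomorphism.

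The main obstacle I expect is bookkeeping of the involutions and shifts: making sure the left dual composite lands on $I\amalg_{S^0}I^\op$ rather than on a twisted version, and that the duals ${}^\circ$ and ${}^\op$ of this graph really coincide with it as pointed objects. The adjunction composition itself is formal.
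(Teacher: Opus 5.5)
\textbf{Left dual.} This part is the paper's argument, factored differently. The paper composes the adjunction of \cref{theorem_directed_pushouts_are_absolute} with $\colim\dashv\Delta$ on $\Fun(\Lambda^2_0,\CatSp)$. You compose the lax-cylinder corollary with $\ev_1\dashv\Delta$. Since $i_!\circ\Delta_{\cube^1}\simeq\Delta_{\Lambda^2_0}$ and $\ev_1\circ i_!^L\simeq\colim_{\Lambda^2_0}$, this is the same composite. It correctly gives the left dual $L=\Sigma^{\infty-1}(I\amalg_{S^0}I^\op)$.

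\textbf{Right dual: the primary argument fails.} There is no general fact that a left duality $L\dashv R$ produces a right dual of $R$ through a four-periodic cycle. The rules of \cref{remark_duals_of_I} only give
\[\Sigma^{\pm1}L^\circ\dashv\Sigma^{\mp1}R,\qquad \Sigma^{\pm1}L\dashv\Sigma^{\mp1}R^\circ,\qquad L^\circ\dashv R^\circ,\]
and none of these puts $R=\Sigma^\infty_+\cube^1$ on the left. The cycle closes for $I$ only because there $R=\Sigma^\infty I$ is the shift $\Sigma L$ of its own left dual, so $\Sigma L\dashv\Sigma^{-1}R^\circ$ reads $R\dashv\Sigma^{\infty-1}I^\op$. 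Here $\Sigma L=\Sigma^\infty(I\amalg_{S^0}I^\op)$ is not $\Sigma^\infty_+\cube^1$; already in $\Sp$ they become $\SS[1]$ and $\SS$. In a nonsymmetric monoidal category a left dual gives you no right dual for free. Your relation ``$\dashv\Sigma^{-1}L^\circ\cdot\Sigma$'' has no meaning.

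\textbf{Right dual: use your fallback, with the correct input.} The input is not the remark after \cref{theorem_directed_pushouts_are_absolute}. That remark only guesses the right dual via the best right-module approximation $[W(\blank),\FF]\otimes X$, and a guess is not a proof. The input is that the odd dual is an anti-monoidal autoequivalence $\CatSp^{\otimes^\rev}\simeq\CatSp^\otimes$, as stated in the remark closing the subsection on monoidal involutions. The argument then runs as follows.
\begin{enumerate}
\item Apply $(\blank)^\op$ to the unit $\FF\to R\otimes L$ and the counit $L\otimes R\to\FF$. You get $\FF\to L^\op\otimes R^\op$ and $R^\op\otimes L^\op\to\FF$, still satisfying the triangle identities. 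This says $R^\op\otimes(\blank)\dashv L^\op\otimes(\blank)$.
\item Check $R^\op=\Sigma^\infty_+(\cube^1)^\op\cong R$.
\item Check $L^\op\simeq\Sigma^{\infty-1}\bigl((I\amalg_{S^0}I^\op)^\co\bigr)=L$. This uses $D_\tau\circ\Sigma^{\infty-1}=\Sigma^{\infty-1}\circ D_{\tau[1]}$, so the odd dual acts on the input of $\Sigma^{\infty-1}$ by $\co$, which is trivial on $1$-categories.
\end{enumerate}

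The paper's one-line proof likewise only produces the left adjoint. This reversal step is what justifies the claim that the left and right duals are both $\Sigma^{\infty-1}(I\amalg_{S^0}I^\op)$.
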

    \begin{proof}
        Compose the adjunction of the theorem with the adjunction
        \[\begin{tikzcd}
            \Fun(\Lambda^2_0, \CatSp) \arrow[r, shift left = 1ex, "\colim"]
            & \CatSp. \arrow[l, shift left = .5ex, "\Delta"]
            \arrow[l, phantom, shift right = .2ex, "\scriptscriptstyle\boldsymbol{\bot}"]
        \end{tikzcd}\]
    \end{proof}
    \begin{corollary}\label{retracts_of_cubes_are_dualizable}
        Left- and right-dualizable objects are closed under shifts, tensor products, and retracts. 
        In particular, $\Sigma^{\infty-n} X$ is dualizable for any $X\in \widetilde{\Cube}$. This includes the cases $X\in \Cube, \Ori, \Theta$ by \cref{cubes_and_orientals_are_dense}. 
    \end{corollary}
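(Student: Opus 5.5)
The corollary has three closure properties plus an application, and I will take them in that order.

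\emph{Shifts.} By \cref{left_action_of_S1_on_catsp} and the half-central structure of $\FF[1]$, the object $\FF[1]=\Sigma^{\infty}_+\vS^1$ is invertible in $\CatSp^\otimes$, with inverse $\FF[-1]$. So $X[n]\simeq \FF[n]\otimes X$. If $X$ has left dual $X^L$, then $X[n]$ has left dual $X^L\otimes\FF[-n]$. To see this I would compose the adjunction $X\otimes(\blank)\dashv X^L$-tensoring with the adjoint equivalence $\FF[n]\otimes(\blank)$, exactly as in \cref{remark_duals_of_I}. The right-dual case is symmetric.

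\emph{Tensor products and retracts.} If $X$ and $Y$ are left-dualizable, then $(X\otimes Y)\otimes(\blank)=X\otimes(Y\otimes(\blank))$ is a composite of functors each admitting the required adjoint in $\RMod_{\CatSp}(\PrL)$. The adjoint is therefore $Y^L$-tensoring composed with $X^L$-tensoring, which gives $(X\otimes Y)^L\simeq Y^L\otimes X^L$.

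For retracts, suppose $Z\xrightarrow{i}X\xrightarrow{r}Z$ with $ri\simeq\id$ and $X$ dualizable. I would use the criterion that $Z$ is left-dualizable iff $Z\otimes(\blank)$ preserves limits, equivalently iff the canonical map $Z^\vee\otimes Y\to[Z,Y]$ is an equivalence. A functor that is a retract in $\Fun(\CatSp,\CatSp)$ of a limit-preserving right-$\CatSp$-linear functor is itself such a functor, because retracts of equivalences are equivalences. Concretely, the candidate dual of $Z$ is the retract of $X^L$ cut out by the idempotent $i r$; this idempotent splits because $\CatSp$ is idempotent complete, being presentable. The comparison map for $Z$ is then a retract of the one for $X$, hence an equivalence. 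This is the most delicate step, but only formally so.

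\emph{Application.} $\Sigma^\infty_+\cube^1$ is dualizable by the previous corollary, so $\Sigma^\infty_+\cube^n\simeq(\Sigma^\infty_+\cube^1)^{\otimes n}$ is dualizable by tensor closure, using that $\Sigma^\infty_+$ is monoidal (\cref{thm_tensor_product_of_catsp}). I read $\widetilde{\Cube}$ as the idempotent completion of the cubes. Each $X\in\widetilde{\Cube}$ is a retract of a cube, so $\Sigma^\infty_+X$ is a retract of a dualizable object, and hence dualizable. Shifts then give $\Sigma^{\infty-n}$, for both reduced and unreduced suspensions, since $\Sigma^\infty X$ is a retract of $\Sigma^\infty_+X$ for pointed $X$. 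The last step is \cref{cubes_and_orientals_are_dense}, which places $\Ori$ and $\Theta$ inside $\widetilde{\Cube}$.
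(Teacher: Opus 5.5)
Your main line is the paper's argument, and it is correct for $\Sigma^{\infty-n}_+X$, which is what the statement means for unpointed $X\in\widetilde{\Cube}$. Tensor products go by composing adjunctions, giving $(X\otimes Y)^L\simeq Y^L\otimes X^L$. Retracts go because the dualizability comparison map for $Z$ is a retract of the one for $X$. Shifts go via the invertible objects $\FF[\pm n]$, a step the paper leaves implicit.

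However, your last clause, extending the result to reduced suspension spectra, rests on a false claim: $\Sigma^\infty X$ is \emph{not} a retract of $\Sigma^\infty_+X$ in $\CatSp$. The splitting $\Sigma^\infty_+X\simeq\FF\oplus\Sigma^\infty X$ is a stable phenomenon that uses subtraction. Concretely, suppose $(X,x_0)$ is connected. A map $\Sigma^\infty X\to\Sigma^\infty_+X=\rB^\infty\Free_{\EE_\infty}(X)$ is the same as a pointed functor $X\to\Free_{\EE_\infty}(X)\simeq\bigsqcup_{k}X^{\times k}/\Sigma_k$ sending $x_0$ to the unit. Since there are no morphisms between different word lengths, such a functor is constant. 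So every composite $\Sigma^\infty X\to\Sigma^\infty_+X\to\Sigma^\infty X$ is zero, and for $X=(\cube^1,0)$ a retraction would force $\Sigma^\infty I\simeq 0$. Delete that clause. The reduced case is the cofiber of $\FF\to\Sigma^\infty_+X$, which is not among your closure operations, so it would need a separate argument, as the paper gives directly for $\Sigma^\infty I$. Relatedly, $\FF[1]=\Sigma^\infty\vS^1$, not $\Sigma^\infty_+\vS^1$.

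There is also a smaller slip in the retract step. With $[Z,\blank]$ right adjoint to $Z\otimes(\blank)$, the map $[Z,\FF]\otimes Y\to[Z,Y]$ detects \emph{right}-dualizability. Left-dualizability is detected by $Y\otimes\llbracket Z,\FF\rrbracket\to\llbracket Z,Y\rrbracket$. Also, limit-preservation of $Z\otimes(\blank)$ only gives an oplax right-linear left adjoint, not necessarily one of the form $Z^L\otimes(\blank)$. So in this non-symmetric setting it is not by itself a dualizability criterion, at least not before one knows the generators are dualizable. Your ``concretely'' argument, run with the correct comparison map, is exactly the paper's and suffices.
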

    \begin{proof}
        By composition of adjunctions, if $X^L, Y^L$ (resp.\ $X^R, Y^R$) are the left (resp.\ right) duals of $X, Y$, then $Y^L\otimes X^L$ (resp.\ $Y^R\otimes X^R$) is the left (resp.\ right) dual of $X\otimes Y$.
        Note that $X$ is right- (resp.\ left-)dualizable if and only if $[X, \FF]\otimes Y\to [X, Y]$ (resp.\ $Y\otimes \llbracket X, \FF\rrbracket\to \llbracket X, Y\rrbracket$) induced by the counit $X\otimes [X, \FF]\to \FF$ (resp.\ $\llbracket X, \FF\rrbracket\otimes X\to \FF$), is an equivalence. This condition is stable under retracts. Explicitly, the right and left duals of a retract of $X$ are the corresponding retract of $X^R= [X, \FF]$ and $X^L = \llbracket X, \FF\rrbracket$. 
    \end{proof}
    A more complete characterization of dualizable categorical spectra will be explained in work in preparation. 

    \section{Extensions of categorical spectra}
    As an important corollary, we can define a lift of the Barratt--Puppe sequence in $\Sp$. 
    \begin{theorem}\label{extension_of_catsp}
        There is the following diagram, depending functorially on $f: \Omega X\to Y \in \Fun(\cube^1, \CatSp)$:
        \[\begin{tikzcd}
            \Omega X \ar[r, "f"]\ar[d] & Y \ar[r]\ar[d, "g"] & 0 \ar[d] \\
            0 \ar[r]\ar[ru, Rightarrow, shorten <=2.5ex, shorten >=2ex] & Z \ar[r, "h"]\ar[d]\ar[ru, phantom, "\rotatebox{40}{$\simeq$}"] & X\ar[d, "\Sigma f"] \\
            & 0 \ar[r]\ar[ru, Rightarrow, shorten <=2.5ex, shorten >=2ex] & \Sigma Y,
        \end{tikzcd}\]
        where the left and bottom squares exhibit $\rcof(f)\xrightarrow{\sim} Z\xrightarrow{\sim} \rfib(\Sigma f)$ and the top-right square is commutative and biCartesian. 
    \end{theorem}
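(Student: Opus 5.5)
We define $Z\coloneqq \rcof(f)=0\ramalg_{\Omega X}Y$ and construct the canonical map $Z\to\rfib(\Sigma f)$ from the absoluteness of directed pushouts (\cref{theorem_directed_pushouts_are_absolute}). That theorem identifies $\rcof=\colim^W$ restricted along $\cube^1$ with a right adjoint. Concretely, the first corollary gives $\rcof\dashv \Sigma^{\infty-1}(S^0\to I)\otimes(\blank)$ as a left adjoint in $\RMod_{\CatSp}(\PrL)$. Hence $\rcof$ is also a weighted limit: $\rcof(f)\simeq[\Sigma^{\infty-1}(S^0\to I),f]$, a limit over $\cube^1$ with weight pieces $\Sigma^{-1}\FF$ and $\Sigma^{-1}I$. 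We unwind this limit using $[\Sigma^{-1}I,\blank]\simeq I\otimes\Sigma(\blank)$ from \cref{remark_duals_of_I}, i.e.\ the path object $\rpath$ shifted. This should produce $\rcof(f)\simeq \Sigma Y\,\overrightarrow{\times}\dots$, which after simplification is a directed pullback $0\overrightarrow{\times}_{\Sigma Y}\Sigma\Omega X=\rfib(\Sigma f)$, using $\Sigma\Omega X\simeq X$.

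To get functoriality and the explicit squares rather than an abstract equivalence, we write the lax square $0\Rightarrow$ for $\rcof$ (left square) by definition. We then apply \cref{half-2-functoriality_of_suspension} to it: suspension sends it to a square with reversed 2-cell, $\Sigma\Omega X\to\Sigma Y$ over $0\to\Sigma Z$. That square has the wrong shape, so instead we compose with the universal cone of $\rfib(\Sigma f)$. The cleaner route is to use the adjunction's unit and counit, the explicit loop $\vS^1\to I\ramalg_{S^0} I^\op$, which yield the comparison map $Z\to\rfib(\Sigma f)$ as a map of lax squares. The key claim is that it is an equivalence. We check this on generators $f$ of $\Fun(\cube^1,\CatSp)$ as a right $\CatSp$-module, namely $0\to\FF$ and $\FF\to\FF$ (suitably shifted), since both sides are right $\CatSp$-module functors: $\rcof$ as a colimit, $\rfib\circ\Sigma$ as a limit that is colimit-preserving by absoluteness. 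On $0\to \FF$ both sides are $\FF$. On $\FF\xrightarrow{=}\FF$ the left side is $\Sigma^\infty I$ and the right side is $\rpath(\FF[1])=[\Sigma^\infty I,\ldots]$, matched by the duality of $\Sigma^\infty I$ with $\Sigma^{\infty-1}I$.

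For the biCartesian top-right square, we take $h\colon Z\to X$ as the lax-fiber projection $\rfib(\Sigma f)\to\Sigma\Omega X\simeq X$. The square $Y\to Z\to X$ with $Y\to 0$ is a strict pullback by the pasting law \cref{pasting_law_lax_pullbacks}(1), applied to the directed pullback square defining $\rfib(\Sigma f)$ stacked with the pullback $0\times_{\Sigma Y}0\simeq\Omega\Sigma Y\simeq Y$. Dually, it is a pushout by the same proposition applied to the directed pushout defining $\rcof(f)$ pasted with $0\amalg_{\Omega X}0\simeq\Sigma\Omega X\simeq X$. The pasting law holds only when the added square is invertible, which is the case here.

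The main obstacle is compatibility: the map $Z\to X$ produced from the pushout side must coincide with the one from the pullback side, and the 2-cells of the two lax squares must be consistent. The orientation flip of \cref{half-2-functoriality_of_suspension}, coming from the half-central structure, is exactly what makes the lax cofiber square for $f$ match a lax (not oplax) fiber square for $\Sigma f$. We would verify this by the same cube-cell bookkeeping used in the triangle identities, on the generator $\FF\to\FF$.
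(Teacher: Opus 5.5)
Your identification of $Z$ is the paper's proof. Absoluteness gives $\Sigma^{\infty-1}(S^0\to I)\otimes(\blank)\dashv\rcof$. You wrote the $\dashv$ the other way round, but the conclusion you draw is the one that follows from this direction. Hence $\rcof(f)$ is the weighted limit
\[
[\Sigma^{-1}\FF,\Omega X]\times_{[\Sigma^{-1}\FF,Y]}[\Sigma^{\infty-1}I,Y]\simeq X\times_{\Sigma Y}[\Sigma^\infty I,\Sigma Y]\simeq\rfib(\Sigma f).
\]
Here $[\Sigma^{-1}I,\blank]\simeq[I,\Sigma(\blank)]$, which is $\Sigma^\infty I\otimes(\blank)$ by duality, not $I\otimes\Sigma(\blank)$. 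This equivalence is canonical and natural in $f$. The lax-versus-oplax orientation is already settled by the computation of the dual of $\Sigma^\infty I$, which is where the half-central structure entered.

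The gap is in the biCartesian part. You leave the compatibility of the pushout-side and pullback-side descriptions as the ``main obstacle'' and propose two ways to close it, neither of which works as stated.
\begin{itemize}
\item The generator check assumes $f\mapsto\rfib(\Sigma f)$ is a colimit-preserving right $\CatSp$-module functor. Absoluteness of directed pushouts does not give you that: it produces a left adjoint to $\ramalg$ and says nothing directly about $\overrightarrow{\times}$ preserving colimits. Colimit-preservation of $\rfib\circ\Sigma$ is a consequence of the identification with $\rcof$, so the argument is circular.
\item Cell bookkeeping on the single generator $\FF\to\FF$ does not identify maps and $2$-cells for general $f$.
\end{itemize}

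The missing idea is naturality of the identification, applied to the maps of arrows $(0\to Y)\to(\Omega X\xrightarrow{f}Y)\to(\Omega X\to 0)$.
\begin{itemize}
\item The second map shows that $\rcof$-functoriality $Z\to 0\ramalg_{\Omega X}0\simeq X$ agrees with the projection $\rfib(\Sigma f)\to X$. By naturality of the universal cocone, pasting the left square horizontally with the top-right square gives the directed pushout square exhibiting $\Sigma\Omega X$.
\item The first map identifies $g$ with the induced map $0\overrightarrow{\times}_{\Sigma Y}0=\Omega\Sigma Y\to\rfib(\Sigma f)$. So the vertical pasting of the top-right square with the bottom square is the directed pullback exhibiting $\Omega\Sigma Y\simeq Y$.
\end{itemize}
Then \cref{pasting_law_lax_pullbacks} gives that the top-right square is a pushout and a pullback. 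The paper does exactly this, without any orientation bookkeeping.

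Relatedly, the outer rectangles must be directed. You wrote $0\times_{\Sigma Y}0\simeq\Omega\Sigma Y$ and $0\amalg_{\Omega X}0\simeq\Sigma\Omega X$, but in $\CatSp$ these plain limits and colimits are not $\Omega\Sigma Y$ and $\Sigma\Omega X$. The plain pullback is computed levelwise by automorphisms rather than endomorphisms, and the plain pushout is a groupoidal rather than directed suspension. The pasting law needs the directed objects $0\overrightarrow{\times}_{\Sigma Y}0$ and $0\ramalg_{\Omega X}0$.
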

    \begin{definition}
        In the situation of the theorem, we say that $Z$ is an \emph{extension} of $X$ by $Y$ classified by $f$ (or $\Sigma f$). As usual, we let $\Ext(X, Y)$ denote $\Map(X, \Sigma Y)\simeq \Map(\Omega X, Y)\in \CMon$, the commutative monoid of extensions of $X$ by $Y$. 
    \end{definition}
    \begin{remark}
        Given a map $f: \Omega X\to Y\in \Ext(X, Y)$, there is the transpose of the above diagram with $\lcof(f)\simeq Z\simeq \lfib(\Sigma f)$. We may say that $Z$ is the \emph{coextension} classified by $f$. There is no reason to choose one over the other; if $Y\to Z\to X$ is an extension of categorical spectra, $\Sigma Y\to \Sigma Z\to \Sigma X$ is a coextension. 
    \end{remark}
    \begin{proof}
        Let $g: Y\to Z$ be the lax cofiber of $f$. Note that the right adjoint $(\Sigma^{-1}\FF\to \Sigma^{\infty-1} I)\otimes (\blank)$ of $\rcof: \Fun(\cube^1, \CatSp)\to \CatSp$ is also the weight of the lax fiber of the shift: $\rfib(\Sigma f) \simeq \lim([I, \Sigma Y]\to [\FF, \Sigma Y]\leftarrow [\FF, X])$, so there is a canonical equivalence $Z\xrightarrow{\sim}\rfib(\Sigma f)$. We see that the spliced map $Z\xrightarrow{\sim}\rfib(\Sigma f)\to X$ is the same map as the map induced by functoriality of the construction on $f$, applied to $(\Omega X\xrightarrow{f} Y)\to (\Omega X\to 0)$, so by the pasting law (\cref{pasting_law_lax_pullbacks}), the sequence $Y\to Z\to \Sigma X$ is a bifiber sequence. 
    \end{proof}
    \begin{remark}
        The theorem is surprising given the failure of the pasting law for directed pullbacks; it suggests that the classical notion of fiber sequences splits into those of lax fiber, lax cofiber, and bifiber sequences, and that they appear $3$-periodically when we rotate a triangle.
    \end{remark}

    \chapter{Applications to TQFT}
    \label{chapter_categorical_spectra_with_adjoints}
    One of the central motivations in the study of $n$-categories is the theory of functorial quantum field theories, especially topological quantum field theories (TQFTs). The goal of this chapter is to give a few sample applications of the theory of categorical spectra in this direction. We refer the reader to \cite{lurieClassificationTopologicalField2009} for an introduction. Let us only sketch a definition of TQFTs here. Let $d\geq n\geq 0$ be integers. An \emph{$n$-category of ($d$-dimensional) cobordisms}, or an \emph{$(n-1)$-th extended cobordism category}, is roughly the univalent completion of an $n$-algebroid consisting of the following data:
    \begin{itemize}
        \item An object is a $(d-n)$-dimensional manifold (possibly with extra structure).
        \item A $1$-morphism from $M_0$ to $M_1$ is a $(d-n+1)$-dimensional manifold $W$ with boundary, equipped with the identification $\partial W = \overline{M_0} \sqcup M_1$, where $\overline{M_0}$ denotes the manifold $M_0$ with the ``reversed'' structure. We call $W$ the \emph{cobordism} from $M_0$ to $M_1$. 
        \item More generally, for $k\leq n$, a $k$-morphism is a cobordism between $(k-1)$-morphisms, i.e., a $(d-n+k)$-dimensional manifold with corners equipped with an identification of the boundary with $(d-n+k-1)$-dimensional manifolds corresponding to the source and target $(k-1)$-morphisms.
        \item An $(n+1)$-morphism is a diffeomorphism between $n$-morphisms. More generally, $k$-morphisms for $k>n$ are diffeomorphisms and isotopies, or equivalently trivial cobordisms.
        \item Composition is given by gluing cobordisms along the shared boundary. 
        \item The symmetric monoidal structure is given by disjoint union of manifolds.
    \end{itemize}
    We often denote this cobordism category by $\Bord_{d-n, \ldots, d}^\cS$, where $\cS$ indicates the relevant structure on the manifolds. We will focus on the \emph{topological} case, i.e., when $\cS$ is a tangential structure $X\in \sS_{/\rB\rO(d)}$, although it is possible to allow more sophisticated geometric structures by working over the site of manifolds.
    We note, however, that to match a structure on a manifold with that on its boundary, we need to identify the boundary with its collar or germ, or at least its first jet.
    A \emph{TQFT} is a symmetric monoidal functor $Z: \Bord_{d-n, \ldots, d}^\cS\to A$ into another symmetric monoidal $n$-category, typically of algebraic flavor, such as ``the $n$-category of $n$-vector spaces'' over $\CC$.

    An important feature of the cobordism category is the existence of duals and adjoints: every object $M$ admits a dual $\overline{M}$, every morphism $W: M_0\to M_1$ admits left and right adjoints $\overline{W}: M_1\to M_0$, and so on, up through $(n-1)$-morphisms (note that it is not reasonable to ask for adjointability of all $n$-morphisms in an $n$-category, as a top-level morphism is adjointable if and only if it is invertible). Consequently, a TQFT lands only in the sufficiently adjointable cells of the target.
 
    Let us say in general that a symmetric monoidal $\infty$-category is \emph{$n$-adjointful} if every $k$-morphism for $0\leq k<n$ is both left and right adjointable (dualizable when $k=0$). 
    The cobordism hypothesis states, in the greatest generality, that a cobordism $n$-category is a free $n$-adjointful symmetric monoidal $n$-category generated by certain data. This is, of course, equivalent to classifying TQFTs as certain collections of generating data.
    Only the case $d=n$ is systematically understood; this is when the objects are $0$-manifolds. Such cases are called \emph{fully extended}, or \emph{fully local}. We will focus on the fully local cases, but note that the other cobordism categories can be described as the $(d-n)$-th loop of a fully extended cobordism category (with a modified structure $\cS$).
    Full locality refers to the heuristic that we can cut global information on higher-dimensional manifolds, or a cobordism, into trivial pieces ($0$-morphisms). This suggests that the TQFT is determined by the value at the (codimension $d$ germ of) points.
    The framed version of the cobordism hypothesis makes this intuition precise and tells us that the cobordism category with $d$-framing is generated by a $d$-framed point. 
    When $d=1$, this translates to the fact that one can uniquely interpret a string diagram; $d> 1$ can be thought of as a higher-dimensional analog of this fact. 
    
    We start this section with the study of $n$-adjointful categorical spectra and, in particular, $n$-adjointful symmetric monoidal categories. It is a levelwise property $\CatSp^{n\mhy\adj}\subset \CatSp$ (in the sense of \cref{section_categorical_spectra_stable_properties}) with a localization $L^{n\mhy\adj}$, and it satisfies the following three properties:
    \begin{enumerate}
        \item $X$ is $n$-adjointful if and only if $\Sigma X$ is $(n+1)$-adjointful. 
        \item If $X$ is an $m$-categorical spectrum and $f: Y\to Z$ is $L^{n\mhy\adj}$-equivalence, then $X\otimes f$ is an $L^{(m+n)\mhy\adj}$-equivalence. 
        \item If $X, Y$ are $n$-adjointful and $X\rightarrowtail Z\twoheadrightarrow Y$ is a (co)extension, then $Z$ is also $n$-adjointful. 
    \end{enumerate}
    The first property is immediate, and the second and third reduces to a formula expressing $\cube^1\otimes (C_1\to \Adj)$ as a pushout of two copies of $C_1\to \Adj$ and a copy of its suspension, where $C_1\to \Adj$ is the inclusion of the right adjoint into a walking adjunction category. The reader may safely skip the proof if preferred. 

    Once this formal property is established, we will spend \cref{section_cobordism_hypothesis} translating the usual cobordism hypothesis into the language of categorical spectra. This translation is formal and superficial, but we point out that because $0$-adjoint categorical spectra form a monoidal subcategory, it is more convenient to shift the $n$-adjointful symmetric monoidal $n$-category $\eC$ to a $0$-adjointful $0$-categorical spectrum $\rB^{\infty-n}\eC$. Philosophically, this means that we use the codimensional indexing, placing the top level (i.e., the partition function) at the categorical level $0$. 

    A real application of our theory is in \cref{section_cobordism_hypothesis_with_singularities}. Here we apply our theory to the study of successive extensions of cobordism categories, which we view as cobordism categories with singularities (a.k.a.\ defects). The equivalence in \cref{extension_of_catsp} is precisely the categorical crux in generalizing the cobordism hypothesis to this setting. 

    We end with a short discussion of the cobordism hypothesis with stable tangential structures in \cref{section_stable_variant}, i.e., when points have infinite codimension. In this case, we will see that the stably framed bordism $\infty$-category is the tensor unit of the $\infty$-adjointful categorical spectra.

    \section{Categorical spectra with adjoints}\label{section_categorical_spectra_with_adjoints}
    Let us start by recalling the definition of adjunctions in higher categories. 
    Let $r: x\to y$ and $l: y\to x$ be morphisms in a $(2, 2)$-category $\eK$. 
    We say a $2$-cell $\eta: \id_y\to rl$ is the \emph{unit} of an adjunction if there is another $2$-cell $\eps: lr\to \id_x$ satisfying the properties $\id_r\simeq (r\eps)(\eta r)$ and $\id_l\simeq (\eps l)(l\eta)$. In this case, we say $l$ and $r$ are \emph{left} and \emph{right adjoint} of the adjunction, and $\eps$ is the \emph{counit} of the adjunction. Now let $X$ be a general $\infty$-category. A $1$-morphism $r: x\to y$ in $X$ admits a left adjoint if it admits a left adjoint in the homotopy $2$-category of $X$. If $k\geq 2$ and $x: C_k\to X$ is a $k$-morphism with $(k-2)$-source $s_{k-2}$ and $(k-2)$-target $t_{k-2}$, then $x$ is said to admit a left adjoint if it admits a left adjoint as a $1$-morphism in $\Hom_X(s_{k-2}, t_{k-2})$. In this section, we study categorical spectra whose cells of dimensions in a range admit adjoints.

    Let $\Adj$ denote the strict $2$-category of walking adjunction \cite{schanuelFreeAdjunction1986}; it is a theorem of Riehl--Verity that it is gaunt and corepresents a homotopy-coherent adjunction in $(\infty, 2)$-categories. We refer to \cite[\S 3.1]{riehlHomotopyCoherentAdjunctions2016}
    for a detailed description as a simplicial computad. Here we only name the atomic cells for reference: $\Adj$ has two objects $0, 1$ and two atomic $1$-cells $r: 0\to 1$, $l: 1\to 0$ (denoted $-, +, u, f$, respectively, in the reference). 
    There are two atomic $2$-cells $\eta: \id_{1}\to rl$ and $\eps: lr\to \id_0$ corepresenting the unit and counit, two atomic $2$-cells $\alpha: \id_r\xrightarrow{\sim} (r\eps)(\eta r)$ and $\beta:\id_l\xrightarrow{\sim} (\eps l)(l\eta)$ corepresenting the triangle identities, and the pattern continues, i.e., there are two atomic cells $\alpha^{(n)}$ and $\beta^{(n)}$ for each $n\geq 3$ corepresenting the ``higher triangle identities.'' The following is the model-independent translation of the main results of \cite[\S 4]{riehlHomotopyCoherentAdjunctions2016}:
    \begin{theorem}
        Consider the following subcategories generated by the atomic cells (with generation indicated by overlines):
        \[C_1 = \overline{\{r\}}\hookrightarrow\overline{\{r, l, \eps\}}\hookrightarrow \overline{\{r, l, \eps, \eta, \alpha\}} \hookrightarrow \overline{\{r, l, \eps, \eta, \alpha, \beta, \alpha^{(3)}\}}\hookrightarrow \Adj. \]
        Then the inclusions of the first three categories into $\Adj$ are epimorphisms in $\infty\Cat$ and the last one is an equivalence. 
    \end{theorem}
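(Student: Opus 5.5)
The plan is to transport the results of Riehl--Verity \cite[\S 4]{riehlHomotopyCoherentAdjunctions2016}, which are phrased in terms of simplicial computads and quasi-categorically enriched categories, into the model-independent language of $\infty\Cat$. The only tool needed is that epimorphisms and equivalences in $\infty\Cat$ can be detected by mapping into arbitrary targets.

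Concretely, a map $A\to B$ in $\infty\Cat$ is an epimorphism iff $\Map(B,X)\to\Map(A,X)$ is a monomorphism of groupoids (inclusion of components) for every $X$. Since $\Adj$ is a $2$-category and all the subcategories are $2$-categories, we may replace $X$ by its $2$-truncation $X^{\leq 2}$. So everything reduces to statements about homotopy-coherent diagrams in $(\infty,2)$-categories. I would present $(\infty,2)$-categories by quasi-categorically enriched categories, or equivalently complete Segal objects. The Riehl--Verity results are statements about $\Map$ in that model: the space of extensions of an adjunction datum on a subcomputad to all of $\Adj$ is either contractible or, at the earlier stages, a union of components.

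The first step is to identify the strict $2$-categories $\overline{\{\cdots\}}$ and $\Adj$ with the corresponding simplicial computads. Because they are gaunt, their images in $2\Cat$ are the same objects whether computed strictly or weakly; this uses that parental subcomputads of the free computad are gaunt. The second step is to invoke the main extension theorem \cite[Theorem 4.3.8/4.4.11]{riehlHomotopyCoherentAdjunctions2016}. It says that the restriction map from $\Map(\Adj,X)$ to $\Map(\overline{\{r,l,\eps,\eta,\alpha,\beta,\alpha^{(3)}\}},X)$ is a trivial fibration, which gives the final equivalence. The same theorem says the restrictions to $\overline{\{r,l,\eps,\eta,\alpha\}}$ and $\overline{\{r,l,\eps\}}$ are trivial fibrations onto the subspaces of data that are part of some adjunction, so these maps are monomorphisms and hence epimorphisms. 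For $C_1\to\Adj$, a map $C_1\to X$ extends exactly when $r$ admits a left adjoint, and the space of such extensions is contractible by uniqueness of adjoints. This identifies $\Map(\Adj,X)$ with a union of components of $\Map(C_1,X)$.

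The main obstacle is the comparison of mapping spaces: showing that the Riehl--Verity mapping spaces out of these computads agree with $\Map_{\infty\Cat}(-,X)$. That requires the representables to be cofibrant, which holds since simplicial computads are cofibrant in the Bergner--Lurie model structure. It also requires the model structure to present $2\Cat$; for this I would cite the known equivalence of quasi-categorically enriched categories with $(\infty,2)$-categories. After that, the argument is bookkeeping of components.
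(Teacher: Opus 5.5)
Your route is the same as the paper's. The paper offers nothing beyond calling the statement the model-independent translation of Riehl--Verity \S4. Your reduction to $(\infty,2)$-categorical targets, the observation that a trivial fibration onto a union of components makes $\Map(\Adj,X)\to\Map(\overline{\{\cdots\}},X)$ a monomorphism, and the cofibrancy comparison are a reasonable way to carry that translation out.

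Your first step, however, is wrong as stated. Only $C_1$, $\overline{\{r,l,\eps\}}$ and $\Adj$ are (nerves of) strict $2$-categories. The other two parental subcomputads are not, so gauntness is beside the point.
\begin{itemize}
\item In the subcomputad generated by $\{r,l,\eps,\eta,\alpha\}$, the hom simplicial set from $1$ to $0$ contains $l\eta\colon l\to lrl$ and $\eps l\colon lrl\to l$. It has no $2$-simplex with this spine, since the only atom with that boundary is $\beta$.
\item The subcomputad generated by $\{r,l,\eps,\eta,\alpha,\beta,\alpha^{(3)}\}$ omits $\beta^{(3)}$, so the boundary of $\beta^{(3)}$ has no filler. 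Its homs are therefore not nerves of categories either.
\end{itemize}
Conversely, suppose you read $\overline{\{\cdots\}}$ literally as strict sub-$2$-categories of $\Adj$. Then $\overline{\{r,l,\eps,\eta\}}$ is already all of $\Adj$, because the triangle identities hold on the nose there. The last two inclusions become identities and no longer match the restriction maps Riehl--Verity study. The fix is to let $\overline{\{\cdots\}}$ mean the $(\infty,2)$-category presented by the cofibrant simplicial computad. Then delete the gauntness step and let the cofibrancy argument of your last paragraph carry the whole identification.

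Two smaller points.
\begin{itemize}
\item For $C_1\to\Adj$, contractibility of the space of extensions does not follow from classical uniqueness of adjoints. That only controls $(l,\eta,\eps)$ up to isomorphism in the homotopy $2$-category and says nothing about the higher data $\alpha^{(n)},\beta^{(n)}$. It is itself one of Riehl--Verity's homotopical-uniqueness results and should be cited as such.
\item Cofibrancy of the source and fibrancy of the target do not by themselves identify Riehl--Verity's simplicial sets of extensions with $\Map_{\infty\Cat}(-,X)$. You also need their simplicial structure to be a homotopy function complex for the model structure you use. Check this against their definitions rather than assuming it.
\end{itemize}
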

    We denote the maps $C_1\to \Adj$ classifying the morphisms $r$, $l$ by the same names. 
    The theorem says that $r, l: C_1\to \Adj$ are epi, i.e., a homotopy coherent adjunction extending a prospective right or left adjoint is unique if it exists.
    \begin{definition}
        Let $d\in \bar{\ZZ}\coloneqq \ZZ\cup \{\pm \infty\}$. 
        Let $S_d^\adj \coloneqq \{\Sigma_+^{\infty-i}\sigma^j l, \Sigma_+^{\infty-i}\sigma^j r\mid 0\leq j\leq d+i-2\}$. 
        A categorical spectrum $X$ is \emph{$d$-adjointful} if it is $S_d^\adj$-local, i.e., for any $f\in S_d^\adj$, the induced map $\Map(f, X)$ is an isomorphism. We let $\CatSp^{d\mhy\adj}\subset \CatSp$ denote the category of $d$-adjointful categorical spectra. 
        We also let $d\CatSp^\adj$ denote the intersection $d\CatSp\cap \CatSp^{d\mhy\adj}$. 
    \end{definition}
    
    In other words, a categorical spectrum $X=(X_n)$ is $d$-adjointful if and only if every $(j+1)$-morphism of $X_n$ has both left and right adjoints for $j=0, \ldots, d+n-2$. Also note that, as a symmetric monoidal category, $X_n= \Omega X_{n+1}$ automatically has duals for objects if $d+n-1\geq 0$.
    \begin{example}
        By definition, we have $0\CatSp^\adj \xrightarrow{\sim} \lim \CMon(n\Cat)^\dual$, where $\CMon(n\Cat)^\dual\subset\CMon(n\Cat)$ denotes the full subcategory of symmetric monoidal $n$-categories \emph{with duals} in the sense of \cite{lurieClassificationTopologicalField2009}. 
        Also, the functor $\rB^{\infty}$ restricts to the equivalence $\CMon(d\Cat)^\dual\xrightarrow{\sim} \CatSp^\cn\cap d\CatSp^{d\mhy\adj}$. 
        For any $n<d$, the intersection $n\CatSp\cap \CatSp^{d\mhy\adj}$ is $\Sp$. 
    \end{example}
    \begin{remark}
        Let $d\in \bar\ZZ$. The inclusion $\CatSp^{d\mhy\adj}\hookrightarrow \CatSp$ admits a left adjoint $L_d^{\adj}$, which freely adds left and right adjoints to stable $k$-cells with $k\leq d-1$. 
    \end{remark}
    The following lemma plays a fundamental role in the study of adjunctions in higher categories. The proof will be deferred to the end of the section.
    \begin{lemma}\label{pushout_formula_for_cylinder_on_Adj}
        There is a pushout diagram of $3$-categories\footnote{The original version claimed that this is true for non-univalent categories too. However, in the proof we use that $C_1\to \Adj$ is epi, which is only true in the univalent setting.}:
        \[\begin{tikzcd}[column sep=20mm]
            \sigma C_1 \sqcup \partial \cube^1\otimes C_1\ar[r, "\phi\sqcup (r_0\sqcup r_1)"]\ar[d, "\sigma r\sqcup \partial\cube^1\otimes r"'] & \cube^1\otimes C_1\ar[d, "\cube^1\otimes r"]\\
            \sigma \Adj \sqcup \partial \cube^1\otimes \Adj \ar[r] & \cube^1\otimes \Adj, 
        \end{tikzcd}\]
        where we label the arrows of $\cube^1\otimes C_1$ as
        \[\begin{tikzcd}
            a_0 \ar[r, "r_0"]\ar[d, "a"] &  b_0 \ar[d, "b"]\\
            a_1\ar[r, "r_1"]\ar[ru, Rightarrow, "\phi", shorten <=1ex, shorten >= 1ex] & b_1
        \end{tikzcd}\] 
        and the map $r: C_1\to \Adj$ picks out the universal right adjoint.
    \end{lemma}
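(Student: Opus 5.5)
The plan is to prove that the square is a pushout by mapping it into an arbitrary $\infty$-category $X$ and comparing lifting problems, so that everything reduces to the epimorphism property of $r\colon C_1\to\Adj$ (from the Riehl--Verity theorem above) together with \cref{pushout_formula_unreduced_suspension} and \cref{pushout_formula_for_gray_cylinder_of_suspensions}. Since $r$ is an epimorphism in $\infty\Cat$, so are $\sigma r$, $\partial\cube^1\otimes r$ and, because $\cube^1\otimes(\blank)$ preserves colimits and hence epimorphisms (the codiagonal criterion), $\cube^1\otimes r$. All maps in the square are therefore epi, and both the pushout $P$ and $\cube^1\otimes\Adj$ are quotients of $\cube^1\otimes C_1$. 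So it suffices to show that a functor $F\colon \cube^1\otimes C_1\to X$ extends along $\cube^1\otimes r$ if and only if it extends along the left vertical map after restriction; both conditions are then properties, and the comparison map $P\to\cube^1\otimes\Adj$ is an equivalence.

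Concretely, $F$ consists of a lax square in $X$: $1$-cells $r_0, r_1, a, b$ and a $2$-cell $\phi\colon b r_0\Rightarrow r_1 a$ (with the orientation fixed by the labelling above). Extension along $\partial\cube^1\otimes r$ says that $r_0$ and $r_1$ are right adjoints, with left adjoints $l_0,l_1$. Extension along $\sigma r$ says that $\phi$, regarded as a $1$-morphism in $\Hom_X(a_0,b_1)$, is a right adjoint there. I then need to show that extension along $\cube^1\otimes r$ is equivalent to these two conditions together. By adjunction, a functor $\cube^1\otimes\Adj\to X$ is a functor $\Adj\to\Fun^\oplax(\cube^1,X)$, the lax arrow category. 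So the claim becomes: a $1$-morphism $(r_0,r_1,\phi)$ of $\Fun^\oplax(\cube^1,X)$ is a right adjoint if and only if $r_0$ and $r_1$ are right adjoints and $\phi$ is a right adjoint $2$-cell. This is a lax analogue of the classical mate-type statement that adjunctions in a lax arrow $2$-category are pairs of adjunctions with a compatible $2$-cell.

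To prove this criterion I will work in the homotopy $3$-category, which is legitimate because adjunctions are detected there: the truncated inclusions in the Riehl--Verity theorem are epimorphisms and $\overline{\{r,l,\eps,\eta,\alpha,\beta,\alpha^{(3)}\}}\simeq\Adj$ is $3$-dimensional. For the ``if'' direction I will build the left adjoint $(l_0,l_1,\psi)$ in $\Fun^\oplax(\cube^1,X)$ by taking $\psi$ to be the mate of the left adjoint $\phi^L$ of $\phi$ transported along $l_0\dashv r_0$ and $l_1\dashv r_1$. The unit and counit will be pairs of the original units and counits, and the $3$-cells filling their naturality squares will come from the unit and counit of $\phi^L\dashv\phi$. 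The triangle identities then follow from those of the three adjunctions. For the ``only if'' direction I will evaluate at the endpoints to get $r_0,r_1$ adjoint, and read off the adjunction data of $\phi$ from the $3$-dimensional components, using \cref{pullback_formula_for_hom_category} to identify the hom of $\Fun^\oplax(\cube^1,X)$ with a lax comma construction.

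I expect the main obstacle to be the ``if'' direction, specifically getting the orientations right. One has to check that the mate construction applied to the left adjoint of $\phi$ really produces a $2$-cell in the direction required by the oplax hom, rather than in the opposite direction, and that the $3$-cells assemble coherently. My first attempt will be a direct pasting-diagram verification in the homotopy $3$-category, keeping the labels of the $\cube^3$-type pasting diagrams explicit as in the proofs of the previous chapter.
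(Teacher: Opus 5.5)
Your proposal follows the paper's own proof. The paper also uses that $r$ is epi, so $P$ and $\cube^1\otimes\Adj$ are both quotients of $\cube^1\otimes C_1$, and then produces maps in both directions. Your formulation (``extends along $\cube^1\otimes r$ iff extends along the left vertical map'') is equivalent to this once you test it on $X=P$ and $X=\cube^1\otimes\Adj$.

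Your ``if'' direction is the paper's construction of $\cube^1\otimes\Adj\to P$: $\tilde l$ has $2$-cell the mate of $\phi^L$, and the unit and counit are filled using $\eta_\phi$, $\eps_\phi$ and the endpoint triangle cells $\alpha_0,\alpha_1$. Your ``only if'' direction is the paper's construction of $P\to\cube^1\otimes\Adj$. There $\phi^L$ is \emph{defined} as the mate of the $2$-cell $\psi$ of $\cube^1\otimes l$. Its unit and counit are pasted from the $3$-cells of $\cube^1\otimes\eta$ and $\cube^1\otimes\eps$ together with $\eta_1$, $\eps_0$ and the triangle cells $\alpha,\beta$. That is what ``reading off from the $3$-dimensional components'' has to mean.

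\textbf{The direction of $\phi$ is backwards.} This is exactly the point you flagged as delicate. With the reversed Koszul rule, $\partial(\underline{?}\otimes\underline{?})=\underline{0?}+\underline{?1}-\underline{1?}-\underline{?0}$. So the source of $\phi$ is $r_1\circ a$, and $\phi\colon r_1\circ a\Rightarrow b\circ r_0$; the arrow in the labelling goes from $a_1$ towards $b_0$. This orientation is what makes your mate step typecheck. The cell $\phi^L\colon b r_0\Rightarrow r_1 a$ has mate $l_1 b\Rightarrow l_1 b r_0 l_0\Rightarrow l_1 r_1 a l_0\Rightarrow a l_0$, which is precisely the shape of the $2$-cell of a morphism $b\to a$ in $\Fun^\oplax(\cube^1,X)$.

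With your stated $\phi\colon b r_0\Rightarrow r_1 a$, the cell $\phi^L\colon r_1 a\Rightarrow b r_0$ has no mate along $l_i\dashv r_i$. Indeed, for squares oriented that way the correct criterion is a condition on the mate of $\phi$ (of Beck--Chevalley type already for $2$-categories), not on $\phi$ itself. So this is not cosmetic, but once you fix it your plan goes through.

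Two smaller points:
\begin{itemize}
\item Not every map in the square is epi; the top map is not, since nothing identifies two copies of $a$ or $b$. Only the vertical maps and the cobase change $\cube^1\otimes C_1\to P$ matter, and those are fine.
\item The triangle identities for $\tilde l\dashv\tilde r$ are $4$-cells of $X$, i.e.\ maps $\cube^1\otimes C_3\to X$. For the second one, the paper also needs the swallowtail-type coherence between $\alpha_i$ and $\beta_i$ of the endpoint adjunctions. This is available because those adjunctions come from maps out of $\Adj$, but ``the triangle identities of the three adjunctions'' must be read to include it.
\end{itemize}
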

    \begin{remark}
        Informally speaking, the lemma states that a morphism in the functor category with lax natural transformations has a left adjoint if and only if each of the component cells has a left adjoint. 
        Applying ${}^{\leq 2}(\blank)$, we recover \cite[Theorem 4.6]{haugsengLaxTransformationsAdjunctions2021}. The proof is also similar, usisng some mate calculus, but naturally our full $3$-categorical version is more complicated. 
    \end{remark}

    Recall that the class of maps inverted by a localization is characterized by being \emph{strongly saturated}: a class $S$ of morphisms in a presentable category $\eC$ is strongly saturated if it satisfies the following three conditions \cite[Definition 5.5.4.5]{LurieHTT}: 
    \begin{enumerate}
        \item $S$ satisfies the 2-out-of-3 property, i.e., if two of $f, g$, and $f\circ g$ belongs to $S$, then so is the third. 
        \item $S$ is closed under cobase change, i.e., if $f: X\to Y\in S$ and 
        \[\begin{tikzcd}
            X\ar[r, "f"]\ar[d] & Y\ar[d] \\
            Z\ar[r, "f'"] & Y\amalg_X Z
        \end{tikzcd}\]
        is a pushout diagram, then $f'$ belongs to $S$ as well. 
        \item $S$ is closed under colimits in $\Fun(\cube^1, \eC)$. 
    \end{enumerate}
    Notice the last condition implies $\id_{\emptyset}\in S$, so by the second condition any isomorphism belongs to $S$.
    For a set $S$, let $\overline{S}$ denote the smallest strongly saturated class containing $S$. These are precisely the maps that get inverted by the localization $\eC\to \eC[S^{-1}]$. 
    \begin{lemma}\label{suspension_and_strongly_saturated_class}
        Let $S$ be a class of morphisms in $\infty\Cat$ and let $\sigma: \infty\Cat\to \infty\Cat$ be the unpointed suspension endofunctor. Then we have the following: 
        \begin{enumerate}
            \item $\sigma\overline{S}\subset \overline{\sigma S}$,
            \item If $f: X\to Y\in S$, then $\sigma f \vee\cube^1: \sigma X\vee\cube^1\to \sigma Y\vee\cube^1 \in \overline{\sigma S}$ and similarly for $\cube^1\vee \sigma f$. 
            \item Let $f: X\to Y$ be a morphism in $\infty\Cat$ such that $\cube^1 \otimes f\in \overline{\{f, \sigma f\}}$. Then for any $n\geq 0$, the morphism $\cube^1\otimes \sigma^n f$ belongs to $\overline{\{\sigma^n f, \sigma^{n+1}f\}}$. 
        \end{enumerate}
    \end{lemma}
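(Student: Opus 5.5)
The plan is to treat the three parts in order, with (3) as a formal consequence of (1), (2) and the functorial pushout formula of \cref{pushout_formula_for_gray_cylinder_of_suspensions}.

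\emph{Part (1).} The endofunctor $\sigma$ of $\infty\Cat$ does not preserve colimits, but it factors as a colimit-preserving functor $\sigma:\infty\Cat\to\infty\Cat_{\partial\cube^1/}$ followed by the forgetful functor $u:\infty\Cat_{\partial\cube^1/}\to\infty\Cat$. I would carry out two steps.
\begin{enumerate}
\item Let $T$ be the class of morphisms $g$ in $\infty\Cat_{\partial\cube^1/}$ with $u(g)\in\overline{\sigma S}$. I claim $T$ is strongly saturated.
\begin{itemize}
\item The 2-out-of-3 property is clear.
\item For cobase change: $u$ preserves pushouts.
\item For colimits in $\Fun(\cube^1,\infty\Cat_{\partial\cube^1/})$: such a colimit is computed in $\infty\Cat$ as the colimit of the cone diagram. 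The cone diagram adjoins the identity $\partial\cube^1\to\partial\cube^1$, which lies in $\overline{\sigma S}$. So its colimit lies in $\overline{\sigma S}$.
\end{itemize}
\item The preimage $\sigma^{-1}T$ is then strongly saturated, because $\sigma$ into the coslice preserves colimits and pushouts. It contains $S$, hence contains $\overline S$. This gives $\sigma\overline S\subset\overline{\sigma S}$.
\end{enumerate}

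\emph{Part (2).} The wedge $\sigma X\vee\cube^1$ is the pushout $\sigma X\amalg_{\{\top\}}\cube^1$. Hence
\[\sigma Y\vee\cube^1\simeq \sigma Y\amalg_{\sigma X}(\sigma X\vee\cube^1),\]
so $\sigma f\vee\cube^1$ is the cobase change of $\sigma f$ along $\sigma X\to\sigma X\vee\cube^1$. It therefore lies in $\overline{\sigma S}$. The case $\cube^1\vee\sigma f$ is symmetric.

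\emph{Part (3).} I would argue by induction on $n$, so it suffices to show that the hypothesis for $f$ implies the hypothesis for $\sigma f$. That is, I want $\cube^1\otimes\sigma f\in\overline{\{\sigma f,\sigma^2 f\}}$.
\begin{enumerate}
\item Use the second (functorial in $X$) pushout square of \cref{pushout_formula_for_gray_cylinder_of_suspensions}. It presents $\cube^1\otimes\sigma X$ as the pushout of the span
\[\sigma((\cube^1)^\op\otimes X)\leftarrow \sigma X\sqcup\sigma X\rightarrow (\sigma X\vee\cube^1)\sqcup(\cube^1\vee\sigma X).\]
\item Applying this square to $f$ expresses $\cube^1\otimes\sigma f$ as the induced map on pushouts of a natural transformation of spans. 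Its three components are:
\begin{itemize}
\item $\sigma((\cube^1)^\op\otimes f)$. Since $(\cube^1)^\op\cong\cube^1$, the hypothesis on $f$ and part (1) put this in $\overline{\{\sigma f,\sigma^2 f\}}$.
\item $\sigma f\sqcup\sigma f$, which is a coproduct of $\sigma f$ with itself.
\item $(\sigma f\vee\cube^1)\sqcup(\cube^1\vee\sigma f)$, which lies in the closure by part (2).
\end{itemize}
\item Strongly saturated classes are closed under colimits in $\Fun(\cube^1,\infty\Cat)$. Hence the pushout map $\cube^1\otimes\sigma f$ lies in $\overline{\{\sigma f,\sigma^2 f\}}$.
\end{enumerate}

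The main point needing care is step 2 of part (3): the pushout square must be functorial in $X$, so that it yields a genuine natural transformation of spans indexed by $f$. It must also be correctly oriented, so that the relevant component is $\sigma((\cube^1)^\op\otimes f)$ and the identification $(\cube^1)^\op\cong\cube^1$ is compatible with $f$. The corollary provides this functoriality, since it is stated as functorial in $X\in\infty\Algbrd$. One also needs to check that it descends to $\infty\Cat$ via univalent completion, which is colimit-preserving.
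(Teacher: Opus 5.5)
Your proposal is correct and follows the paper's proof essentially step for step. In (1) you show that the class of maps whose suspension lies in $\overline{\sigma S}$ is strongly saturated; you package the paper's observation that $\sigma$ sends colimits to colimits under $\sigma\emptyset$ through the coslice $\infty\Cat_{\partial\cube^1/}$. In (2) you realize $\sigma f\vee\cube^1$ as a cobase change of $\sigma f$, where the paper writes it as the pushout $\sigma f\amalg_{\id_\ast}\id_{\cube^1}$ in the arrow category. In (3) you run the same induction through the functorial pushout formula of \cref{pushout_formula_for_gray_cylinder_of_suspensions} combined with (1) and (2). The differences are only in presentation.
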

    \begin{proof}
        \begin{enumerate}
            \item We wish to show that $\overline{S}\subset T= \{f\mid \sigma f\in \overline{\sigma S}\}$. 
            We have $S\subset T$, so it suffices to show that $T$ is strongly saturated. The 2-out-of-3 property is clear. 
            Note that $\sigma: \infty\Cat\to \infty\Cat$ takes a colimit diagram to a colimit diagram under $\sigma\emptyset$, i.e., $\sigma(\colim_{\lambda\in \Lambda} X_\lambda) \simeq \colim_{\lambda \in \Lambda^{\triangleleft}} (\sigma X_\lambda)$, where if $\lambda=-\infty \in \Lambda^{\triangleleft}$ is the cone point, we set $X_{-\infty} = \emptyset$. 
            In particular, $\sigma$ preserves weakly contractible colimits, so if $f'$ is a cobase change of $f\in T$, then $\sigma f'$ is also a cobase change of $\sigma f\in \overline{\sigma S}$, so $f' \in T$. 
            Lastly, if $f' = \colim_{\lambda} f_\lambda$ in $\Fun(\cube^1, \infty\Cat)$ and $f_\lambda\in T$, the suspension $\sigma f'$ is computed by $\colim_{\lambda\in \Lambda^{\triangleleft}}(\sigma f_\lambda)$ with $f_{-\infty} \coloneqq \id_{\emptyset}$. As $\sigma f_\lambda, \sigma\id_{\emptyset}\in \overline{\sigma S}$, it follows that $\sigma f'\in \overline{\sigma S}$, i.e., $f' \in T$. 
            \item This is immediate from the pushout $f\vee \cube^1\coloneqq \sigma f\amalg_{\id_{\ast}} \id_{\cube^1}$ in $\overline{\sigma S}$.
            \item Recall the pushout formula 
            \[\cube^1\otimes \sigma^n f \simeq \sigma(\cube^1\otimes \sigma^{n-1} f)\sqcup_{\sigma^n f \sqcup \sigma^n f} ((\sigma^n f \vee \cube^1)\sqcup (\cube^1\vee\sigma^n f))\] of \cref{pushout_formula_for_gray_cylinder_of_suspensions}.
            By (2) we have $\sigma^n f \vee\cube^1$, $\cube^1\vee \sigma^n f\in \overline{\{\sigma^n f\}}$, so it suffices to check $\sigma(\cube^1\otimes \sigma^{n-1} f)\in \overline{\{\sigma^{n}f, \sigma^{n+1}f \}}$. This follows by induction on $n$ and (1) for $S = \{\sigma^{n-1}f, \sigma^n f\}$. 
        \end{enumerate}
    \end{proof}
    With \cref{pushout_formula_for_cylinder_on_Adj}, we obtain $\cube^1\otimes \sigma^n r\in \overline{\{\sigma^n r, \sigma^{n+1} r\}}$ and hence
    $\Sigma^{\infty-i}_+\cube^1\otimes \Sigma^{\infty-j}_+\sigma^n r\simeq \Sigma^{\infty-(i+j)}_+(\cube^1\otimes \sigma^n r)\in \overline{\{\Sigma_+^{\infty-(i+j)}(\sigma^n r, \sigma^{n+1} r)\}}$. 

    \begin{theorem}\label{theorem_tensor_product_localizes_to_adj}
        The graded monoidal structure $\{n\CatSp\subset \CatSp^\otimes\}_{n\in \bar{\ZZ}}$ is compatible with the localizations $L_n^\adj: n\CatSp\to n\CatSp^\adj$. More precisely, if $f$ is a $L_n^\adj$-equivalence and $X$ is a $m$-categorical spectrum, then $f\otimes X$ and $X\otimes f$ are $L_{m+n}^\adj$-equivalences.
        In particular, the tensor product of categorical spectra localizes to $\otimes^\adj: m\CatSp^\adj\otimes n\CatSp^\adj\to (m+n)\CatSp^\adj$. 
    \end{theorem}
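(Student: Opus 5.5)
We reduce to checking generators. The class of $L_{m+n}^\adj$-equivalences is strongly saturated. Because the tensor product preserves colimits in each variable, the class of maps $f$ for which $f\otimes X$ is an $L_{m+n}^\adj$-equivalence is also strongly saturated. Hence it suffices to treat $f\in S_n^\adj$, that is, $f=\Sigma_+^{\infty-i}\sigma^j r$ or $\Sigma_+^{\infty-i}\sigma^j l$ with $0\le j\le n+i-2$. The same closure argument applies in the variable $X$: the class of $X$ for which the statement holds is closed under colimits. By \cref{tensor_product_adds_category_levels}, $m\CatSp$ is the colimit-closure of $\{\Sigma_+^{\infty-k}\cube^p\mid p\le m+k\}$. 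Since $\cube^p=(\cube^1)^{\otimes p}$, we can build $X\otimes f$ by tensoring with $\cube^1$ one factor at a time, each time shifting the category level by one. The case $X\otimes f$ is symmetric: we either use the total dual $D$, which preserves the adjoint generators because $D$ swaps $l$ and $r$ up to the appropriate duality, or repeat the argument on the other side using the second pushout of \cref{pushout_formula_for_gray_cylinder_of_suspensions}.

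The key computation is the remark following \cref{suspension_and_strongly_saturated_class}. \Cref{pushout_formula_for_cylinder_on_Adj} gives $\cube^1\otimes r\in\overline{\{r,\sigma r\}}$. Part (3) of \cref{suspension_and_strongly_saturated_class} upgrades this to $\cube^1\otimes\sigma^j r\in\overline{\{\sigma^jr,\sigma^{j+1}r\}}$. Applying $\Sigma_+^{\infty-(i+k)}$, and using that $D\cube^1\simeq\cube^1$ makes the tensor formula for suspension spectra clean, we get
\[\Sigma^{\infty-k}_+\cube^1\otimes\Sigma^{\infty-i}_+\sigma^j r\in\overline{\{\Sigma_+^{\infty-(i+k)}\sigma^j r,\Sigma_+^{\infty-(i+k)}\sigma^{j+1}r\}}.\]
We then index-check. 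With $k\le 1$ this lands inside $S_{n+1}^\adj$, since $j+1\le (n+1)+(i+k)-2$ exactly when $j\le n+i+k-2$, which holds. For the basic case $\Sigma^{\infty-k}_+C_0=\FF[-k]$ the statement is just the shift $S_n^\adj[-k]\subset S_{n+k}^\adj$, since $\Sigma^{-k}$ shifts the category level by $k$ up to a sign convention. Inducting on $p$, we obtain $\Sigma_+^{\infty-k}\cube^p\otimes f\in\overline{S_{n+p-k}^\adj}\subset\overline{S^\adj_{m+n}}$ whenever $p-k\le m$. We do the same for $l$, either by the analogous lemma (replace $\Adj$ by $\Adj^\op$-type duals) or by applying the $\op$/$\co$ dualities, which exchange $l$ and $r$ and interact with $\otimes$ by \cref{total_dual}.

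The localized tensor product then follows formally. By \cite[Proposition 2.2.1.9]{LurieHA}-style reasoning applied to the graded family, the functor $L_{m+n}^\adj\circ\otimes$ inverts $L_m^\adj\times L_n^\adj$-equivalences in each variable separately, so it factors uniquely through $m\CatSp^\adj\otimes n\CatSp^\adj$.

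We expect the main obstacle to be the bookkeeping in the index-check. The bound $j\le n+i-2$ must be tracked precisely through the shifts. The duality twists $D^j$ appearing in $\Sigma^{\infty-i}X\otimes\Sigma^{\infty-j}Y$ must not move generators outside $S^\adj$, which needs $D(\sigma^jr)$ to be a $\sigma^j l$ or $\sigma^j r$ (it is, since $D$ flips all cells). We must also handle the $j=0$ versus non-negative level edge cases, including $n=\pm\infty$.
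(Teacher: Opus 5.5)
Your skeleton is the paper's: strong saturation reduces to $f\in S^\adj_n$ and to cube generators $X$, and the good $X$ are closed under colimits and tensor products. The only geometric input is $\cube^1\otimes r\in\overline{\{r,\sigma r\}}$ from \cref{pushout_formula_for_cylinder_on_Adj}, propagated to $\cube^1\otimes\sigma^j r$ by part (3) of \cref{suspension_and_strongly_saturated_class}. The paper first shifts to $(m,n)=(0,0)$, which would have avoided your index slips.
\begin{itemize}
\item $\Sigma^{\infty-k}_+\cube^1$ is $(1-k)$-categorical, so the key computation must land in $S^\adj_{n+1-k}$. It does for every $k$, because $j+1\le (n+1-k)+(i+k)-2$ holds iff $j\le n+i-2$. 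Your check instead aims at $S^\adj_{n+1}$, and its inequality $j\le n+i+k-2$ needs $k\ge 0$, not $k\le 1$.
\item $\FF[-k]\otimes f\simeq\Sigma^{-k}f$ lies in $S^\adj_{n-k}$, not $S^\adj_{n+k}$, because desuspension lowers the level.
\end{itemize}
With the corrected base case, write $\Sigma^{\infty-k}_+\cube^p\simeq\Sigma^\infty_+\cube^1\otimes\Sigma^{\infty-k}_+\cube^{p-1}$ (using $D\cube^1\cong\cube^1$) and use the $k=0$ computation. The induction then does give the $S^\adj_{n+p-k}$ you state.

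The step that actually fails is the symmetry argument. The total dual is monoidal, $(A\otimes B)^\circ\simeq A^\circ\otimes B^\circ$ (\cref{total_dual}), so it cannot trade $f\otimes X$ for $X\otimes f$. It does not exchange $l$ and $r$ either, since $l$ is still the left adjoint in $\Adj^{\co\op}$. The second formula of \cref{pushout_formula_for_gray_cylinder_of_suspensions} concerns $\cube^1\otimes\sigma X$, which is the side you already treated. The tool that swaps sides is the antimonoidal odd or even dual, $A\otimes B\simeq(B^\op\otimes A^\op)^\op$, as in the paper.

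However, on a generator $\Sigma^{\infty-i}_+\sigma^j r$ this dual acts on $\Adj$ as $\op$ or $\co$, and both of these exchange $l$ and $r$. So side and adjoint type are swapped together. Starting from $\cube^1\otimes r\in\overline{\{r,\sigma r\}}$, dualities only produce the cases $X\otimes\Sigma^{\infty-i}_+\sigma^j r$ and $\Sigma^{\infty-i}_+\sigma^j l\otimes X$. The remaining cases $X\otimes\Sigma^{\infty-i}_+\sigma^j l$ and $\Sigma^{\infty-i}_+\sigma^j r\otimes X$ are $\op$-dual to each other, and they need a second input that no duality of $\Adj$ supplies. That input is a right-adjoint analogue of \cref{pushout_formula_for_cylinder_on_Adj}, for example $\cube^1\otimes l\in\overline{\{l,\sigma l,\sigma r\}}$, proved by rerunning the mate calculus of that lemma. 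Once you have both left-side cases, $\op$ gives the right side.

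So your ``analogous lemma'' is the right fix, but it must be proved; it cannot be extracted from ``$\Adj^\op$-type duals''. The paper's one-line duality reduction to the $r$ case is equally silent on this point, so it is worth stating that lemma explicitly.
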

    \begin{proof}
        The claim is trivial if one of $m, n$ is $-\infty$. 
        We must show that $f\in S_n^\adj$ and $X\in m\CatSp$ imply $f\otimes X, X\otimes f\in \overline{S_{m+n}^\adj}$. 
        For finite $m, n$, we have $\Sigma^{-n}S_n^\adj = S_0^\adj$ and $\Sigma^{-m}m\CatSp = 0\CatSp$, so after shifts we may assume $(m, n)=(0, 0)$. Other cases reduce to $(m, n)=(0, \infty), (\infty, 0), (\infty, \infty)$; the first two are special cases of the third. Now assume $(m, n)=(0, 0)$; the case $(m, n)=(\infty, \infty)$ is similar.
        Since $S_n^\adj$ and $m\CatSp$ are closed under duality involutions, using $(A\otimes B)\simeq (B^\op\otimes A^\op)^\op$, the theorem reduces to proving $X\otimes \Sigma^{\infty-i}\sigma^k r \in \overline{S_0^\adj}$. 
        Such $X$ are closed under tensor products and colimits. Note that $0\CatSp$ is generated under colimits by $\Sigma_+^{\infty-n}\cube^n$ because $\{\Omega^{\infty-n}: 0\CatSp\to n\Cat\xrightarrow{\Map(\cube^n, \blank)}\sS\}_{n\in\NN}$ is jointly conservative (note that $C_n$ is a retract of $\cube^n$). 
        Consequently, we need only check the case $X=\Sigma^{\infty-1}\cube^1\otimes \Sigma^{\infty-i}\sigma^k r\in \overline{S_0^\adj}$ for $0\leq k \leq i-2$, which follows from $\Sigma^{\infty-(i+1)}\cube^1\otimes \sigma^kr \in\overline{\{\Sigma_+^{\infty-(i+1)}\sigma^k r, \Sigma_+^{\infty-(i+1)}\sigma^{k+1} r\}}$. 
    \end{proof}
    \begin{remark}
        We expect that the localized monoidal product is more commutative than the original: if a category admits adjoints, then passage to adjoints allows us to identify the category with its various duality involutions. In particular, we likely have a natural isomorphism $X\otimes^L Y\simeq (X\otimes^L Y)^\op\simeq Y^\op\otimes^L X^\op\simeq Y\otimes^L X$. In fact, assuming a conjecture on factorization homology, one can show that the localized tensor product upgrades to an $\EE_\infty$-structure.
    \end{remark}
    \begin{corollary}\label{n_adjointable_catsp_are_closed_under_extensions}
        $n$-adjointful categorical spectra are closed under (co)extensions.
    \end{corollary}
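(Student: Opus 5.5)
We prove \cref{n_adjointable_catsp_are_closed_under_extensions} by recognizing the extension as a lax cofiber and testing locality against the generators $S_n^\adj$. By \cref{extension_of_catsp}, an extension $Z$ of $X$ by $Y$ classified by $f:\Omega X\to Y$ is $\rcof(f)=0\ramalg_{\Omega X}Y$, i.e.\ the pushout
\[ Z\simeq Y\amalg_{\{1\}\otimes\Omega X}(\cube^1\otimes \Omega X)\amalg_{\{0\}\otimes\Omega X}0 .\]
Dually, by the same theorem, $Z\simeq\rfib(\Sigma f)$, i.e.\ $Z\simeq 0\overrightarrow{\times}_{\Sigma Y}X$, the limit of $0\to\Sigma Y\xleftarrow{\ev_0}[\cube^1,\Sigma Y]\xrightarrow{\ev_1}\Sigma Y\leftarrow X$. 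The limit description is the one to use: $n$-adjointful spectra are the $S_n^\adj$-local objects, so they are closed under limits, and it suffices to show that each vertex of this diagram is $n$-adjointful. The coextension case follows by the transposed argument, using $\lfib$ and $[\cube^1,\blank]$ from the other side, or by applying a duality involution, since $S_n^\adj$ is closed under duality involutions.

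The vertices $0$ and $X$ are $n$-adjointful by hypothesis. The vertex $\Sigma Y$ is $(n+1)$-adjointful, hence $n$-adjointful, because $S_n^\adj\subset\Sigma^{-1}$ of the generating set and the shift identifies $\CatSp^{n\mhy\adj}\simeq\CatSp^{(n+1)\mhy\adj}[1]$. It remains to show that $[\cube^1,W]$ is $n$-adjointful whenever $W=\Sigma Y$ is $(n+1)$-adjointful. Here $[\cube^1,\blank]$ is the internal hom adjoint to $\Sigma^\infty_+\cube^1\otimes(\blank)$, with the appropriate side fixed by the weight convention. By adjunction, $\Map(g,[\cube^1,W])\simeq\Map(\Sigma^\infty_+\cube^1\otimes g,W)$ for $g\in S_n^\adj$. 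So we need $\Sigma^\infty_+\cube^1\otimes g\in\overline{S^\adj_{n+1}}$. This is exactly the consequence of \cref{pushout_formula_for_cylinder_on_Adj} and \cref{suspension_and_strongly_saturated_class}(3) recorded before \cref{theorem_tensor_product_localizes_to_adj}: $\cube^1\otimes\sigma^k r\in\overline{\{\sigma^kr,\sigma^{k+1}r\}}$, and similarly for $l$ by duality. After $\Sigma^{\infty-i}_+$, the element $\Sigma^{\infty-i}_+\sigma^{k+1}r$ with $k\le n+i-2$ lies in $S^\adj_{n+1}$, because the index bound rises by one. Equivalently, we may invoke \cref{theorem_tensor_product_localizes_to_adj} with $m=1$, since $\Sigma^\infty_+\cube^1\in1\CatSp$: tensoring with it sends $L_n^\adj$-equivalences to $L_{n+1}^\adj$-equivalences.

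The main obstacle is bookkeeping rather than substance. We must check that the internal hom appearing in $\rfib$ is on the same side as the tensor handled by \cref{theorem_tensor_product_localizes_to_adj}, which holds since that theorem covers both $f\otimes X$ and $X\otimes f$. We must also check that the shift indices match, so that $\Sigma Y$ really lands in $(n+1)$-adjointful spectra while $[\cube^1,\blank]$ costs exactly one level of adjointfulness. If the limit presentation caused difficulties, we would instead check locality directly on the colimit presentation of $\rcof(f)$ via the absoluteness adjunction of \cref{theorem_directed_pushouts_are_absolute}. That adjunction expresses $\Map(g,Z)$ through maps of $\Sigma^{\infty-1}I$, $\Sigma^{\infty-1}I^\op$-tensored generators into $X$ and $Y$, and these are handled by the same tensor-compatibility.
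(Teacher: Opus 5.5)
Your proposal is correct and follows essentially the same route as the paper. Both arguments realize the extension as the lax fiber $\rfib(\Sigma f)$, use that $S_n^\adj$-local objects are closed under limits, observe that $\Sigma Y$ is $(n+1)$-adjointful, and obtain locality of $[\cube^1,\Sigma Y]$ from $\Sigma^\infty_+\cube^1\otimes\overline{S_n^\adj}\subset\overline{S_{n+1}^\adj}$; your additional remark that coextensions follow via a duality involution preserving $S_n^\adj$ covers a case the paper leaves implicit.
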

    \begin{proof}
        Let $X, Y$ be $n$-adjointful categorical spectra, let $f: Y\to \Sigma X$ be a morphism, and let $Z= \rfib(f) = \lim(0\to \Sigma X\xleftarrow{\ev_0}[\cube^1, \Sigma X]\xrightarrow{\ev_1} \Sigma X\leftarrow Y)$, so that $X\to Z \to Y$ is an extension of categorical spectra. Since $\CatSp^{n\mhy\adj}\subset \CatSp$ is closed under limits, it suffices to show that $\Sigma X$ and $[\cube^1_+, \Sigma X]$ are also $n$-adjointful. $\Sigma X$ is $(n+1)$-adjointful, so, in particular, $n$-adjointful. Moreover, $[\cube^1_+, \Sigma X]$ is $S_n^\adj$-local because $\Sigma^\infty_+\cube^1\otimes \overline{S_n^\adj}\subset \overline{S_{n+1}^\adj}$ and $\Sigma X$ is $S_{n+1}^\adj$-local. 
    \end{proof}

    \begin{proof}[proof of \cref{pushout_formula_for_cylinder_on_Adj}]
        Let $P$ denote the pushout $(\cube^1\otimes C_1)\sqcup_{\sigma C_1\sqcup \partial\cube^1\otimes C_1}(\sigma\Adj\sqcup \partial\cube^1\otimes \Adj)$. 
        Since $r: C_1\to \Adj$ is an epimorphism, $\cube^1\otimes C_1\to \cube^1\otimes \Adj$ and $\cube^1\otimes C_1\to P$ are both epimorphisms. Since the epimorphisms from $\cube^1\otimes C_1$ form a poset, to show $P\simeq \cube^1\otimes \Adj$ in $\infty\Cat_{(\cube^1\otimes C_1)/}$, it suffices to give maps in both directions. %

        We first define $P\to \cube^1\otimes\Adj$, or equivalently the commutative square in the lemma. 
        The second component $\partial\cube^1\otimes \Adj\to \cube^1\otimes \Adj$ is induced from $\partial\cube^1\hookrightarrow\cube^1$. 
        To define the first component $\sigma \Adj\to \cube^1\otimes \Adj$, we must show that the $2$-cell $\phi: r_1\circ a\to b\circ r_0$ given by $\sigma C_1\to\cube^1\otimes C_1\to \cube^1\otimes \Adj$ admits a left adjoint. 
        \begin{itemize}
            \item Let $\psi$ denote the $2$-cell $C_2\hookrightarrow \cube^1\otimes C_1\xrightarrow{\id\otimes l} \cube^1\otimes \Adj$ and let $\phi^L$ be the $2$-cell of the mate square of $\id\otimes l$:
            \[\phi = \begin{tikzcd}
                a_0\ar[d, "a"']\ar[r, "="{name=L}]\ar[phantom, "=", from=L, to=2-2]\ar[rd, "r_0"'] & a_0\ar[d, "r_0"] \\
                a_1\ar[r, Rightarrow, shorten <=1.5ex, shorten >=1.5ex, "\phi"]\ar[d, "r_1"']\ar[rd, "r_1"] & b_0\ar[d, "b"] \\
                b_1\ar[r, "="'{name=K}]\ar[phantom, from=K, to=2-1, "="] & b_1.
            \end{tikzcd} \qquad \phi^L \coloneqq \begin{tikzcd}
                a_0\ar[d, "r_0"']\ar[r, "="{name=J}]\ar[Rightarrow, to=J, from=2-1, shorten <=2.5ex, shorten >=2ex, "\eps_0"] & a_0\ar[d, "a"] \\
                b_0\ar[r, Rightarrow, shorten <=1.5ex, shorten >=1.5ex, "\psi"]\ar[d, "b"']\ar[ru, "l_0"'] & a_1\ar[d, "r_1"] \\
                b_1\ar[r, "="'{name=I}]\ar[ru, "l_1"]\ar[Rightarrow, from=I, to=2-2, shorten <=2ex, shorten >=2.5ex, "\eta_1"'] & b_1.
            \end{tikzcd}\]
            \item We will exhibit $\phi^L\dashv \phi$ by defining $E: \phi^L\circ \phi\to \id_{r_1\circ a}$ and $H: \id_{b\circ r_0}\to \phi\circ\phi^L$ and checking the triangle identities. 
            Let $\id\otimes \eps, \id\otimes \eta: \cube^1\otimes C_2\to \cube^1\otimes \Adj$ also denote the corresponding $3$-cell. We define 
            \[\adjustbox{scale=0.9, center}{$E: $\begin{tikzcd}
                a_0\ar[d, "a"']\ar[r, "="{name=L}]\ar[rd, "r_0"'] & a_0\ar[r, "="{name=L}] & a_0\ar[d, "a"] \\
                a_1\ar[r, Rightarrow, shorten <=1.5ex, shorten >=1.5ex, "\phi"]\ar[d, "r_1"']\ar[rd, "r_1"] & b_0\ar[d, "b"]\ar[u, Rightarrow, "\eps_0"', shorten <=1ex, shorten >=1ex]\ar[r, Rightarrow, shorten <=1.5ex, shorten >=1.5ex, "\psi"]\ar[ru, "l_0"'] & a_1\ar[d, "r_1"] \\
                b_1\ar[r, "="'{name=K}]\ar[phantom, from=K, to=2-1, "="] & b_1\ar[r, "="'{name=I}]\ar[ru, "l_1"]\ar[Rightarrow, from=I, to=2-3, shorten <=2ex, shorten >=2.5ex, "\eta_1"'] & b_1.
            \end{tikzcd}$\quad \xRrightarrow{\eta_1\ast (r_1\ast(\id\otimes \eps))} \quad $
            \begin{tikzcd}
                a_0\ar[d, "a"']\ar[r, "="{name=L}] & a_0\ar[r, "="{name=L}] & a_0\ar[d, "a"] \\
                a_1\ar[rr, "="]\ar[d, "r_1"']\ar[rd, "r_1"]\ar[rru, phantom, "="] & {} & a_1\ar[d, "r_1"] \\
                b_1\ar[r, "="'{name=K}]\ar[phantom, from=K, to=2-1, "="] & b_1\ar[r, "="'{name=I}]\ar[ru, "l_1"]\ar[Rightarrow, from=I, to=2-3, shorten <=2ex, shorten >=2.5ex, "\eta_1"']\ar[u, Rightarrow, shorten <=1ex, shorten >=.5ex, "\eps_1"'] & b_1.
            \end{tikzcd}$\quad \xRrightarrow[\alpha^{-1}\ast a]{\sim}\quad \id_{r_1\circ a},$}
            \]
            \[\adjustbox{scale=0.9, center}{$H: \id_{b\circ r_0} \quad \xRrightarrow[b\ast\beta]{\sim}\quad$ 
            \begin{tikzcd}
                a_0\ar[d, "r_0"']\ar[r, "="{name=J}]\ar[Rightarrow, to=J, from=2-1, shorten <=2.5ex, shorten >=2ex, "\eps_0"] & a_0\ar[r, "="{name=L}]\ar[phantom, "=", from=L, to=2-3]\ar[rd, "r_0"'] & a_0\ar[d, "r_0"] \\
                b_0\ar[d, "b"']\ar[ru, "l_0"']\ar[rr, "="']\ar[rrd, phantom, "="] & {}\ar[u, Rightarrow, "\eta_0", shorten <=1ex, shorten >=1ex] & b_0\ar[d, "b"] \\
                b_1\ar[r, "="'{name=I}] & b_1\ar[r, "="'{name=K}] & b_1.
            \end{tikzcd} $\quad\xRrightarrow{((\id\otimes \eta)\ast r_0)\ast\eps_0}\quad$
            \begin{tikzcd}
                a_0\ar[d, "r_0"']\ar[r, "="{name=J}]\ar[Rightarrow, to=J, from=2-1, shorten <=2.5ex, shorten >=2ex, "\eps_0"] & a_0\ar[d, "a"]\ar[r, "="{name=L}]\ar[phantom, "=", from=L, to=2-3]\ar[rd, "r_0"'] & a_0\ar[d, "r_0"] \\
                b_0\ar[r, Rightarrow, shorten <=1.5ex, shorten >=1.5ex, "\psi"]\ar[d, "b"']\ar[ru, "l_0"'] & a_1\ar[r, Rightarrow, shorten <=1.5ex, shorten >=1.5ex, "\phi"]\ar[rd, "r_1"] & b_0\ar[d, "b"] \\
                b_1\ar[r, "="'{name=I}]\ar[ru, "l_1"] & b_1\ar[r, "="'{name=K}]\ar[u, Rightarrow, "\eta_1"', shorten <=0.5ex, shorten >=1ex] & b_1.
            \end{tikzcd}}\]
            The triangle identities are given similarly by the $4$-cells corresponding to $\id\otimes \alpha, \id\otimes \beta: \cube^1\otimes C_3\to \cube^1\otimes \Adj$ composed with $\alpha_0$, $\alpha_1$, $\beta_0$ and $\beta_1$. 
        \end{itemize}
        Therefore, there is a unique map $\sigma\Adj\to \cube^1\otimes \Adj$ extending $\phi$, which defines $P\to \cube^1\otimes \Adj$. 

        Next we define $\cube^1\otimes \Adj\to P$. We must show that the canonical map $\tilde{r}: a\to b$ in $[\cube^1, P]$ corresponding to the square $\id\otimes r: \cube^1\otimes C_1\to P$ admits a left adjoint. We define $\tilde{l}: b\to a$ as follows. Let $\phi: r_1\circ a\to b\circ r_0$ denote again the $2$-cell corresponding to $\tilde{r}$, i.e., the composition $\sigma C_1\xrightarrow{\sigma r} \sigma\Adj\to P$. Let $\phi^L: b\circ r_0\to r_1\circ a$ denote the composition $\sigma C_1\xrightarrow{\sigma l} \sigma\Adj\to P$, which is left adjoint to $\phi$, exhibited by the adjunction data $\sigma\Adj\to P$. Now we let $\tilde{l}$ be the mate square of $\phi^L$;the $2$-cell $l_1b\Rightarrow al_0$ will be called $\psi$ again:
        \[\tilde{l} \coloneqq\quad
        \begin{tikzcd}
            b_0 \ar[r, "l_0"]\ar[d, equal] & a_0 \ar[r, equal]\ar[d, "r_0"] & a_0\ar[r, equal]\ar[d, "a"] & a_0\ar[d, "a"] \\
            b_0 \ar[r, equal]\ar[d, "b"]\ar[ru, Rightarrow, shorten <=2ex, shorten >=2ex, "\eta_0"] & b_0 \ar[d, "b"]\ar[r, Rightarrow, shorten <=1ex, shorten >=1ex, "\phi^L"'] & a_1 \ar[r, equal]\ar[d, "r_1"] & a_1\ar[d, equal]\\
            b_1\ar[r, equal] & b_1\ar[r, equal] & b_1 \ar[r, "l_1"']\ar[ru, Rightarrow, shorten <=2ex, shorten >=2ex, "\eps_1"'] & a_1
        \end{tikzcd}\]
        We wish to show that $\tilde{l}$ is left adjoint to $\tilde{r}$. 
        The unit map $\tilde{\eta}: \id_b\to \tilde{r}\circ\tilde{l}$ and the counit map $\tilde{\eps}: \tilde{l}\circ\tilde{r}\to \id_a$ are defined as follows. We first define $\tilde{\eta}$. 
        We must define a $2$-cell $C_2\to \Fun^\lax(\cube^1, P)$, or equivalently a map $\cube^1\otimes C_2\to P$, extending the prescribed boundary $(\id_b, \tilde{r}\circ\tilde{l}): \cube^1\otimes \partial C_2\to P$. 
        Now recall that we have a pushout of $3$-algebroids $\cube^1\otimes C_2\simeq (\partial\cube^1\otimes C_2\sqcup_{\partial\cube^1\otimes \partial C_2} \cube^1\otimes \partial C_2)\sqcup_{\partial C_3}C_3$.
        We already know that $\tilde{\eta}|_{\partial \cube^1\otimes C_2}$ must be $\eta_0\sqcup \eta_1$, which agrees with the given map on $\cube^1\otimes \partial C_2$. It remains to define $C_3\to P$ whose boundary is prescribed as follows: 
        \[\tilde{\eta}: 
        \begin{tikzcd}
            & a_0\ar[rd, "r_0"] & \\
            b_0\ar[rr, "="]\ar[dd, "b"']\ar[ru, "l_0"] & {}\ar[u, Rightarrow, "\eta_0", shorten <=2ex, shorten >=1ex] & b_0\ar[dd, "b"] \\
            &&\\
            b_1\ar[rr, "="]\ar[rruu, phantom, "="] & & b_1
        \end{tikzcd} \quad \Rrightarrow\quad 
        \begin{tikzcd}
             & a_0\ar[d, "a"]\ar[rd, "r_0"] & \\
            b_0\ar[ru, "l_0"]\ar[d, equal]\ar[rd, phantom, "\xRightarrow{\psi}"] & a_1\ar[d, equal] & b_0\ar[dd, "b"]\\
            b_0\ar[d, "b"'] & a_1\ar[rd, "r_1"]\ar[ru, phantom, "\xRightarrow{\phi}"] & \\
            b_1\ar[rr, "="']\ar[ru, "l_1"'] & {}\ar[u, Rightarrow, "\eta_1", shorten <=1ex, shorten >=1ex] & b_1,
        \end{tikzcd}\]
        Massaging the diagram, the domain and codomain can be rewritten into the following grid shape, and now $\tilde{\eta}$ is clearly given by first inserting $\eta_\phi: \id_{br_0}\to \phi\phi^L$ and then $\alpha_1: \id_{r_1}\to (r_1\ast \eps_1)(\eta_1\ast r_1)$:
        {\small \[\begin{tikzcd}
            b_0 \ar[r, "l_0"] \ar[d, equal] & a_0 \ar[d, "r_0"'] \\
            b_0 \ar[r, equal]\ar[dd, "b"]\ar[ru, Rightarrow, shorten <=2ex, shorten >=2ex, "\eta_0"] & b_0 \ar[dd, "b"']\\
            {}& {} \\
            b_1\ar[r, equal] & b_1
        \end{tikzcd}
        \Rrightarrow
        \begin{tikzcd}
            b_0 \ar[r, "l_0"]\ar[d, equal] & a_0 \ar[r, equal]\ar[d, "r_0"] &  a_0\ar[d, "a"]\ar[r, equal] & a_0\ar[d, "r_0"']\\
            b_0 \ar[r, equal]\ar[dd, "b"]\ar[ru, Rightarrow, shorten <=2ex, shorten >=2ex, "\eta_0"] & b_0 \ar[dd, "b"]\ar[rd, phantom,  "\xRightarrow{\phi^L}"] & a_1\ar[dd, "r_1"] & b_0\ar[dd, "b"']\\
            && {}\ar[ru, phantom, "\xRightarrow{\phi}"] & \\
            b_1\ar[r, equal] & b_1\ar[r, equal] & b_1\ar[r, equal] & b_1
        \end{tikzcd} \Rrightarrow \begin{tikzcd}
            b_0 \ar[r, "l_0"]\ar[d, equal] & a_0 \ar[r, equal]\ar[d, "r_0"] & a_0\ar[r, equal]\ar[d, "a"] & a_0\ar[d, "a"]\ar[r, equal] & a_0\ar[d, "r_0"']\\
            b_0 \ar[r, equal]\ar[dd, "b"]\ar[ru, Rightarrow, shorten <=2ex, shorten >=2ex, "\eta_0"] & b_0 \ar[dd, "b"]\ar[rd, phantom,  "\xRightarrow{\phi^L}"] & a_1 \ar[r, equal]\ar[d, "r_1"] & a_1\ar[d, equal] & b_0\ar[dd, "b"']\\
            && b_1 \ar[ru, Rightarrow, shorten <=2ex, shorten >=2ex, "\eps_1"']\ar[d, equal]\ar[r, "l_1"] & a_1\ar[d, "r_1"]\ar[ru, phantom, "\xRightarrow{\phi}"] & \\
            b_1\ar[r, equal] & b_1\ar[r, equal] & b_1 \ar[r, equal]\ar[ru, Rightarrow, shorten <=2ex, shorten >=2ex, "\eta_1"] & b_1\ar[r, equal] & b_1
        \end{tikzcd}\]}
        The definition of the counit $\tilde{\eps}$ is similar: we need to find the $3$-cell $C_3\hookrightarrow \cube^1\otimes C_2\xrightarrow{\tilde{\eps}} P$ with prescribed boundaries, which is accomplished by $\alpha_0^{-1}$ and $\eps_{\phi}$:
        {\small \[\begin{tikzcd}
            a_0 \ar[r, equal] \ar[dd, "a"] & a_0 \ar[r, equal] \ar[d, "r_0"'] & a_0 \ar[d, equal] \ar[r, equal] & a_0 \ar[dd, "a"] \ar[r, equal] & a_0 \ar[dd, "a"']\\
            & b_0 \ar[r, "l_0"] \ar[d, equal] \ar[ru, "\eps_0", Rightarrow, shorten <=2ex, shorten >=2ex] & a_0 \ar[d, "r_0"']\ar[rd, phantom, "\xRightarrow{\phi^L}"] &&\\
            a_1 \ar[d, "r_1"]\ar[ru, phantom, "\xRightarrow{\phi}"] & b_0 \ar[d, "b"] \ar[r, equal] \ar[ru, "\eta_0", Rightarrow, shorten <=2ex, shorten >=2ex] & b_0 \ar[d, "b"'] & a_1 \ar[d, "r_1"'] \ar[r, equal] & a_1 \ar[d, equal] \\
            b_1 \ar[r, equal] & b_1 \ar[r, equal] & b_1 \ar[r,equal] & b_1 \ar[r, "l_1"]\ar[ru, Rightarrow, shorten <=2ex, shorten >=2ex, "\eps_1"]& a_1
        \end{tikzcd}\Rrightarrow\begin{tikzcd}
            a_0 \ar[r, equal] \ar[dd, "a"] & a_0 \ar[dd, "r_0"] \ar[r, equal] & a_0 \ar[dd, "a"] \ar[r, equal] & a_0 \ar[dd, "a"']\\
            &{} \ar[rd, phantom, "\xRightarrow{\phi^L}"] &&\\
            a_1 \ar[d, "r_1"]\ar[ru, phantom, "\xRightarrow{\phi}"] & b_0 \ar[d, "b"'] & a_1 \ar[d, "r_1"] \ar[r, equal] & a_1 \ar[d, equal] \\
            b_1 \ar[r, equal] & b_1 \ar[r,equal] & b_1 \ar[r, "l_1"]\ar[ru, Rightarrow, shorten <=2ex, shorten >=2ex, "\eps_1"]& a_1
        \end{tikzcd}\Rrightarrow\begin{tikzcd}
            a_0\ar[r, equal]\ar[dd, "a"] & a_0 \ar[dd, "a"']\\
            {} & {}\\
            a_1 \ar[r, equal]\ar[d, "r_1"] & a_1\ar[d, equal]\\
            b_1 \ar[r, "l_1"] & a_1
        \end{tikzcd}\]}

        To verify the triangle identity, we must provide $3$-cells $\tilde{\alpha}: \id_{\tilde{r}}\xrightarrow{\sim} (\tilde{r}\tilde{\eps})(\tilde{\eta}\tilde{r})$, $\tilde{\beta}: \id_{\tilde{l}}\to (\tilde{\eps}\tilde{l})(\tilde{l}\tilde{\eta})$ in $\Fun^\lax(\cube^1, P)$. The construction of $\tilde{\alpha}$ reduces to defining a $4$-cell $C_4\to \cube^1\otimes C_3\to P$ with domain
        \[C_3\to \cube^1\otimes C^-_2\cup_{\{1\}\otimes C^-_2} \{1\}\otimes C_3\xrightarrow{\id_{\tilde{r}}\cup \alpha_1} P\] and codomain 
        \[C_3\to \cube^1\otimes C^+_2\cup_{\{0\}\otimes C^+_2} \{0\}\otimes C_3\xrightarrow{(\tilde{r}\tilde{\eps})(\tilde{\eta}\tilde{r})\cup\alpha_0} P.\]
        This $4$-cell is described in the following diagram. We keep the above grid-shaped description of $\tilde{\eta}, \tilde{\eps}$ and omit the labels of the boundary or whiskered/padded cells. The left edge composes to the domain ($\alpha_1$ component) and the the top, right, and bottom edges compose to the codomain ($\alpha_0$, $\tilde{\eta}\tilde{r}$, $\tilde{r}\tilde{\eps}$ components, respectively). The top-left square commutes by $\alpha_\phi$, and the other squares commute by interchange law, so the whole square commutes.
        \[\begin{tikzpicture}[
        x=1cm,y=1cm,
        box/.style={draw, minimum width=.65cm, minimum height=1.8cm, inner sep=0pt},
        sbox/.style={draw, minimum width=.65cm, minimum height=.6cm, inner sep=0pt},
        darr/.style={->, double distance=1.4pt},
        label/.style={font=\scriptsize}
        ]
        \newcommand{\codalphazero}[2]{%
        \node[sbox] at (#1,#2+0.6) {$\varepsilon_0$};
        \node[sbox] at (#1,#2) {$\eta_0$};
        \node[sbox] at (#1,#2-0.6) {$=$};
        }
        \newcommand{\codalphaone}[2]{%
        \node[sbox] at (#1,#2+0.6) {$=$};
        \node[sbox] at (#1,#2) {$\varepsilon_1$};
        \node[sbox] at (#1,#2-0.6) {$\eta_1$};
        }

        \node[box] (A) at (0,0) {$\phi$};
        \node[box] (B) at (2.4,0) {$\phi$};
        \draw[line width=0.6pt, double, double distance=1.2pt, shorten <=1ex, shorten >=1ex] (A.east) -- (B.west);
        \node[box] (C) at (6.6,0) {$\phi$};
        \codalphazero{7.25}{0}
        \draw[triple, shorten <=1ex, shorten >=1ex] (B.east) -- node[above,label] {$\alpha_0$} (C.west);

        \node[box] (D) at (0,-2.6) {$\phi$};

        \node[box] (E1) at (2.4,-2.6) {$\phi$};
        \node[box] (E2) at (3.05,-2.6) {$\phi^{L}$};
        \node[box] (E3) at (3.7,-2.6) {$\phi$};

        \draw[triple, shorten <=1ex, shorten >=1ex] (B) -- node[left ,label] {$\eta_\phi$} (E1);
        \draw[triple, shorten <=1ex, shorten >=1ex] (E1.west) -- node[below,label] {$\varepsilon_\phi$} (D.east);
        \draw[line width=0.6pt, double, double distance=1.2pt, shorten <=1ex, shorten >=1ex] (A.south) -- (D.north);

        \node[box] (F1) at (6.6,-2.6) {$\phi$};
        \codalphazero{7.25}{-2.6}
        \node[box] (F2) at (7.9,-2.6) {$\phi^{L}$};
        \node[box] (F3) at (8.55,-2.6) {$\phi$};

        \draw[triple, shorten <=1ex, shorten >=1ex] (C) -- node[left,label] {$\eta_\phi$} (F1);
        \draw[triple, shorten <=1ex, shorten >=1ex] (E3.east) -- node[above,label] {$\alpha_0$} (F1.west);

        \node[box] (G1) at (-.65,-5.2) {};
        \codalphaone{-.65}{-5.2}
        \node[box] (G2) at (0,-5.2) {$\phi$};

        \draw[triple, shorten <=1ex, shorten >=1ex] (D) -- node[left,label] {$\alpha_1$} (G2);

        \node[box] (H1) at (2.4,-5.2) {$\phi$};
        \node[box] (H2) at (3.05,-5.2) {$\phi^{L}$};
        \node[box] (H3) at (3.7,-5.2) {};
        \codalphaone{3.7}{-5.2}
        \node[box] (H4) at (4.35,-5.2) {$\phi$};

        \draw[triple, shorten <=1ex, shorten >=1ex] (E2) -- node[left,label] {$\alpha_1$} (H2);
        \draw[triple, shorten <=1ex, shorten >=1ex] (H1.west) -- node[below,label] {$\varepsilon_\phi$} (G2.east);

        \node[box] (I1) at (6.6,-5.2) {$\phi$};
        \codalphazero{7.25}{-5.2}
        \node[box] (I2) at (7.9,-5.2) {$\phi^{L}$};
        \node[box] (I3) at (8.55,-5.2) {};
        \codalphaone{8.55}{-5.2}
        \node[box] (I4) at (9.2,-5.2) {$\phi$};

        \draw[triple, shorten <=1ex, shorten >=1ex] (F1) -- node[left,label] {$\alpha_1$} (I1);
        \draw[triple, shorten <=1ex, shorten >=1ex] (I1.west) -- node[above,label] {$\alpha_0^{-1}$} (H4.east);
        \draw[quadruple] (A) -- node[label, midway, above, sloped, auto] {$\alpha_\phi$} (E1);
        \end{tikzpicture}\]
        The construction of $\tilde{\beta}$ reduces to defining a $4$-cell $C_4\to \cube^1\otimes C_3\to P$ with the domain
        \[C_3\to \cube^1\otimes C^-_2\cup_{\{1\}\otimes C^-_2} \{1\}\otimes C_3\xrightarrow{\id_{\tilde{l}}\cup \beta_1} P\] and the codomain 
        \[C_3\to \cube^1\otimes C^+_2\cup_{\{0\}\otimes C^+_2} \{0\}\otimes C_3\xrightarrow{(\tilde{\eps}\tilde{l})(\tilde{l}\tilde{\eta})\cup\beta_0} P.\]
        This $4$-cell can be filled as in the following diagram. The homotopies between $3$-cells labeled $\alpha_i$ and $\beta_i$ are provided by the higher triangle identity. Namely, an L-shaped gluing of the unit and counit can be simplified by either $\alpha$ or $\beta$, and the next adjunction datum provides a homotopy between these two.
            \[\begin{tikzpicture}[
            x=1cm,y=1cm,
            box/.style={draw, minimum width=.65cm, minimum height=1.8cm, inner sep=0pt},
            sbox/.style={draw, minimum width=.65cm, minimum height=.6cm, inner sep=0pt},
            dbox/.style={draw, minimum width=.65cm, minimum height=1.2cm, inner sep=0pt},
            label/.style={font=\scriptsize},
            short/.style={shorten <=1ex, shorten >=1ex}
            ]
            \newcommand{\codalphazero}[2]{%
            \node[sbox] at (#1,#2+0.6) {$\varepsilon_0$};
            \node[sbox] at (#1,#2) {$\eta_0$};
            \node[sbox] at (#1,#2-0.6) {$=$};
            }
            \newcommand{\codalphaone}[2]{%
            \node[sbox] at (#1,#2+0.6) {$=$};
            \node[sbox] at (#1,#2) {$\varepsilon_1$};
            \node[sbox] at (#1,#2-0.6) {$\eta_1$};
            }
            \newcommand{\leftunit}[2]{%
            \node[sbox] at (#1,#2+0.6) {$\eta_0$};
            \node[dbox] at (#1,#2-0.3) {$=$};
            }
            \newcommand{\rightcounit}[2]{%
            \node[dbox] at (#1,#2+0.3) {$=$};
            \node[sbox] at (#1,#2-0.6) {$\varepsilon_1$};
            }
            \leftunit{-1.95}{0}
            \node[box] (A2) at (-1.3,0) {$\phi^L$};
            \node[box] (A3) at (-0.65, 0) {};
            \rightcounit{-0.65}{0}

            \node[box] (B1) at (4.1, 0) {};
            \leftunit{4.1}{0}
            \node[box] (B2) at (4.75,0) {$\phi^L$};
            \node[box] (B3) at (5.4, 0) {};
            \rightcounit{5.4}{0}

            \draw[line width=0.6pt,double,double distance=1.2pt,short]
            (A3) -- (B1);

            \node[box] (C0) at (9.4, 0) {};
            \leftunit{9.4}{0}
            \codalphazero{10.05}{0}
            \node[box] (Cphi) at (10.7,0) {$\phi^L$};
            \rightcounit{11.35}{0}

            \draw[triple,short]
            (B3.east |- 0, 0.4) -- 
            node[above,label] {$\beta_0$}
            (C0.west |- 0, 0.4);

            \draw[triple,short]
            (B3.east |- 0, -0.4) --
            node[below,label] {$\alpha_0$}
            (C0.west |- 0, -0.4);

            \leftunit{-1.95}{-2.6}
            \node[box] (Dphi) at (-1.3,-2.6) {$\phi^L$};
            \node[box] (D3) at (-0.65, -2.6) {};
            \rightcounit{-0.65}{-2.6}

            \draw[line width=0.6pt,double,double distance=1.2pt,short]
            (A2.south) -- (Dphi.north);

            \node[box] (E0) at (2.8, -2.6) {};
            \leftunit{2.8}{-2.6}
            \node[box] (E1) at (3.45,-2.6) {$\phi^L$};
            \node[box] (E2) at (4.1,-2.6) {$\phi$};
            \node[box] (E3) at (4.75,-2.6) {$\phi^L$};
            \node[box] (E4) at (5.4, -2.6) {};
            \rightcounit{5.4}{-2.6}

            \draw[triple,short]
            (B2.south) --
            node[right,label] {$\eta_\phi$}
            (E3.north);

            \draw[triple,short]
            (E0.west) --
            node[below,label] {$\eps_\phi$}
            (D3.east);

            \draw[quadruple]
            (A3) --
            node[midway, above,sloped,label, auto] {$\beta_\phi$}
            (E0);

            \node[box] (F0) at (8.1, -2.6) {};
            \leftunit{8.1}{-2.6}
            \node[box] (F1) at (8.75,-2.6) {$\phi^L$};
            \node[box] (F2) at (9.4,-2.6) {$\phi$};
            \codalphazero{10.05}{-2.6}
            \node[box] (F3) at (10.7,-2.6) {$\phi^L$};
            \rightcounit{11.35}{-2.6}

            \draw[triple,short]
            (Cphi.south) --
            node[right,label] {$\eta_\phi$}
            (F3.north);

            \draw[triple,short]
            (E4.east) --
            node[above,label] {$\alpha_0$}
            (F0.west);

            \leftunit{-1.95}{-5.2}
            \node[box] (G1) at (-1.3,-5.2) {$\phi^L$};
            \codalphaone{-0.65}{-5.2}
            \node[box] (G2) at (0,-5.2) {};
            \rightcounit{0}{-5.2}

            \draw[triple,short]
            (Dphi.south west) --
            node[left,label] {$\beta_1$}
            (G1.north west);

            \draw[triple,short]
            (Dphi.south east) --
            node[right,label] {$\alpha_1$}
            (G1.north east);

            \node[box] (H0) at (2.15, -5.2) {};
            \leftunit{2.15}{-5.2}
            \node[box] (H1) at (2.8,-5.2) {$\phi^L$};
            \codalphaone{3.45}{-5.2}
            \node[box] (H2) at (4.1,-5.2) {$\phi$};
            \node[box] (H3) at (4.75,-5.2) {$\phi^L$};
            \node[box] (H4) at (5.4, -5.2) {};
            \rightcounit{5.4}{-5.2}

            \draw[triple,short]
            (E3.south) --
            node[right,label] {$\alpha_1$}
            (H3.north);

            \draw[triple,short]
            (H0) --
            node[below,label] {$\eps_\phi$}
            (G2);

            \node[box] (I0) at (7.45, -5.2) {};
            \leftunit{7.45}{-5.2}
            \node[box] (I1) at (8.1,-5.2) {$\phi^L$};
            \codalphaone{8.75}{-5.2}
            \node[box] (I2) at (9.4,-5.2) {$\phi$};
            \codalphazero{10.05}{-5.2}
            \node[box] (I3) at (10.7,-5.2) {$\phi^L$};
            \rightcounit{11.35}{-5.2}

            \draw[triple,short]
            (F3.south) --
            node[right,label] {$\alpha_1$}
            (I3.north);

            \draw[triple,short]
            (I0.west) --
            node[above,label] {$\alpha_0^{-1}$}
            (H4.east);

            \end{tikzpicture}\]
            The left edge composes is the domain of $\tilde{\beta}$. The top, right, bottom edge composes to the codomain of $\tilde{\beta}$. The top-left square commutes by triangle identity $\beta_\phi$ and the other squares commute by the interchange law. 
    \end{proof}
    
    \section{The cobordism hypothesis}\label{section_cobordism_hypothesis}
    The goal of this section is to review the basic form of the cobordism hypothesis and translate it into the context of categorical spectra.
    \begin{notation}
    Recall the straightening-unstraightening equivalence $\colim: \Fun(\rB \rO(n), \sS)\xrightarrow{\sim} \sS_{/\rB \rO(n)}$, which takes a groupoid $\tilde{X}$ with an $\rO(n)$-action to its quotient $X \simeq \tilde{X}/\rO(n) = \tilde{X}_{h\rO(n)}$. 
    The inverse is given by pulling back the universal $\rO(n)$-torsor $\ast\to \rB\rO(n)$ along $X\to \rB\rO(n)$. 
    The data is also equivalent to the pair of a CW complex representing the groupoid $X$ and the real vector bundle $\zeta\simeq \RR^n\times_{\rO(n)} \tilde{X}$ of rank $n$ with inner product, so we sometimes use the notation $\tilde{X}$ and $(X, \zeta)$ interchangeably or let $\zeta$ denote the classifying map $X\to \rB \rO(n)$ itself. 
    These equivalent data will be called \emph{tangential structures}. Despite the notation $\rO(n)$, unless we specifically refer to a vector bundle on a topological space, we will not need a topological group structure on $\rO(n)$; we only regard it as a group object in $\sS$. 
    \end{notation}
    Let $m\leq n$, let $M$ be a smooth $m$-dimensional manifold, and let $\tilde{X}$ be a tangential structure. Let $\tau_M: M\to \rB\rO(m)$ be the classifying map of the tangent bundle. A $\tilde{X}$-structure, or $(X, \zeta)$-structure, on $M$ is a commutative diagram in $\sS$ lifting the tangent classifier
    \[\begin{tikzcd}[column sep=25mm]
        & X\ar[d, "\zeta"]\\
        M\ar[r, "\tau_M\oplus \RR^{n-k}"']\ar[ru, dashed] & \rB\rO(n).
    \end{tikzcd}\]
    
    \begin{example}
        The most basic tangential structure is $X\to \rB\rO(n) = \id$, $\tilde{X} = \ast$. This is no additional structure, i.e., the structure of \emph{unoriented manifolds}. 
        Another fundamental case is $X=\ast\to \rB\rO(n)$, $\tilde{X} = \rO(n)$. In this case, a $\tilde{X}$-structure gives a \emph{framing}, i.e., an identification of the tangent bundle with the constant bundle of the reference point of $\rB\rO(n)$.
        We will use the superscript $\fr$ to indicate framing and write $\un$ (or omit the notation for tangential structure) for unoriented manifolds. 
    \end{example}
    Note that the groupoidification $\abs{C_k}$ of the cell $C_k$ admits a canonical stratified manifold structure as a quotient of the cube $[0, 1]^k$ by cylindrically collapsing appropriate faces.
    In particular, it comes equipped with a natural framing on every stratum in a compatible manner. Such manifold realization extends to $\Theta$ in a functorial way. 
    In the following, it is helpful to think of them as secretly being extended to the ``germ'' beyond the boundary, so that the objects restricted to the boundary come equipped with some ``glue.''
    Roughly speaking, the \emph{$n$-dimensional bordism category $\Bord^{\tilde{X}}_n$ with $\tilde{X}$-structure} is an $n$-category whose value on $\theta\in \Theta_n$ classifies finite submersive bundles of $\tilde{X}$-manifolds on $\abs{\theta}$ whose $\tilde{X}$-structure is compatible across stratifications. In particular, we have the following description of cells:
    \begin{enumerate}
        \setcounter{enumi}{-1}
        \item An object of $\Bord_n^{\tilde{X}}$ is a finite set of points $x = \{x_1, \ldots, x_n\}$ of $X$ equipped with identifications $(\zeta(x_i)\simeq \RR^n)\in\rO(n)$. 
        \item A $k$-cell for $1\leq k\leq n$ from $M_0$ to $M_1$ is a $k$-dimensional $\tilde{X}$-manifold $W$ together with identifications of the boundaries $(\abs{C_{k-1}}\xrightarrow{s}\abs{C_k})^\ast W\simeq M_0$, $(\abs{C_{k-1}}\xrightarrow{t}\abs{C_k})^\ast W \simeq M_1$, i.e., a cobordism from $M_0$ to $M_1$ with $\tilde{X}$-structures. 
        \item When $k> n$, one may think of a $k$-morphism as a bundle on $\abs{C_k}$ induced from one on $\abs{C_n}$ via the projection $C_n\to C_k$; these are the trivial cobordism given by cylinders (note, however, that the identifications with the boundary has the freedom of diffeomorphisms).
    \end{enumerate}
    Moreover, we equip $\Bord^{\tilde{X}}_n$ with a symmetric monoidal structure by disjoint unions of manifolds. 
    Giving a precise definition of the symmetric monoidal $(\infty, n)$-category of cobordisms is a nontrivial task. At least one has to encode the compatibility of tangential structures across different strata systematically, which may be done by considering collars or developing the theory of stratified manifolds. 
    We refer the reader to \cite{calaqueNote$inftyn$categoryCobordisms2019}\cite{ayalaCobordismHypothesis2017}\cite{gradyExtendedFieldTheories2023} for some constructions in the literature. 
    One should also note that the above naive definition ends up in a non-univalent category; the underlying groupoid consists of trivial cobordisms (i.e., cylinders), but there are nontrivial invertible cobordisms (these are roughly the h-cobordisms), so we usually apply the univalent completion (this inverts the difference between the smooth and $\rP\rL$ cobordism categories, for instance). 
    \begin{remark}
        One should define the cobordism categories in a way that clearly depends functorially on $X\in \sS_{/\rB\rO(n)}$. In particular, the framed bordism $n$-category $\Bord_n^\fr$ admits an action of $\Aut(\ast\to \rB\rO(n)) = \rO(n)$. It is a folklore result that $\Aut(\Bord_n^\fr) = \rP\rL(n)$, except for the unknown case $n=4$, but we will stick to the $\rO(n)$-action for simplicity. 
    \end{remark}
    Modulo the problem of definition, it is not difficult to show that the objects of cobordism categories are fully dualizable; for $k<n$, the dual of a $k$-morphism is the same manifold with the opposite $\tilde{X}$-structure.
    The cobordism hypothesis \cite{baezHigherDimensionalAlgebraII1996} states the universal property of the cobordism category as the free symmetric monoidal $n$-category with duals.
    \begin{hypothesis}\label{cobordism_hypothesis_plainver}
        The forgetful functor $\CMon(n\Cat)^{\dual}\hookrightarrow \CMon(n\Cat)\xrightarrow{(\blank)^{\leq 0}} \sS$ is corepresented by the \emph{framed bordism category} $\Bord_n^\fr$. In particular, the functor admits an action of $\rO(n)$, so the above functor canonically factors through the category of groupoids with $\rO(n)$-action:
        \[\begin{tikzcd}[column sep=large]
            n\SMC^\dual\ar[d, hook]\ar[r, dashed]  & \sS_{/\rB\rO(n)} \ar[r, "X\mapsto \tilde{X}", "\sim"'] & \Fun(\rB\rO(n), \sS)\ar[d, "\ev_\ast"]\\
            n\SMC \ar[rr, "(\blank)^{\leq 0}"] && \sS.
        \end{tikzcd}\]
        Moreover, the left adjoint of $\CMon(n\Cat)^{\dual}\to \Fun(\rB\rO(n), \sS)$ sends the object $\tilde{X}$ to the $n$-dimensional cobordism category $\Bord_{n}^{\tilde{X}}$, i.e., $\Bord_n^{\tilde{X}}\simeq \tilde{X}\otimes_{\rO(n)}\Bord_n^{\fr}$. 
    \end{hypothesis}
    \begin{remark}
        The second statement is equivalent to saying that $\tilde{X}\to \Bord^{\tilde{X}}_n$ is colimit-preserving, i.e., satisfies \emph{descent}. 
        This is expected from the \emph{locality} of fully extended cobordism categories; one can cut a manifold into small bordisms and assemble information on the original manifold from the restricted pieces. In the current situation, it essentially means that one can assemble a tangential structure on a manifold from local trivializations of the tangent bundle by asking for some properties/structures on the transition functions and cocycles. This perspective is central in \cite{gradyExtendedFieldTheories2023}. The unoriented case is, in some sense, universal: by unstraightening the framed bordism category as an $\rO(n)$-equivariant object $\Bord_n^\fr: \rB\rO(n)\to n\SMC$, one gets a coCartesian fibration $\Bord_n\coloneqq \Bord_n^\un = \int \Bord_n^\fr \to \rB\rO(n)$; intuitively, one obtains an unoriented bordism category from the framed bordism category by taking the orbits under change of framings.
        All the other cases are base changes of this fibration along the structure map $X\to \rB\rO(n)$. One may think of this construction as the \emph{categorical Madsen--Tillman spectrum}.
    \end{remark}
    \begin{remark}
        The cobordism hypothesis is widely believed, but there is no consensus on a rigorous proof yet, as far as the author knows (\cite{lurieClassificationTopologicalField2009} gives a sketch of a proof, and \cite{ayalaCobordismHypothesis2017} gives a proof assuming a fundamental conjecture on factorization homology of solidly framed manifolds. \cite{gradyGeometricCobordismHypothesis2022} claims a rigorous proof and generalization to the geometric context, parametrized by the site of manifolds, but peer review is still in progress). \textbf{From now on, we will work conditionally on this formulation of the cobordism hypothesis.}    
    \end{remark}
    Let us restate the cobordism hypothesis using categorical spectra. 
    As the functor $(\Omega^{\infty})^{\leq d}: \CatSp^{d\mhy\adj}\to \CMon(d\Cat)$ lands in $\CMon(d\Cat)^\dual$, the underlying groupoid functor $(\Omega^\infty)^{\leq 0}: \CatSp\to 0\CatSp^\cn \simeq \CMon(\sS)\to \sS$ lifts to $\Fun(\rB \rO(n), \sS)$. 
    \begin{corollary}
        The functor $\underline{\Omega}^{\infty-d}: \CatSp^{0\mhy\adj}\xrightarrow{\Omega^{\infty-d}} \CMon(\infty\Cat)\xrightarrow{(\blank)^{\leq 0}} \sS$ is represented by $\rB^{\infty-d}\Bord_d^\fr$ and factors through $\sS_{/\rB\rO(n)}$. 
        The left adjoint $\sS_{/\rB\rO(n)}\to \CatSp^{0\mhy\adj}$, formally given by $X\mapsto \tilde{X}\otimes_{\rO(n)} L_0^\adj\FF[-n]$, admits a description as the cobordism categorical spectrum $\rB^{\infty-d}\Bord_d^X$.  
    \end{corollary}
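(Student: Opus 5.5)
The plan is to reduce the statement to \cref{cobordism_hypothesis_plainver} through a chain of adjunctions; throughout I read the $n$ in the statement as $d$. For $X\in\CatSp^{0\mhy\adj}$, the level $X_d=\Omega^{\infty-d}X$ is a symmetric monoidal $\infty$-category in which every $k$-cell with $k<d$ has left and right adjoints, and objects are dualizable. Hence $(X_d)^{\leq d}\in\CMon(d\Cat)^\dual$. This gives a functor
\[
\Phi\coloneqq(\Omega^{\infty-d})^{\leq d}\colon \CatSp^{0\mhy\adj}\to \CMon(d\Cat)^\dual .
\]
Since $(\blank)^{\leq 0}$ factors through $(\blank)^{\leq d}$, the functor $\underline{\Omega}^{\infty-d}$ of the statement is the composite $(\blank)^{\leq 0}\circ\Phi$.

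Step 1 is to construct a left adjoint to $\Phi$. On all of $\CatSp$, $\rB^{\infty-d}$ is left adjoint to $\Omega^{\infty-d}$. Also, ${}^{\leq d}(\blank)$ and $(\blank)^{\leq d}$ are adjoint to the inclusion $d\Cat\hookrightarrow\infty\Cat$ and preserve products, so they induce the corresponding adjunctions on $\CMon$. Composing, for $\eC\in\CMon(d\Cat)^\dual$ and $X\in\CatSp^{0\mhy\adj}$ I expect
\[
\Map(\rB^{\infty-d}\eC, X)\simeq \Map_{\CMon(\infty\Cat)}(\eC, X_d)\simeq \Map_{\CMon(d\Cat)}(\eC, X_d^{\leq d}).
\]
This is only an adjunction on the nose if $\rB^{\infty-d}\eC$ already lies in $\CatSp^{0\mhy\adj}$. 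In general the left adjoint is $L_0^\adj\rB^{\infty-d}$, and Step 2 shows that the localization is unnecessary.

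Step 2 is to check that $\rB^{\infty-d}\eC$ is $0$-adjointful whenever $\eC$ has duals. I expect this to be the main obstacle. For levels $k<d$, the level is $\Omega^{d-k}\eC$. Its cells are cells of $\eC$ of dimension shifted by $d-k$, so the required adjoints are inherited from $\eC$. For levels $k\geq d$, the level is $\rB^{k-d}\eC$, which is the univalent completion of the algebroid delooping. A $j$-cell of $\rB^{k-d}\eC$ with $j\leq k-d$ is either an identity or, when $j=k-d$, an object of $\eC$ viewed as a $(k-d)$-morphism. Dualizability of objects of $\eC$ gives adjoints for these, and cells of dimension $j>k-d$ are cells of $\eC$ of dimension $j-(k-d)<d$, which are adjointable by hypothesis. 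I would first make this argument in the flagged setting and then check that passing to the univalent completion preserves it. That check should be safe, because by Riehl--Verity the maps $l,r\colon C_1\to\Adj$ are epimorphisms, so having an adjoint is a property invariant under categorical equivalence.

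Step 3 assembles the pieces. By \cref{cobordism_hypothesis_plainver}, the forgetful functor $(\blank)^{\leq 0}$ on $\CMon(d\Cat)^\dual$ is corepresented by $\Bord_d^\fr$. Steps 1 and 2 then show that $\underline{\Omega}^{\infty-d}$ is corepresented by $\rB^{\infty-d}\Bord_d^\fr$. Taking $\eC=\CMon(d\Cat)$-free object on a point, this also identifies $\rB^{\infty-d}\Bord_d^\fr$ with $L_0^\adj\FF[-d]$, since $\FF[-d]=\rB^{\infty-d}\Free_{\EE_\infty}(\ast)$. The $\rO(d)$-action on $\Bord_d^\fr$ acts on the corepresenting object, so the functor factors through $\Fun(\rB\rO(d),\sS)\simeq\sS_{/\rB\rO(d)}$.

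For the left adjoint of this lifted functor, note that a colimit-preserving functor out of $\Fun(\rB\rO(d),\sS)$ is determined by its value on the free orbit. It is therefore given by $\tilde X\mapsto \tilde X\otimes_{\rO(d)} L_0^\adj\FF[-d]$. Because $L_0^\adj\rB^{\infty-d}$ is a left adjoint and so preserves colimits, this equals $L_0^\adj\rB^{\infty-d}\bigl(\tilde X\otimes_{\rO(d)}\Bord_d^\fr\bigr)$, where the colimit is computed in $\CMon(d\Cat)^\dual$. By the descent clause of the hypothesis, the argument is $\Bord_d^X$, and by Step 2 the localization $L_0^\adj$ acts trivially on $\rB^{\infty-d}\Bord_d^X$. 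This gives the claimed description as $\rB^{\infty-d}\Bord_d^X$.
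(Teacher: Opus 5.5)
Your proposal is correct and follows the paper's route. The paper gives no detailed proof: it only observes that $(\Omega^{\infty-d})^{\le d}$ lands in $\CMon(d\Cat)^\dual$ on $0$-adjointful categorical spectra and then applies the cobordism hypothesis. It implicitly uses your Step 2, namely its Example identifying $\CMon(d\Cat)^\dual$ with $\CatSp^\cn\cap d\CatSp^{d\mhy\adj}$ via $\rB^\infty$, shifted by $d$. Your argument spells out the adjunction and Yoneda bookkeeping that the paper leaves as a ``superficial translation''.
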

    The translation is superficial, but in a sense, this better reflects the intuition of the cobordism hypothesis. Via the cobordism hypothesis, the framed cobordism $n$-category is described as the free symmetric monoidal $n$-category with duals generated by a point. However, this point secretly looks like a germ of $\RR^n$, and there are $\rO(n)$-worth of such points. In the above, this generator is placed in dimension $-n$.
    For example, targets of TQFTs become dimension-independent in this way: for any $d$, one can write $\{\rB^{\infty-d}\Bord_d^{X}\to \underline{\CC}\}$ to mean $\Bord_d\to d\mathsf{Vect}_{\CC}$, where $\underline{\CC}$ is the categorical spectrum $\{n\Mod_\CC\}_n$ of \cite{stefanichHigherQuasicoherentSheaves2021}.
    Moreover, we can organize the framed cobordism categories of all dimensions into a single algebra in $0\CatSp^{0\mhy\adj}$:
    \begin{corollary}\label{multiplicative_cobordism_hypothesis}
        The categorical spectrum $\bigoplus_{n\geq 0}\rB^{\infty-n}\Bord^\fr_n$ admits the structure of a tensor algebra on $\rB^{\infty-1}\Bord_1^\fr$ in the monoidal category $\CatSp^{0\mhy\adj}$\footnote{Shai Keidar informed the author that this is already a tensor algebra in $\CatSp$. It follows from the improvement of the cobordism hypethesis that describes $\Fun^\lax(\Bord_n^\fr, \eC)$ \cite[Theorem 7.6]{johnson-freydOplaxNaturalTransformations2017}. This theorem is a corollary of \cref{pushout_formula_for_cylinder_on_Adj}.}. 
    \end{corollary}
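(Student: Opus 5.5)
The plan is to derive the tensor algebra structure formally from three ingredients established earlier: the monoidal localization $L_0^\adj$ of \cref{theorem_tensor_product_localizes_to_adj}, the corepresentability statement of the preceding corollary, and the universal property of tensor algebras. In the monoidal category $\CatSp^{0\mhy\adj}$, the free $\EE_1$-algebra on an object $V$ is $T(V)=\bigoplus_{n\geq 0}V^{\otimes^\adj n}$. This holds because $\CatSp^{0\mhy\adj}$ is presentable and the localized tensor product preserves colimits in each variable, and because $\CatSp$ is semiadditive, so coproducts are the direct sums $\oplus$. Take $V=\rB^{\infty-1}\Bord_1^\fr\simeq L_0^\adj\FF[-1]$. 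It then suffices to identify $V^{\otimes^\adj n}\simeq\rB^{\infty-n}\Bord_n^\fr$ for each $n\geq 0$, compatibly with the associativity equivalences.

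The first step is to describe the framed bordism spectra as localizations of shifted units. By the preceding corollary applied to $X=\ast\to\rB\rO(n)$, the spectrum $\rB^{\infty-n}\Bord_n^\fr$ corepresents $\underline{\Omega}^{\infty-n}$ on $\CatSp^{0\mhy\adj}$. This functor is also corepresented by $L_0^\adj\Sigma^{\infty-n}S^0=L_0^\adj\FF[-n]$. Hence $\rB^{\infty-n}\Bord_n^\fr\simeq L_0^\adj\FF[-n]$, with the case $n=0$ giving the unit $L_0^\adj\FF$.

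The second step is to compute the tensor powers. Since $L_0^\adj$ is monoidal by \cref{theorem_tensor_product_localizes_to_adj} (applied with $m=n=0$), we have $(L_0^\adj\FF[-1])^{\otimes^\adj n}\simeq L_0^\adj(\FF[-1]^{\otimes n})$. By \cref{left_action_of_S1_on_catsp} and the half-central structure of $\FF[1]$, the product $\FF[-1]^{\otimes n}$ is equivalent to $\FF[-n]$ up to total duals $D^k$. The total dual of $\FF$ is $\FF$ itself, since $S^0$ is fixed by all dualities, so these twists vanish and $\FF[-1]^{\otimes n}\simeq\FF[-n]$. Combining with the first step gives $V^{\otimes^\adj n}\simeq\rB^{\infty-n}\Bord_n^\fr$.

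Assembling the summands produces the $\EE_1$-structure on $\bigoplus_n\rB^{\infty-n}\Bord_n^\fr$, transported along these equivalences from the free algebra $T(V)$. The step I expect to need the most care is coherence. The identifications must assemble into an equivalence of algebras, not merely a levelwise equivalence of objects, and the twists $D^k$ and the automorphisms of $\FF[-n]$ could in principle make the identification non-canonical. I would resolve this by never choosing the levelwise identifications by hand: define the algebra structure as $T(V)$ itself, transported along the single equivalence $V\simeq L_0^\adj\FF[-1]$. The summand identifications of the second step then serve only to name the underlying object.

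A remark that the multiplication is heuristically the Cartesian product of framed manifolds is not required by the statement, so I would not prove it.
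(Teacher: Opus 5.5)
Your proposal is correct and follows the paper's proof. The paper applies the monoidal localization to the tensor algebra $\operatorname{Tens}(\FF[-1])=\bigoplus_{n\geq 0}\FF[-n]$ and uses the cobordism hypothesis to identify $L^{0\mhy\adj}\FF[-n]\simeq\rB^{\infty-n}\Bord_n^\fr$, which is your argument read in the opposite direction. Your choice to use the localization $0\CatSp\to 0\CatSp^{\adj}$, i.e.\ \cref{theorem_tensor_product_localizes_to_adj} with $m=n=0$, is the right level of precision: every object involved is $0$-categorical, whereas on all of $\CatSp$ the localization is not compatible with $\otimes$ (left tensoring with $\FF[1]$ is the shift, which does not preserve $L^{0\mhy\adj}$-equivalences).
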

    \begin{proof}
        The first claim is equivalent to the cobordism hypothesis by applying the monoidal localization $L^{0\mhy\adj}: \CatSp\to \CatSp^{0\mhy\adj}$ to the tensor algebra $\operatorname{Tens}(\FF[-1]) = \bigoplus \FF[-n]$. 
    \end{proof}
    \begin{remark}
        Intuitively, the $\EE_1$-rig structure on $\bigoplus_{n\geq 0}\rB^{\infty-n}\Bord_n^\fr$ is given by disjoint union and Cartesian product of manifolds. 
        As soon as such a Cartesian product functor $\Bord_m^\fr\otimes \Bord_n^\fr\to \Bord_{m+n}^\fr$ is shown to be well-defined, the universal property shows that it is the unique functor that sends the pair of reference codimension-$m$ and codimension-$n$ points to the reference codimension-$(m+n)$ point, but at the moment of writing, the author is not aware of a way that does not go back to a construction of the cobordism category. 
    \end{remark}
    \begin{remark}
        The cobordism hypothesis and its suggested proofs have purely categorical consequences that can be stated without mentioning manifolds. One is the $\rO(n)$-action on the underlying groupoid of a symmetric monoidal $n$-category with duals. This action remains elusive without the framed cobordism hypothesis, forcing us to state the cobordism hypothesis in two steps. Combining this with \cref{theorem_tensor_product_localizes_to_adj}, the groupoid of $k$-cells in a symmetric monoidal $n$-category with duals acquires an $\rO(n-k)$-action.
        This fact is mentioned in \cite[\S 4.3]{lurieClassificationTopologicalField2009} without proof and used to formulate the cobordism hypothesis with singularities. 

        Another categorical takeaway from Lurie's proof sketch is an explicit finite-step pushout description of $L^{(-1)\mhy\adj}\FF[-n]\to L^{0\mhy\adj}\FF[n]$. A priori, it is unclear whether the localization can be reached after finitely many pushouts.
        More precisely, in \cite[\S 3.4]{lurieClassificationTopologicalField2009}, Lurie describes the inclusion $\rB^{\infty-n}\Bord_{n-1}\to \rB^{\infty-n}\Bord_n$
        by introducing the \emph{index filtration}: 
        $\rB^{\infty-n}\Bord_{n-1} = \cF_{-1}\hookrightarrow \cF_{0}\hookrightarrow \cF_1 \hookrightarrow \cdots \hookrightarrow\cF_{n}=\rB^{\infty-n}\Bord_{n}$.
        Note that $\cF_{-1}\hookrightarrow\cF_{n}$ is the reflection under the inclusion $0\CatSp^{0\mhy\adj}\hookrightarrow 0\CatSp^{(-1)\mhy\adj}$.
        Roughly speaking, at least non-univalently, the category $\cF_k$ has the same $0, 1, \ldots, (n-1)$-cells as $\Bord_{n-1}$ and allows the $n$-cells (i.e., cobordisms) of $\Bord_{n}$ that can be built up from handles of index at most $k$; here we see the time coordinate of the cobordism as a (generalized) Morse function. Each step $\cF_{k-1}\hookrightarrow\cF_{k}$ is generated by the single $\rO(n-k)$-equivariant $n$-cell corresponding to the $k$-handle, subject to the relation of cancellation of a $(k-1)$-handle and a $k$-handle, as well as the ``triangle identity'' of cancellation. The case $k=0$ is particularly simple and stated as the lax cofiber sequence: 
        \[\begin{tikzcd}
        \Sigma^{\infty-1}_+ \rB\rO(n) \ar[r, "S^{n-1}"]\ar[d] & \rB^{\infty-n}\Bord_{n-1}\ar[d] \\
        0 \ar[r]\ar[ru, Rightarrow, shorten <=4ex, shorten >=2ex, "D^n"] & \cF_0
        \end{tikzcd}\]
        In other words, $\cF_0$ is an extension of $\Sigma_+^\infty \rB\rO(n)$ by $\rB^{\infty-n}\Bord_{n-1}$ classified by the standard $\rO(n)$-action on the $(n-1)$-sphere \[S^{n-1}\in \Map_{\sS}(\rB\rO(n), \Omega^{n-1}\Bord_{n-1})\simeq \Ext(\Sigma^\infty_+\rB\rO(n), \rB^{\infty-n}\Bord_{n-1}).\] 
        \Cref{n_adjointable_catsp_are_closed_under_extensions} ensures that this extension does not break the previous levels of adjointfulness. 
    \end{remark}

    \section{Cobordism hypothesis with singularities}
    \label{section_cobordism_hypothesis_with_singularities}
    As we saw in the last section, the cobordism hypothesis gives a geometric description of the categorical spectra with adjoints freely generated by a $\rO(n)$-equivariant groupoid of $(-n)$-cells.
    In this section, we study a generalization called the \emph{cobordism hypothesis with singularities}, sketched by Lurie in \cite[\S 4.3]{lurieClassificationTopologicalField2009}; 
    we describe the cobordism category with certain types of conical singularities (a.k.a.\ defects) allowed. It turns out that such cobordism categories/categorical spectra arise as \emph{cell complexes} in categorical spectra with adjoints, i.e., as iterated extensions of cobordism categorical spectra of different codimensions, and the singularity types precisely classify the extensions. 
    The goal of this section is to make Lurie's sketch of the argument precise, filling the unproven categorical claims using our previous results. We first make the following definition and later check that it qualifies for its name.

    \begin{definition}
        A \emph{cobordism categorical spectrum with singularities} is a categorical spectrum $B^0$ that fits into the following sequence, where  $d\geq 0$, $X^k\in\sS_{/\rB\rO(k)}$, and $B^k$ is an extension (resp.\ coextension) of $\rB^{\infty-k}\Bord_k^{\tilde{X}^k}$ by $B^{k+1}$  when $k$ is odd (resp.\ even):
        \[\begin{tikzcd}[column sep=small]
            0= B^{d+1}\ar[r, tail] & B^d\ar[r, tail]\ar[d, two heads] & B^{d-1}\ar[r, tail]\ar[d, two heads] & \cdots \ar[r, tail] & B^1\ar[r, tail]\ar[d, two heads] & B^0\ar[d, two heads] \\
            & \rB^{\infty-d} \Bord_d^{\tilde{X}^d} & \rB^{\infty-(d-1)}\Bord_{d-1}^{\tilde{X}^{d-1}} & &\rB^{\infty-1} \Bord_1^{\tilde{X}^1} & \rB^{\infty}\Bord_0^{\tilde{X}^0}
        \end{tikzcd}\]
        A \emph{singularity datum} for $B^0$ is the sequence $\vec{X} = (X^0, E^0, \ldots, X^{d-1}, E^{d-1}, X^d)$ where $B^k$ is classified by $E^k\in \Ext(\rB^{\infty-k}\Bord_k^{\tilde{X}^k}, B^{k+1})$. 
        In this case, we denote $\Bord^{\vec{X}}_d\coloneqq \Omega^{\infty-d} B^0$ and call it the \emph{cobordism category of $\vec{X}$-manifolds}.
    \end{definition}
    \begin{remark}
        The distinction between extensions and coextensions is not essential under the existence of adjoints. The above definition is designed to make the statement of \cref{cobordism_hypothesis_with_singularities} cleaner.
    \end{remark}
    \begin{remark}
        In the above definition, if $X^d=\emptyset$, we automatically have $B^d=0$ and $E^{d-1}=0$, so deleting $E^{d-1}, X^d$ from $\vec{X}$ does not change the resulting $B^0$.
        Also, since  $E^k$ only inductively depends on the entries on the right, it makes sense to consider a singularity datum $X^{\geq k}$ where $X^i$ and $E^i$ for $i< k$ are replaced by $\emptyset$ and $0$, respectively. The resulting sequence of extensions is $0\rightarrowtail B^d\rightarrowtail \cdots B^k\xrightarrow{=} \cdots\xrightarrow{=} B^k$, so $B^k$ is also a cobordism categorical spectrum with singularities. 
        Combining both considerations, when $X^i=\emptyset$ unless $k\leq i\leq l$, we may simply denote the singularity datum $\vec{X}\coloneqq \vec{X}^{[k, l]}\coloneqq (X^k, E^k, \ldots, E^{l-1}, X^l)$. Notice that the categorical spectrum $B^0$ does not depend on the choice of $d(\geq l)$, while $\Bord_d^{\vec{X}} =\Omega^{\infty-d}B^0$ does.
    \end{remark}
    \begin{remark}
        By \cref{n_adjointable_catsp_are_closed_under_extensions}, $B^k$  is $0$-adjointful, $0$-categorical, and $(-d)$-connective. In particular, $B^k = \rB^{\infty-d}\Bord_d^{\vec{X}^{\geq k}}$.
        By the cobordism hypothesis, one computes the Ext monoid as follows: 
        \begin{align*}
            \Ext(\rB^{\infty-k}\Bord_k^{\tilde{X}^k}, B^{k+1})&\simeq \Map(\tilde{X}^k \otimes_{\rO(k)} L^{0\mhy\adj}\FF[-k], \Sigma B^{k+1})\\
            &\simeq \Map_{\rO(k)}(\tilde{X}^k, \underline{\Omega}^{\infty-k-1}B^{k+1}).
        \end{align*}
        Here the $\rO(k+1)$-action on the codomain is restricted to $\rO(k)$ by the inclusion $\rO(k)\subset \rO(k+1)$. 
        By definition, this classifies an $\rO(k)$-equivariant local system of $(k+1)$-dimensional $\vec{X}^{\geq k+1}$-manifolds. By abuse of notation, we continue to denote the corresponding local system by $E^k$. Intuitively, the bundle $E^k\to \tilde{X}^k$ describes the universal \emph{link bundle} of the codimension $k$ stratum inside the whole manifold with singularity.
        Given a topological model of $\tilde{X^k}$, this should give %
        an honest bundle that continuously and equivariantly assigns an $X^{\geq k+1}$-manifold $E^k(\tilde{x})$ to a point $\tilde{x}\in\tilde{X}^k$. However, to achieve this, it seems necessary to construct the cobordism categories not merely as symmetric monoidal $d$-categories but as smooth stacks.
    \end{remark}
    \begin{theorem}[Cobordism hypothesis with singularities {\cite[Theorem 4.3.11]{lurieClassificationTopologicalField2009}}]\label{cobordism_hypothesis_with_singularities}
        Let $d> k\geq 0$, let $\vec{X} = (X^k, E^k, \ldots, X^d)$ be a singularity datum, and let $\vec{X'} = \vec{X}^{\geq k+1}$. 
        For any $0$-adjointful categorical spectrum $A = (A_n)$, there is a Cartesian square 
        \[\begin{tikzcd}
            \Map_{\CatSp}(\rB^{\infty-d}\Bord_d^{\vec{X}}, A)\ar[r]\ar[d] & \Map_{\CatSp}(\rB^{\infty-d}\Bord_d^{\vec{X'}}, A)\ar[d, "(E^k)^\ast"]\ar[r, phantom, "\ni"] & Z_0\ar[d, mapsto] \\
            \Map_{\rO(k)}(\tilde{X}^k, \Alg_{\EE_0}(
                A_{k+1})^{\leq 0}) \ar[r] & \Map_{\rO(k)}(\tilde{X}^k, A_{k+1}^{\leq 0})\ar[r, phantom, "\ni"] & \Omega^{\infty-k-1}Z_0\circ E^k.
        \end{tikzcd}\]
        
    \end{theorem}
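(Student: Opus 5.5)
The plan is to use that $B := \rB^{\infty-d}\Bord_d^{\vec{X}}$ is, by definition, a (co)extension of $C := \rB^{\infty-k}\Bord_k^{\tilde{X}^k}$ by $B' := \rB^{\infty-d}\Bord_d^{\vec{X'}}$, classified by $E^k$. \cref{extension_of_catsp} presents such an (co)extension as a directed pushout; for definiteness take $B \simeq \rcof(f)$ with $f\colon \Omega C\to B'$ the transpose of $E^k$ (the coextension case is the transpose and yields the same shape). Directed pushouts are colimits weighted by $W$, so mapping out of them produces a directed pullback of mapping objects. Concretely, $\Map(B,A)$ is the pullback of
\[\Map(B',A)\xrightarrow{f^\ast}\Map(\Omega C,A)\xleftarrow{\ev_1}\Map(\cube^1\otimes\Omega C,A)\xrightarrow{\ev_0}\Map(\Omega C,A)\leftarrow \Map(0,A)=\ast .\]
The square in the statement should therefore be the base change of the square $\Map(\cube^1\otimes \Omega C,A)\to \Map(\Omega C,A)$ along the fiber over $0$, pulled back along $f^\ast$.

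First I would identify the corner terms with the cobordism hypothesis, as in the remark computing the Ext monoid. Since $A$ is $0$-adjointful, $\Map(\Omega C, A)\simeq \Map(C,\Sigma A)\simeq \Map_{\rO(k)}(\tilde{X}^k, \underline{\Omega}^{\infty-k}\Sigma A)=\Map_{\rO(k)}(\tilde{X}^k, A_{k+1}^{\leq 0})$. Under this identification, $f^\ast$ sends $Z_0$ to $\Omega^{\infty-k-1}Z_0\circ E^k$, which is just the naturality of the corepresentation. Next I would compute the fiber of $\ev_1$ over the zero map $\ev_0=0$. By adjunction (using \cref{half-2-functoriality_of_suspension} to move $\cube^1$ across the shift), this is $\Map(C, \lpath\text{-type object of }\Sigma A)$, i.e.\ the lax slice under the unit. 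Levelwise, this is $\Map_{\rO(k)}(\tilde X^k, -)$ applied to the groupoid of objects $a\in A_{k+1}$ equipped with a morphism $1\to a$, which is $\Alg_{\EE_0}(A_{k+1})^{\leq 0}$. The map to $A_{k+1}^{\leq 0}$ forgets the unit.

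The main obstacle is this last identification, in two respects. The first is keeping track of lax versus oplax: the direction $1\to a$ rather than $a\to 1$ must match the extension/coextension parity convention in the definition. The second is checking that the relevant cotensor $[\cube^1_+,\Sigma A]$, or its fiber, is again $0$-adjointful, so that the cobordism hypothesis applies to it. The latter follows from the argument in \cref{n_adjointable_catsp_are_closed_under_extensions}, namely $\Sigma^\infty_+\cube^1\otimes\overline{S_0^\adj}\subset\overline{S_1^\adj}$. The equivariance under $\rO(k)$ is obtained by functoriality of the whole construction in $\tilde X^k$, with the $\rO(k)$-action on $A_{k+1}^{\leq0}$ restricted from $\rO(k+1)$.
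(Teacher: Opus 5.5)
Your proposal is correct and is essentially the paper's proof. There too $B^k$ is rewritten via \cref{extension_of_catsp} as the ordinary pushout of $B^{k+1}\leftarrow \rB^{\infty-k-1}\Bord_k^{\tilde{X}^k}\to \Sigma^\infty D^{k+1}I\otimes \rB^{\infty-k-1}\Bord_k^{\tilde{X}^k}$; your fiber of $\ev_0$ over $0$ is exactly the space of maps out of this cone. One then maps into $A$ and identifies the corners by the cobordism hypothesis.

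The parity issue you flag is what the $D^{k+1}$ handles. By the half-central structure, $\Omega^{\infty-k-1}[\Sigma^\infty J, A]\simeq [D^{k+1}J, A_{k+1}]$, so the twist built into the extension/coextension convention cancels. The corner is therefore always $\Map_\ast(I, A_{k+1})\simeq \Alg_{\EE_0}(A_{k+1})^{\leq 0}$. By contrast, taking $\rcof$ for every $k$, as in your ``for definiteness'' choice, would give morphisms $a\to 1$ when $k$ is even.
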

    \begin{proof}
        By \cref{extension_of_catsp}, there is an (op)lax cofiber sequence $\rB^{\infty-k-1}\Bord_k^{\tilde{X}^k}\to B^{k+1} \to B^k$, or a pushout diagram
        \[\begin{tikzcd}
            \rB^{\infty-k-1}\Bord_k^{\tilde{X}^{k}} \ar[r]\ar[d] & B^{k+1}\ar[d] \\
            \Sigma^\infty D^{k+1} I \otimes\rB^{\infty-k-1}\Bord_k^{\tilde{X}^{k}} \ar[r] & B^k.
        \end{tikzcd}\]
        Mapping into $A$ and applying the cobordism hypothesis, one obtains
        \[\begin{tikzcd}
            \Map(B^k, A)\ar[r]\ar[d]\ar[rd, phantom, "\lrcorner", near start] & \Map(B^{k+1}, A)\ar[d] \\
            \Map_{k\SMC^\dual}(\Bord_k^{\tilde{X}^k}, \Omega^{\infty-k-1}[\Sigma^\infty I, A])\ar[r]\ar[d, "\simeq"] & \Map_{k\SMC^\dual}(\Bord_k^{\tilde{X}^k}, \Omega^{\infty-k-1}A)\ar[d, "\simeq"]\\
            \Map_{\rO(k)}(\tilde{X}^k, \underline{\Omega}^{\infty-k-1}[\Sigma^\infty I, A])\ar[r] & \Map_{\rO(k)}(\tilde{X}^k, \underline{\Omega}^{\infty-k-1}A)
        \end{tikzcd}\]
        Now observe that $\Omega^{\infty-k-1}[\Sigma^\infty D^{k+1} I, A]\simeq \Omega^\infty[\Sigma^\infty I, \Sigma^{k+1} A]$ so the underlying groupoid is \[\Map_\ast(I, \underline{\Omega}^{\infty-k-1}A)\simeq \Alg_{\EE_0}(A_{k+1})^{\leq 0}.\]
    \end{proof}

    \section{Stable tangential structures}\label{section_stable_variant}
    In this short section, we investigate $\CatSp^{\infty\mhy\adj}$.
    Having adjoints for all cells is a strong condition; for instance, such a category is coinductively equivalent to a groupoid.
    As a consequence, this category exhibits behavior closer to the category of spectra.

    A \emph{stable tangential structure} is an object of $\sS_{/\rB\rO}$.
    For $X\to \rB\rO\in \sS$, one can associate a sequence $\{X_n\}$ of finite-dimensional tangential structures by $X_n \coloneqq X \times_{\rB\rO}\rB\rO(n)$. Conversely, if a sequence $X_n\to \rB\rO(n)$ and homotopies $X_n\simeq X_{n+1}\times_{\rB\rO(n+1)}\rB\rO(n)$ are given, then one recovers $X$ as $X\simeq \colim_n X_n$.
    This data is also equivalent to a sequence of $\rO(n)$-equivariant groupoids $\tilde{X}^n$ with $\rO(n)$-equivariant equivalences $\tilde{X}^n\xrightarrow{\sim} \tilde{X}^{n+1}$. In the limit, the common underlying space $\tilde{X}$ acquires the $\rO = \rO(\infty)$-action, whose quotient is $X\to \rB\rO$. 
    The cobordism hypothesis implies that there is a sequence of cobordism categories that extends the assignment $\tilde{X}^n\xrightarrow{\sim}\tilde{X}^{n+1}$: 
    \[\Bord_0^{X^0}\to \Bord_1^{X^1} \to\Bord_2^{X^2}\to \cdots\]
    The colimit is the cobordism $\infty$-category $\Bord_\st^X$ of \emph{stably $X$-manifolds}.
    This is a natural example of an $(\infty, \infty)$-category with adjoints that is not truncated at any finite level. 
    We denote the stably framed case, when $X= \ast$, by $\Bord^{\fr}_\st$. 
    The following is the limit of the $d$-dimensional framed cobordism hypothesis as $d\to \infty$. 
    \begin{theorem}\label{stable_cobordism_hypothesis}
        Assume the cobordism hypothesis. Then $L^{\infty\mhy\adj} \FF$ is the infinite cobordism category $\rB^\infty\Bord^{\fr}_\st$ of stably framed categories.
        It is the tensor unit of the monoidal category $\CatSp^{\infty\mhy\adj}$, so for any $A\in \CatSp^{\infty\mhy\adj}$, we have 
        \[\ev_\ast: [\rB^\infty\Bord^{\fr}_\st, A]\xrightarrow{\sim} A. \]
    \end{theorem}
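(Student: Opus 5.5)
The plan is to identify $L^{\infty\mhy\adj}\FF$ as the colimit of the finite-dimensional free objects, and then obtain the tensor-unit statement from \cref{theorem_tensor_product_localizes_to_adj}.

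First, write $\CatSp^{\infty\mhy\adj}=\bigcap_d \CatSp^{d\mhy\adj}$, since $S_\infty^\adj=\bigcup_d S_d^\adj$. Hence $L^{\infty\mhy\adj}$ is the colimit of the localizations $L^{d\mhy\adj}$. Concretely, there is a tower
\[L^{0\mhy\adj}\FF\to L^{1\mhy\adj}\FF\to\cdots,\]
and $L^{\infty\mhy\adj}\FF\simeq L^{\infty\mhy\adj}(\colim_d L^{d\mhy\adj}\FF)$.

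Second, compute each term. Write $\FF=\FF[-d][d]$. Since $X$ is $d$-adjointful iff $\Sigma X$ is $(d+1)$-adjointful, we get $L^{d\mhy\adj}\FF\simeq \Sigma^d L^{0\mhy\adj}\FF[-d]$. By the corollary to the framed cobordism hypothesis, $L^{0\mhy\adj}\FF[-d]\simeq \rB^{\infty-d}\Bord_d^\fr$, so $L^{d\mhy\adj}\FF\simeq \rB^\infty\Bord_d^\fr$. I would then check that the transition maps $\rB^\infty\Bord_d^\fr\to\rB^\infty\Bord_{d+1}^\fr$ are the maps sending the generating point to the generating point, i.e.\ the stabilization maps $\Bord_d^{X^d}\to\Bord_{d+1}^{X^{d+1}}$ for $X=\ast$. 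This holds by the universal property, since both sides corepresent the underlying groupoid after forgetting the $\rO(d)$-action.

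Third, since $\rB^\infty$ preserves colimits (it is a left adjoint), the colimit is $\rB^\infty\colim_d\Bord_d^\fr=\rB^\infty\Bord_\st^\fr$. I then need this colimit to already be $\infty$-adjointful, so that the outer $L^{\infty\mhy\adj}$ does nothing. Adjointfulness is detected by locality against the compact maps $\Sigma^{\infty-i}_+\sigma^j l$ and $\Sigma^{\infty-i}_+\sigma^j r$ (these are finite, hence compact). Therefore $S_d^\adj$-locality is preserved by filtered colimits, and a filtered colimit whose $d$-th stage is $d$-adjointful is $\infty$-adjointful. I expect this compactness argument to be the main technical point: I need $\Adj$ to be compact in $\infty\Cat$, which I would argue via its finite presentation from the theorem listing the generating cells up to $\alpha^{(3)}$.

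For the unit claim, \cref{theorem_tensor_product_localizes_to_adj} with $m=n=\infty$ gives a monoidal structure on $\CatSp^{\infty\mhy\adj}$ making $L^{\infty\mhy\adj}$ monoidal, so the unit is $L^{\infty\mhy\adj}\FF$. The equivalence $\ev_\ast$ then follows from $[\mathbf 1,A]\simeq A$ in the localized internal hom. That internal hom agrees with the one in $\CatSp$ because $\CatSp^{\infty\mhy\adj}$ is an exponential ideal: tensoring with $\infty$-categorical spectra preserves $L^{\infty\mhy\adj}$-equivalences by the same theorem. Evaluating at $\FF\to L^{\infty\mhy\adj}\FF$ then identifies the map with $\ev_\ast$.
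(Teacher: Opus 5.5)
Your proposal is correct and follows the paper's route. The paper treats the theorem as the $d\to\infty$ limit of the framed cobordism hypothesis, i.e.\ $L^{d\mhy\adj}\FF\simeq\rB^\infty\Bord^\fr_d$ in the colimit, and reads off the unit statement from \cref{theorem_tensor_product_localizes_to_adj} with $m=n=\infty$; you have written out the steps it leaves implicit.

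One simplification: you do not need compactness of $\Adj$. The maps $l,r\colon C_1\to\Adj$, and hence all $\Sigma^{\infty-i}_+\sigma^j l$ and $\Sigma^{\infty-i}_+\sigma^j r$, are epimorphisms with compact source. Locality against them therefore just says that every map out of the source extends, and this property is visibly preserved by filtered colimits.
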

    \begin{remark}
        The last statement is an improved version of the stable cobordism hypothesis; we usually recover only the underlying groupoid, but using the lax internal hom we can now recover the whole category (or the categorical spectrum). An important consequence is that the group $\rO(\infty)$ (or $\mathrm{PL}(\infty)$, which is much closer to $\Aut(\SS)$) acts on the category $A$ itself.
        The cobordism hypothesis with tangential structures implies that, the stable cobordism $\infty$-category $\rB^\infty\Bord^X_{\st}$ is obtained by restricting the action of $\rO$ on $\rB^\infty\Bord^\fr_\st$ along $X\to \rB\rO$ and then taking the colimit. This should be thought of as \emph{categorical Thom spectra}.
        One can also stabilize cobordism categories with singularities in the previous section, giving a categorical lift of the Bass--Sullivan theory.
    \end{remark}

\appendix
\chapter{Steiner's theory for strict \texorpdfstring{$\infty$}{infinity}-categories}\label{appendix_Steiner_theory}
In this appendix, we give a summary of Steiner's theory. It provides an equivalence between a class of strict $\infty$-categories and a class of chain complexes with a kind of positivity structure. 
It is a powerful computational tool in the combinatorics of strict $\infty$-categories, especially when the dualities involved in making a construction functorial are confusing.

\textbf{Only in this appendix, a \emph{category} without specification will mean a $(1, 1)$-category, not an $(\infty, 1)$-category.} References include \cite{steinerOmegacategoriesChainComplexes2004}, \cite{CategoricalCharacterizationSteiner}, \cite{araJointTranchesPour2020}, \cite{ozornovaQuillenAdjunctionGlobular2023}.

\begin{remark}\label{globular definition of strict omega categories}
    A strict $\infty$-category $X$, as defined in \cref{def_strict_omega_categories}, admits the following more explicit description (cf.\ \cref{remark_strcat_as_set_valued_theta_presheaves}): 
    \begin{enumerate}
        \item for each integer $n\geq 0$, a set $X_n$, called the set of $n$-cells,
        \item for each $p > q \geq 0$, the structure of a category with objects $X_q$ and morphisms $X_p$, i.e., 
        \begin{enumerate}
            \item the ($q$-)source and ($q$-)target maps $s_q, t_q: X_p \to X_q$,
            \item the identity map $i_p: X_q\to X_p$,
            \item the composition map $\ast_q: {X_p}_{s_q}{\times_{X_q}}_{t_q} X_p \to X_p$ satisfying associativity and unitality, 
        \end{enumerate}
    \end{enumerate}
    which are compatible in the sense that for $p>q>r\geq 0$, the above data define a $2$-category structure on $(X_p, X_q, X_r)$, i.e., they satisfy the globularity conditions $s_rs_q = s_r = s_rt_q$, $t_rt_q = t_r = t_rs_q$ and the ``interchange law'' $(f\ast_q g)\ast_r (h\ast_q k) = (f\ast_r h)\ast_q (g\ast_r k)$ for $f, g, h, k\in C_p$. The data (1) and (2)(i)(ii) with the globularity condition are precisely the data of reflexive globular sets (i.e., set-valued presheaves on $\GG$), and (iii) is the structure required to extend it to presheaves on $\Theta$ satisfying the Segal conditions. 
\end{remark}       

\begin{remark}
    For any category $\eC$ with finite limits, the obvious modification of the above defines the notion of \emph{strict $\infty$-category objects in $\eC$}, which describes $\eC$-valued presheaves on $\Theta$ satisfying the Segal conditions. 
\end{remark}

\section{Steiner's adjunction}
Steiner's adjunction is between the category of augmented directed complexes and the category of strict $\omega$-categories. The main theorem of Steiner's theory states that it restricts to an adjoint equivalence on the full subcategory of \emph{strong Steiner objects}.
First, we compile the relevant definitions on the chain-complex side: 
\begin{definition}\label{definition_ADC_steiner_complex}
    \begin{enumerate}
        \item An \emph{augmented directed chain complex} (ADC for short) is a triple $(A, A^+, \epsilon) = ((A_\bullet, \partial_\bullet), A_\bullet^+, \epsilon)$, where $A\in \Ch_{\geq 0}(\ZZ)$ is a nonnegatively (homologically) graded chain complex, $\epsilon: A_0\to \ZZ$ is an augmentation, and $A_n^+\subset A_n$ is a sub-$\NN$-module (a.k.a.\ submonoid) for each $n$ (called the \emph{positivity submonoid}; we do \textbf{not} ask that $\partial_n(A_n^+)\subset A_{n-1}^+$). We often omit $\partial_\bullet$, $A^+$, and $\epsilon$ when it is not confusing or is clear from the context.
        \item A map $A\to B$ of ADCs is a chain map $f: A\to B$ that commutes with augmentations and satisfies $f(A^+)\subset B^+$. Let $\adCh$ denote the category of ADCs.
        \item A \emph{basis} of an ADC $(A_\bullet, A_\bullet^+)$ is a graded subset $\{B_q\subset A_q^+\}_{q\geq 0}$ that is both a $\ZZ$-basis of $A$ and an $\NN$-basis of $A^+$. This is unique if it exists\footnote{This is true in general for an $\NN$-module. In fact, any isomorphism $f: \NN^{\oplus I}\to \NN^{\oplus J}$ is induced by a bijection $I\to J$. To show this, note that both $f$ and $f^{-1}$ are injective, so they do not decrease the sum of the coefficients in the standard basis presentation.}, in which case we call it \emph{the} basis of $A$ and moreover make the following definitions: 
        \begin{enumerate}
            \item Any element $a = \sum_{b\in B_q} \lambda_b b\in A_q$ is uniquely a difference $a = a^+-a^-$ with $a^+, a^-\in A^+$. We write $\partial_q^\pm(a) \coloneqq (\partial_q(a))^\pm$. 
            Also, define $\operatorname{supp}(a)\subset B_q$ as the set of $b\in B_q$ with $\lambda_b\neq 0$. 
            \item The basis $\{B_q\}$ is \emph{unital} if for every $q \geq 0$ and $b\in B_q$, we have $\epsilon\circ\partial^+_0\circ\cdots\circ\partial^+_{q-1} (b) = 1 = \epsilon\circ\partial^-_0\circ\cdots\circ\partial^-_{q-1} (b)$.
            \item Consider the preorder on $\bigsqcup_{q\geq 0} B_q$ generated by the relation 
            \[\{(a, b)\mid b\in B_q, a\in \operatorname{supp}(\partial^-_{q-1}b)\}\cup \{(a, b)\mid a\in B_q, b\in \operatorname{supp}(\partial^+_{q-1}a)\}.\] 
            The basis is \emph{strongly loop-free} if this preorder is a partial order. 
        \end{enumerate}
        \item A \emph{strong Steiner complex} is an ADC with a strongly loop-free unital basis.
    \end{enumerate}
\end{definition}

\begin{remark}
    The category $\adCh$ is cocomplete, and colimits can be computed degreewise. More precisely, the forgetful functor $\adCh\to \grMod_{\ZZ}\times \grMod_{\NN}$ creates colimits. 
\end{remark}

\begin{example}
    For any CW complex $X$ with chosen orientations of cells, we regard the augmented cellular chain complex $C_\bullet(X)\to \ZZ$ as an ADC with basis consisting of the cells. 
    In particular, let $D^p\subset \RR^p\subset \RR^\infty$ be the unit $p$-disk with the CW structure $D^p = \Bigl(\bigsqcup_{q\leq p-1} (e_+^q\sqcup e_-^q)\Bigr)\sqcup e^p$, where $e_\pm^q = \{(x_0, \ldots, x_{q-1}, x_q, 0, \ldots, 0)\mid x_0^2+\cdots+x_q^2=1, \pm x_q>0 \}\subset \RR^p$. The cellular chain $\{C_\bullet(D^q)\}_{q\geq 0}$ extends to an $\omega$-category object in $\adCh^\op$; for $p>q\geq 0$,
    \begin{itemize}
        \item the co-source and co-target map $s^q, t^q: C_\bullet(D^q)\to C_\bullet(D^p)$ are induced by the inclusions $D^q\hookrightarrow D^p$ with the images $e_-^q$ and $e_+^q$,
        \item the co-identity map $i^q: C_\bullet(D^p)\to C_\bullet(D^q)$ is induced by the projection $D^p\to D^q$, 
        \item the co-composition map $\ast^p: C_\bullet(D^p)\to \colim (C_\bullet(D^p)\leftarrow C_\bullet(D^q)\rightarrow C_\bullet(D^p)) \cong C_\bullet(D^p\sqcup_{D^q}D^p)$ is induced by the $q$-fold unreduced suspension of the pinch map $D^{p-q}\to D^{p-q}\vee D^{p-q}$. These moreover satisfy the interchange law. 
    \end{itemize}
\end{example}
In other words, $\{C_\bullet(D^q)\}: \GG\to \adCh$ extends to a functor $\Theta\to \adCh$ satisfying the Segal conditions, so the restricted Yoneda embedding $\adCh\to \PSh_{\Set}(\Theta)$, which is right adjoint to the Yoneda extension $\PSh_{\Set}(\Theta)\to \adCh$, factors through the subcategory $\infty\strCat$. 
\begin{definition}
    Steiner's adjunction \[\begin{tikzcd}
        \infty\strCat \arrow[r, shift left = 1ex, "\lambda"]
        & \adCh \arrow[l, shift left = .5ex, "\nu"]
        \arrow[l, phantom, shift right = .2ex, "\scriptscriptstyle\boldsymbol{\bot}"]
    \end{tikzcd}\]
    is the restricted Yoneda extension adjunction of $\{C_\bullet(D^q)\}$.
\end{definition}
\begin{remark}
    One should think of $\lambda$ as the linearization functor: $q$-cells of a strict $\infty$-category $X$ generate $(\lambda X)_q$ and the relations are given by splitting a composition into a sum. The functor $\nu$ is a sort of cellular nerve: for $A\in \adCh$, a $q$-cell of $\nu A$ corresponds to a map $\lambda X_q\to A$. 
    For a detailed explicit description of these functors, see e.g., \cite[\S 2.3]{ozornovaQuillenAdjunctionGlobular2023}.
\end{remark}

We now explain the notion corresponding to strong Steiner complexes on the category side, following \cite{CategoricalCharacterizationSteiner}: 
\begin{definition}\label{definition_polygraph_strong_steiner}
    Let $X$ be a strict $\omega$-category. 
    \begin{enumerate}
        \item A set of cells $E=\bigsqcup_{n\geq 0} E_n$ of $X$, where $E_n \subset X_n$, is a \emph{polygraphic basis} if the following diagram is a pushout for any $n$:
        \[\begin{tikzcd}
            \bigsqcup_{E_q} \partial C_q \ar[d, hook] \ar[r] & X^{\leq q-1} \ar[d, hook]\\
            \bigsqcup_{E_q} C_q \ar[r] & X^{\leq q}
        \end{tikzcd}\]
        $C$ is called a (strict) \emph{polygraph} or a \emph{computad} if it admits a polygraphic basis. 
        \cite{makkaiWordProblem}\cite[Proposition 2.4]{CategoricalCharacterizationSteiner} shows that if a basis exists, it must be the set of nondegenerate indecomposables.
        \item Let $E$ be a polygraphic basis. For $c\in X_q$, define $\operatorname{supp}(c)\subset E_q$ to be the set of factors of $c$. 
        \item \label{preorder on the polygraphic basis} Consider the preorder on $E$ generated by the relation
        \[\bigcup_{p<q}\{(a, b)\in E_p\times E_q \mid a\in \operatorname{supp}(s_p b)\} \cup \bigcup_{p>q}\{(a, b)\in E_p\times E_q \mid b\in \operatorname{supp}(t_q a)\}.\]
        A polygraphic basis $E$ is \emph{strongly loop-free} if the preorder is a partial order.
        \item A strict $\omega$-category is \emph{strong Steiner} if it admits a strongly loop-free polygraphic basis. 
    \end{enumerate}
\end{definition}

\begin{remark}
    Any polygraph is gaunt. Note that the pushout in the definition of polygraph is taken strictly. These do not agree in general; for instance $C_2/\partial C_2$ is strictly $\rB^2\NN$ but weakly $\rB^2\Free_{\EE_2}(\ast)$. 
\end{remark}
\begin{theorem}[\cite{steinerOmegacategoriesChainComplexes2004}, \cite{CategoricalCharacterizationSteiner}]
    The adjunction $\lambda\dashv \nu$ restricts to an equivalence between the category of strong Steiner categories and the category of strong Steiner complexes. Moreover, $\lambda X$ is strong Steiner if and only if $X$ is, and similarly for $\nu$. 
\end{theorem}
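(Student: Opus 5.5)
This theorem is quoted from Steiner's work and its categorical characterization, so the plan is the standard one: compare the two sides basis by basis. Work with a strong Steiner complex $A$ with basis $B$ and with a strong Steiner category $X$ with polygraphic basis $E$.

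First, for each basis element $b\in B_q$ we build the associated ``atom'' $\langle b\rangle\in \nu A$. This is the $q$-cell whose table has entries
\[\langle b\rangle_q^\pm=b,\qquad \langle b\rangle_{p}^\pm=\partial^\pm_{p}\langle b\rangle_{p+1}^\pm .\]
Unitality of the basis makes this a genuine cell, since the augmentation condition $\epsilon(\langle b\rangle_0^\pm)=1$ holds. Strong loop-freeness is used to show two things: the atoms generate $\nu A$ under composition, and every cell of $\nu A$ has a decomposition into atoms that is unique up to the strict $\omega$-category axioms. The natural route is to show that $\{\langle b\rangle\}$ is a polygraphic basis of $\nu A$. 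Concretely, we prove that the square $\bigsqcup_{B_q}\partial C_q\to (\nu A)^{\le q-1}$, $\bigsqcup_{B_q}C_q\to(\nu A)^{\le q}$ is a strict pushout. We do this by induction on $q$, ordering basis elements by the partial order from strong loop-freeness, which lets us peel off one atom at a time from any cell. The same order shows that this polygraphic basis is strongly loop-free, because the two preorders correspond under $b\mapsto\langle b\rangle$ (supports of $\partial^\pm$ match supports of sources and targets). This shows that $\nu$ preserves strong Steinerness.

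Second, we compute $\lambda X$ for $X$ strong Steiner. Because $\lambda$ is a left adjoint it preserves the defining pushouts, and $\lambda(\partial C_q\to C_q)$ adjoins one free generator in degree $q$. Hence $\lambda X$ is free on $E$, with positivity monoid $\NN E$ and differential $\partial e=[t e]-[s e]$. Unitality follows from the globular identities. Strong loop-freeness transfers because the support of $\partial^\pm e$ is the set of factors of $t_{q-1}e$, respectively $s_{q-1}e$. A finer point is the lower-dimensional relation, which we reduce to the codimension-one case by iterating sources and targets. This shows that $\lambda$ preserves strong Steinerness.

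Finally, we check that unit and counit are isomorphisms on these subcategories. For the counit $\lambda\nu A\to A$, both sides are free on the same basis ($\langle b\rangle\mapsto b$), so it is an isomorphism. For the unit $X\to\nu\lambda X$, both sides are polygraphs, and the unit sends the basis $E$ bijectively onto the atom basis of $\nu\lambda X$ compatibly with boundaries. Inductively over the pushout squares, the unit is therefore an isomorphism on each $(\cdot)^{\le q}$. The ``only if'' directions follow because the unit and counit are isomorphisms precisely on these objects: if $\lambda X$ is strong Steiner, then $X\to\nu\lambda X$ is an isomorphism, using Steiner's criterion via loop-freeness. The main obstacle is the first step. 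Proving that every cell of $\nu A$ factors uniquely into atoms, so that the pushout holds strictly and not merely as a surjection, is the hardest part, and loop-freeness is what prevents the non-uniqueness caused by interchange cycles. There I would follow Steiner's inductive argument, choosing at each stage a basis element minimal for the partial order.
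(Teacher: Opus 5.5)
The paper gives no proof of this theorem; it cites Steiner and the categorical-characterization paper. Your outline of the equivalence itself is the standard route: Steiner's freeness theorem makes the atoms of $\nu A$ a polygraphic basis, $\lambda$ of a polygraph is free on its basis, and unit and counit are compared basis by basis. It is fine, modulo deferring Steiner's freeness argument, which the paper also defers.

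\textbf{The gap: the converse directions.} Your claim that if $\lambda X$ is strong Steiner then $\eta_X\colon X\to\nu\lambda X$ is an isomorphism is false, and so is the implication it is meant to prove. Take the polygraph $X$ with:
\begin{itemize}
\item $0$-generators $x,y,z$;
\item $1$-generators $f,g\colon x\to y$ and $u\colon y\to z$;
\item $2$-generators $e',e''\colon f\Rightarrow g$;
\item one $3$-generator $b\colon u\ast_0 e'\Rrightarrow u\ast_0 e''$.
\end{itemize}
In $\lambda X$ the whiskering is invisible: $[u\ast_0e']=e'$. So $\lambda X$ is free on these generators with $\partial b=e''-e'$ and $\partial e'=\partial e''=g-f$. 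This basis is unital, and its preorder is the chain $x<f<e'<b<e''<g<y<u<z$, so $\lambda X$ is a strong Steiner complex. But the unique polygraphic basis of $X$ is not strongly loop-free: $u\in\operatorname{supp}(s_1b)\cap\operatorname{supp}(t_1b)$ gives $u\le b\le u$. Correspondingly $\eta_X(b)=u\ast_0\langle b\rangle$. The atom $\langle b\rangle\colon e'\Rrightarrow e''$ has $0$-target $y$, while every non-identity $3$-cell of $X$ has $0$-target $z$, so $\langle b\rangle$ is not in the image and $\eta_X$ is not invertible.

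On the other side, take $K$ with $K_0=\ZZ x\oplus\ZZ y$, $K_1=\ZZ e$, $\partial e=y-x$ and $K_1^+=0$. Then $K$ has no basis, yet $\nu K$ is the discrete category on $\{x,y\}$, which is strong Steiner.

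The triangle identities only make $\lambda\eta_X$ and $\nu\varepsilon_K$ invertible. Whether $\eta_X$ or $\varepsilon_K$ is itself an isomorphism is extra information about $X$ or $K$. What your steps actually prove, and what is true, is the following:
\begin{itemize}
\item if $X$ is strong Steiner, then $\lambda X$ is strong Steiner and $\eta_X$ is invertible;
\item if $K$ is strong Steiner, then $\nu K$ is strong Steiner and $\varepsilon_K$ is invertible.
\end{itemize}
A converse holds only after adding the hypothesis that $\eta_X$ (resp.\ $\varepsilon_K$) is an isomorphism. The biconditional in the last sentence of the statement has to be weakened accordingly.

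\textbf{A smaller point in your second step.} Unitality of $\lambda X$ does not follow from the globular identities alone, and neither does the identification $\eta_X(e)=\langle e\rangle$ that your unit argument needs. From $\partial[t_pe]=[t_{p-1}e]-[s_{p-1}e]$ you can conclude $\partial^+[t_pe]=[t_{p-1}e]$ and $\partial^-[s_pe]=[s_{p-1}e]$ only if $\operatorname{supp}(s_{p-1}e)\cap\operatorname{supp}(t_{p-1}e)=\emptyset$ for every $p$. This disjointness is exactly what the lower-dimensional relations of the categorical preorder supply. The example above is loop-free in codimension one and fails precisely there, so those relations cannot be dropped.

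Conversely, for the atoms of $\nu A$, the lower-dimensional relations follow from the codimension-one ones via $\operatorname{supp}(\partial^-x)\subseteq\bigcup_{c\in\operatorname{supp}(x)}\operatorname{supp}(\partial^-c)$ for $x$ in the positivity monoid. So the reduction you mention belongs to the $\nu$ step, not the $\lambda$ step.
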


\section{Operations on augmented directed complexes}\label{operations_on_ADCs}
Corresponding to the operations on strict $\infty$-categories, the category $\adCh$ has suspension, duality involutions, and a tensor product. A good reference is \cite[\S 1, 2]{ozornovaQuillenAdjunctionGlobular2023}.
\begin{definition}
    The \emph{suspension} functor $\sigma: \adCh\to \adCh$ sends an object
    \[A: \quad\cdots \to A_n\xrightarrow{\partial_n^A} A_{n-1}\to \cdots \to A_0\xrightarrow{\eps^A} \ZZ \] to 
    \[\begin{tikzcd}
        \quad \cdots \ar[r] & (\sigma A)_n\ar[d, equal]\ar[r, "\partial_n^{\sigma A}"] & (\sigma A)_{n-1}\ar[d, equal]\ar[r, "\partial_{n-1}^{\sigma A}"] & \cdots \ar[r]& (\sigma A)_1\ar[d, equal]\ar[r, "\partial_1^{\sigma A}"] & (\sigma A)_0\ar[d, equal]\ar[r, "\eps^{\sigma A}"] & \ZZ\ar[d, equal]\\
        \cdots \ar[r] & A_{n-1}\ar[r, "\partial_{n-1}^A"] & A_{n-2}\ar[r, "\partial^A_{n-2}"] & \cdots \ar[r]& A_0\ar[r, "{\binom{-\eps^A}{\eps^A}}"] & \bot\ZZ\oplus\top\ZZ \ar[r, "{(1, 1)}"] & \ZZ
    \end{tikzcd}\]
    with positivity submonoids $(\sigma A)^+_{n} = A^+_{n-1}$ for $n\geq 1$ and $(\sigma A)^+_0 = \bot\NN\oplus\top \NN$. On morphisms, $\sigma$ assigns $f: A\to B$ to its degree shift $\sigma f: \sigma A\to \sigma B$. 
    The functor $\sigma$ clearly lifts to a colimit-preserving, fully faithful functor $\sigma: \adCh\to \adCh_{\sigma 0/}$.
\end{definition}

\begin{definition}
    For $\tau\in (\ZZ/2)^{\ZZ_{\geq 1}}$, define the \emph{$\tau$-dual} functor $D_\tau: \adCh\to \adCh$ by
    \[(A_n, \partial_n: A_n\to A_{n-1}, \eps, A^+) \mapsto (A_n, (-1)^{\tau(n)}\partial_n, \eps, A^+)\] 
    on objects and as the identity on morphisms, viewed as morphisms of graded abelian groups. When $\tau$ is constantly $1$, we call the $\tau$-dual the \emph{total dual} and denote it by $(\blank)^\circ$. 
    Similarly, when $\tau(n)\equiv n \pmod{2}$ (resp.\ $\tau(n)\equiv n+1 \pmod{2}$) then we call the $\tau$-dual the \emph{odd dual} (resp.\ \emph{even dual}) and denote by $(\blank)^\op$ (resp.\ $(\blank)^\co$). 
\end{definition}
In the following, in order to match the \emph{lax} Gray tensor product of $\infty$-categories, we use the \emph{reverse} of the monoidal structure on $\adCh$ that is standard in the references \cite{steinerOmegacategoriesChainComplexes2004}\cite{ozornovaQuillenAdjunctionGlobular2023}. One can think of the reversed Koszul sign rule as the rule for a differential acting from the right.
\begin{definition}
    \begin{enumerate}
        \item The usual symmetric monoidal structure on $\Ch_{\geq 0}(\ZZ)$ with the Koszul sign rule is equivalent to its \emph{reverse}, i.e., the differential on a tensor product may as well be defined by 
        $\partial(x\otimes y) = (-1)^{\deg y}(\partial x)\otimes y + (x\otimes \partial y)$. We pick this \emph{reversed Koszul sign rule} convention for the tensor product. 
        \item We equip $\grMod_{\NN}$ with the Day convolution symmetric monoidal structure, i.e., $(A^+_\bullet \otimes_\NN B^+_\bullet)_n = \bigoplus_{i+j=n} A^+_i\otimes_{\NN} B^+_j$.
        \item The symmetric monoidal structure on $\Ch_{\geq 0}(\ZZ)$ induces a symmetric monoidal structure on the category $\Ch_{\geq 0}(\ZZ)_{/\ZZ}$ of \emph{augmented} complexes by $\epsilon_{A\otimes B}: (A\otimes B)_0\simeq A_0\otimes B_0\xrightarrow{\epsilon_A\otimes \epsilon_B}\ZZ\otimes \ZZ\simeq \ZZ$.
        \item Let $A = (A, A^+, \epsilon_A)$ and $B = (B, B^+, \epsilon_B)$ be ADCs. We define the tensor product $A\otimes B$ as $(A\otimes_\ZZ B, A^+\otimes_\NN B^+, \epsilon_{A\otimes B})$. This tensor product canonically extends to a monoidal structure.
    \end{enumerate}
\end{definition} 
\begin{remark}
    The positivity structure in $\adCh$ breaks the symmetry of the tensor product: if $A$ and $B$ are both free ADCs on a single basis element in degree $1$, say $a$ and $b$, then the symmetry morphism $a\otimes b\mapsto -b\otimes a$ does not preserve the positivity submonoid.
\end{remark}
Compatibility with the operations on $\infty\strCat$ through Steiner's adjunction is summarized as follows:
\begin{remark}
    \begin{enumerate}
        \item (\cite[Proposition A.3]{araJointTranchesPour2020}) The functors $\sigma$, $(\blank)^\circ$, and $\otimes$ preserves strong Steiner complexes. 
        \item (\cite[Proposition 2.12]{ozornovaQuillenAdjunctionGlobular2023}) There is a natural isomorphism $\nu\sigma \cong \sigma\nu: \adCh\to \infty\strCat$.
        \item (\cite[Proposition 2.19]{araJointTranchesPour2020})
        For any $\tau$, the $\tau$-dual functor naturally commutes with $\lambda$ and $\nu$.
        \item (\cite[Theorem A.15]{araJointTranchesPour2020}, \cite[Proposition 2.14]{ozornovaQuillenAdjunctionGlobular2023})
        There exists a unique biclosed monoidal structure on $\infty\strCat$ satisfying one of the following two equivalent conditions:
        \begin{itemize}
            \item $\nu|_{\adCh^{\Ste}}: \adCh^{\Ste}\to \infty\strCat$ promotes to a monoidal functor (so $X\otimes X'\coloneqq \nu((\lambda X)\otimes (\lambda X'))$ for strong Steiner categories $X, X'$), or
            \item $\lambda: \infty\Cat\to \adCh$ promotes to a monoidal functor, so $(\lambda X)\otimes (\lambda X') \simeq \lambda(X\otimes X')$ for strict $\infty$-categories $X, X'$.
        \end{itemize}
        The monoidal structure is called the \emph{lax Gray tensor product of strict $\infty$-categories} and denoted by $\otimes$ or $\otimes^\lax$.
    \end{enumerate}
\end{remark}

\section{The cubes and the orientals}
In this section, we define important families of strong Steiner categories and investigate their basic combinatorics.
Let $C_\bullet(\Delta^n)$ be the normalized chain complex of the $n$-simplex, i.e., $C_k(\Delta^n) = \bigoplus_{\alpha: [k]\rightarrowtail [n]} \ZZ\alpha$ with differential $\partial \alpha = \sum_{i=0}^k (-1)^i \alpha\circ \delta^i$, where $\delta^i: [k-1]\to [k]$ skips the value $i$.
We give it the structure of an ADC so that $\{\alpha: [k]\rightarrowtail[n]\}$ is a basis. 
In the following, we will notationally identify the nondegenerate simplex $\alpha: [k]\rightarrowtail [n]$ with the subset $\im\alpha \subset [n]$, and the $i$-th vertex is denoted by $\underline{i}$. We let $\cube^1\coloneqq C_1$ be the interval category and denote the degree-$1$ positive generator of $\lambda \cube^1$ by $\underline{?}$, so the complex is $\ZZ\underline{?}\to \ZZ\underline{0}\oplus\ZZ\underline{1}$ with $\partial(\underline{?}) = \underline{1}-\underline{0}$ and $\eps(\underline{0}) = \eps(\underline{1})=1$. 
\begin{definition}
    The \emph{$n$-oriental} is the strict $n$-category $\Ori^n \coloneqq \nu C_\bullet(\Delta^n)$.
    The \emph{$n$-cube} is the strict $n$-category $\cube^n\coloneqq (\cube^1)^{\otimes n} = \nu(C_\bullet(\Delta^1)^{\otimes n})$.
    These are strong Steiner categories (see below).
\end{definition}
We denote the basis elements of $\lambda \cube^n \coloneqq (\lambda \cube^1)^{\otimes n}$ by ``(partially undetermined) binary strings'' $\underline{a_n a_{n-1}\cdots a_1}\coloneqq \underline{a_n}\otimes \underline{a_{n-1}}\otimes\cdots \otimes \underline{a_1}$, where $a_i\in \{?, 0, 1\}$. 
The nontrivial part of checking the strong Steinerness of the ADCs above is to show that the basis is loop-free. For these categories, it turns out that the preorder on the basis is in fact a total order:
\begin{proposition}
    Let $n_1, \ldots, n_k\geq 0$ be integers. The augmented directed chain complex $C_\bullet(\Delta^{n_1})\otimes\cdots \otimes C_\bullet(\Delta^{n_k})$  has a basis, and the preorder of \cref{definition_ADC_steiner_complex} is a total order. 
\end{proposition}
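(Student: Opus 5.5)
We must show that $C_\bullet(\Delta^{n_1})\otimes\cdots\otimes C_\bullet(\Delta^{n_k})$ has a basis and that the generated preorder is total. Total in particular means antisymmetric, which is the strong loop-freeness we need.

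\textbf{Basis.} The basis is the set of tensor products $\alpha_1\otimes\cdots\otimes\alpha_k$ of nondegenerate simplices. It is a $\ZZ$-basis of the underlying complex. It is also an $\NN$-basis of the positivity submonoid, since that submonoid is the Day convolution of the free $\NN$-modules. Unitality is checked by induction on degree. The iterated $\partial^+$ (respectively $\partial^-$) of a product is a product of iterated $\partial^\pm$'s of the factors, up to the reversed Koszul signs. Each factor is unital: for a simplex $\{i_0<\cdots<i_m\}$ the iterated positive boundary ends at the vertex $\underline{i_m}$ and the negative one at $\underline{i_0}$. So the product is unital.

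\textbf{Order: explicit description.} Write the preorder on simplices of a single $\Delta^n$ explicitly. From $\partial\alpha=\sum(-1)^i\alpha\delta^i$ one gets
\[
\partial^-\alpha=\sum_{i\text{ odd}}\alpha\delta^i,\qquad \partial^+\alpha=\sum_{i\text{ even}}\alpha\delta^i .
\]
The generating relations say: a face lies below the simplex if its deleted index is odd, and above it if the deleted index is even. I would encode each simplex $S\subset[n]$ as a sign vector $v_S\in\{-,0,+\}^{n+1}$: coordinate $j\notin S$ gets $0$, and coordinate $j\in S$ gets a sign alternating with its position in $S$. Then I would show the relation is the lexicographic-type order on these vectors, namely the one determined by the first coordinate where they differ, with $-<0<+$ after a parity normalization. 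This shows the preorder on one simplex is a total order. It is the classical Street order on orientals.

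\textbf{Order: products.} For the tensor product, the reversed Koszul rule
\[
\partial(x\otimes y)=(-1)^{\deg y}\partial x\otimes y+x\otimes\partial y
\]
gives $\partial^\pm(x\otimes y)=(\partial^{\pm(-1)^{\deg y}}x)\otimes y+x\otimes\partial^\pm y$. I would encode a basis element $x_1\otimes\cdots\otimes x_k$ by concatenating the sign vectors of the factors. The sign of each block is twisted by the parity of the total degree of the factors to its right. The goal is to show that every generating relation strictly increases this concatenated vector in the lexicographic order. Conversely, any two distinct vectors should be comparable through a chain of generating relations. Incomparability is the real content of totality, so this step is needed. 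I would prove it by induction on $k$, using the binary tensor case repeatedly; each basis complex is itself of the form considered.

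\textbf{Main obstacle.} The hard step is this second half: showing that any two basis elements are connected by a monotone chain of face relations. Antisymmetry follows from the lexicographic invariant, but totality needs explicit chains. For two elements first differing in block $i$, I would move down to the common face in that block, using the single-simplex totality. For the remaining blocks I would use relations of the form $x\otimes\partial^\pm y$ to adjust the later coordinates. The delicate part is tracking the Koszul sign flips to confirm that each move goes in the right direction.
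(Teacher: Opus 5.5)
Your overall plan is the same as the paper's: exhibit the basis, then identify the generated preorder with a twisted lexicographic order, first on a single simplex and then on tensor products. The paper itself just defers this verification to Steiner's Examples 3.8 and 3.10. However, your product step would fail as set up, because it uses the wrong block as the most significant key. As written, you concatenate $v_{x_1}\mid\cdots\mid v_{x_k}$ and, per your last paragraph, decide at the first differing block and then adjust later blocks via $x\otimes\partial^\pm y$. That makes the leftmost factor the major key. Under the reversed Koszul rule it is the right factor whose boundary carries no sign, so the right factor must be the major key.

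Already in $C_\bullet(\Delta^1)\otimes C_\bullet(\Delta^1)$, writing $\underline{ab}=\underline a\otimes\underline b$, one has
\[\partial(\underline{??})=\underline{0?}-\underline{1?}+\underline{?1}-\underline{?0}.\]
So the generating relations include $\underline{?0}<\underline{??}<\underline{?1}$ and $\underline{1?}<\underline{??}<\underline{0?}$. Also $\partial(\underline{?0})=\underline{10}-\underline{00}$ gives $\underline{00}<\underline{?0}<\underline{10}$. Thus the left factor is ordered $0<?<1$ over $\underline 0$ but $1<?<0$ over $\underline ?$, so no order that compares left blocks first can contain the preorder. In terms of your invariant, the left blocks of $\underline{?0},\underline{??},\underline{?1}$ are $v_?,-v_?,v_?$, and you would need $v_?\prec -v_?\prec v_?$. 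For the same reason, each move $x\otimes\partial^\pm y$ flips the twist on every block to the left of $y$, including the block where the two elements first differ. So the sign bookkeeping you flag as delicate cannot be made to work.

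The repair is to make the rightmost factor the most significant block, with each block twisted by the parity of the total degree to its right. This is exactly the signed lexicographic order of \cref{aut_of_cubes_are_trivial}, which compares the rightmost differing entries. For a binary product $A\otimes B$ of totally ordered bases, order $x\otimes y$ first by $y$, then by $x$, with the order on $x$ reversed when $\deg y$ is odd.
\begin{itemize}
\item The two summands of $\partial(x\otimes y)$ have different bidegrees, so nothing cancels. Hence relations in $A$ lift to each fibre $A\otimes y$ with this twist, relations $y<y'$ in $B$ lift to $x\otimes y<x\otimes y'$ for every $x$, and every generating relation is monotone.
\item For comparability across fibres, the missing observation is this. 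If $y<y'$ is a single generating relation, then $\deg y'=\deg y\pm1$, so the fibre orders over $y$ and $y'$ are opposite. Hence for $x\neq x'$, either $x\otimes y<x'\otimes y<x'\otimes y'$ or $x\otimes y<x\otimes y'<x'\otimes y'$.
\end{itemize}
Your ``move down to the common face'' step cannot work in general anyway, since $\{0\}$ and $\{1\}$ have no common face.

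The single-simplex case also needs connecting chains, not just a monotone invariant. They come from the same fibrewise argument applied to the cone. The basis of $\Delta^{n+1}$ consists of $\alpha$ and $\alpha\cup\{n+1\}$ for $\alpha\subseteq[n]$, together with $\{n+1\}$. For $\deg\alpha>0$ one has $\partial(\alpha\cup\{n+1\})=(\partial\alpha)\cup\{n+1\}+(-1)^{\deg\alpha+1}\alpha$. This is what the paper means when it says the cone case is similar.

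Finally, your unitality sketch is not literally a product formula. Iterated $\partial^+$ of a tensor produces sums with cancellation: for example, $\partial^+\partial^+(\underline{??})=\underline{11}$ only after $\underline{01}$ cancels. The correct degree-zero formula is Steiner's, and you can simply cite it.
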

\begin{proof}
    \cite[Examples 3.8 and 3.10]{steinerOmegacategoriesChainComplexes2004} explicate the preorder on the lax cone 
    construction and the tensor product. 
    The preorder on the tensor product is essentially the lexicographic order, twisted by the degree according to the Koszul sign rule. 
    The preorder on the lax (left) cone construction is similar; for a subset of $[n]$, it is a twisted lexicographic order on the indicator function, read as a binary string. Thus both are total orders. 
\end{proof}
To give an idea of ``the lexicographic order twisted by the degree'' in the proof, let us work out the case that we will use in \cref{trivial_automorphisms}.
\begin{lemma}\label{aut_of_cubes_are_trivial}
    $\cube^n$ is a strong Steiner category, and the preorder of \cref{definition_ADC_steiner_complex} on its polygraphic basis is a linear order. In particular, $\Aut(\cube^n)$ is trivial. 
\end{lemma}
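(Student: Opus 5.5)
The plan is to work entirely on the chain-complex side. By the equivalence of strong Steiner categories with strong Steiner complexes, it suffices to show three things: the ADC $\lambda\cube^n=(\lambda\cube^1)^{\otimes n}$ has a unital basis; the preorder of \cref{definition_ADC_steiner_complex} on this basis is antisymmetric and in fact total; and a strong Steiner complex whose basis is linearly ordered by this preorder has no nontrivial automorphisms. The basis is the set of strings $\underline{a_n\cdots a_1}$ with $a_i\in\{0,?,1\}$, which lie in the positive submonoid since $A^+\otimes_\NN B^+$ is spanned by tensors of basis elements. Unitality follows because both $\partial^+$ and $\partial^-$ of a string are sums of strings with one $?$ replaced by $0$ or $1$. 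Iterating these down to degree $0$ gives a single vertex, so $\eps\circ\partial^\pm\circ\cdots$ equals $1$.

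Next I compute $\partial^\pm$ explicitly using the reversed Koszul rule. Write $x=\underline{a_n\cdots a_1}$ and let the $?$'s sit at positions $i_1>\cdots>i_k$. Replacing the $?$ at position $i_j$ by $1$ or $0$ contributes with sign $(-1)^{\#\{?\text{ to the right}\}}$ (up to the fixed convention). So $\partial^+x$ is the sum of the strings where the $j$-th $?$ (counted from the right, starting at $0$) is set to $1$ if $j$ is even and to $0$ if $j$ is odd; $\partial^-x$ is the opposite. To show the preorder is a total order, I will define an explicit order on strings and check two things: every generating relation is strictly increasing in it, and any two distinct basis elements are comparable through a chain of generating relations. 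The order is lexicographic from the leftmost position $n$, with the order on letters at a position alternating between $0<?<1$ and $1<?<0$ according to the parity of the number of $?$'s to the right. This is the twisted lexicographic order alluded to in the preceding proposition, and the cleanest route is induction on $n$ via $\lambda\cube^n=\lambda\cube^{n-1}\otimes\lambda\cube^1$, using Steiner's description of the preorder on a tensor product (\cite[Example 3.10]{steinerOmegacategoriesChainComplexes2004}). In that description the order on $A\otimes B$ is lexicographic with the $B$-factor's order reversed according to degree. Antisymmetry then follows directly from the linear order on each factor, which gives strong loop-freeness. Totality comes from observing that the generating relations connect any two strings differing in a single letter, and then from transitivity.

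\textbf{Main obstacle.} The sign bookkeeping is the step I expect to be delicate. I have to check carefully that, under the reversed Koszul convention, the preorder on $\lambda\cube^{n-1}\otimes\lambda\cube^1$ really is the twisted lexicographic one and not merely contained in it. Totality needs the relations to be rich enough, not just consistent with the proposed order. I would first test the claim by hand on $\cube^2$ and $\cube^3$, whose atomic cells are drawn in \cref{pictures_of_the_cubes}.

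Finally, for the automorphism group, let $f$ be an automorphism of $\cube^n$. By the equivalence, $\lambda f$ is an ADC automorphism. It preserves the positive submonoid and its inverse does too, so by the uniqueness of the $\NN$-basis (footnote in \cref{definition_ADC_steiner_complex}) it permutes the basis. Because $\lambda f$ commutes with $\partial$ and is positive, it commutes with $\partial^\pm$, so it preserves the generating relations and hence the order. An order-preserving bijection of a finite linearly ordered set is the identity. Therefore $\lambda f=\id$, and $f=\id$.
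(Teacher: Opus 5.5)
Your overall route is the paper's: identify the preorder on the strings $\underline{a_n\cdots a_1}$ with a twisted lexicographic order, deduce that it is linear, and observe that an automorphism permutes the basis and preserves the generating relations. Your treatment of that last step is fine. However, the order you write down is wrong, so the central verification fails as proposed.

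With the paper's reversed Koszul rule, it is the \emph{rightmost} differing letter that decides. Already in $\lambda\cube^2$ the generating relations give
\[\underline{00}<\underline{?0}<\underline{10}<\underline{1?}<\underline{??}<\underline{0?}<\underline{01}<\underline{?1}<\underline{11}.\]
So the strings beginning with $0$ do not form an interval, which is incompatible with deciding at the leftmost differing position. Your twist is also not well defined for such a comparison: two strings first differing (from the left) at position $k$ can have different numbers of $?$'s to the right of $k$. The correct description, which is the paper's, takes $k$ minimal with $a_k\neq b_k$. It then compares $a_k,b_k$ in $0<?<1$ or in the reversed order according to the parity of the number of $?$'s in the common suffix $a_{k-1}\cdots a_1$, which is unambiguous. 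Relatedly, the tensor-product order you quote (first factor decisive, second factor reversed according to degree) is the one for Steiner's standard sign convention. Under $\partial(x\otimes y)=(-1)^{\deg y}\partial x\otimes y+x\otimes\partial y$, in $A\otimes B$ the $B$-factor is decisive and the $A$-factor's order is reversed when $\deg b$ is odd.

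Your totality argument also does not go through. Comparability of strings differing in one letter, together with transitivity, only composes \emph{monotone} chains; a zigzag of comparisons yields nothing. What is needed is the monotone chain through extremal strings that the paper writes down. For instance, if $a_k=0$, $b_k=?$ and the common suffix $c=a_{k-1}\cdots a_1$ contains an even number of $?$'s, then
\[\underline{a_n\cdots a_{k+1}0c}\le\underline{1\cdots 10c}<\underline{1\cdots 1?c}\le\underline{b_n\cdots b_{k+1}?c}.\]
This holds because the relations of $\cube^{n-k}$ are copied, with the appropriate twist, onto strings with a fixed suffix. Inductively, $\underline{1\cdots 1}$ is then maximal for the untwisted order on the left block and minimal for the reversed one, and changing the letter at position $k$ from $0$ to $?$ flips exactly this twist. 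The remaining cases are analogous, passing through $\underline{1\cdots 1}$ or $\underline{0\cdots 0}$.

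A smaller point: your unitality argument is too quick. $\partial^+$ of a sum is the positive part of its boundary, not the sum of the $\partial^+$ of the summands. For example, $\partial^+(\underline{0?}+\underline{?1})=\underline{11}$, whereas summandwise you would get $\underline{01}+\underline{11}$, of augmentation $2$. It is simpler to cite that $\otimes$ preserves strong Steiner complexes, as the paper does.
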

\begin{proof}
    The second part follows from the first because any automorphism must preserve the order of the basis. 
    When $n=1$, the order on the basis is the total order $\underline{0}<\underline{?}<\underline{1}$.
    In general, let $\underline{a_n\cdots a_1}$, $\underline{b_n\cdots b_1}$ be two basis elements such that $a_i=b_i$ for $i<k$ and $a_k\neq b_k$. Unwinding the definition, we see that the order on the basis is the ``signed lexicographic order,'' i.e., $\underline{a_n\cdots a_1}<\underline{b_n\cdots b_1}$ if and only if either 
    \begin{itemize}
        \item $a_1, \ldots, a_{k-1}$ contains an even number of $?$ and $a_k < b_k$ as elements of $\{0< ?< 1\}$, or 
        \item $a_1, \ldots, a_{k-1}$ contains an odd number of $?$ and $a_k > b_k$. 
    \end{itemize}
    For example, if $a_1, \ldots, a_{k-1}$ contains even number of $?$ and $a_k=0$, $b_k=?$, then 
    \[\underline{a_n\cdots a_{k+1}0a_{k-1}\cdots a_1}\leq \underline{1\cdots 10a_{k-1}\cdots a_1}< \underline{1\cdots 1 ? a_{k-1}\cdots a_1}\leq \underline{b_n\cdots b_{k+1} ? a_{k-1}\cdots a_1}\]
    by the Koszul sign rule (observe that without the sign, $\underline{1\cdots 1}$ is maximal and $\underline{0\cdots 0}$ is minimal) and $\underline{1\cdots 10a_{k-1}\cdots a_1}\in \operatorname{supp}(\partial^-(\underline{1\cdots 1?a_{k-1}\cdots a_1}))$.
    In particular, two basis elements are always comparable by checking the rightmost different entries, so the preorder is linear. 
\end{proof}
\begin{remark}\label{cube_category_is_gaunt}
    The subcategory $\Gaunt\subset \infty\Cat$ is an exponential ideal for the Cartesian product, so in particular it is self-enriched by the functor category. 
    Using the suspension-hom adjunction, one sees that positive-dimensional cells of a $\Gaunt$-enriched $(\infty, \infty)$-category have a trivial $\infty$-groupoid of automorphisms.
    Therefore, a skeletal subcategory of the category $\Gaunt^{\mathrm{triv-aut}}\subset \Gaunt$ of gaunt $\infty$-categories with trivial automorphisms is gaunt. It follows that $\Cube, \Ori, \Theta$ are gaunt,both as underlying $1$-categories or as $(\infty, \infty)$-categories.
\end{remark}
We end this section by proving some retract relations for these fundamental gaunt $\infty$-categories. 
\begin{proposition}\label{lemma_cylinder_suspension_wedge_of_orientals}
    \begin{enumerate}
        \item The quotient by the full subcategory $\{1\}\otimes \Ori^n\subset \cube^1\otimes \Ori^n$ is isomorphic to $\Ori^{n+1}$, and the quotient map admits a section sending the vertex $\underline{n+1}$ to the vertex $\underline{1}\otimes \underline{n}$. 
        \item The further quotient by $\Ori^{\{0, \ldots n\}}\subset \Ori^{n+1}$ is isomorphic to the unreduced suspension $\sigma \Ori^n$, and the quotient map admits a section. 
        \item The concatenation $\Ori^n\vee \Ori^m$ is a retract of $\Ori^{n+m}$.
    \end{enumerate}
\end{proposition}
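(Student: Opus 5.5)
I will work entirely on the Steiner side: all objects involved ($\cube^1\otimes\Ori^n$, $\Ori^{n+1}$, $\sigma\Ori^n$, wedges of orientals) are strong Steiner. The plan is to compute each quotient in $\adCh$, then transport to $\infty\strCat$ through the equivalence $\infty\Gaunt^\Ste\simeq\adCh^\Ste$. For a quotient by a full subcategory, the strict pushout along $\{1\}\otimes\Ori^n\to C_0$ corresponds under $\lambda$ to collapsing the basis elements of the subcomplex to the single vertex. So the strategy is: describe the basis of the quotient ADC, match it with the basis of the target, check that the differentials agree, and then write down sections explicitly as ADC maps sending basis elements to positive sums of basis elements.

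For (1), I will compare bases directly. The basis of $\lambda\cube^1\otimes C_\bullet(\Delta^n)$ consists of $\underline{0}\otimes S$, $\underline{1}\otimes S$ and $\underline{?}\otimes S$ for nonempty $S\subset[n]$. Collapsing $\underline{1}\otimes S$ to the vertex $\underline{n+1}$ kills everything except $\underline{0}\otimes S\mapsto S$, $\underline{?}\otimes S\mapsto S\cup\{n+1\}$ and $\underline{1}\otimes\underline{i}\mapsto\underline{n+1}$. This is exactly the basis of $C_\bullet(\Delta^{n+1})$, i.e.\ the cone. The key check is that the reversed Koszul sign $\partial(\underline{?}\otimes S)=(-1)^{|S|-1}(\underline{1}-\underline{0})\otimes S+\underline{?}\otimes\partial S$ matches the simplicial differential of $S\cup\{n+1\}$, with the new vertex last. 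This is the standard identification of the lax cone: it is Steiner's Example 3.8 of the cone construction, matched against the tensor-product conventions of \cref{operations_on_ADCs}, and it is also the fact quoted in \cref{example_sample_calculations_of_lax_fib_cofib}.

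For the section, I must send $T\subset[n+1]$ containing $n+1$ to a positive element of the cylinder complex. I will try the ``staircase'' formula: $T=S\cup\{n+1\}$ goes to $\underline{?}\otimes S+\sum(\text{terms } \underline{1}\otimes S')$, and the vertex $\underline{n+1}$ goes to $\underline{1}\otimes\underline{n}$. I will then check that this is a chain map preserving positivity and augmentation. I expect this to be the main obstacle, since a naive choice $\underline{?}\otimes S$ is not a chain map. Concretely, the boundary term $\underline{1}\otimes S$ must be sent to the composite through $\underline{1}\otimes\underline{n}$, and this is a degenerate-looking positive chain, so I will need a correction term. What I will try first is to use the diagonal of $\Ori^n$ toward its last vertex, i.e.\ the map $\Ori^n\to\Ori^n$ that is the constant map at $\underline n$, smeared into the cylinder via the natural transformation $\id\Rightarrow\mathrm{const}_{\underline{n}}$ coming from the cone structure of $\Ori^n$ on its last vertex (by induction, $\Ori^n=\lcone\Ori^{n-1}$). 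This transformation gives a functor $\Ori^{n+1}\to\cube^1\otimes\Ori^n$ on $\lambda$ directly, provided the induction hypothesis supplies it for smaller $n$.

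For (2), I will argue that quotienting $\Ori^{n+1}$ by $\Ori^{\{0,\dots,n\}}$ leaves basis elements $\underline{0'}$ (collapsed face), $\underline{n+1}$, and $S\cup\{n+1\}$, which is precisely $\sigma C_\bullet(\Delta^n)$ up to the sign conventions of $\sigma$ in \cref{operations_on_ADCs} (total dual adjustments if needed). For the section I will compose the section from (1) with the map $\sigma\Ori^n\to\cube^1\otimes\Ori^n$ of \cref{pushout_formula_unreduced_suspension}, or more simply use the complex map picking $\underline{0}$ as the bottom vertex. For (3), I will take the maps $\Ori^n\vee\Ori^m\to\Ori^{n+m}$ given by the faces $\{0..n\}$ and $\{n..n+m\}$. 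The retraction $\Ori^{n+m}\to\Ori^n\vee\Ori^m$ sends a simplex $S$ to $S\cap[0,n]$ if $S\subset[0,n]$, to $S\cap[n,n+m]$ if $S\subset[n,n+m]$, and to a degenerate image otherwise, with vertices $i\le n$ fixed and $i> n$ sent to themselves in the second factor. I will check this is an ADC map by verifying that straddling simplices have boundary collapsing consistently; the straddling case is routine but needs care with degeneracies.
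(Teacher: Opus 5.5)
Your computation of the quotient complexes in (1) and (2) is fine: it is the paper's map $q$, and in (2) no dual twist is needed, since $\alpha\sqcup\{n+1\}\mapsto\sigma\alpha$ already commutes with $\partial$. Part (3), however, fails as proposed. A retraction fixing all vertices cannot send straddling simplices to degenerate cells. Write $\alpha_L=\alpha\cap\{0,\dots,n-1\}$ and $\alpha_R=\alpha\cap\{n+1,\dots,n+m\}$. For $i<n<k$ the edge $\{i,k\}$ has boundary $\underline{k}-\underline{i}\neq 0$ in $\lambda(\Ori^n\vee\Ori^m)$, so it must go to the composite $\{i,n\}+\{n,k\}$. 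Likewise, a straddling $\alpha\not\ni n$ with $\#\alpha_L=1$ (resp.\ $\#\alpha_R=1$) is forced to go to the nondegenerate simplex $\{n\}\cup\alpha_R$ (resp.\ $\alpha_L\cup\{n\}$). No simplicial map does this, so the straddling check you defer would not go through.

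The paper avoids guessing. It sets $h(\alpha)=(-1)^{\#\alpha_L}(\alpha\cup\{n\})$ for straddling $\alpha\not\ni n$ and $h=0$ otherwise, and takes $r=\id-\partial h-h\partial$. This is automatically a chain map and is the identity on the wedge. One computes $r\alpha=[\#\alpha_L=1](\{n\}\cup\alpha_R)+[\#\alpha_R=1](\alpha_L\cup\{n\})$ for straddling $\alpha\not\ni n$, and $r\alpha=0$ if $n\in\alpha$. This gives positivity and the image condition. (The paper's case list asserts $r\alpha=0$ for all straddling $\alpha$, which is off in exactly these low cases, but the formula for $r$ carries the argument.)

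The section in (2) is not actually constructed. \cref{pushout_formula_unreduced_suspension} gives the quotient $\cube^1\otimes X\to\sigma X$, not a map $\sigma\Ori^n\to\cube^1\otimes\Ori^n$. Composing with the section of (1) cannot produce a map $\sigma\Ori^n\to\Ori^{n+1}$ either. Sending $\bot\mapsto\underline{0}$ is forced but determines little, and positivity excludes natural choices. For $n=2$, sending $\sigma\underline{2}\mapsto\{0,2\}+\{2,3\}$ forces $\sigma\{1,2\}\mapsto\{1,2,3\}-\{0,1,2\}$, which is not positive. You must send $\sigma\underline{i}$ to the spine path $\{0,1\}+\cdots+\{i-1,i\}+\{i,n+1\}$ and fill the higher cells compatibly. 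For $n=2$ this means $\sigma\{0,1\}\mapsto\{0,1,3\}$, $\sigma\{1,2\}\mapsto\{1,2,3\}$, $\sigma\{0,2\}\mapsto\{0,2,3\}+\{0,1,2\}$ and $\sigma\{0,1,2\}\mapsto\{0,1,2,3\}$. You then still have to prove this pattern works for all $n$.

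In (1) your diagnosis is correct, and it also exposes a weakness in the paper's justification. The paper splits the short exact sequence of reduced complexes along the basis decomposition, and reuses this for (2). But the basis-complement map sends $\partial(\alpha\sqcup\{n+1\})$ to something differing from $\partial(\underline{?}\otimes\alpha)$ by $(-1)^{\deg\alpha}\,\underline{1}\otimes\alpha$, so correction terms are unavoidable.

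Your transformation idea can be completed without induction, as follows.
\begin{itemize}
\item Let $h\colon\cube^1\otimes\Ori^n\xrightarrow{q}\Ori^{n+1}\to\Ori^n$, where the second map is the degeneracy merging $n$ and $n+1$. This is a transformation $\id\Rightarrow\mathrm{const}_{\underline{n}}$.
\item Set $H=(\cube^1\otimes h)\circ(\delta\otimes\Ori^n)$, where $\delta\colon\cube^1\to\cube^1\otimes\cube^1$ is the path through $\underline{1}\otimes\underline{0}$. The other path does not yield a section.
\item Then $H$ collapses $\{1\}\otimes\Ori^n$ to $\underline{1}\otimes\underline{n}$ and $qH=q$.
\item The induced section is $s(\alpha)=\underline{0}\otimes\alpha$, $s(\underline{n+1})=\underline{1}\otimes\underline{n}$, and $s(\alpha\sqcup\{n+1\})=\underline{?}\otimes\alpha+\underline{1}\otimes(\alpha\sqcup\{n\})$ if $n\notin\alpha$, $s(\alpha\sqcup\{n+1\})=\underline{?}\otimes\alpha$ if $n\in\alpha$.
\end{itemize}

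Finally, $\lambda$ preserving the pushout does not by itself identify the strict quotient with $\Ori^{n+1}$. Steiner's equivalence applies only to objects already known to be strong Steiner. The section supplies this step: $sq=H$ differs from the identity only by composing with cells of $\{1\}\otimes\Ori^n$, which become identities in the quotient.
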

\begin{proof}
    \begin{enumerate}
        \item We define the map of Steiner complexes corresponding to $q: \cube^1\otimes \Ori^n\to \Ori^{n+1}$. As a map of graded abelian groups, let
        \[\underline{1}\otimes \alpha \mapsto \begin{cases}
            \underline{n+1} &\quad \text{if $\deg(\alpha) = 0$}\\
            0 & \quad \text{if $\deg(\alpha)>0$}
        \end{cases},\quad \underline{0}\otimes \alpha \mapsto \alpha,\quad \underline{?}\otimes \alpha\mapsto \alpha\sqcup \{n+1\}.  \]
        It is straightforward to check that the map defined is indeed a map of augmented directed chain complexes.
        Note that the following square commutes and is biCartesian in $\Ch(\ZZ)$, i.e., it is a short exact sequence in $\Ch(\ZZ)_{\ZZ//\ZZ}$ with the basepoint given by the sink vertex: 
        \[\begin{tikzcd}
            \lambda(\{1\}\otimes \Ori^n) \ar[d]\ar[r, hook] & \lambda(\cube^1\otimes \Ori^n)\ar[d, two heads]\\
            \ZZ\ar[r, "\underline{n+1}"] & \lambda(\Ori^{n+1})
        \end{tikzcd}\]
        Thus we have a short exact sequence of the reduced complexes
        \[\tilde{\lambda}(\{1\}\otimes \Ori^n)\to \tilde{\lambda}(\cube^1\otimes \Ori^n) \to \tilde{\lambda}(\Ori^{n+1}).\]
        Now the first map admits an obvious retraction, so the second map admits a section; this splitting is given by the disjoint union decomposition of the standard basis, so the section is again a map of (reduced) ADCs. Adding back the basepoint factor, we obtain the section $\lambda(\Ori^{n+1})\to \lambda(\cube^1\otimes \Ori^n)$. 
        \item The argument is similar to the first part; define the map $\lambda(\Ori^{n+1})\twoheadrightarrow \lambda(\sigma\Ori^n)$ of ADCs by
        \[\alpha\mapsto\begin{cases}
            \sigma(\alpha\setminus \{n+1\}) &\quad \text{if $\deg(\alpha)>0$, $n+1\in\alpha$} \\
            0 & \quad \text{if $\deg(\alpha)>0$, $n+1\not\in\alpha$}
        \end{cases}, \quad \underline{0}, \ldots, \underline{n}\mapsto \bot,\quad \underline{n+1}\mapsto \top. 
        \]
        Then the following is a short exact sequence in $\Ch(\ZZ)_{\ZZ//\ZZ}$ with the basepoint given by the source vertex: 
        \[\begin{tikzcd}
            \lambda(\Ori^{\{0, \ldots, n\}})\ar[r, hook]\ar[d] & \lambda(\Ori^{n+1})\ar[d, two heads]\\
            \ZZ\ar[r, "\bot"] & \lambda(\sigma\Ori^n)
        \end{tikzcd}\]
        The rest is the same as in (1). 
        \item The degree-$k$ part of the ADC $\lambda(\Ori^n\vee\Ori^m)$ is generated by injections $[k]\to [n+m]$ with the image contained either in $\{0,\ldots, n\}$ or $\{n, \ldots, n+m\}$, so there is an obvious inclusion $s: \lambda(\Ori^n\vee\Ori^m) \hookrightarrow \lambda(\Ori^{n+m})$. 
        The retraction $r$ is the one that exhibits $\Delta^n\vee\Delta^m$ as a simple deformation retract of $\Delta^{n+m}$, which we now explicitly describe. 
        For $\alpha\subset\{0, \ldots, n+m\}$, let $\alpha_L\coloneqq \alpha\cap \{0, \ldots, n-1\}$ and $\alpha_R\coloneqq \alpha\cap\{n+1, \ldots, n+m\}$. 
        Define a degree-$1$ map $h: \lambda(\Ori^{n+m})\to \lambda(\Ori^{n+m})$ of graded abelian groups by
        \[\alpha \mapsto \begin{cases}
            (-1)^{\#\alpha_L} (\alpha \cup \{n\}) & \text{if $\alpha_L, \alpha_R\neq \emptyset$, $n\not\in\alpha$,}
            \\
            0 & \text{otherwise}
        \end{cases}.\]
        Observe that $r = \id-\partial  h-h\partial: \lambda(\Ori^{n+m})\to \lambda(\Ori^{n+m})$ is a chain map. Moreover, we have
        \begin{itemize}
            \item if $\alpha_L$ or $\alpha_R$ is empty, then $h \alpha = h\partial \alpha = 0$, so $r\alpha = \alpha$,
            \item if $\alpha_L, \alpha_R \neq \emptyset$ and $n\not \in \alpha$, then $\partial h\alpha = \alpha +h\partial \alpha$, so $r\alpha = 0$,
            \item if $\alpha_L, \alpha_R\neq \emptyset$ and $n\in \alpha$, then $h\alpha = 0$ and $h \partial \alpha = h ((-1)^{\#\alpha_L}\alpha\circ \delta^{\# \alpha_L}) = \alpha$, so $r\alpha = 0$. 
        \end{itemize}
        Therefore, $r$ is a map of ADCs that lands in $\lambda(\Ori^n\vee \Ori^m)$ and satisfies $rs=\id$. 
    \end{enumerate}
\end{proof}
The following density results are proven by \cite{campionCubesAreDense2022} for the cubes and \cite{gepnerOrientedSimplicialSpacesInpreparation} for the orientals.
\begin{corollary}\label{cubes_and_orientals_are_dense}
    \begin{enumerate}
        \item The $n$-oriental $\Ori^n$ is a retract of the $n$-cube $\cube^n$.
        \item We have inclusion of the idempotent completions $\Theta\subset \widetilde{\Ori}\subset \widetilde{\Cube}$. In particular, the orientals and cubes are dense in $\infty\Algbrd$. 
    \end{enumerate}
\end{corollary}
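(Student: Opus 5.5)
We prove both parts by induction, using the retractions of \cref{lemma_cylinder_suspension_wedge_of_orientals} together with the fact that retract diagrams are preserved by any functor. All the objects involved ($\Ori^n$, $\cube^n$, objects of $\Theta$) are strong Steiner, hence gaunt. So a retraction constructed among strict $\infty$-categories stays a retraction after including into $\infty\Algbrd$. Moreover the strict and weak Gray tensor products agree on these objects (\cref{remark_Gray_cylinder_preserves_gauntness}).

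For (1) we induct on $n$; the case $n=0$ is trivial since $\Ori^0=\cube^0=\ast$. Assume $\Ori^n$ is a retract of $\cube^n$, say via $s\colon \Ori^n\to\cube^n$ and $r\colon \cube^n\to\Ori^n$ with $rs=\id$.
\begin{itemize}
\item Applying the functor $\cube^1\otimes(\blank)$ exhibits $\cube^1\otimes\Ori^n$ as a retract of $\cube^1\otimes\cube^n=\cube^{n+1}$.
\item By \cref{lemma_cylinder_suspension_wedge_of_orientals}(1), the quotient map $\cube^1\otimes\Ori^n\twoheadrightarrow\Ori^{n+1}$ has a section, so $\Ori^{n+1}$ is a retract of $\cube^1\otimes\Ori^n$.
\item Composing the two retractions shows that $\Ori^{n+1}$ is a retract of $\cube^{n+1}$.
\end{itemize}
This gives $\widetilde{\Ori}\subset\widetilde{\Cube}$.

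For (2) we show that every $\theta\in\Theta$ is a retract of some oriental, by induction on the construction of $\Theta^\can_{\ast\ast}$ from $C_0$ under suspension and wedge sum. We must keep track of the canonical bipointings throughout, since the wedge sum uses them. So the actual claim is stronger: each $\theta$ is a bipointed retract of $(\Ori^a,\underline 0,\underline a)$.

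For suspension, suppose $\theta$ is such a bipointed retract of $\Ori^a$. Then $\sigma\theta$ is a bipointed retract of $\sigma\Ori^a$, because $\sigma$ is a functor. By \cref{lemma_cylinder_suspension_wedge_of_orientals}(2), $\sigma\Ori^a$ is a retract of $\Ori^{a+1}$. The quotient map there sends $\underline0\mapsto\bot$ and $\underline{a+1}\mapsto\top$. One checks that the section produced by the splitting of reduced complexes sends $\bot\mapsto\underline0$ and $\top\mapsto\underline{a+1}$, so this retraction respects the endpoints.

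For wedge sums, bipointed retracts $\theta\lhd\Ori^a$ and $\theta'\lhd\Ori^b$ give a retract $\theta\vee\theta'\lhd\Ori^a\vee\Ori^b$. By \cref{lemma_cylinder_suspension_wedge_of_orientals}(3), $\Ori^a\vee\Ori^b$ is a retract of $\Ori^{a+b}$. Both the inclusion $s$ and the retraction $r=\id-\partial h-h\partial$ fix the vertices $\underline0$ and $\underline{a+b}$, because $h$ vanishes in degree $0$.

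The main obstacle is exactly this endpoint bookkeeping. The sections in parts (1) and (2) of that proposition come from splittings of \emph{reduced} complexes, so one must verify that the resulting ADC maps fix the source and sink vertices. I would check this directly on degree-$0$ basis elements.

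Density then follows formally. First, $\PSh(\eC)\simeq\PSh(\widetilde\eC)$, so enlarging a category by retracts does not affect presheaf categories. Second, suppose $A\subset B$ are full subcategories of $\infty\Algbrd$ with $A$ dense. Then $N_A\simeq \mathrm{res}\circ N_B$ is fully faithful, so $N_B$ is faithful on mapping spaces (the map $\Map(X,Y)\to\Map(N_BX,N_BY)$ is split mono). The map $\mathrm{res}$ is conservative on mapping spaces between presheaves of the form $N_BY$, because each $b\in B$ is the canonical colimit of $A_{/b}$. Hence $N_B$ is fully faithful, i.e.\ $B$ is dense. Applying this to $\Theta\subset\widetilde{\Ori}$ and $\Theta\subset\widetilde{\Cube}$, with $\Theta$ dense in $\infty\Algbrd$ by Rezk's presentation, shows that the orientals and the cubes are dense in $\infty\Algbrd$.
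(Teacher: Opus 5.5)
Your proof is correct and is essentially the paper's argument: part (1) by induction, tensoring the retraction $\Ori^n\lhd\cube^n$ with $\cube^1$ and composing with the section from part (1) of \cref{lemma_cylinder_suspension_wedge_of_orientals}, and $\Theta\subset\widetilde{\Ori}$ by induction over suspensions and wedge sums using parts (2) and (3) of that proposition. Your endpoint bookkeeping and the density argument spell out what the paper leaves implicit. In the density step, the fact that each $b$ is the colimit of $A_{/b}$ actually shows that $N_BY$ is right Kan extended from $A$, so restriction is an equivalence on those mapping spaces; this is cleaner than ``conservative'' and makes the split-mono remark unnecessary.
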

\begin{proof}
    \begin{enumerate}
        \item This follows by inductively applying (1) of the proposition. Although it is notationally heavier, it is also not too hard to provide the section and retraction directly: the retraction collapses each face of the form $\underline{0\cdots 0 1 ?\cdots ?}$ (possibly with no $\underline{0}$). The section $\Ori^n\hookrightarrow\cube^n$ sends the simplex $\{i_0<\cdots<i_k\}: [k]\rightarrowtail [n]$ to the composition of the faces of $\cube^n$ of the form $\underline{a_n\cdots a_1}$ where:
        \begin{itemize}
            \item The possibly empty substrings $\underline{a_n\cdots a_{i_{k}+1}}$ and $\underline{a_{i_0}\cdots a_1}$ are $\underline{0\cdots 0}$ and $\underline{1\cdots 1}$, respectively.
            \item For each $0\leq j<k$, the substring $\underline{a_{i_{j+1}}\cdots a_{i_j+1}}$ is of the form $\underline{1\cdots 1?0\cdots 0}$, with exactly one $\underline{?}$ and possibly no $\underline{0}$ or $\underline{1}$. 
        \end{itemize}
        When $n=3$, the picture of ``coarsening'' the cube to the oriental is the following (observe that the directions of $2$- and $3$-cells also align):
        \[\begin{tikzcd}
            000 \arrow[rr] \arrow[rd, bend right=15, red] \arrow[dd, bend right=25, blue] & & 001 \arrow[rd] \arrow[dd, bend right=25, violet] & \\
            & 010 \arrow[rr, bend right=15, crossing over, red] \arrow[dd, dashed] & & 011 \arrow[dd] \\
            100 \arrow[rd, bend right=15, blue] \arrow[rr, dotted] & & 101 \arrow[rd, bend right=20, violet] &\\
            & 110 \arrow[rr, bend right=15, blue] & & 111
        \end{tikzcd}\]
        \item The second inclusion follows from (1). The first inclusion uses the strategy of \cite{campionCubesAreDense2022}; by our definition of $\Theta$ (\cref{def_cells_and_Theta}), it reduces to (2) and (3) of the proposition. 
    \end{enumerate}
\end{proof}

    \printbibliography
    \end{document}